\date{}
\newtheorem{thm}{Theorem}[section]
\newtheorem{example}[thm]{Example}
\newtheorem{definition}[thm]{Definition}
\newtheorem{rem}[thm]{Remark}
\newtheorem{conjL'}[thm]{Conjecture $\textbf{L}(\overline{\mathcal{X}}_{et},d)_{\geq0}$}
\newtheorem{prop}[thm]{Proposition}
\newtheorem{prop-definition}[thm]{Proposition--Definition}
\newtheorem{lem}[thm]{Lemma}
\newtheorem{cor}[thm]{Corollary}
\newtheorem{notation}[thm]{Notation}
\newenvironment{f-proof}[1][\sc Proof.]{\begin{trivlist}
\item[\hskip \labelsep {\bfseries #1}]}{\hfill{$\square$}\end{trivlist}}
\newcommand{\Coprod}{\displaystyle\coprod}
\newcommand{\fonc}[5]{
 \begin{array}{cccc}
 #1: & #2 & \longrightarrow & #3\\
     & #4 & \longmapsto & #5
 \end{array}
}
\newcommand{\appl}[4]{
 \begin{array}{cccc}
  #1 & \longrightarrow & #2\\
  #3 & \longmapsto & #4
 \end{array}
}
\begin{document}
\title[Invariants of symmetric bundles]{The classifying topos of a group scheme and invariants of symmetric bundles}

\author{Ph. Cassou-Nogu\`es, T. Chinburg, B. Morin, M.J. Taylor  }

\maketitle

\begin{abstract}
Let $Y$ be a scheme in which $2$ is invertible and let $V$ be a rank $n$ vector bundle on $Y$ endowed with a non-degenerate symmetric bilinear form $q$. The orthogonal group ${\bf O}(q)$ of the form $q$ is a group scheme over $Y$ whose cohomology ring $H^*(B_{{\bf O}(q)},{\bf Z}/2{\bf Z})\simeq A_Y[HW_1(q),..., HW_n(q)]$ is a polynomial algebra over the \'etale cohomology ring $A_Y:=H^*(Y_{et},{\bf Z}/2{\bf Z})$ of the scheme $Y$. Here the $HW_i(q)$'s are Jardine's universal Hasse-Witt invariants and $B_{{\bf O}(q)}$ is the classifying topos of ${\bf O}(q)$ as defined by Grothendieck and Giraud. The cohomology ring $H^*(B_{{\bf O}(q)},{\bf Z}/2{\bf Z})$ contains canonical classes $\mathrm{det}[q]$ and $[C_q]$ of degree $1$ and $2$ respectively, which are obtained from the determinant map and the Clifford group of $q$. The classical Hasse-Witt invariants $w_i(q)$ live in the ring $A_Y$.

Our main theorem provides a computation of $\mbox{det}[q]$ and $[C_{q}]$ as polynomials in $HW_{1}(q)$ and $HW_{2}(q)$ with coefficients in $A_Y$ written in terms of $w_1(q),w_2(q)\in A_Y$. This result is the source of numerous standard comparison formulas for classical Hasses-Witt invariants of quadratic forms. Our proof is based on computations with  (abelian and non-abelian) Cech cocycles in the topos $B_{{\bf O}(q)}$. This requires a general study of the cohomology of the classifying topos of a group scheme, which we carry out in the first part of this paper.

\end{abstract}

\section{Introduction}

In \cite{Fro} and \cite{Serre}, Fr\"ohlich  and Serre proved some beautiful formulas which compared invariants associated to various kinds of Galois representations and quadratic forms defined over a field $K$ of characteristic different from  $2$. Their work has inspired numerous generalizations (see for example \cite{EKV} and \cite{CNET1}). The basic underlying idea may be summarized as follows. Let $(V, q)$  be  a symmetric bundle,  defined over a scheme $Y$ in which $2$ is invertible, and let ${\bf O}(q)$ be the orthogonal group of $(V,q)$ considered as a group scheme over $Y$. We may associate to any orthogonal representation  $\rho: G\rightarrow {\bf O}(q)$ of a finite discrete group $G$ and any $G$--torsor $X$ on $Y$ a cocycle in the cohomology set $H^1(Y_{et}, {\bf O}(q))$. Since this set classifies the isometry classes of symmetric bundles with the same rank of $q$,  we  may attach to $(\rho, X)$ a new symmetric bundle $(V_X, q_X)$,   known as the  Fr\"ohlich twist of $(V, q)$. The results consist of various comparison formulas,  in the \'etale   cohomology ring $H^*(Y_{et},{\bf Z}/2{\bf Z})$,  which relate the Hasse-Witt invariants of $(V,q)$ to those of its  twisted form $(V_X,q_X)$. One of the principal aims of this paper is to  show that all these comparison formulas, together with a number of new results,  can be immediately  deduced by pulling back from a single equation  which sits  in the cohomology ring $H^*(B_{{\bf O}(q)},{\bf Z}/2{\bf Z})$ of the classifying topos
$B_{{\bf O}(q)}$ and which is independent of any choice of particular orthogonal representation and particular torsor. 

In \cite{SGA4} and \cite{Giraud} Grothendieck and Giraud introduced  the notion of the classifying topos of a group object in given topos, and they suggested that it could be used in the theory of characteristic classes in algebraic geometry. Building on their insight, we shall prove our main theorem using both abelian and non-abelian Cech cohomology of the classifying topos $B_{{\bf O}(q)}$ of the group-scheme ${\bf O}(q)$. To this end, the first part of this paper, namely Sections 2 and 3,  is devoted to the study of some  basic properties of the classifying topos $B_G$ of a $Y$-group-scheme $G$ which is defined as follows. Let $Y_{fl}$ denote the category of sheaves of sets on the big fppf-site of $Y$ and let $yG$ denote the sheaf of groups of  $Y_{fl}$ represented by $G$. Then $B_G$ is simply the category of objects $\mathcal{F}$ of $Y_{fl}$ endowed with a left action of $yG$. We may view a $Y$-scheme as an object of $Y_{fl}$ and write $G$ for $yG$.

In Section 2 we prove the well-known result due to Grothendieck and Giraud  stating that there is a canonical equivalence
\begin{equation}\label{equiintro}
{\bf Homtop}_{Y_{fl}}(\mathcal{E},B_G)^{op}\stackrel{\sim}{\longrightarrow} {\bf Tors}(\mathcal{E},f^*G)
\end{equation} 
where $f:\mathcal{E}\rightarrow Y_{fl}$ is any topos over $Y_{fl}$, ${\bf Homtop}_{Y_{fl}}(\mathcal{E},B_G)$ is the category of morphisms of $Y_{fl}$-topoi from $\mathcal{E}$ to $B_G$ and ${\bf Tors}(\mathcal{E},f^*G)$ is  the groupoid of $f^*G$-torsors in $\mathcal{E}$. This result is proposed as an exercise in \cite{SGA4}. We include a proof here because the explicit description of the functor (\ref{equiintro}) and its quasi-inverse is used  repeatedly throughout this work. 

Section 3 is devoted to the study of the cohomology of $B_G$. First we show that there is a canonical isomorphism
\begin{equation}\label{compare-intro}
H^*(B_G,\mathcal{A})\simeq H^*({\bf B}G_{et},\mathcal{A})
\end{equation}
where the right-hand side denotes the \'etale cohomology of the simplicial scheme ${\bf B}G$ (as defined in \cite{F}) and $\mathcal{A}$ is an 
abelian object of $B_G$ which is representable by a smooth $Y$-scheme supporting a $G$-action.   For a constant group $G$, we show that the cohomology of $B_G$ (or more generally the cohomology of $B_G/X$ for any $Y$-scheme $X$ with a $G$-action) computes Grothendieck's mixed cohomology (see \cite{Grothendieck-chern-repres} 2.1). There are several interesting spectral sequences and exact sequences which relate the cohomology of $B_G$ to other kinds of cohomology. For example, for any commutative group scheme $A$ endowed with a left action of $G$, there is an exact sequence
\begin{equation}\label{ges-intro}
0\rightarrow H^0(B_G,\mathcal{A})\rightarrow H^0(Y_{fl},\mathcal{A})\rightarrow \mathrm{Crois}_Y(G,A)\rightarrow H^1(B_G,\mathcal{A})
\end{equation}
\begin{equation*}
\rightarrow H^1(Y_{fl},\mathcal{A})\rightarrow \mathrm{Ext}_{Y}(G,A)\rightarrow
H^2(B_G,\mathcal{A})\rightarrow H^2(Y_{fl},\mathcal{A})
\end{equation*}
where $\mathcal{A}=y(A)$, $\mathrm{Crois}_Y(G,A)$ is the group of crossed homomorphisms from $G$ to $A$ (which is just $\mathrm{Hom}_Y(G,A)$ if $G$ acts trivially on $A$) and $\mathrm{Ext}_{Y}(G,A)$ is the group of extensions $1\rightarrow A\rightarrow \tilde{G}\rightarrow G\rightarrow 1$ inducing the given $G$-action on $A$.

We shall also establish the existence of a Hochschild-Serre spectral sequence in this context. Let $1\rightarrow N\rightarrow G\rightarrow G/N\rightarrow 1$ be an exact sequence of $S$-group-schemes (w.r.t. the $\mathrm{fppf}$-topology). Then, for any abelian object $\mathcal{A}$ of $B_G$, there is a natural $G/N$--action on the cohomology $H^j_{S}(B_N,\mathcal{A})$ of $\mathcal{A}$ with values in $S_{fl}$ (see Definition \ref{def-relative-cohomology}) and we have the spectral sequence
$$H^i(B_{G/N},H^j_{S}(B_N,\mathcal{\mathcal{A}}))\Longrightarrow H^{i+j}(B_{G},\mathcal{A}).$$
The five-term exact sequence induced by this spectral sequence reads as follows:
\begin{equation}\label{es-hs}
0\rightarrow H^1(B_{G/N},\mathcal{A})\rightarrow H^1(B_G,\mathcal{A})\rightarrow
H^0(B_{G/N},\underline{\textrm{Hom}}(N,\mathcal{A}))
\end{equation}
$$\rightarrow H^2(B_{G/N},\mathcal{A})\rightarrow H^2(B_G,\mathcal{A})$$
where we assume for simplicity that $\mathcal{A}$ is given with trivial $G$-action.

This then concludes our description of the first part of the article, which is of a relatively general nature.

The aim of the second part of this paper, which starts from Section 4, is to apply the  general results of the first part to the study of symmetric bundles and their invariants. From Section 4 on, we fix a scheme $Y$ in which $2$ is invertible and a symmetric bundle $(V, q)$ on $Y$, i.e. a locally free $\mathcal{O}_Y$-module $V$ of rank $n$ endowed with a non-degenerate bilinear form $V\otimes_{\mathcal{O}_Y}V\rightarrow \mathcal{O}_Y$. A special case is given by $(\mathcal{O}_{Y}^{n}, t_{n}=x_{1}^{2}+...+x_{n}^{2})$, the standard form of rank $n$, and ${\bf O}(n)$ is defined as the orthogonal group for this form. The isomorphism (\ref{compare-intro}), together with a fundamental result of Jardine (see \cite{J5} Theorem 2.8), yields a canonical identification of $A$-algebras
$$H^*(B_{{\bf O}(n)}, {\bf Z}/2{\bf Z})\simeq H^*({\bf  BO}(n)_{et}, {\bf Z}/2{\bf Z})\simeq A[HW_{1},...,HW_{n}]$$
where $HW_{i}$ has degree $i$ and $A:=H^*(Y_{et},{\bf Z}/2{\bf Z})$ is the \'etale cohomology ring of $Y$. The symmetric bundle $(V,q)$ provides
us with the object ${\bf Isom}(t_n,q)$ of $Y_{fl}$, which naturally supports a right action of ${\bf O}(n)$ and a left action of ${\bf O}(q)$. It is easily seen that ${\bf Isom}(t_n,q)$ is in fact an ${\bf O}(n)$-torsor of $B_{{\bf O}(q)}$. It follows from (1) that  this torsor may be viewed  as a $Y_{fl}$-morphism
$$T_q:B_{{\bf O}(q)}\longrightarrow B_{{\bf O}(n)} . $$
This morphism is actually an equivalence of $Y_{fl}$-topoi. Note that, however, the groups ${\bf O}(n)$ and ${\bf O}(q)$ are not isomorphic in general. Indeed, $T_q$ has a quasi-inverse
$$T^{-1}_q:B_{{\bf O}(n)}\stackrel{\sim}{\longrightarrow} B_{{\bf O}(q)}$$
given by the ${\bf O}(q)$-torsor ${\bf Isom}(q,t_n)$ of $B_{{\bf O}(n)}$. This yields a canonical isomorphism of $A$-algebras
$$H^*(B_{{\bf O}(q)}, {\bf Z}/2{\bf Z})\simeq A[HW_{1}(q), ...,HW_{n}(q)]$$
where $HW_{i}(q):=T_q^*(HW_{i})$ has degree $i$.  The classes $HW_i(q), 1 \leq i\leq n$, will be called the universal Hasse-Witt invariants of $q$. The proof of the analogous fact in the simplicial framework seems more technical (see \cite{J4}, where it is shown that the simplicial sheaves associated to ${\bf O}(n)$ and ${\bf O}(q)$ are weakly equivalent). We may now view the object ${\bf Isom}(t_n,q)$ as an ${\bf O}(n)$-torsor of $Y_{fl}$; it therefore yields a map
$$\{q\}:Y_{fl}\longrightarrow B_{{\bf O}(n)}$$
which, incidentally,  determines $q$. The classical Hasse-Witt invariants of $q$ are defined by:
$$w_i(q):= \{q\}^*(HW_i)\in H^i(Y,{\bf Z}/2{\bf Z}).$$

 We can attach to   $(V,q)$  both a canonical map $\textrm{det}_{{\bf O}(q)} :{\bf O}(q)\rightarrow {\bf Z}/2{\bf Z}$ and also the central group-extension
\begin{equation}\label{Cq-Intro}
1\rightarrow {\bf Z}/2{\bf Z}\rightarrow\widetilde{{\bf O}}(q)\rightarrow {\bf O}(q)\rightarrow 1
\end{equation}
derived from the Clifford algebra and the Clifford group of $q$. It turns out, by considering the sequence  (\ref{ges-intro}), that the map $\mathrm{det}_{{\bf O}(q)}$ yields a cohomology class $\mathrm{det}[q]\in H^1(B_{{\bf O}(q)}, {\bf Z}/2{\bf Z})$, while the extension (\ref{Cq-Intro}) gives  us  a cohomology class $[C_q]\in H^2(B_{{\bf O}(q)}, {\bf Z}/2{\bf Z})$. The  main result of the second part of the paper provides an explicit expression of $\mbox{det}[q]$ and $[C_{q}]$ as polynomials in $HW_{1}(q)$ and $HW_{2}(q)$ with coefficients in $A$ expressed in terms of $w_1(q),w_2(q)\in A$.
To be more precise, we shall prove the following:
\begin{thm}\label{mainthmintro} Let $Y$ be a scheme in which $2$ is invertible and let $(V, q)$ be a symmetric bundle on $Y$. Assume that $Y$ is the disjoint union of its connected components. Then we have the equalities
$$\textrm{\emph{det}}[q]=w_{1}(q)+HW_{1}(q)$$
and
$$[C_q]=(w_{1}(q)\cdot w_{1}(q)+w_{2}(q))+w_{1}(q)\cdot HW_{1}(q)+HW_{2}(q)$$
in the polynomial ring $$H^*(B_{{\bf O}(q)}, {\bf Z}/2{\bf Z})\simeq A[HW_{1}(q), ...,HW_{n}(q)].$$
\end{thm}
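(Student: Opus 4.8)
The plan is to exploit the polynomial structure $H^*(B_{{\bf O}(q)},{\bf Z}/2{\bf Z})\simeq A[HW_1(q),\dots,HW_n(q)]$ to reduce both identities to the computation of finitely many coefficients in $A$, and then to determine those coefficients by combining three ingredients: the canonical base-point section of $B_{{\bf O}(q)}\to Y_{fl}$, base change to geometric points, and an explicit Cech-cocycle computation in the Clifford group. Since $Y$ is assumed to be the disjoint union of its connected components, I may work one component at a time and assume $A^0=H^0(Y_{et},{\bf Z}/2{\bf Z})={\bf Z}/2{\bf Z}$. In degree $1$ I write $\mathrm{det}[q]=\alpha+\beta\,HW_1(q)$ with $\alpha\in A^1$ and $\beta\in A^0$, and in degree $2$ I write $[C_q]=c_2+c_1\,HW_1(q)+c_0'\,HW_1(q)^2+c_0\,HW_2(q)$ with $c_2\in A^2$, $c_1\in A^1$ and $c_0',c_0\in A^0$. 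The theorem is then the assertion that $\alpha=w_1(q)$, $\beta=1$, $c_2=w_1(q)^2+w_2(q)$, $c_1=w_1(q)$, $c_0'=0$ and $c_0=1$.

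The first tool is the point $s_q:Y_{fl}\to B_{{\bf O}(q)}$ corresponding to the trivial ${\bf O}(q)$-torsor, which is a section of the structure morphism. I would first record that $\{q\}=T_q\circ s_q$, so that $s_q^*(HW_i(q))=\{q\}^*(HW_i)=w_i(q)$, while $s_q^*$ is the identity on $A$. Next, because $\mathrm{det}[q]$ and $[C_q]$ are by construction the images of $\mathrm{det}_{{\bf O}(q)}\in\mathrm{Crois}_Y({\bf O}(q),{\bf Z}/2{\bf Z})$ and of the extension (\ref{Cq-Intro}) in $\mathrm{Ext}_Y({\bf O}(q),{\bf Z}/2{\bf Z})$, the exact sequence (\ref{ges-intro}) shows that both die under the subsequent restriction $H^*(B_{{\bf O}(q)})\to H^*(Y_{fl})$, which is exactly $s_q^*$. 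Applying $s_q^*$ to the two expansions therefore yields the relations $\alpha+\beta\,w_1(q)=0$ and $c_2+c_1\,w_1(q)+c_0'\,w_1(q)^2+c_0\,w_2(q)=0$. The second tool is base change along a geometric point of the component: there $A$ collapses to ${\bf Z}/2{\bf Z}$ in degree $0$, all $w_i(q)$ vanish, and $q$ becomes isometric to $t_n$, so one is reduced to the universal split identifications $\mathrm{det}[t_n]=HW_1$ and $[C_{t_n}]=HW_2$ in $H^*(B_{{\bf O}(n)},{\bf Z}/2{\bf Z})$. The first follows once one checks, again from (\ref{ges-intro}), that $\mathrm{Hom}_Y({\bf O}(n),{\bf Z}/2{\bf Z})\hookrightarrow H^1(B_{{\bf O}(n)})$ is injective, forcing $\beta=1$; the second amounts to showing that the Clifford extension contributes $HW_2$ with no $HW_1^2$ term, giving $c_0'=0$ and $c_0=1$. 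Feeding these back into the relations gives $\alpha=w_1(q)$, which completes the determinant formula, and $c_2=c_1\,w_1(q)+w_2(q)$.

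Everything now hinges on the single remaining coefficient $c_1\in A^1$, and this is the main obstacle, since it is invisible at a geometric point (where $A^1=0$) and genuinely requires global, non-abelian cocycle data. I would compute it by a splitting principle: pass to an fppf cover on which $q$ diagonalizes as $\langle a_1\rangle\perp\cdots\perp\langle a_n\rangle$, so that ${\bf O}(q)$ reduces to its diagonal subgroup and, by the isomorphism $C(q_1\perp q_2)\simeq C(q_1)\,\widehat{\otimes}\,C(q_2)$ of ${\bf Z}/2{\bf Z}$-graded Clifford algebras, the extension (\ref{Cq-Intro}) is assembled from the rank-one pieces. Choosing a set-theoretic section $s$ of $\widetilde{{\bf O}}(q)\to{\bf O}(q)$ built from reflections and reading off the ${\bf Z}/2{\bf Z}$-valued $2$-cocycle $g,h\mapsto s(g)s(h)s(gh)^{-1}$, I would isolate its cross term and match it against $w_1(q)\cdot HW_1(q)$. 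By naturality in $(Y,q)$ the coefficient $c_1$ is a universal ${\bf Z}/2{\bf Z}$-multiple of $w_1(q)$, and the multiple is pinned to $1$ by a single transparent low-rank evaluation over a field, giving $c_1=w_1(q)$ and hence $c_2=w_1(q)^2+w_2(q)$. Conceptually, the cross term $w_1(q)\,HW_1(q)$ should be read as the product of the two degree-one corrections: the same twist that turns $\mathrm{det}[t_n]=HW_1$ into $\mathrm{det}[q]=w_1(q)+HW_1(q)$ modifies the Clifford class by exactly this bilinear term.

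I expect the genuinely delicate points to be the sign and grading conventions in the Clifford group: confirming the vanishing of the $HW_1(q)^2$ contribution (the statement $c_0'=0$, where the $\mathrm{Pin}^{+}$ versus $\mathrm{Pin}^{-}$ dichotomy enters) and fixing the mixed coefficient $c_1$. As an alternative route for these two coefficients one could apply the Hochschild--Serre spectral sequence (\ref{es-hs}) to $1\to{\bf SO}(q)\to{\bf O}(q)\xrightarrow{\ \mathrm{det}\ }{\bf Z}/2{\bf Z}\to1$, computing the restriction of $[C_q]$ to $B_{{\bf SO}(q)}$ as a spin obstruction and recovering the determinant contribution from the edge maps; but I would expect the direct cocycle computation to be the more economical path and the one most in keeping with the methods of the first part of the paper.
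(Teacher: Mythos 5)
Your reduction framework is sound, and for the degree--one identity it amounts to a complete argument that is genuinely different from the paper's. Writing $\mathrm{det}[q]=\alpha+\beta\,HW_1(q)$ with $\alpha\in A^1$, $\beta\in A^0$, the exactness of (\ref{ges-intro}) at $H^1$ (equivalently: the trivial torsor lifts, so $\eta^*\mathrm{det}[q]=0$) gives $\alpha=\beta\,w_1(q)$, and restriction to a geometric point of each component together with the normalization $HW_1=\mathrm{det}[t_n]$ gives $\beta=1$. The paper instead exhibits an explicit isomorphism of ${\bf Z}/2{\bf Z}$--torsors $T_q\wedge^{{\bf O}(n)}{\bf O}(n)/{\bf SO}(n)\simeq\Theta_q\wedge^{{\bf O}(n)}({\bf O}(n)/{\bf SO}(n)\wedge^{{\bf Z}/2{\bf Z}}{\bf O}(q)/{\bf SO}(q))$ in $B_{{\bf O}(q)}$; your version trades that construction for the naturality of Jardine's generators under base change, which the paper also relies on. The same two tools (section plus geometric point) correctly give $c_0=1$, $c_0'=0$ and $c_2=c_1w_1(q)+w_2(q)$ in degree $2$.

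The gap is the coefficient $c_1$, which carries the entire content of the degree--two formula, and your treatment of it is a plan with two concrete weak points. First, the assertion that ``by naturality $c_1$ is a universal ${\bf Z}/2{\bf Z}$--multiple of $w_1(q)$'' is not justified as stated: granting that natural degree--one invariants of rank--$n$ forms are pullbacks of classes in $H^1(B_{{\bf O}(n)_{{\bf Z}[1/2]}},{\bf Z}/2{\bf Z})$ (itself a point needing an argument), that group is $H^1(\mathrm{Spec}({\bf Z}[1/2])_{et},{\bf Z}/2{\bf Z})\oplus{\bf Z}/2{\bf Z}\cdot HW_1$ and the first summand is nonzero, so a constant contribution must be excluded separately (e.g.\ by evaluating at $t_n$). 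Second, and more seriously, the splitting principle diagonalizes $q$ only after passing to a cover $U\rightarrow Y$, whereas $c_1$ lives in $H^1(Y_{et},{\bf Z}/2{\bf Z})$, which does not inject into $H^1(U_{et},{\bf Z}/2{\bf Z})$; a cocycle computation carried out entirely over such a cover cannot detect $c_1$. The mixed term $w_1(q)\cup HW_1(q)$ is precisely an off--diagonal Cech contribution coupling the direction that trivializes $q$ with the direction that trivializes $E_{{\bf O}(q)}$, and extracting it is where all the work lies.

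This is exactly the point around which the paper's proof is organized: it first reduces, via the Hochschild--Serre sequence for $1\rightarrow{\bf Z}/2{\bf Z}\rightarrow\widetilde{{\bf O}}(q)\rightarrow{\bf O}(q)\rightarrow1$ together with the algebraic independence of the $HW_i(q)$ over $A$ (needed to resolve a residual ambiguity by $[C_q]$ in the kernel of $B_{r_q}^*$), to an identity in $H^2(B_{\widetilde{{\bf O}}(q)},{\bf Z}/2{\bf Z})$; it then computes on the single covering $U=\widetilde{E}_{{\bf O}(q)}\times\widetilde{\Theta}_q$, on which the torsors $\widetilde{T}_q$, $\widetilde{\Theta}_q$, $\widetilde{E}_{{\bf O}(q)}$ are simultaneously trivialized and their cocycles satisfy $\gamma=\theta_p(\alpha)\cdot\beta$, and the cross term falls out of the sign rule $\widetilde{\psi}_t=\pm i_{s(t)}$ of Lemma \ref{lem-Ph}. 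Your instinct that the $\mathrm{Pin}^{\pm}$ sign conventions are where the difficulty sits is correct, but that computation, performed on a covering adapted to producing a class of $Y$ rather than of a trivializing cover, is the proof, and it is missing from your proposal.
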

Surprisingly, this theorem is new even in the case where  $Y=\mathrm{Spec}(K)$ is the spectrum of a field.

Section 5  is devoted to the proof of this result. The identity in degree $1$ is proved using simple computations with torsors. The proof of the identity in degree $2$ is more involved and is based on computations with Cech cocycles. A technical reduction makes use of the exactness of the  sequence (\ref{es-hs}) derived from the Hochschild--Serre spectral sequence for the group--extension (\ref{Cq-Intro}). 

Theorem \ref{mainthmintro} is the source of numerous comparison formulas, which are either new results or generalizations of known results (see \cite{EKV}, \cite{CNET1}) by use the following method: for any topos $\mathcal{E}$ given with an ${\bf O}(q)$--torsor, we have the canonical map $f:\mathcal{E}\rightarrow B_{{\bf O}(q)}$, and we obtain comparison formulas in $H^*(\mathcal{E},{\bf Z}/2{\bf Z})$ by applying the functor $f^*$ to the universal comparison formulas of Theorem \ref{mainthmintro}. For example, given an ${\bf O}(q)$--torsor $\alpha$ on $Y$, we consider the map $f:\mathcal{E}=Y_{fl}\rightarrow B_{{\bf O}(q)}$, which classifies $\alpha$, and we thereby obtain an identity in $H^i(Y_{fl},{\bf Z}/2{\bf Z})$ for $i=1,2$. Our result,  Corollary \ref{cor-one},  generalizes a result of Serre to any base scheme $Y$. A second example is provided by an orthogonal representation $\rho:G\rightarrow {\bf O}(q)$ of a $Y$-group-scheme $G$: here we consider the map $B_{\rho}:B_G\rightarrow B_{{\bf O}(q)}$ and the obtain identities in $H^*(B_G,{\bf Z}/2{\bf Z})$. The result that we obtain, Corollary \ref{cor-two},  is new - even in the case when $G$ is a constant (= discrete) group. A third example is provided by an orthogonal representation $\rho:G\rightarrow {\bf O}(q)$ and a $G$-torsor $X$ on $Y$: in this case we may consider the map
$$Y_{fl}\stackrel{X}{\longrightarrow} B_G\stackrel{B_{\rho}}{\longrightarrow} B_{{\bf O}(q)}$$
in order to obtain identities in $H^*(Y_{fl},{\bf Z}/2{\bf Z})$ (see Corollary \ref{cor-three}); the result that we obtain essentially generalizes the Theorem of  Fr\"ohlich-Kahn-Snaith (see \cite{J4} Theorem 2.4). Our result  is  general in the sense that $Y$ is an arbitrary scheme (except that $2$ must be invertible) and $G$ is not assumed to be constant. However, we should remark that we do not obtain  a  complete analogue of \cite{J4},  Theorem 1.6. ii), when $G$ is a non-constant group scheme (see remark of Section 6).  In Section 7 the twisting formulas of Corollary \ref{cor-three} are generalized to the case where $X$ is a $G$-cover of $Y$ with odd ramification (as defined in 7.1).  Twists of symmetric bundles by $G$-torsors (for non-constant group scheme $G$)  appear naturally in situations of arithmetic interest (for instance, the trace form of any finite and separable algebra is a  twist of the standard form).  The formulas in Corollary \ref{cor-three} provide us with tools to deal with the embedding problems associated to torsors; it is our intention to return to these questions in a forthcoming paper.

 We conclude this Introduction by remarking that, for the convenience of the reader, we have gathered together some of the basic material of topos theory in an appendix.

\tableofcontents

\section{The classifying topos of a group scheme}

\subsection{The definition of $B_G$}
Let $S$ be a scheme. We consider the category of $S$-schemes ${\bf Sch}/S$ endowed  with the \'etale topology or fppf-topology. Recall that a fundamental system of covering families for the fppf-topology is given by the surjective families $(f_i:X_i\rightarrow X)$ consisting of  flat morphisms which are locally finitely  presented.  The corresponding sites are denoted by $({\bf Sch}/S)_{et}$ and $({\bf Sch}/S)_{fppf}$.
The big flat topos and  the big \'etale topos of $S$ are defined as the categories of sheaves of sets on these sites:
$$S_{fl}:=\widetilde{({\bf Sch}/S)}_{fppf}\mbox{ and }S_{Et}:=\widetilde{({\bf Sch}/S)_{et}}.$$
Here $\widetilde C$ denotes the category of sheaves on a site $C$. The identity $({\bf Sch}/S)_{et}\rightarrow ({\bf Sch}/S)_{fppf}$ is a continuous functor. It yields a canonical morphism of topoi
$$i:S_{fl}\rightarrow S_{Et}.$$
This map is an embedding, i.e. $i_*$ is fully faithful;  hence $S_{fl}$ can be identified with the full subcategory of $S_{Et}$ consisting of  big \'etale sheaves on $S$ which are sheaves for the $\textrm{fppf}$-topology. The $\textrm{fppf}$-topology on the category ${\bf Sch}/S$ is subcanonical (hence so is the \'etale topology). In other words, any representable presheaf is a sheaf. It follows that the Yoneda functor yields a fully faithful functor
\begin{equation}\label{yoneda}
y:{\bf Sch}/S\rightarrow S_{fl}.
\end{equation}
For any $S$-scheme $Y$, we consider the slice topos $S_{fl}/yY$ (i.e. the category of maps $\mathcal{F}\rightarrow yY$ in $S_{fl}$). We have a canonical equivalence \cite{SGA4} [IV, Section 5.10]
$$S_{fl}/yY:=\widetilde{({\bf Sch}/S)_{fl}}/yY\simeq \widetilde{({\bf Sch}/Y)_{fl}}=:Y_{fl}.$$
The Yoneda functor (\ref{yoneda}) commutes with  projective limits. In particular it preserves products and the final object; hence a group scheme $G$ over $S$ represents a group object $yG$ in $S_{fl}$, i.e. a sheaf of groups on the site $({\bf Sch}/S)_{fl}$.
\begin{definition}
The \emph{classifying topos $B_G$} of the $S$-group scheme $G$ is defined as the category of objects in $S_{fl}$ given with a left action of $yG$.
The \emph{\'etale classifying topos $B^{et}_G$} of the $S$-group scheme $G$ is defined as the category of objects in $S_{Et}$ given with a left action of $yG$.
\end{definition}

More explicitly, an object of $B_G$ (resp. of $B_G^{et}$) is a sheaf $\mathcal{F}$ on ${\bf Sch}/S$ for the fppf-topology (resp. for the \'etale topology) such that, for any $S$-scheme $Y$, the set $\mathcal{F}(Y)$ is endowed  with a $G(Y)$--action
$$\textrm{Hom}_S(Y,G)\times\mathcal{F}(Y)\rightarrow \mathcal{F}(Y)$$
which is functorial  in $Y$. We have a commutative diagram (in fact a pull--back) of topoi
\[\xymatrix{
B_{G}\ar[d]\ar[r]& B_G^{et}\ar[d] \\
S_{fl}\ar[r]^i&S_{Et}
}\]
where the vertical morphisms are defined as in (\ref{map-pi}) below.

\subsection{Classifying torsors}
More generally, let $\mathcal{S}$ be any topos and let $G$ be any group in $\mathcal{S}$. We denote by ${\bf Tors}(\mathcal{S},G)$ the category of $G$--torsors in $\mathcal{S}$. Recall that a (right) $G$--torsor in $\mathcal{S}$ is an object $T$ endowed with a right action $\mu:T\times G\rightarrow T$ of $G$ such that:
\begin{itemize}
\item the map $T\rightarrow e_{\mathcal{S}}$ is an epimorphism, where $e_{\mathcal{S}}$ is the final object of $\mathcal{S}$;
\item the map $(p_1,\mu):T\times G\rightarrow T\times T$ is an isomorphism, where $p_1$ is the projection on the first component.
\end{itemize}
An object $T$ in $\mathcal{S}$,  endowed  with a right $G$--action,  is a $G$--torsor if and only if there exists an epimorphic family $\{U_i\rightarrow e_{\mathcal{S}}\}$ such that the base change $U_i\times T$ is isomorphic to the trivial $(U_i\times G)$--torsor in $\mathcal{S}/U_i$, i.e. if there is a $(U_i\times G)$--equivariant isomorphism
$$U_i\times T\simeq U_i\times G$$
defined over $U_i$, where $U_i\times G$ acts on itself by right multiplication.

The classifying topos $$B_G:=B_G(\mathcal{S})$$
is the category of left $G$-objects in $\mathcal{S}$. The fact that $B_G$ is a topos follows easily from Giraud's axioms;  the fact that $B_G$ classifies $G$--torsors is recalled below. We denote by
\begin{equation}\label{map-pi}
\pi: B_G\rightarrow \mathcal{S}
\end{equation}
the canonical map: the inverse image functor $\pi^*$ sends an object $\mathcal{F}$ in $\mathcal{S}$ to $\mathcal{F}$ with trivial $G$--action. Indeed, $\pi^*$ commutes with arbitrary inductive and projective limits;  hence  $\pi^*$ is the inverse image of a morphism of topoi $\pi$. In particular,  the group $\pi^*G$ is given by the trivial action of $G$ on itself. Let $E_G$ denote  the object of $B_G$ defined by the action of $G$ on itself by left multiplication. Then the map
$$E_G\times \pi^*G\rightarrow E_G,$$
given by right multiplication is a morphism of $B_G$ (i.e. it is $G$--equivariant). This action provides $E_G$ with the structure of a right $\pi^*G$--torsor in $B_G$.

If $f:\mathcal{E}\rightarrow\mathcal{S}$ and $f':\mathcal{E}'\rightarrow\mathcal{S}$ are topoi over the base topos $\mathcal{S}$,  then we denote by ${\bf Homtop}_{\mathcal{S}}(\mathcal{E},\mathcal{E}')$ the category of $\mathcal{S}$--morphisms from $\mathcal{E}$ to $\mathcal{E}'$. An object of this category is a pair $(a,\alpha)$ where $a:\mathcal{E}\rightarrow\mathcal{E}'$ is a morphism and $\alpha: f'\circ a\simeq f$ is an isomorphism, i.e. an isomorphism of functors $\alpha:f'_*\circ a_*\simeq f_*$, or equivalently, an isomorphism of functors $\alpha:f^*\simeq a^*\circ f'^* $. A map $\tau:(a,\alpha)\rightarrow(b,\beta)$ in the category ${\bf Homtop}_{\mathcal{S}}(\mathcal{E},\mathcal{E}')$ is a morphism (of morphism of topoi) $\tau:a\rightarrow b$ compatible with $\alpha$ and $\beta$. More precisely,  we require that  the following triangle commutes:
\[\xymatrix{
f'_*\circ a_*\ar[r]^{f'_*(\tau)}\ar[dr]_{\alpha}&f'_*\circ b_*\ar[d]^{\beta} \\
&f_*}\]

\begin{thm}\label{Thm-classifying--torsors}
Let $f:\mathcal{E}\rightarrow\mathcal{S}$ be a morphism of topoi and let $B_G$ be the classifying topos of a group $G$ in $\mathcal{S}$.
The functor
$$
\fonc{\Psi}{{\bf Homtop}_{\mathcal{S}}(\mathcal{E},B_G)^{op}}{{\bf Tors}(\mathcal{E},f^*G)}{(a,\alpha)}{a^*E_G}
$$
is an equivalence of categories.
\end{thm}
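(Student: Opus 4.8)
The plan is to exhibit an explicit quasi-inverse $\Phi$ to $\Psi$ and check that $\Phi\circ\Psi$ and $\Psi\circ\Phi$ are naturally isomorphic to the respective identity functors. The construction of $\Psi$ itself rests on two facts I would verify at the outset: first, that $a^*E_G$ really is an $f^*G$-torsor in $\mathcal{E}$, and second, that $\Psi$ is functorial (contravariantly) in the pair $(a,\alpha)$. For the first point I would use that $E_G$ is a right $\pi^*G$-torsor in $B_G$ (as recalled just before the statement), that the inverse-image functor $a^*$ is left exact and preserves epimorphisms, and that the structural isomorphism $\alpha:f^*\simeq a^*\circ\pi^*$ identifies $a^*(\pi^*G)$ with $f^*G$ as groups; since torsor-ness is expressed by the two diagrammatic conditions (the map to the final object being epi, and $(p_1,\mu)$ being iso), both of which $a^*$ preserves, the image $a^*E_G$ inherits the torsor structure over $f^*G$.

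\emph{Constructing the quasi-inverse.} Given an $f^*G$-torsor $T$ in $\mathcal{E}$, I want to produce an $\mathcal{S}$-morphism $a_T:\mathcal{E}\rightarrow B_G$. The natural idea is to use the descent/localization description of torsors: an object of $B_G$ is an object of $\mathcal{S}$ with a left $G$-action, and a morphism into $B_G$ should send an object $\mathcal{F}$ of $\mathcal{S}$ (with $G$-action, i.e. an object of $B_G$) to the ``associated object'' $T\times^{f^*G}f^*\mathcal{F}$, the contracted product of the torsor $T$ with the pullback $f^*\mathcal{F}$. Concretely I would define the inverse-image functor $a_T^*:B_G\rightarrow\mathcal{E}$ on an object $(\mathcal{F},\text{action})$ by
$$a_T^*(\mathcal{F})\;=\;T\wedge^{f^*G}f^*\mathcal{F}\;:=\;(T\times f^*\mathcal{F})/f^*G,$$
the quotient by the diagonal action $(t,x)\cdot g=(t\cdot g,\,g^{-1}\cdot x)$, which exists as a coequalizer in the topos $\mathcal{E}$. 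The key points to check are that $a_T^*$ is left exact (this is where the torsor hypothesis is essential: locally $T$ is trivial, so locally $a_T^*$ is just $f^*$, which is left exact, and exactness is a local condition in a topos) and that it commutes with arbitrary colimits (so that it is a genuine inverse-image functor), whence it defines a morphism $a_T$; the isomorphism $\alpha_T: f^*\simeq a_T^*\circ\pi^*$ comes from the canonical trivialization $T\wedge^{f^*G}(f^*G\text{-trivial }\mathcal{F})\simeq f^*\mathcal{F}$, since $\pi^*\mathcal{F}$ carries the trivial action.

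\emph{The two natural isomorphisms.} To see $\Psi\circ\Phi\simeq\mathrm{id}$, I compute $a_T^*E_G=T\wedge^{f^*G}f^*(\pi^*G\text{ as left }G\text{-object})$; because $E_G$ is $G$ acting on itself by left multiplication while retaining the \emph{right} translation action that makes it a torsor, this contracted product collapses to $T$ itself, $f^*G$-equivariantly, giving $\Psi(\Phi(T))\simeq T$. For the other direction, starting from $(a,\alpha)$ I must show $a_{a^*E_G}^*\simeq a^*$ as morphisms over $\mathcal{S}$; both are inverse-image functors, so by the local triviality of the universal torsor $E_G$ it suffices to compare them after a cover that trivializes things, where each reduces to $f^*$ composed with the tautological identification. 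The main obstacle, and the step I would budget the most care for, is rigorously justifying the \emph{left exactness} of the contracted-product functor $a_T^*$ and checking that all constructions are compatible with the structural $2$-isomorphisms $\alpha$ (so that one obtains an equivalence of \emph{categories of $\mathcal{S}$-morphisms}, tracking the $1$-morphisms $\tau$ and the commuting triangles, not merely a bijection on isomorphism classes). Faithfulness and fullness of $\Psi$ on morphisms then reduce to the statement that a map of torsors $a^*E_G\rightarrow b^*E_G$ over $f^*G$ corresponds uniquely to a $2$-morphism $\tau:a\rightarrow b$ respecting $\alpha$ and $\beta$, which I would deduce from the same local-triviality mechanism together with the universal property of $E_G$.
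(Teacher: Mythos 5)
Your proposal is correct and follows essentially the same route as the paper: define $\Psi$ by pulling back the universal torsor $E_G$, construct the quasi-inverse on a torsor $T$ via the contracted product $a_T^*\mathcal{F}=T\wedge^{f^*G}f^*\mathcal{F}$, and establish left exactness of this functor by local triviality of $T$ (reducing to the trivial torsor, where the functor is just the forgetful/inverse-image functor). The paper declares the verification that $\Psi$ and $\Phi$ are quasi-inverse to be straightforward, whereas you sketch the two collapses $a_T^*E_G\simeq T$ and $a_{a^*E_G}\simeq a$ explicitly, but this is the same argument.
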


\begin{proof}
(1) Let $(a,\alpha)$ be an object of ${\bf Homtop}_{\mathcal{S}}(\mathcal{E},B_G)$. The functor $a^*$ is left exact and $E_G$ is a right $\pi^*G$--torsor in $B_G$. It follows that $a^*E_G$ is given with a right action of $a^*\pi^*G$ such that the map
$a^*E_G\times a^*\pi^*G\rightarrow a^*E_G\times a^*E_G$
is an isomorphism. Moreover, $a^*$ commutes with arbitrary inductive limits;  hence it preserves epimorphisms,  so that the map $a^*E_G\rightarrow e_{\mathcal{E}}$ is epimorphic. More generally, the inverse image of a morphism of topoi preserves torsors. Hence $a^*E_G$ is a right $a^*\pi^*G$--torsor in $\mathcal{E}$. Then $a^*E_G$ is a right $f^*G$--torsor via the given isomorphism $\alpha(G):f^*G\simeq a^*\pi^*G$. A morphism $\tau:(a,\alpha)\rightarrow(b,\beta)$ yields a morphism $\tau(E_G):b^*E_G\rightarrow a^*E_G$ which is $f^*G$--equivariant thanks to the above commutative triangle. Thus the functor $\Psi$ is indeed well-defined.

(2) Next we  define a functor
$$\Phi:{\bf Tors}(\mathcal{E},f^*G)\rightarrow {\bf Homtop}_{\mathcal{S}}(\mathcal{E},B_G)^{op}.
$$
Let $T$ be a right $f^*G$--torsor in $\mathcal{E}$. We denote by $p:\mathcal{S}\rightarrow B_G$ the canonical morphism ($p^*$ sends an object $\mathcal{F}$ with a $G$--action to $\mathcal{F}$). For any $\mathcal{F}$ in $B_G$, $p^*\mathcal{F}$ carries a left action of the group $G$  and so  $f^*G$ acts on $f^*p^*\mathcal{F}$. Consider the diagonal action of $f^*G$ on the product $T\times f^*p^*\mathcal{F}$ (here $f^*G$ acts on the right on the first factor and on the left on the second factor). The quotient
$$a_T^*\mathcal{F}:=T\wedge^{f^*G} f^*p^*\mathcal{F}:= (T\times f^*p^*\mathcal{F})/f^*G$$
is well defined in $\mathcal{E}$. We obtain a functor
$$a_T^*:B_G(\mathcal{S})\rightarrow\mathcal{E}. $$
Our aim is to show that $a_{T}^{*}$ is the inverse image functor of a morphism of topoi. For this it is enough to show that $a_T^*$ commutes with finite projective limits and arbitrary inductive limits. Inductive limits are universal in a topos (so that $T\times f^*p^*(-)$ commutes with inductive limits) and inductive limits commute with themselves (hence with $(-)/G$). Hence  $a_T^*$ commutes with inductive limits. It remains to show that $a_T^*$ commutes with finite projective limits. It is enough to prove that the functor
$$
\fonc{q_T^*}{B_{f^*G}(\mathcal{E})}{\mathcal{E}}{X}{T\wedge^{f^*G}X:=(T\times X)/f^*G}
$$
is left exact, i.e. that it commutes with finite projective limits. It is enough to establish this fact locally in $\mathcal{E}$. Since a torsor is locally trivial, one may assume that $T$ is the trivial torsor $T_0=f^*G$. The canonical map in $\mathcal{E}$ (induced by the left action $f^*G\times X\rightarrow X$ of $f^*G$ on $X$):
$$q_{T_0}^*X=f^*G\wedge^{f^*G} X:=(f^*G\times X)/f^*G\rightarrow X$$
is an isomorphism, which is functorial in the object $X$ of $B_{f^*G}(\mathcal{E})$. The functor $q_{T_0}^*$ forgets the $f^*G$--action, and thus coincides with the inverse image functor of the canonical map $\mathcal{E}\rightarrow B_G$.

Hence,  as required,  we see that $a_T^*$ commutes with finite projective limits and arbitrary inductive limits and therefore
is the inverse image of a morphism $a_T:\mathcal{E}\rightarrow B_G(\mathcal{S})$. For any object $\mathcal{L}$ of $\mathcal{S}$, one has a natural isomorphism
$$a_T^*\pi^*\mathcal{L}=(T\times f^*p^*\pi^*\mathcal{L})/f^*G=(T\times f^*\mathcal{L})/f^*G\simeq(T/f^*G)\times f^*\mathcal{L}\simeq f^*\mathcal{L}$$
since $f^*G$ acts trivially on $f^*\mathcal{L}$. We therefore obtain an isomorphism
$$\alpha_T:f^*\rightarrow a_T^*\pi^*$$
and the functor $\Phi:T\mapsto(a_T,\alpha_T)$ is well-defined. Indeed, an $f^*G$--morphism of torsors $T\rightarrow T'$ induces
a map
$$a_T^*(\mathcal{F})=T\wedge^{f^*G} f^*p^*\mathcal{F}\rightarrow T'\wedge^{f^*G} f^*p^*\mathcal{F}=a_{T'}^*(\mathcal{F})$$ which is
functorial in $\mathcal{F}$ and which in turn gives a morphism $(a_{T'},\alpha_{T'})\rightarrow(a_T,\alpha_T)$.

(3) It  is straightforward to show that $\Psi$ and $\Phi$ are quasi-inverse to each other.
\end{proof}

\begin{cor}\label{Diaconescu-Thm}
Let $f:\mathcal{E}\rightarrow\mathcal{S}$ be a morphism of topoi and let $G$ be a group in $\mathcal{S}$.
Then the following square
\[\xymatrix{
B_{f^*G}(\mathcal{E})\ar[d]\ar[r]& B_G(\mathcal{S})\ar[d]_{\pi} \\
\mathcal{E}\ar[r]^f&\mathcal{S}}\]
is a pull--back.
\end{cor}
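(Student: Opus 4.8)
The plan is to deduce the statement from the universal property of the classifying topos recorded in Theorem \ref{Thm-classifying--torsors}, by testing the square against an arbitrary topos $\mathcal{Z}$. By the 2-Yoneda lemma for the 2-category of topoi, the canonical comparison morphism $B_{f^*G}(\mathcal{E})\to\mathcal{E}\times_{\mathcal{S}}B_G(\mathcal{S})$ induced by the square is an equivalence if and only if, for every topos $\mathcal{Z}$, the induced functor on $\mathcal{Z}$-valued points is an equivalence. Now the 2-categorical fibre product is characterized by the requirement that ${\bf Homtop}(\mathcal{Z},\mathcal{E}\times_{\mathcal{S}}B_G(\mathcal{S}))$ be naturally equivalent to the category of triples $(g,h,\theta)$, where $g:\mathcal{Z}\to\mathcal{E}$ and $h:\mathcal{Z}\to B_G(\mathcal{S})$ are morphisms and $\theta:f\circ g\simeq\pi\circ h$ is an isomorphism over $\mathcal{S}$. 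So it suffices to produce, naturally in $\mathcal{Z}$ and compatibly with the projections, an equivalence between ${\bf Homtop}(\mathcal{Z},B_{f^*G}(\mathcal{E}))$ and this category of triples, and to check that it is the one carried by the canonical comparison.

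First I would compute the left-hand side. A morphism $\mathcal{Z}\to B_{f^*G}(\mathcal{E})$ determines, on composition with the structural projection $B_{f^*G}(\mathcal{E})\to\mathcal{E}$, a morphism $g:\mathcal{Z}\to\mathcal{E}$, which exhibits the original morphism as an object of ${\bf Homtop}_{\mathcal{E}}(\mathcal{Z},B_{f^*G}(\mathcal{E}))$. Applying Theorem \ref{Thm-classifying--torsors} to $g:\mathcal{Z}\to\mathcal{E}$ and the group $f^*G$ in $\mathcal{E}$ yields an equivalence of this category with ${\bf Tors}(\mathcal{Z},g^*f^*G)^{op}$. Letting $g$ vary, I obtain an equivalence of ${\bf Homtop}(\mathcal{Z},B_{f^*G}(\mathcal{E}))$ with the (fibred) category of pairs $(g,T)$, where $g:\mathcal{Z}\to\mathcal{E}$ and $T$ is a $g^*f^*G$-torsor in $\mathcal{Z}$, up to the evident duality.

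Next I would compute the right-hand side in the same way. Given a triple $(g,h,\theta)$, the isomorphism $\theta$ makes $h$ an $\mathcal{S}$-morphism from $\mathcal{Z}$, viewed over $\mathcal{S}$ via $f\circ g$, to $B_G(\mathcal{S})$; by Theorem \ref{Thm-classifying--torsors} applied to $f\circ g:\mathcal{Z}\to\mathcal{S}$ and the group $G$ in $\mathcal{S}$, such an $h$ corresponds to a $(f\circ g)^*G$-torsor in $\mathcal{Z}$, again up to the same duality. Using the canonical identification $(f\circ g)^*G\simeq g^*f^*G$, the category of triples is therefore equivalent to the category of pairs $(g,T)$ with $T$ a $g^*f^*G$-torsor in $\mathcal{Z}$, matching the description of the left-hand side. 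Since the opposite-category duality enters symmetrically on the two sides, it cancels in the comparison, and the two projections to ${\bf Homtop}(\mathcal{Z},\mathcal{E})$ both send a pair $(g,T)$ to $g$, so the equivalence respects the projection to $\mathcal{E}$.

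The main obstacle will be bookkeeping rather than conceptual. One must check that the equivalences of Theorem \ref{Thm-classifying--torsors}, stated there for a single fixed base morphism, are natural as the base morphism ($g$, respectively $f\circ g$) and the test topos $\mathcal{Z}$ vary, so that they assemble into an equivalence of fibred categories; this is where the explicit formulas for $\Psi$ and $\Phi$ and the functoriality of $T\mapsto a_T$ in the base are needed. One must also confirm that the resulting equivalence is precisely the one induced by the canonical comparison coming from the square, rather than some unrelated equivalence; concretely this reduces to verifying that the tautological torsor $E_{f^*G}$ on $B_{f^*G}(\mathcal{E})$ is carried by the top horizontal morphism to the pullback of the tautological torsor $E_G$ on $B_G(\mathcal{S})$, which follows from the description of $\Psi$ as $(a,\alpha)\mapsto a^*E_G$. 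Granting these compatibilities, the 2-Yoneda lemma gives that the canonical comparison morphism is an equivalence, i.e. the square is a pull-back.
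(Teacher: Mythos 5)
Your proposal is correct and follows essentially the same route as the paper: the paper also tests the square against an arbitrary topos $\mathcal{E}'$ and uses Theorem \ref{Thm-classifying--torsors} twice --- once to turn the pair $(a,\sigma)$ into a $b^*(f^*G)$--torsor and once to turn that torsor back into a morphism to $B_{f^*G}(\mathcal{E})$ --- exactly as you identify both sides with pairs $(g,T)$. The only difference is presentational: the paper constructs the quasi-inverse directly and leaves the naturality and comparison-with-the-canonical-functor checks implicit, whereas you flag them explicitly; this is a welcome refinement, not a divergence.
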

\begin{proof}
 The square of the proposition is clearly commutative. Hence for any topos $\mathcal{E}'$
there is a canonical functor 
\begin{equation}\label{functhere}
{\bf Homtop}(\mathcal{E}',B_{f^*G}(\mathcal{E}))\longrightarrow
{\bf Homtop}(\mathcal{E}',B_{G}(\mathcal{S}))\times_{{\bf Homtop}(\mathcal{E}',\mathcal{S})}
{\bf Homtop}(\mathcal{E}',\mathcal{E}).
\end{equation}
This functor has a quasi-inverse which is defined as follows. Let $a:\mathcal{E}'\rightarrow B_G(\mathcal{S})$ and $b:\mathcal{E}'\rightarrow\mathcal{E}$ be morphisms,  and let $\sigma:\pi\circ a\simeq f\circ b$ be an isomorphism. It follows from Theorem \ref{Thm-classifying--torsors}  that the $\mathcal{S}$--morphism
 $$(a,\sigma):\mathcal{E}'\rightarrow B_G(\mathcal{S})$$ provides  a $b^*(f^*G)$--torsor in $\mathcal{E}'$,
 which in turn gives a morphism
$$a\times b: \mathcal{E}'\rightarrow B_{f^*G}(\mathcal{E})\ .$$
It is straightforward to check that this yields a quasi-inverse to (\ref{functhere}). Hence the functor (\ref{functhere}) is an equivalence for any $\mathcal{E}'$. This then proves the result.
\end{proof}

\subsection{Torsors under group scheme actions}

\begin{cor}
Let $G$ be a group scheme over $S$ and let $Y$ be an $S$-scheme. There are  canonical equivalences
\begin{eqnarray*}
{\bf Tors}(Y_{fl},G_Y)^{op}&\simeq&{\bf Homtop}_{{Y}_{fl}}(Y_{fl},B_{G_Y})\\
&\simeq&{\bf Homtop}_{{S}_{fl}}(Y_{fl},B_G)\\
&\simeq&{\bf Homtop}_{{S}_{Et}}(Y_{fl},B^{et}_G).\\
\end{eqnarray*}
\end{cor}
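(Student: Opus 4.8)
The plan is to deduce the three equivalences as successive applications of Theorem \ref{Thm-classifying--torsors} and Corollary \ref{Diaconescu-Thm}, treating $Y_{fl}$ as a topos over each of the three base topoi in turn. For the first line, I take $\mathcal{S}=Y_{fl}$ itself, with $f=\mathrm{id}:Y_{fl}\rightarrow Y_{fl}$. Since $f^*=\mathrm{id}^*$ is the identity, $f^*G_Y=G_Y$, and Theorem \ref{Thm-classifying--torsors} gives directly
$$
{\bf Homtop}_{Y_{fl}}(Y_{fl},B_{G_Y})^{op}\stackrel{\sim}{\longrightarrow}{\bf Tors}(Y_{fl},G_Y),
$$
which is the first equivalence after taking opposite categories. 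Here $B_{G_Y}=B_{G_Y}(Y_{fl})$ is the classifying topos of the group object $G_Y$ in $Y_{fl}$, matching the earlier notation. This step is essentially bookkeeping.

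For the second line I need to compare $B_{G_Y}$, the classifying topos of $G_Y$ over the base $Y_{fl}$, with $B_G$, the classifying topos of $G$ over the base $S_{fl}$, seen as topoi receiving a map from $Y_{fl}$. The key input is Corollary \ref{Diaconescu-Thm}: applying it to the structure morphism $f:Y_{fl}\rightarrow S_{fl}$ and the group $G$ in $S_{fl}$, and using that $f^*G=G_Y$ (the pullback along $Y_{fl}\rightarrow S_{fl}$ of the sheaf represented by $G$ is the sheaf represented by $G_Y=G\times_S Y$, which is where I expect the main care is needed), we obtain a pullback square
\[\xymatrix{
B_{G_Y}(Y_{fl})\ar[d]\ar[r]& B_G(S_{fl})\ar[d]^{\pi} \\
Y_{fl}\ar[r]^f&S_{fl}.}\]
A pullback of topoi is characterized, as in the proof of Corollary \ref{Diaconescu-Thm}, by the equivalence of $\mathcal{S}$-morphism categories it induces. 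Concretely, the universal property of this pullback with test topos $Y_{fl}$ mapping to $Y_{fl}$ via the identity yields
$$
{\bf Homtop}_{Y_{fl}}(Y_{fl},B_{G_Y})\simeq{\bf Homtop}_{S_{fl}}(Y_{fl},B_G),
$$
since an $S_{fl}$-morphism $Y_{fl}\rightarrow B_G$ together with the identity $Y_{fl}\rightarrow Y_{fl}$ lifts uniquely, up to canonical isomorphism, to a $Y_{fl}$-morphism $Y_{fl}\rightarrow B_{G_Y}$. This gives the second equivalence.

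The third line is the analogous comparison between the fppf and \'etale classifying topoi. I would again invoke a pullback square, now the one displayed just after the definition of $B_G$ in the excerpt,
\[\xymatrix{
B_{G}\ar[d]\ar[r]& B_G^{et}\ar[d] \\
S_{fl}\ar[r]^i&S_{Et},}\]
where $i:S_{fl}\rightarrow S_{Et}$ is the canonical embedding. Composing the structure map $Y_{fl}\rightarrow S_{fl}$ with $i$ realizes $Y_{fl}$ as a topos over $S_{Et}$, and the universal property of this pullback identifies $S_{fl}$-morphisms into $B_G$ with $S_{Et}$-morphisms into $B_G^{et}$ whose underlying map to $S_{Et}$ factors through $i$; since the given map $Y_{fl}\rightarrow S_{Et}$ does so by construction, we get
$$
{\bf Homtop}_{S_{fl}}(Y_{fl},B_G)\simeq{\bf Homtop}_{S_{Et}}(Y_{fl},B^{et}_G).
$$
Chaining the three displayed equivalences proves the corollary. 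The main obstacle I anticipate is not in the categorical formalism, which is a routine transport through two pullback squares, but in verifying the compatibility of base-change identifications, namely that $f^*G\simeq G_Y$ as group objects and that the two pullback squares do compose correctly so that the composite $Y_{fl}\rightarrow S_{Et}$ is the one used throughout; once these identifications are pinned down, each equivalence is forced by the respective universal property.
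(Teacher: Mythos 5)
Your proposal is correct and follows essentially the same route as the paper: the first equivalence is Theorem \ref{Thm-classifying--torsors} applied over the base $Y_{fl}$, the second rests on the identification $f^*(yG)=y(G_Y)$ for $f:Y_{fl}\rightarrow S_{fl}$, and the third comes from the pull--back $B_G\simeq S_{fl}\times_{S_{Et}}B_G^{et}$ of Corollary \ref{Diaconescu-Thm}. The only cosmetic difference is in the middle step, where the paper simply applies the theorem a second time with base $S_{fl}$ (so both sides are identified with ${\bf Tors}(Y_{fl},G_Y)^{op}$), whereas you route through the universal property of the pull--back square --- which is itself a consequence of the same theorem.
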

\begin{proof}
The first equivalence follows directly from the previous theorem, and so does the second equivalence,  since the inverse image of $y(G)$ along the morphism
$Y_{fl}\rightarrow S_{fl}$ is the sheaf on $Y$ represented by $G_Y=G\times_SY$.
The third equivalence follows from the canonical equivalence
$$B_G:=B_G(S_{fl})\simeq S_{fl}\times_{S_{Et}}B_G(S_{Et})$$
given by Corollary \ref{Diaconescu-Thm}.
\end{proof}

The key case of interest is provided by an $S$-group-scheme $G$ which is flat and locally of finite presentation over $S$.
For an $S$-scheme $Y$, denote by ${\bf Tors}(Y,G_Y)$ the category of $G_Y$--torsors of the scheme $Y$;
i.e. the category of maps $T\rightarrow Y$ which are faithfully flat and locally of finite presentation, supporting  a right action $T\times_YG_Y\rightarrow T$
such that the morphism $T\times_YG_Y\rightarrow T\times T$ is an isomorphism of $T$-schemes. The Yoneda embedding yields a fully faithful functor
$$y:{\bf Tors}(Y,G_Y)\rightarrow {\bf Tors}(Y_{fl},G_Y).$$
This functor is not an equivalence (i.e. it is not essentially surjective) in general. However,  it is an equivalence in certain special cases, see  \cite{Milne}[III, Theorem 4.3].   In particular,  this is the case  when $G$ is affine over $S$.
\begin{cor}
Let $Y$ be an $S$-scheme. Let $G$ be a flat   group scheme over $S$ which is locally of finite type. Assume that $G$ is affine over $S$.  Then
we have an  equivalence of categories
\begin{eqnarray*}
{\bf Tors}(Y,G_Y)^{op}&\simeq&{\bf Homtop}_{{S}_{fl}}(Y_{fl},B_G).\\
\end{eqnarray*}
\end{cor}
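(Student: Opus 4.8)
The plan is to reduce to the penultimate corollary by means of the Yoneda embedding. That corollary already furnishes a canonical equivalence ${\bf Tors}(Y_{fl},G_Y)^{op}\simeq{\bf Homtop}_{S_{fl}}(Y_{fl},B_G)$, so it will suffice to show that the fully faithful Yoneda functor $y:{\bf Tors}(Y,G_Y)\rightarrow{\bf Tors}(Y_{fl},G_Y)$ is an equivalence under the hypothesis that $G$ is affine over $S$; composing the two equivalences (passing to opposite categories on the left) then yields the claim, and the composite is canonical since both factors are. Full faithfulness of $y$ is immediate from the Yoneda lemma, so the one substantive point is essential surjectivity: that every $G_Y$-torsor $T$ in the topos $Y_{fl}$ is representable by a scheme over $Y$.

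To prove essential surjectivity I would invoke the local triviality recalled in Section 2.2: there is an fppf covering $\{U_i\rightarrow Y\}$ together with $(G\times_SU_i)$-equivariant isomorphisms $T\times_YU_i\simeq G\times_SU_i$ over $U_i$. Since $G$ is affine over $S$, each trivial piece $G\times_SU_i$ is affine over $U_i$, so after the fppf base change $\coprod_iU_i\rightarrow Y$ the sheaf $T$ becomes an affine scheme over the base, and its torsor structure supplies the descent datum relative to this covering.

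The main obstacle is then to descend this locally affine object to a genuine $Y$-scheme. This is precisely the effectivity of fppf descent for affine morphisms, which is the content of the cited \cite{Milne}[III, Theorem 4.3]: affine morphisms form a class that is local on the base and for which descent is effective for the fppf topology, so the descent datum glues to a scheme $\widetilde{T}$ affine over $Y$ representing $T$. It remains to verify that $\widetilde{T}\rightarrow Y$ is faithfully flat and locally of finite presentation and that the action and torsor-isomorphism conditions hold as conditions on schemes; each of these properties is fppf-local on the base, so it descends from the trivial model $G\times_SU_i\rightarrow U_i$, which satisfies them because $G$ is flat and locally of finite type over $S$. This exhibits $\widetilde{T}$ as a scheme-theoretic $G_Y$-torsor with $y(\widetilde{T})\simeq T$, completing the proof of essential surjectivity and hence of the corollary.
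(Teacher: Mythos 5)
Your proof is correct and follows exactly the route the paper intends: the corollary is obtained by combining the preceding equivalence ${\bf Tors}(Y_{fl},G_Y)^{op}\simeq{\bf Homtop}_{S_{fl}}(Y_{fl},B_G)$ with the fact that the Yoneda functor ${\bf Tors}(Y,G_Y)\rightarrow{\bf Tors}(Y_{fl},G_Y)$ is an equivalence when $G$ is affine, which the paper simply cites from \cite{Milne}[III, Theorem 4.3]. You have in addition spelled out the underlying descent argument (effectivity of fppf descent for affine morphisms, plus fppf-locality of the flatness, presentation and torsor conditions), which the paper leaves to the reference.
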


\begin{notation}
Let $G$ be a flat   affine  group scheme over $S$ which is of finite type and let $Y$ be an $S$-scheme. We have  canonical equivalences
\begin{eqnarray*}
{\bf Tors}(Y,G_Y)^{op}&\simeq&{\bf Tors}(Y_{fl},G_Y)^{op}\\
&\simeq&{\bf Homtop}_{{S}_{fl}}(Y_{fl},B_G(S_{fl}))\\
&\simeq&{\bf Homtop}_{{S}_{fl}}(Y_{fl},B_{G_Y}(Y_{fl}))
\end{eqnarray*}
If a $Y$-scheme $T$ is a $G_Y$--torsor over $Y$, then  we again denote by $T$ the object of ${\bf Tors}(Y_{fl},G_Y)^{op}$, and also denote by $T$ 
$$T:Y_{fl}\rightarrow B_G(S_{fl})$$
the corresponding object of ${\bf Homtop}_{{S}_{fl}}(Y_{fl},B_G(S_{fl}))$; and similarly we denote by $T$ 
$$T:Y_{fl}\rightarrow B_{G_Y}(Y_{fl})$$
the corresponding object of  ${\bf Homtop}_{{Y}_{fl}}(Y_{fl},B_{G_Y}(Y_{fl}))$.
\end{notation}

\subsection{The big topos $B_G/X$ of $G$--equivariant sheaves}

Let $X$ be an $S$-scheme endowed with a left action (over $S$) of $G$. Then $yX$ is a sheaf on $({\bf Sch}/S)_{fppf}$ with a left action of $yG$ (since $y$ commutes with finite projective limits). The resulting object of $B_G$ will be denoted by $y(G,X)$, or just by $X$ if there is no risk of ambiguity. The slice category $B_G/X$ is a topos, which we refer to as  the topos of $G$--equivariant sheaves on $X$. This terminology is justified by the following observation: an object of $B_G/X$ is given by an object $\mathcal{F}\rightarrow X$ of $S_{fl}/X\simeq X_{fl}$ (i.e. a sheaf on the fppf-site of $X$), endowed with an action of $yG$ such that the structure map $\mathcal{F}\rightarrow X$ is $G$--equivariant. We have a (localization) morphism
$$f:B_G/X\rightarrow B_G$$
whose inverse image maps an object $\mathcal{F}$ of $B_G$ to the ($G$--equivariant) projection $\mathcal{F}\times X\rightarrow X$, where $yG$ acts diagonally on $\mathcal{F}\times X$.

Let $Y$ be an $S$-scheme with trivial $G$--action, and consider the topos $B_G/Y$. We denote by $G_Y:=G\times_SY$ the base change of the $S$-group scheme $G$ to $Y$ and we consider the classifying topos $B_{G_Y}$ of the $Y$-group scheme ${G_Y}$. Recall that $B_{G_Y}$ is the category of $y(G_Y)$--equivariant sheaves on $({\bf Sch}/Y)_{fl}$. The following result shows that the classifying topos $B_G$ behaves well with respect to base change.
\begin{prop}\label{prop-BGY}
If $G$ acts trivially on an $S$-scheme $Y$, then there is a canonical equivalence
$$B_{G_Y}\simeq B_G/Y.$$
\end{prop}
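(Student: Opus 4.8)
The plan is to deduce this equivalence directly from the base-change result already established in Corollary~\ref{Diaconescu-Thm}, rather than constructing an equivalence of categories by hand. First I would observe that since $G$ acts trivially on $Y$, the object $y(G,Y)$ of $B_G$ is precisely $\pi^*(yY)$, where $\pi:B_G\rightarrow S_{fl}$ is the canonical map. This is the crucial identification: the localization $B_G/Y$ sits over $B_G$ via a morphism $f$, and the fact that $Y$ carries the trivial $G$-action means this localization is the pullback of the localization $S_{fl}/yY$ along $\pi$.

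The key steps, in order, are as follows. I would first recall the standard fact (from \cite{SGA4}) that localization commutes with pullbacks of topoi: for a morphism of topoi $\pi:B_G\rightarrow S_{fl}$ and an object $\mathcal{L}$ of $S_{fl}$, there is a canonical equivalence $B_G/\pi^*\mathcal{L}\simeq B_G\times_{S_{fl}}(S_{fl}/\mathcal{L})$. Applying this with $\mathcal{L}=yY$ and using $\pi^*(yY)=y(G,Y)$ gives
\begin{equation*}
B_G/Y\simeq B_G\times_{S_{fl}}(S_{fl}/yY).
\end{equation*}
Next I would use the canonical equivalence $S_{fl}/yY\simeq Y_{fl}$ recorded in the excerpt, together with the observation that under this equivalence the structure morphism $S_{fl}/yY\rightarrow S_{fl}$ becomes the canonical morphism $Y_{fl}\rightarrow S_{fl}$. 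Finally I would invoke Corollary~\ref{Diaconescu-Thm} with $\mathcal{E}=Y_{fl}$, $\mathcal{S}=S_{fl}$ and $f:Y_{fl}\rightarrow S_{fl}$: this identifies the pullback $B_G\times_{S_{fl}}Y_{fl}$ with $B_{f^*G}(Y_{fl})$. Since $f^*(yG)$ is the sheaf on $Y$ represented by $G_Y=G\times_S Y$ (exactly the identification used in the proof of the Corollary two statements earlier), we have $B_{f^*G}(Y_{fl})=B_{G_Y}$, and chaining the equivalences yields $B_{G_Y}\simeq B_G/Y$.

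I expect the main obstacle to be the careful bookkeeping of the \emph{compatibility} of the various structure morphisms, rather than any deep difficulty. Specifically, the delicate point is verifying that the equivalence $S_{fl}/yY\simeq Y_{fl}$ intertwines the localization morphism $S_{fl}/yY\rightarrow S_{fl}$ with the morphism $Y_{fl}\rightarrow S_{fl}$ induced by the structure map $Y\rightarrow S$, so that the two-out-of-three fibre-product diagrams genuinely agree and Corollary~\ref{Diaconescu-Thm} applies to the correct base morphism. One must also confirm that the pullback appearing in the localization-commutes-with-pullback statement is the same $2$-categorical fibre product of topoi used in Corollary~\ref{Diaconescu-Thm}; this is standard but should be stated explicitly. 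Once these identifications are checked, the equivalence follows formally by composing the three displayed equivalences, and I would note that the resulting equivalence is canonical because each constituent equivalence is.
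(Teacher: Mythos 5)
Your proposal is correct and follows essentially the same route as the paper's own proof: both identify $\pi^*(yY)$ with $Y$ carrying the trivial action, invoke the localization pull--back square of \cite{SGA4}[IV, 5.10] to get $B_G/Y\simeq B_G\times_{S_{fl}}(S_{fl}/yY)$, and then apply Corollary \ref{Diaconescu-Thm} together with $g^*(yG)=y(G_Y)$ in $S_{fl}/yY\simeq Y_{fl}$ to conclude. The compatibility checks you flag are exactly the (routine) points the paper leaves implicit.
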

\begin{proof} Let $\pi:B_G\rightarrow S_{fl}$  denote  the canonical map. On the one hand by  \cite{SGA4}[IV, Section 5.10], the square
\[\xymatrix{
B_G/\pi^*(yY)\ar[r]\ar[d]&S_{fl}/yY\ar[d] \\
B_G\ar[r]^\pi&S_{fl}}\]
is a pull--back. Note that $\pi^*(yY)$ is given by the trivial action of $G$ on $Y$ so that $B_G/\pi^*(yY)=B_G/Y$. On the other hand, the square
\[\xymatrix{
B_{g^*(yG)}(S_{fl}/yY)\ar[r]\ar[d]&B_G \ar[d] \\
S_{fl}/yY\ar[r]^g&S_{fl}}\]
is also a  pull--back by Corollary \ref{Diaconescu-Thm}. Hence we have  canonical equivalences
$$B_G/Y\simeq B_G\times_{S_{fl}}S_{fl}/yY\simeq B_{S_{fl}/yY}(g^*(yG)).$$
Here the first equivalence (respectively the second) is induced by the first (respectively the second) pull--back square above.
Finally, we have  $g^*(yG)=y(G\times_SY)=y(G_Y)$ in the topos $S_{fl}/yY\simeq Y_{fl}$; hence we obtain
$$B_{S_{fl}/yY}(g^*(yG))\simeq B_{Y_{fl}}(yG_Y)=B_{G_Y}.$$
\end{proof}

\section{Cohomology of group schemes}

The cohomology of a Lie group can be defined as the cohomology of its classifying space. Analogously, Grothendieck and
Giraud defined the cohomology of a group object $G$ in a topos as the cohomology of its classifying topos $B_G$.

\begin{definition} Let $G$ be an $S$-group scheme and let $\mathcal{A}$ be an abelian object of
 $B_G=B_{yG}(S_{fl})$. The cohomology of the $S$-group scheme $G$ with coefficients in  $\mathcal{A}$ is defined as
$$H^i(G,\mathcal{A}):=H^i(B_G,\mathcal{A}).$$
\end{definition}
Note that any commutative group scheme $\mathcal{A}$ over $S$,  endowed with an action of $G$,  gives rise to an abelian object in $B_G$.
Note also that, in the case where the $S$-group scheme $G$ is trivial (i.e. $G=S$) the cohomology of $G$ is reduced
to the flat cohomology of $S$.

In this section we show that the cohomology of the classifying topos $B_G$ of a group-scheme with coefficients in a smooth commutative group scheme coincides with the \'etale cohomology of the simplicial classifying scheme ${\bf B}G$. This fact holds in the more general situation given by the action of $G$ on a scheme $X$ over $S$.

\begin{definition}  Let $X$ be an $S$-scheme endowed with a left $G$--action. We define the
equivariant cohomology of the pair $(G,X)$ with coeficients in an abelian object $\mathcal{A}$ of $B_G/X$ by:
$$H^i(G,X,\mathcal{A}):=H^i(B_G/X,\mathcal{A}) . $$

\end{definition}

Note that, if $X=S$ is trivial, the equivariant cohomology of the pair $(G,X)$ is just the cohomology of the $S$-group
scheme $G$ as defined before. If the group scheme $G$ is trivial, then the equivariant cohomology of the pair $(G,X)$ is the flat cohomology of the scheme $X$.

\subsection{Etale cohomology of simplicial schemes}
After recalling the notions of simplicial schemes and simplicial topoi we observe that the big and the small \'etale sites of a simplicial scheme have the same cohomology. References for this section are \cite{F}[I,II] and \cite{I2}[VI, Section 5].

The category  $\Delta$  of standard simplices  is the category whose objects are the finite ordered sets $[0, n]=\{0<1<...<n\}$ and whose morphisms  are non-decreasing functions. Any morphism $[0, n]\rightarrow [0, m]$, other than identity, can be written as a composite of degeneracy maps $s^{i}$ and face maps $d^{i}$. Here recall that $s^{i}:[0, n+1]\rightarrow [0, n]$ is the is the unique surjective map with two elements mapping to $i$ and that $d^{i}: [0, n-1]\rightarrow [0, n]$ is the unique injective map avoiding $i$. A simplicial scheme is  a functor $X_\bullet:\Delta^{op}\rightarrow{\bf Sch}$.
As usual we write $X_n:=X_\bullet([0,n])$, $d_i=X_\bullet(d^i)$ for
the face map and $s_i=X_\bullet(s^i)$ for the degeneracy map.  From the functor $X_{\bullet}$ we deduce a simplicial topos
$$\fonc{X_{\bullet,\,et}}{\Delta^{op}}{\bf Top}{[0,n]}{X_{n,\,et}}$$
where $X_{\bullet,\,et}([0,n])=X_{n,\,et}$ is the small \'etale topos of the scheme $X_n$, i.e. the category of sheaves on the category of \'etale $X_n$-schemes endowed with the \'etale topology. Strictly speaking, $X_{\bullet,\,et}$ is a pseudo--functor from $\Delta^{op}$ to the 2-category of topoi (see Section \ref{subsect-2catoftopos}).

Finally  we consider the
total topos $\textsc{Top}(X_{\bullet,\,et})$ associated to this simplicial topos (see \cite{I2} VI. 5.2). Recall that an object
of $\textsc{Top}(X_{\bullet,\,et})$ consists of  the data of objects $F_n$ of $X_{n,\,et}$ together with maps
 $\alpha^*F_m\rightarrow F_n$ in $X_{n,\,et}$ for each $\alpha:[0,m]\rightarrow[0,n]$ in $\Delta$ satisfying
 the natural transitivity condition for a composite map in $\Delta$. The arrows in $\textsc{Top}(X_{\bullet,\,et})$
 are defined in the obvious way. We observe that this category is equivalent to the category of sheaves on the etale site $Et(X_{\bullet})$
 as defined in \cite{F}[I, Def. 1.4].

In a similar way we define the big \'etale simplicial topos associated to $X_{\bullet}$ as follows:
$$
\fonc{X_{\bullet,\,Et}}{\Delta^{op}}{{\bf Top}}{[0,n]}{X_{n,\,Et}}$$
 here $X_{n, Et}$ is the big \'etale topos of the scheme $X_{n}$, i.e. the category of sheaves on the category ${\bf Sch}/X_n$ endowed with the \'etale topology. Then we denote by $\textsc{Top}(X_{\bullet, Et})$ the total topos associated to $X_{\bullet,\, Et}$.

\begin{lem}\label{lem-1}
For any simplicial scheme $X_{\bullet}$, there is a canonical morphism of topoi
$$\iota:\textsc{Top}(X_{\bullet,\,Et})\rightarrow \textsc{Top}(X_{\bullet,\,et}) $$
such that the map
$$H^i(\textsc{Top}(X_{\bullet,\,et}),\mathcal{A})\rightarrow H^i(\textsc{Top}(X_{\bullet,\,Et}),\iota^*\mathcal{A})$$
is an isomorphism for any $i\geq0$ and for any abelian sheaf $\mathcal{A}$ of $\textsc{Top}(X_{\bullet,\,et})$.
\end{lem}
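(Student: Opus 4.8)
The plan is to reduce the statement to the classical comparison between the big and the small \'etale topos of a \emph{single} scheme, and then to propagate that comparison through the simplicial degrees by means of the spectral sequence computing the cohomology of a total topos.

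First I would construct $\iota$ degreewise. For each $n$ the inclusion of the small \'etale site $Et(X_n)$ into the big \'etale site $({\bf Sch}/X_n)_{et}$ is a continuous functor and hence induces a morphism of topoi
$$\iota_n:X_{n,\,Et}\rightarrow X_{n,\,et},$$
whose direct image $\iota_{n,*}$ is restriction of a big sheaf to the small site and whose inverse image $\iota_n^*$ is the associated ``extension'' functor. The classical comparison theorem for big and small \'etale cohomology (\cite{SGA4}, see also \cite{Milne}) asserts that for every abelian sheaf $\mathcal{A}_n$ on $X_{n,\,et}$ the canonical map
$$H^i(X_{n,\,et},\mathcal{A}_n)\rightarrow H^i(X_{n,\,Et},\iota_n^*\mathcal{A}_n)$$
is an isomorphism for all $i\geq0$; equivalently $\iota_{n,*}\iota_n^*=\mathrm{id}$ and $R^q\iota_{n,*}\iota_n^*=0$ for $q>0$.

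Next I would assemble the $\iota_n$ into a morphism of simplicial topoi. For any $\alpha:[0,m]\rightarrow[0,n]$ in $\Delta$ the comparison morphisms are compatible with pullback along the induced scheme map $X_\bullet(\alpha):X_n\rightarrow X_m$, so the squares relating $\iota_n$, $\iota_m$ and the transition functors commute up to coherent isomorphism. This produces a morphism of simplicial topoi $X_{\bullet,\,Et}\rightarrow X_{\bullet,\,et}$ and therefore the desired morphism of total topoi $\iota:\textsc{Top}(X_{\bullet,\,Et})\rightarrow\textsc{Top}(X_{\bullet,\,et})$, whose inverse image is computed degreewise by $(\iota^*\mathcal{A})_n=\iota_n^*\mathcal{A}_n$. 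Finally I would compare cohomology via the spectral sequence of the total topos (\cite{F}[I,II], \cite{I2}[VI, Section 5]). For any abelian object $\mathcal{A}=(\mathcal{A}_n)_n$ of $\textsc{Top}(X_{\bullet,\,et})$ there is a spectral sequence
$$E_1^{p,q}=H^q(X_{p,\,et},\mathcal{A}_p)\Longrightarrow H^{p+q}(\textsc{Top}(X_{\bullet,\,et}),\mathcal{A}),$$
and an analogous one for the big simplicial topos with $E_1$-term $H^q(X_{p,\,Et},\iota_p^*\mathcal{A}_p)$. The morphism $\iota$ induces a morphism between these two spectral sequences which on $E_1$-pages is given degreewise by the single-scheme comparison maps of the previous step. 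Since those maps are isomorphisms, the induced morphism is an isomorphism on $E_1$, hence on every later page and on the abutments, which yields the claimed isomorphism. (Alternatively one may avoid the spectral sequence: since the direct image along a degreewise morphism of total topoi is computed degreewise, the degreewise vanishing $R^q\iota_{n,*}\iota_n^*=0$ gives $R^q\iota_*\iota^*\mathcal{A}=0$ for $q>0$ and $\iota_*\iota^*\mathcal{A}=\mathcal{A}$, and the Leray spectral sequence for $\iota$ then collapses to the stated isomorphism.)

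The main obstacle will be the second step: one must check that the big/small comparisons can be chosen compatibly with the full simplicial (pseudo-functorial) structure so that $\iota$ is a genuine morphism of total topoi, and that the induced map on the $E_1$-pages is precisely the degreewise comparison map rather than merely some map between isomorphic groups. Once this coherence is in place, the cohomological comparison is a formal consequence of the single-scheme case.
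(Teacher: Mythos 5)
Your proposal is correct and follows essentially the same route as the paper: the morphism $\iota$ is obtained from the pseudo-functoriality of the big-to-small \'etale comparison, and the cohomological statement is deduced by comparing the two spectral sequences of the total topoi, whose $E_1$-terms are matched by the classical single-scheme isomorphism $H^j(X_{i,et},\mathcal{A}_i)\simeq H^j(X_{i,Et},\iota_i^*\mathcal{A}_i)$. The coherence issue you flag is exactly what the paper disposes of by observing that $Y_{Et}\rightarrow Y_{et}$ is pseudo-functorial in $Y$ via its description as a morphism of sites.
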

\begin{proof}
The canonical morphism $Y_{Et}\rightarrow Y_{et}$, from the big \'etale topos of a scheme $Y$ to its small \'etale topos, is pseudo--functorial in $Y$;  this follows immediately from the description of this morphism in terms of morphism of sites. Hence we have a morphism of simplicial topoi
$$\iota_{\bullet}:X_{\bullet,\,Et}\rightarrow X_{\bullet,\,et}$$
inducing a morphism between total topoi:
$$\iota:\textsc{Top}(X_{\bullet,\,Et})\rightarrow \textsc{Top}(X_{\bullet,\,et}).$$
Note that we have a commutative diagram of topoi
\[\xymatrix{
X_{n,Et}\ar[r]^{\iota_n}\ar[d]_{f_n}&X_{n,et}\ar[d]_{g_n} \\
\textsc{Top}(X_{\bullet,\,Et})\ar[r]^{\iota}&\textsc{Top}(X_{\bullet,\,et}) \\
}\]
for any object $[0,n]$ of $\Delta$. Here the inverse image $g_n^*$ (resp.  $f_n^*$) of the vertical morphism
$g_n:X_{n,et}\rightarrow X_{\bullet,\,et}$ (resp.
$f_n:X_{n,Et}\rightarrow X_{\bullet,\,Et}$) maps an object $F=(F_n;\,\alpha^*F_m\rightarrow F_n)$
of the total topos $\textsc{Top}(X_{\bullet,\,et})$ (resp. of $\textsc{Top}(X_{\bullet,\,Et})$)
to $F_n\in X_{n,\,et}$ (resp. to $F_n\in X_{n,\,Et}$). Recall that the functors  $g_n^*$ and  $f_n^*$
preserve injective objects. This leads to spectral sequences (see \cite{SGA4} VI Exercice 7.4.15)
\begin{equation}\label{spectral}
E_1^{i,j}=H^j(X_{i,et},\mathcal{A}_i)\Rightarrow H^{i+j}(\textsc{Top}(X_{\bullet,\,et}),\mathcal{A})
\end{equation}
and
\begin{equation}\label{sequence}
'E_1^{i,j}=H^j(X_{i,Et},(\iota^*\mathcal{A})_i)\Rightarrow H^{i+j}(\textsc{Top}(X_{\bullet,\,Et}),\iota^*\mathcal{A})
\end{equation}
for any abelian object $\mathcal{A}$ of $\textsc{Top}(X_{\bullet,\,et})$. The morphism $\iota_{\bullet}$ of simplicial topoi
induces a morphism of spectral sequences from (\ref{spectral}) to (\ref{sequence}). This morphism of spectral sequences
is an isomorphism since the natural map
\begin{equation}\label{an-iso}
H^j(X_{i,et},\mathcal{A}_i)\rightarrow H^j(X_{i,Et},(\iota^*\mathcal{A})_i)=H^j(X_{i,Et},\iota_{i}^*(\mathcal{A}_i))
\end{equation}
is an isomorphism, where the equality on the right-hand side follows from the previous commutative square. Then the map (\ref{an-iso})
is the natural morphism from the cohomology of the small \'etale site of $X_i$ to the cohomology of its big \'etale site, which is
well known to be an isomorphism. Therefore, the induced morphism on abutments
$$H^i(\textsc{Top}(X_{\bullet,\,et}),{\mathcal{A}})\rightarrow H^i(\textsc{Top}(X_{\bullet,\,Et}),\iota^*{\mathcal{A}})$$
is an isomorphism.
\end{proof}
\subsection{Classifying topoi and classifying simplicial schemes}

Let $S$ be a scheme, let $G$ be an $S$-group scheme and let $X$ be an $S$-scheme which supports  a left $G$--action
$G\times_SX\rightarrow X$. We consider the classifying simplicial scheme ${\bf B}(G,X)$ as defined in (\cite{F} Example 1.2). Recall that
$${\bf B}(G,X)_n=G^{n}\times X$$
where  $G^{n}$ is the $n$-fold fiber product of $G$ with itself over $S$ and the product $G^{n}\times X$ is taken over $S$,
with structure maps given in the usual way by using  the multiplication in $G$, the action of $G$ on $X$ and the unit section $S\rightarrow G$.
We consider  the  big \'etale simplicial topos
$$
\fonc{{\bf B}(G,X)_{Et}}{\Delta^{op}}{{\bf Top}}{[0,n]}{(G^n\times X)_{\,Et}}$$
and the total topos $\textsc{Top}({\bf B}(G,X)_{Et})$ as defined
in the previous subsection.
\begin{lem}\label{lem-2}
There is a canonical morphism of topoi
$$\kappa :\textsc{Top}({\bf B}(G,X)_{Et})\rightarrow B^{et}_G/X.$$
\end{lem}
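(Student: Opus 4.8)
The plan is to exhibit $\kappa$ through its inverse image functor $\kappa^*:B^{et}_G/X\rightarrow\textsc{Top}({\bf B}(G,X)_{Et})$ and then to check that $\kappa^*$ is left exact and cocontinuous, so that it is indeed the inverse image of a morphism of topoi. Two morphisms serve as building blocks. First, exactly as in the proof of Theorem \ref{Thm-classifying--torsors}, there is a canonical morphism $p:S_{Et}\rightarrow B^{et}_G$ whose inverse image forgets the $G$--action; localizing over the object $X$ of $B^{et}_G$ and noting that $p^*X=yX$, the equivalence $S_{Et}/yX\simeq X_{Et}$ produces a morphism $p_X:X_{Et}\rightarrow B^{et}_G/X$ (the base change of $p$ along the localization $B^{et}_G/X\rightarrow B^{et}_G$) whose inverse image $p_X^*$ sends an equivariant sheaf $\mathcal{F}\rightarrow X$ to its underlying object $\mathcal{F}^{\flat}$ of $X_{Et}$. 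Second, each projection of schemes $\mathrm{pr}:G^n\times X\rightarrow X$ induces a morphism of big \'etale topoi $(G^n\times X)_{Et}\rightarrow X_{Et}$, and I set $\eta_n:=p_X\circ\mathrm{pr}:(G^n\times X)_{Et}\rightarrow B^{et}_G/X$.

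I would then define $\kappa^*$ level by level by $(\kappa^*\mathcal{F})_n:=\eta_n^*\mathcal{F}=\mathrm{pr}^*\mathcal{F}^{\flat}\in(G^n\times X)_{Et}$. The transition maps are where the equivariant structure of $\mathcal{F}$ enters. For a morphism $\alpha:[0,m]\rightarrow[0,n]$ in $\Delta$, with associated structure map ${\bf B}(\alpha):G^n\times X\rightarrow G^m\times X$, one must produce a map ${\bf B}(\alpha)^*(\kappa^*\mathcal{F})_m\rightarrow(\kappa^*\mathcal{F})_n$, that is, a comparison between the two pullbacks of $\mathcal{F}^{\flat}$ along the maps $\mathrm{pr}_m\circ{\bf B}(\alpha)$ and $\mathrm{pr}_n$ from $G^n\times X$ to $X$. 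By the explicit description of the bar construction (\cite{F}, Example 1.2), the first of these is $(g_1,\dots,g_n,x)\mapsto h_\alpha(g_1,\dots,g_n)\cdot x$ for a suitable word $h_\alpha$ in the coordinates, while the second is $(g_1,\dots,g_n,x)\mapsto x$. The $G$--equivariance isomorphism $\theta:a^*\mathcal{F}^{\flat}\simeq\mathrm{pr}^*\mathcal{F}^{\flat}$ over $G\times X$ (and its iterates) then supplies a canonical comparison between these two pullbacks, which is the required transition map; it reduces to the identity for those $\alpha$ that leave the $X$--coordinate unchanged.

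The step I expect to be the main obstacle is verifying that these transition maps satisfy the transitivity condition required of an object of $\textsc{Top}({\bf B}(G,X)_{Et})$: this is precisely the compatibility between the simplicial identities of the bar construction and the cocycle condition satisfied by $\theta$, and it is the bookkeeping heart of the construction. Granting it, functoriality of $\mathcal{F}\mapsto\kappa^*\mathcal{F}$ is routine, since a morphism of equivariant sheaves is compatible with the $\theta$'s. Finally, because finite limits and arbitrary colimits in the total topos are computed level-wise while each $\eta_n^*=\mathrm{pr}^*\circ p_X^*$ is left exact and cocontinuous, the functor $\kappa^*$ is itself left exact and cocontinuous; it therefore admits a right adjoint and is the inverse image of the sought morphism $\kappa$. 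As a conceptual check, $p_X:X_{Et}\rightarrow B^{et}_G/X$ is a torsor under $p_X^*G$, being the base change of the universal torsor $E_G$, so its \v{C}ech nerve has $n$--th term $(G^n\times X)_{Et}$ with exactly the bar-construction structure maps, and $\kappa$ is the canonical morphism from the total topos of this \v{C}ech nerve down to $B^{et}_G/X$.
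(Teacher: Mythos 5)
Your construction is correct and is essentially the paper's own proof: the paper also defines the inverse image level-wise as the pullback of the underlying sheaf along the projections $G^n\times X\rightarrow X$ (written there as $(d_0\cdots d_0)^*L$), with transition maps equal to the identity except where the action enters, and concludes by observing that finite projective limits and inductive limits are computed component-wise. The only cosmetic difference is that the paper factors the functor through the descent topos $\textsc{Desc}({\bf B}(G,X)_{Et})\simeq B^{et}_G/X$, which packages your equivariance isomorphism $\theta$ and its cocycle condition as a descent datum; your closing remark identifying $\kappa$ with the map from the total topos of the \v{C}ech nerve of the torsor $X_{Et}\rightarrow B^{et}_G/X$ is exactly the right conceptual picture.
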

\begin{proof} We let    $\textsc{Desc}({\bf B}(G,X)_{Et})$ be the descent topos. It  is defined as the category of
objects $L$ of $X_{Et}={\bf B}(G,X)_{0,Et}$ endowed with  descent data, i.e. an isomorphism
$a:d_1^*L\rightarrow d_0^*L$ such that
\begin{itemize}
\item $s_0^*(a)=Id_L$
\item $d_0^*(a)\circ d_2^*(a)=d_1^*(a)$ (neglecting the transitivity isomorphisms).
\end{itemize}
Then there is an equivalence of categories
\begin{equation}\label{descent=BG}
\textsc{Desc}({\bf B}(G,X)_{Et})\rightarrow B_G^{et}/X .
\end{equation}
Indeed, for any object $L$ of $S_{Et}/X\simeq X_{Et}$,  descent data on $L$
is equivalent to a left action of $G$ on $L$ such that the structure map $L\rightarrow X$ is $G$--equivariant (see \cite{I2}[VI, Section 8]).
We define the  functor
$$
\fonc{\textrm{Ner}}{\textsc{Desc}({\bf B}(G,X)_{Et})}{\textsc{Top}({\bf B}(G,X)_{Et})}{(L,a)}{\textrm{Ner}(L,a)}
$$
as follows. Let $(L,a)$ be an object of $\textsc{Desc}({\bf B}(G,X)_{Et})$. We consider $$\textrm{Ner}(L,a)_n=(d_0...d_0)^*L$$ in the topos $S_{Et}/(G^n\times X)\simeq (G^n\times X)_{Et}$. The map
$$d_i^*\textrm{Ner}_{n-1}(L,a)\rightarrow\textrm{Ner}_{n}(L,a)$$
is $Id$ for $i<n$ and $(d_0...d_0)^*(a)$ for $i=n$. Finally the map
$$s_i^*\textrm{Ner}_n(L,a)\rightarrow \textrm{Ner}_{n-1}(L,a)$$ is the identity for any $i$. The functor $\textrm{Ner}$ commutes with inductive limits and finite projective limits,  since the inverse image of a morphism of topoi commutes with such limits and since these limits are computed component-wise in both $\textsc{Desc}({\bf B}(G,X)_{Et})$ and $\textsc{Top}({\bf B}(G,X)_{Et})$. Hence $\textrm{Ner}$ is the inverse image of a morphism of topoi
$$\textsc{Top}({\bf B}(G,X)_{Et})\rightarrow\textsc{Desc}({\bf B}(G,X)_{Et}).$$
Composing this map with the equivalence (\ref{descent=BG}), we obtain the desired morphism
$$\kappa :\textsc{Top}({\bf B}(G,X)_{Et})\rightarrow \textsc{Desc}({\bf B}(G,X)_{Et})\simeq B^{et}_G/X.$$
\end{proof}

\begin{lem}\label{lem-3}
The canonical map
$$H^i(B^{et}_G/X,\mathcal{A})\rightarrow H^i(\textsc{Top}({\bf B}(G,X)_{Et}), \kappa ^*\mathcal{A})$$
is an isomorphism for any $i$ and any abelian sheaf $\mathcal{A}$ on $B^{et}_G/X$.
\end{lem}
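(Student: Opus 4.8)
The plan is to exhibit $\kappa$ as the augmentation morphism attached to a \v{C}ech nerve inside $B^{et}_G/X$, and then to invoke cohomological descent. First I would isolate the covering object. Since $E_G$ is the universal torsor in $B^{et}_G$, the canonical map $E_G\to e$ is an epimorphism; consider the object $W:=E_G\times X$ of $B^{et}_G/X$, equipped with the diagonal $G$--action and the ($G$--equivariant) structure map $\mathrm{pr}_X:E_G\times X\to X$. As epimorphisms are stable under base change, $W\to X$ is an epimorphism, so $W$ covers the final object of $B^{et}_G/X$. I would then form the \v{C}ech nerve $W_\bullet$ of this epimorphism, so that $W_n=E_G^{\,n+1}\times X=G^{n+1}\times X$ carries the diagonal action, and pass to the associated simplicial topos $(B^{et}_G/X)/W_\bullet$ together with its augmentation to $B^{et}_G/X$.

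The decisive computation is the identification of the slices. Because $E_G$ is a torsor, localising over a free object trivialises the group action (this is the local triviality of torsors recalled in Section 2.2): for any $S$--scheme $Z$ one has $B^{et}_G/(G\times Z)\simeq Z_{Et}$, where $G$ acts by left translation on the first factor alone. Applying the standard change of coordinates $(g_0,\dots,g_n,x)\mapsto(g_0;\,g_0^{-1}g_1,\dots,g_0^{-1}g_n,\,g_0^{-1}x)$, which turns the diagonal action on $G^{n+1}\times X$ into left translation on $G\times(G^n\times X)$, I obtain a canonical equivalence
\[
(B^{et}_G/X)/W_n\ \simeq\ (G^n\times X)_{Et}.
\]
This is exactly the passage from the homogeneous to the inhomogeneous bar construction, and one checks that under it the simplicial structure maps of $W_\bullet$ correspond to the face and degeneracy maps of ${\bf B}(G,X)$ described in Lemma \ref{lem-2}. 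Hence the simplicial topos $(B^{et}_G/X)/W_\bullet$ is canonically identified with ${\bf B}(G,X)_{Et}$, compatibly with the augmentations, so that the augmentation morphism $\textsc{Top}({\bf B}(G,X)_{Et})\to B^{et}_G/X$ agrees with the morphism $\kappa$ of Lemma \ref{lem-2}.

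It then remains to invoke cohomological descent. For the \v{C}ech nerve of an epimorphism $W\to e$ in any topos the augmentation is of cohomological descent: $\mathbf{Z}[W_\bullet]\to\mathbf{Z}$ is a resolution in the abelian objects of $B^{et}_G/X$ (which reduces, by the local triviality of $W$, to the contractibility of the \v{C}ech nerve of the trivial torsor), so the total--topos spectral sequence
\[
E_1^{i,j}=H^j\big((B^{et}_G/X)/W_i,\ \mathcal{A}|_{W_i}\big)\ \Longrightarrow\ H^{i+j}(B^{et}_G/X,\mathcal{A})
\]
converges to the cohomology of $B^{et}_G/X$. Comparing this with the total--topos spectral sequence of $\textsc{Top}({\bf B}(G,X)_{Et})$ (the analogue of (\ref{spectral})) via the map induced by $\kappa$, and using the slice identifications above, the comparison is an isomorphism on $E_1$--terms and hence on abutments, which yields the claim.

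I expect the main obstacle to be the bookkeeping in the second step: checking that the coordinate change is genuinely $G$--equivariant at every simplicial degree and that it carries the face and degeneracy operators of the \v{C}ech nerve $W_\bullet$ precisely onto those of the bar construction ${\bf B}(G,X)$, so that the identification is simplicial and really compatible with $\kappa$. The appeal to cohomological descent is a standard general fact, but it must be applied here in the big \'etale setting, with no reduction to a small site available (this being exactly why Lemma \ref{lem-1} was established separately).
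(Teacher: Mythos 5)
Your argument is essentially the paper's: both use the covering $E_G\times X\rightarrow X$ of the final object of $B^{et}_G/X$, identify the slices $(B^{et}_G/X)/(E_G^{n+1}\times X)\simeq S_{Et}/(G^{n}\times X)$, and compare the resulting spectral sequence (the paper invokes Cartan--Leray, which is exactly the cohomological-descent spectral sequence of your \v{C}ech nerve) with the total-topos spectral sequence of $\textsc{Top}({\bf B}(G,X)_{Et})$ via the commutative square involving $\kappa$, concluding by an isomorphism of $E_1$-terms. The only cosmetic difference is that you re-derive the convergence via descent rather than citing the Cartan--Leray spectral sequence directly.
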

\begin{proof}
We shall prove  this lemma as follows: we describe spectral sequences converging to $H^*(B^{et}_G/X,\mathcal{A})$ and $H^*(\textsc{Top}({\bf B}(G,X)_{Et}), \kappa ^*\mathcal{A})$ respectively; then we show that these spectral sequences are isomorphic at $E_1$. Let $e_G$ be the final object of $B^{et}_G$. Since the map $E_G\rightarrow e_G$ has a section,  it  is an epimorphism;
hence so is $E_G\times X\rightarrow X$,  since epimorphisms are universal in a topos. We obtain a  covering $\mathcal{U}=(E_G\times X\rightarrow X)$
of the final object in $B^{et}_G/X$. This covering  gives rise to the Cartan-Leray spectral sequence (see \cite{SGA4} [V Cor. 3.3])
$$\Check{H}^i(\mathcal{U},\underline{H}^j(\mathcal{A}))\Rightarrow H^{i+j}(B^{et}_G/X,\mathcal{A})$$
where $\underline{H}^j(\mathcal{A})$ denotes the presheaf on $B^{et}_G/X$
$$\underline{H}^j(\mathcal{A}): (\mathcal{F}\rightarrow X)\rightarrow H^{j}((B^{et}_G/X)/\mathcal{F}, \mathcal{F}\times_{X}\mathcal{A})$$
and $\check{H}^i(\mathcal{U},-)$ denotes Cech cohomology. By \cite{SGA4}[IV, 5.8.3] we have a canonical equivalence
$$(B^{et}_G/X)/(E_G\times X)\simeq S_{Et}/X.$$
Consider more generally the $n$-fold product of $(E_G\times X)$ with itself over the final object in $B^{et}_G/X$
$$(E_G\times X)^n=(E_G\times X)\times_X...\times_X(E_G\times X)=(E_G^n\times X).$$
Here $E_G^n$ is the object of $B^{et}_G$ represented by the scheme $G^n=G\times_S...\times_SG$ on which $G$ acts diagonally. Then we have an equivalence
$$(B^{et}_G/X)/(E_G^{n+1}\times X)=(B^{et}_G/X)/(E_G\times(E_G^{n}\times X))\simeq S_{Et}/(G^{n}\times X)$$
for any $n\geq0$. Therefore the term $E_1^{i,j}$ of the Cartan-Leray spectral sequence takes the following form:
\begin{eqnarray*}
E_1^{i,j}&=&H^{j}((B^{et}_G/X)/(E_G^{i+1}\times X),(E_G^{i+1}\times X)\times_X \mathcal{A})\\
&=& H^{j}((S_{Et}/(G^{i}\times X),G^{i}\times \mathcal{A})\\
&=& H^{j}((G^{i}\times X)_{Et},G^{i}\times \mathcal{A})
\end{eqnarray*}
for any abelian object $\mathcal{A}\rightarrow X$ of $B^{et}_G/X$.  We conclude that   the spectral sequence can be written as follows:
\begin{equation}\label{spectralsequence-deplus}
E_1^{i,j}= H^{j}((G^{i}\times X)_{Et},G^{i}\times \mathcal{A})\Rightarrow H^{i+j}(B^{et}_G/X, \mathcal{A}).
\end{equation}
We also have a spectral sequence (see (\ref{sequence}))
\begin{equation}\label{spectralsequence-3plus}
'E_1^{i,j}={H}^j({\bf B}(G,X)_{i,Et},(\kappa^*\mathcal{A})_i)\Rightarrow H^{i+j}(\textsc{Top}({\bf B}(G,X)_{Et}),\kappa^*\mathcal{A})
\end{equation}
where $\kappa:\textsc{Top}({\bf B}(G,X)_{Et})\rightarrow B^{et}_G/X$ is the map of Lemma \ref{lem-2}. For any $i\geq0$, the following square
\[\xymatrix{
{\bf B}(G,X)_{i,Et}\ar[r]\ar[d]&(B^{et}_G/X)/(E_G^{i+1}\times X)\ar[d] \\
\textsc{Top}({\bf B}(G,X)_{Et})\ar[r]^{\kappa}&B^{et}_G/X \\
}\]
is commutative, where the top horizontal map is the canonical equivalence
$${\bf B}(G,X)_{i,Et}=(G^i\times X)_{Et}\simeq  S_{Et}/(G^{i}\times X)\simeq(B^{et}_G/X)/(E_G^{i+1}\times X). $$
Note that this last equivalence is precisely the equivalence from which we have deduced the isomorphism
$E_1^{i,j}= H^{j}((G^{i}\times X)_{Et},G^{i}\times A)$. We obtain a morphism of spectral sequences from (\ref{spectralsequence-deplus}) to (\ref{spectralsequence-3plus}). This morphism of spectral sequences is an isomorphism since $(\kappa^*\mathcal{A})_i=G^{i}\times \mathcal{A}$, which in turn follows from the fact that the square above commutes. The result follows.
\end{proof}

We now consider the flat topos $S_{fl}$,  the big \'etale topos $S_{Et}$ and their   classifying topoi
$$B_G^{et}=B_{yG}(S_{Et})\mbox{ and } B_G^{fl}=B_{yG}(S_{fl}).$$ It follows from Corollary \ref{Diaconescu-Thm} 
that the canonical morphism
$i:S_{fl}\rightarrow S_{et}$ induces a morphism $B_G\rightarrow B_G^{et}$ such that the following square
\[\xymatrix{
B_G\ar[r]\ar[d]&B_G^{et}\ar[d] \\
S_{fl}\ar[r]^{i}&S_{Et}
}\]
is a pull--back. This morphism induces a morphism \cite{SGA4}[IV, Section 5.10]:
$$\gamma:B_{G}/X\rightarrow B_{G}^{et}/X.$$
The following is an equivariant refinement of the classical comparison theorem \cite{BrauerIII}[Theorem 11.7] between \'etale and flat cohomology.
\begin{lem}\label{compra-etale-flat}
Let $\mathcal{A}=yA$ be an abelian object of $B_G/X$ represented by a smooth $X$-scheme $A$. Then the canonical morphism
$$\gamma^*:H^i(B^{et}_G/X,yA)\rightarrow H^i(B_G/X, yA)$$
is an isomorphism for any $i\geq 0$.
\end{lem}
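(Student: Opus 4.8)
The plan is to reduce the equivariant comparison to the classical (non-equivariant) comparison theorem by computing both sides with the same Cartan--Leray spectral sequence and matching them term by term. First I would reproduce, now in the flat topos $S_{fl}$ rather than in $S_{Et}$, the construction carried out in the proof of Lemma \ref{lem-3}. The object $E_G$ of $B_G$ maps onto the final object by a split epimorphism, so $E_G\times X\to X$ is a covering $\mathcal{U}$ of the final object of $B_G/X$. Using \cite{SGA4}[IV, 5.8.3] together with the equivalence $S_{fl}/y(G^n\times X)\simeq (G^n\times X)_{fl}$, the $n$-fold fiber products localize exactly as before, namely $(B_G/X)/(E_G^{n+1}\times X)\simeq (G^n\times X)_{fl}$. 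This yields the flat Cartan--Leray spectral sequence
$$E_1^{i,j}= H^{j}((G^{i}\times X)_{fl},G^{i}\times \mathcal{A})\Rightarrow H^{i+j}(B_G/X, \mathcal{A}),$$
the verbatim flat analogue of the spectral sequence (\ref{spectralsequence-deplus}).

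Next I would compare this with the big \'etale spectral sequence (\ref{spectralsequence-deplus}). The morphism $\gamma:B_G/X\to B^{et}_G/X$ is induced by $i:S_{fl}\to S_{Et}$, and by Corollary \ref{Diaconescu-Thm} it is compatible with the localizations above: the covering $E_G\times X\to X$ on the flat side is the $\gamma$-pullback of the corresponding covering on the big \'etale side, and likewise for all the iterated fiber products. Hence $\gamma$ induces a morphism from the big \'etale spectral sequence to the flat one, which on $E_1$-pages is the family of comparison maps
$$H^{j}((G^{i}\times X)_{Et},y(G^i\times A))\longrightarrow H^{j}((G^{i}\times X)_{fl},y(G^i\times A)).$$

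It then suffices to show these $E_1$-maps are isomorphisms. The coefficient $G^i\times\mathcal{A}$ is represented by $G^i\times A=(G^i\times X)\times_X A$, which is smooth over $G^i\times X$ because $A\to X$ is smooth and smoothness is stable under base change. Thus each $E_1$-map is the comparison between the big \'etale and the big flat cohomology of a single scheme with coefficients in a smooth commutative group scheme; identifying big \'etale cohomology with small \'etale cohomology (well known, and already used in Lemma \ref{lem-1}) and big flat cohomology with ordinary fppf cohomology, this is precisely the classical comparison theorem \cite{BrauerIII}[Theorem 11.7], hence an isomorphism. An isomorphism on $E_1$ forces an isomorphism on abutments, so $\gamma^*$ is an isomorphism for all $i\geq0$. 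I expect the main obstacle to be bookkeeping rather than conceptual: one must check that the flat localizations $(B_G/X)/(E_G^{n+1}\times X)\simeq (G^n\times X)_{fl}$ hold with the same identification of coefficient sheaves as in the \'etale case, and that $\gamma$ genuinely respects the two coverings so as to produce a morphism---not merely a pair---of spectral sequences; Corollary \ref{Diaconescu-Thm} and the pull-back square relating $B_G$, $B^{et}_G$, $S_{fl}$, $S_{Et}$ are exactly what make this matching automatic.
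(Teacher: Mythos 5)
Your proposal is correct and follows essentially the same route as the paper: both arguments compare the Cartan--Leray spectral sequences for the covering $E_G\times X\to X$ in $B^{et}_G/X$ and in $B_G/X$, observe that $\gamma$ induces a morphism between them, and invoke the classical comparison theorem of \cite{BrauerIII}[Theorem 11.7] on the $E_1$-pages (where the coefficient $G^i\times A$ is smooth over $G^i\times X$ by base change) to conclude the isomorphism on abutments.
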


\begin{proof}
Consider the spectral sequence (a special case of (\ref{spectralsequence-deplus}))
\begin{equation}\label{spectral-encore}
H^{j}((G^{i}\times X)_{Et},G^{i}\times \mathcal{A})\Rightarrow H^{i+j}(B^{et}_G/X,\mathcal{A})
\end{equation}
associated to the covering $(E_G\times X\rightarrow X)$ in $B^{et}_G/X$. Applying the functor $\gamma^*$ to $\mathcal{U}=(E_G\times X\rightarrow X)$, we obtain the covering $\gamma^*\mathcal{U}=(E_G\times X\rightarrow X)$ in $B_G/X$, and we get a morphism of spectral sequences from (\ref{spectral-encore}) to
$$H^{j}((G^{i}\times X)_{fl},G^{i}\times \mathcal{A})\Rightarrow H^{i+j}(B_G/X, \mathcal{A}).$$
But the canonical maps
$$H^{j}((G^{i}\times X)_{Et},G^{i}\times \mathcal{A})\rightarrow H^{j}((G^{i}\times X)_{fl},G^{i}\times \mathcal{A})$$
are isomorphisms \cite{BrauerIII}[Theorem 11.7]. 
It therefore follows that the maps
$$H^i(B^{et}_G/X,\mathcal{A})\rightarrow H^i(B_G/X,\mathcal{A})$$
are also isomorphisms.
\end{proof}

\begin{thm}\label{thm-cohomologyBG}
Let $\mathcal{A}=yA$ be an abelian object of $B_G/X$ represented by a smooth $X$-scheme $A$. Then there is a canonical isomorphism
$$H^i(B_G/X,yA)\simeq H^i(Et({\bf B}(G,X)),yA)$$
for any $i\geq 0$, where $B_G$ denotes the classifying topos of $G$ and $Et({\bf B}(G,X))$ denotes the (small) \'etale site of the simplicial scheme  ${\bf B}(G,X)$ as defined in \cite{F}.
\end{thm}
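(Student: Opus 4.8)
The plan is to obtain the isomorphism by concatenating the three comparison results established immediately above, namely Lemma \ref{compra-etale-flat} (flat versus big \'etale cohomology for smooth coefficients), Lemma \ref{lem-3} (big \'etale cohomology of $B_G^{et}/X$ versus the big \'etale total topos of ${\bf B}(G,X)$), and Lemma \ref{lem-1} (big versus small \'etale cohomology of a simplicial scheme). Each of these has already been proved, so the only genuine work lies in verifying that the coefficient sheaves that appear match up correctly along the chain.

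First I would invoke Lemma \ref{compra-etale-flat}: since $A$ is a smooth $X$-scheme, the morphism $\gamma:B_G/X\rightarrow B_G^{et}/X$ induces an isomorphism $\gamma^*:H^i(B_G^{et}/X,yA)\stackrel{\sim}{\longrightarrow}H^i(B_G/X,yA)$ for all $i\geq 0$, which reduces the problem to the big \'etale classifying topos. Next I would apply Lemma \ref{lem-3} with $\mathcal{A}=yA$, giving a canonical isomorphism $H^i(B_G^{et}/X,yA)\stackrel{\sim}{\longrightarrow}H^i(\textsc{Top}({\bf B}(G,X)_{Et}),\kappa^*yA)$, where $\kappa$ is the morphism of Lemma \ref{lem-2}. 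Here I would record, from the proof of Lemma \ref{lem-3}, the explicit description of the coefficient: $(\kappa^*yA)_i=G^i\times yA=y(G^i\times A)$, the big \'etale sheaf on $(G^i\times X)_{Et}$ represented by the smooth scheme $G^i\times A$, with transition data dictated by the $G$-action on $A$ and the simplicial structure of ${\bf B}(G,X)$.

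Finally I would apply Lemma \ref{lem-1} to the simplicial scheme $X_\bullet={\bf B}(G,X)$. Recalling the equivalence between $\textsc{Top}({\bf B}(G,X)_{et})$ and the category of sheaves on the small \'etale site $Et({\bf B}(G,X))$, the smooth $X$-scheme $A$ defines an abelian object $yA$ of the small \'etale total topos whose $i$-th level is the small \'etale representable sheaf $y(G^i\times A)$ on $(G^i\times X)_{et}$. Lemma \ref{lem-1} then yields $H^i(\textsc{Top}({\bf B}(G,X)_{et}),yA)\stackrel{\sim}{\longrightarrow}H^i(\textsc{Top}({\bf B}(G,X)_{Et}),\iota^*yA)$, and the left-hand side is by definition $H^i(Et({\bf B}(G,X)),yA)$.

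The main obstacle, and the step I would treat with the most care, is the identification $\iota^*yA=\kappa^*yA$ as abelian objects of $\textsc{Top}({\bf B}(G,X)_{Et})$. An object of a total topos is determined by its levels together with its transition data, so it suffices to compare these level by level. On level $i$, the functor $\iota_i^*$ sends the small \'etale representable sheaf $y(G^i\times A)$ to the big \'etale representable sheaf $y(G^i\times A)$ --- this is exactly the compatibility of big and small representability under the canonical morphism $(G^i\times X)_{Et}\rightarrow(G^i\times X)_{et}$ already used in (\ref{an-iso}) --- which agrees with $(\kappa^*yA)_i$; and the transition maps coincide because both are induced by the single $G$-action on $A$ and the structure maps of ${\bf B}(G,X)$. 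Once this identification is in hand, concatenating the three displayed isomorphisms produces the desired canonical isomorphism $H^i(B_G/X,yA)\simeq H^i(Et({\bf B}(G,X)),yA)$, completing the proof.
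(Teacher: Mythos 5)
Your proposal is correct and follows exactly the route of the paper, whose proof of this theorem is simply the one-line concatenation of Lemmas \ref{lem-1}, \ref{lem-2}, \ref{lem-3} and \ref{compra-etale-flat}. The only difference is that you spell out the coefficient identification $\iota^*yA=\kappa^*yA$, which the paper leaves implicit; your verification of it is sound.
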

\begin{proof}
This follows from Lemmas \ref{lem-1}, \ref{lem-2}, \ref{lem-3} and \ref{compra-etale-flat}.
\end{proof}

\subsection{Constant group schemes}
Here we consider a constant $S$-group scheme $G_S=\coprod_GS$ where $G$ is an abstract  group. In what follows we do not distinguish the abstract group $G$ and  the group scheme $G_S$. Let $X$ be an $S$-scheme endowed with a left action of $G$. We denote by $\mathcal{S}_{et}(G,X)$ the topos of $G$--equivariant (small) \'etale sheaves on $X$: an object $\mathcal{F}$ of $\mathcal{S}_{et}(G,X)$ is a sheaf of sets $\mathcal{F}$ on the small \'etale site of $X$ (i.e. on the category of \'etale $X$-schemes given with the \'etale topology) together with a family of isomorphisms
$\{\phi_g:\mathcal{F}\rightarrow g^*\mathcal{F},\,g\in G\}$ such that $\phi_{hg}=g^*(\phi_h)\circ\phi_g$ and $\phi_1=\textrm{Id}_{\mathcal{F}}$.
The topos $\mathcal{S}_{et}(G,X)$ is the algebraic analogue of the category of $G$--equivariant \'etal\'e spaces on a $G$-topological space. This definition is due to Grothendieck (see \cite{Grothendieck-chern-repres} 2.1) who calls \ the cohomology of $\mathcal{S}_{et}(G,X)$ \emph{mixed cohomology}.
\begin{prop}There is a canonical morphism
$$h:B_{G}/X\rightarrow\mathcal{S}_{et}(G,X)$$
inducing isomorphisms
$$H^i(\mathcal{S}_{et}(G,X),yA)\stackrel{\sim}{\rightarrow} H^i(B_{G}/X,yA)$$
for any abelian object $\mathcal{A}=yA$ of $B_G/X$ represented by a smooth $X$-scheme $A$.
\end{prop}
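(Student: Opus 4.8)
The plan is to mimic the strategy of Lemma~\ref{lem-3} and Lemma~\ref{compra-etale-flat}: produce Cartan--Leray type spectral sequences converging to each side, and show they agree at $E_1$. Concretely, I would first construct the morphism $h:B_G/X\rightarrow\mathcal{S}_{et}(G,X)$. Since $G$ is the constant group scheme $\coprod_G S$, giving a left action of $yG$ on a big \'etale sheaf amounts to giving, for each $g\in G$, an automorphism of the underlying sheaf, compatible with the group law; restricting such an object along the small \'etale site of $X$ produces exactly the data $\{\phi_g\}$ defining an object of $\mathcal{S}_{et}(G,X)$. One checks that this restriction functor is left exact and commutes with arbitrary colimits (both being computed on underlying sheaves), and hence is the inverse image functor $h^*$ of a morphism of topoi.

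Next I would identify a covering of the final object in $\mathcal{S}_{et}(G,X)$ analogous to $\mathcal U=(E_G\times X\rightarrow X)$ used in the flat/\'etale classifying topos. In $\mathcal{S}_{et}(G,X)$ the natural choice is the free $G$--equivariant sheaf $\coprod_{g\in G} X$, i.e. $G\times X$ with $G$ acting by translation, which maps epimorphically onto the final object $X$. The $n$-fold fibre products of this covering over the final object are $G^n\times X$ (with the appropriate simplicial structure maps coming from the multiplication and action), and localizing $\mathcal{S}_{et}(G,X)$ at the $(n+1)$-fold product recovers the small \'etale topos $(G^n\times X)_{et}=S_{et}/(G^n\times X)$, exactly as the final object localizations in $B_G/X$ recover $(G^n\times X)_{fl}$. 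This yields a Cartan--Leray spectral sequence
$$E_1^{i,j}=H^j((G^i\times X)_{et},G^i\times\mathcal{A})\Rightarrow H^{i+j}(\mathcal{S}_{et}(G,X),\mathcal{A})$$
with the same $E_1$-terms (for $\mathcal A = yA$ with $A$ smooth) as the spectral sequence~(\ref{spectralsequence-deplus}) computing $H^*(B^{et}_G/X,\mathcal{A})$, and the one computing $H^*(B_G/X,\mathcal A)$.

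The heart of the argument is then to check that $h$ is compatible with these coverings, so that it induces a morphism of spectral sequences which is an isomorphism on $E_1$. Since the localizations of both topoi at the $n$-th stage are canonically $(G^n\times X)_{et}$ and $(G^n\times X)_{fl}$ respectively, the induced map on $E_1^{i,j}$ is precisely the comparison map $H^j((G^i\times X)_{et},G^i\times\mathcal A)\rightarrow H^j((G^i\times X)_{fl},G^i\times\mathcal A)$, which is an isomorphism for smooth $A$ by \cite{BrauerIII}[Theorem 11.7]; alternatively one compares directly against $B^{et}_G/X$ and invokes Lemma~\ref{compra-etale-flat}. A morphism of spectral sequences inducing an isomorphism at $E_1$ induces an isomorphism on abutments, giving the claimed isomorphism $H^i(\mathcal{S}_{et}(G,X),yA)\stackrel{\sim}{\rightarrow}H^i(B_G/X,yA)$.

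The main obstacle I anticipate is the bookkeeping that identifies the localizations $\mathcal{S}_{et}(G,X)/(G^{n+1}\times X)$ with the small \'etale topos $(G^n\times X)_{et}$ \emph{compatibly} with the simplicial structure maps, and the verification that the comparison morphism $h$ intertwines the two coverings at every simplicial level. This is essentially the equivariant descent computation underlying the equivalence $\textsc{Desc}({\bf B}(G,X)_{Et})\simeq B^{et}_G/X$ of Lemma~\ref{lem-2}, now performed for the small \'etale (mixed) topos instead of the big one. Once these identifications are in place, the isomorphism on $E_1$--terms and hence on abutments is formal.
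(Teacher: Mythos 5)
Your main argument is sound, but it is organized differently from the paper's. You run two parallel Cartan--Leray spectral sequences over the covering $EG$ (i.e. $\coprod_G X\rightarrow X$) and its pullback, reducing to the flat/\'etale comparison on each $G^i\times X$; this is a legitimate transcription of the method of Lemmas \ref{lem-3} and \ref{compra-etale-flat}. The paper instead slices $h$ over $EG$ once, obtaining the localization pull--back whose top row is the comparison morphism $h':X_{fl}\rightarrow X_{et}$, deduces $l^*R^j(h_*)(yA)=R^j(h'_*)(yA)=0$ for $j\geq1$ and $A$ smooth from \cite{BrauerIII}, concludes $R^j(h_*)(yA)=0$ because the localization functor $l^*$ is conservative, and lets the Leray spectral sequence of $h$ degenerate. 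The paper's route only needs the flat/\'etale comparison on $X$ itself (together with conservativity of $l^*$), whereas yours needs it, and the compatibility of the two coverings, at every simplicial level; in exchange you never have to manipulate $Rh_*$. Either works.

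One slip you should repair: the construction of $h$ has the variance backwards. The restriction functor you describe goes \emph{from} the equivariant big topos \emph{to} $\mathcal{S}_{et}(G,X)$, and a left exact, cocontinuous functor in that direction is the inverse image of a morphism $\mathcal{S}_{et}(G,X)\rightarrow B^{et}_G/X$ --- a section of $h$, not $h$. For the morphism asserted in the proposition, restriction is the \emph{direct} image $h_*$, and one must exhibit its left exact left adjoint $h^*$ (extension of a $G$--equivariant small \'etale sheaf to the big flat site, with its induced action). The paper avoids this by defining $h$ as the composite $B_G/X\rightarrow B_G(S_{Et})/X\simeq \textsc{Desc}({\bf B}(G,X)_{Et})\rightarrow \textsc{Desc}({\bf B}(G,X)_{et})\simeq\mathcal{S}_{et}(G,X)$ induced by the morphism of simplicial topoi; since the rest of your proof only uses that $h$ restricts to $h'$ over $EG$, adopting that definition leaves your argument intact.
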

\begin{proof}
Consider the (small \'etale) simplicial topos
$$\fonc{{\bf B}(G,X)_{\,et}}{\Delta^{op}}{\bf Top}{[0,n]}{(G^n\times X)_{\,et}}.$$
It is easy to see that the topos of equivariant sheaves is in fact a descent topos:
$$\mathcal{S}_{et}(G,X)\simeq \textsc{Desc}({\bf B}(G,X)_{\,et}).$$
The morphism of simplicial topoi ${\bf B}(G,X)_{\,Et}\rightarrow {\bf B}(G,X)_{\,et}$ therefore induces a canonical morphism
$$B_{G}/X:=B_{G}(S_{fl})/X\longrightarrow B_{G}(S_{Et})/X\simeq \textsc{Desc}({\bf B}(G,X)_{\,Et})$$
$$\longrightarrow \textsc{Desc}({\bf B}(G,X)_{\,et})\simeq \mathcal{S}_{et}(G,X).$$
Here the equivalence $B_{G}(S_{Et})/X\simeq \textsc{Desc}({\bf B}(G,X)_{\,Et})$ is (\ref{descent=BG}).  One can also define the morphism of the proposition directly.

Consider the morphisms $B_{G}/X\rightarrow \mathcal{S}_{et}(G,X)\rightarrow B_G({\bf Sets})$, where $G$ is considered as a discrete group. Localizing over $EG$ (an object of $B_G({\bf Sets})$), we obtain the localization pull--back (see Section \ref{subsect-2catoftopos})
\[\xymatrix{
X_{fl}\ar[r]^{h'}\ar[d]^{l'}&X_{et}\ar[d]^l \\
B_G/X\ar[r]^h&  \mathcal{S}_{et}(G,X)}\]
since $\mathcal{S}_{et}(G,X)/EG\simeq X_{et}$ is the small \'etale topos of $X$ and $(B_G/X)/(EG\times X)\simeq X_{fl}$ is its big flat topos. Indeed, the natural transformation
$$l^*\circ h_*\rightarrow h'_*\circ l'^*$$
is easily seen to be an isomorphism. Moreover, $l'^*$ is exact and preserves abelian injective objects. We obtain
$$l^*\circ R(h_*)\simeq R(l^*\circ h_*)\simeq  R(h'_*\circ l'^*)\simeq R(h'_*)\circ l'^*.$$
In particular, we have
$$l^*\circ R(h_*)(yA)= R(h'_*)\circ l'^*(yA)= R(h'_*)(yA)$$
hence by \cite{BrauerIII}[Theorem 11.7]
$$l^*\circ R^j(h_*)(yA)= R^j(h'_*)(yA)=0$$
for any $j\geq1$ and any smooth $A$ (note that $R^j(h'_*)(yA)$ is the sheaf associated with the presheaf sending an \'etale map $U\rightarrow X$ to $H^j(U_{fl},yA)=H^j(U_{et},yA)$). But $l^*$ is conservative, hence $R^j(h_*)(yA)=0$ for any $j\geq1$ and any smooth $A$. Recall that $l^*$ conservative means that, for any map $\phi$, $l^*(\phi)$ is an  isomorphism if and only if $\phi$ is an isomorphism (see \cite{SGA4}[I, Definition 6.1]).

The result now follows from the fact that the spectral sequence
$$H^i(\mathcal{S}_{et}(G,X),R^jh_*(yA))\Rightarrow H^{i+j}(B_G/X,yA)$$
degenerates and yields the desired isomorphism.

\end{proof}

\subsection{Equivariant cohomology spectral sequences}

We consider a left action of $G$ on $X$ over the base scheme $S$ such that $Y=X/G$ exists as a scheme.
The following diagram:
\[\xymatrix{
B_G/X\ar[r]^f\ar[d]_g&B_G\ar[d] \\
Y_{fl}\ar[r]& {\bf Sets}}\]
provides two different decompositions of the unique map $B_G/X\rightarrow{\bf Sets}$. This yields two Leray spectral sequences converging to the cohomology of $B_G/X$. Here the map $f$ is the localization map and $g$ is defined as follows: For an object $\mathcal{F}\rightarrow Y$ of $Y_{fl}\simeq S_{fl}/Y$, we have $g^*(\mathcal{F}\rightarrow Y):=\mathcal{F}\times_Y X$ where $G$ acts on $\mathcal{F}\times_Y X$ via its action on $X$. We adopt the following notation.
\begin{definition}\label{def-relative-cohomology}
Let $t:\mathcal{E}\rightarrow\mathcal{S}$ be a topos over a base topos $\mathcal{S}$ and let $\mathcal{A}$ be an abelian object in $\mathcal{E}$. The cohomology of $\mathcal{A}$ with values in $\mathcal{S}$ in degree $n$ is the following abelian object in $\mathcal{S}$:
$$H_{\mathcal{S}}^n(\mathcal{E},\mathcal{A}):=R^n(t_*)\mathcal{A}.$$
\end{definition}
For any Grothendieck topos $\mathcal{E}$  we have
$$H_{{\bf Sets}}^n(\mathcal{E},\mathcal{A})=H^n(\mathcal{E},\mathcal{A}):=R^n(e_*)\mathcal{A}, $$
where  $e:\mathcal{E}\rightarrow {\bf Sets}$ is the unique morphism from $\mathcal{E}$ to the final topos.
For any topos $t:\mathcal{E}\rightarrow\mathcal{S}$ over $\mathcal{S}$, the Leray spectral sequence reads as follows:
$$H^i(\mathcal{S},H_{\mathcal{S}}^j(\mathcal{E},\mathcal{A}))\Longrightarrow H^{i+j}(\mathcal{E},\mathcal{A}).$$

For a topos $\mathcal{T}$ over $S_{fl}$ we write $H^j_{S}(\mathcal{T},-)$ for $H^j_{S_{fl}}(\mathcal{T},-)$. First we consider the morphism $f:B_G/X\rightarrow B_G$.
\begin{prop}\label{localproof}
Let $G$ be an $S$-group scheme acting on a scheme $X$.
For any abelian object $\mathcal{A}$ of $B_G/X$, there is a natural $G$--action on $H_{S}^j(X_{fl},\mathcal{A})$ and one has a spectral sequence
$$H^i(B_G,H_{S}^j(X_{fl},\mathcal{A}))\Rightarrow H^{i+j}(G,X,\mathcal{A}).$$
\end{prop}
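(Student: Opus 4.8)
The plan is to identify the asserted spectral sequence with the Leray spectral sequence of the localization morphism $f\colon B_G/X\to B_G$ defined above. This Leray spectral sequence reads $H^i(B_G,R^jf_*\mathcal{A})\Rightarrow H^{i+j}(B_G/X,\mathcal{A})$, and its abutment is $H^{i+j}(G,X,\mathcal{A})$ by the very definition of equivariant cohomology. Hence the whole statement reduces to a single identification of abelian objects of $B_G$, namely $R^jf_*\mathcal{A}\simeq H^j_S(X_{fl},\mathcal{A})$: once this is known, the natural $G$--action requested in the statement is simply the $B_G$--structure carried by the abelian object $R^jf_*\mathcal{A}$, and substitution into the Leray spectral sequence yields the claim.

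To carry out the identification I would use the canonical morphism $p\colon S_{fl}\to B_G$ whose inverse image forgets the $G$--action. This is exactly the localization of $B_G$ at the universal torsor $E_G$, so that $B_G/E_G\simeq S_{fl}$ and $p^*$ is exact and conservative. Since an abelian object of $B_G$ is an abelian sheaf on $S_{fl}$ together with a $G$--action, and since $p^*$ merely forgets that action, it is enough to compute the underlying sheaf $p^*R^jf_*\mathcal{A}$ and to check that it agrees with $H^j_S(X_{fl},\mathcal{A})$ compatibly with the $G$--structure. Localizing $f$ at $E_G$, which by \cite{SGA4}[IV, 5.10] yields a pull--back square, and invoking the flat analogues of the slice identifications used in Lemma \ref{lem-3}, namely $B_G/E_G\simeq S_{fl}$ and $(B_G/X)/(E_G\times X)\simeq S_{fl}/X=X_{fl}$, produces a pull--back square
\[\xymatrix{
X_{fl}\ar[r]^{p'}\ar[d]_{\bar f}&B_G/X\ar[d]^f \\
S_{fl}\ar[r]^p&B_G
}\]
in which $\bar f\colon X_{fl}=S_{fl}/X\to S_{fl}$ is the structural morphism and $p'^*\mathcal{A}$ is $\mathcal{A}$ viewed, via the forgetful functor, as an abelian object of $X_{fl}$.

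The crux is then a base--change isomorphism $p^*R^jf_*\mathcal{A}\simeq R^j\bar f_*(p'^*\mathcal{A})$, whose right--hand side is $H^j_S(X_{fl},\mathcal{A})$ by Definition \ref{def-relative-cohomology}. I would establish it in two steps. First, a direct adjunction computation shows that $p^*f_*\mathcal{G}$ and $\bar f_*p'^*\mathcal{G}$ both represent the presheaf $V\mapsto \mathrm{Hom}_{B_G/X}(f^*V,\mathcal{G})$ on $S_{fl}$, giving the underived base change $p^*f_*\simeq \bar f_*p'^*$. Second, because $p$ and $p'$ are localization morphisms, their inverse images possess exact left adjoints (extension by zero on abelian sheaves) and therefore preserve injective abelian objects; thus $p'^*$ carries an injective resolution of $\mathcal{A}$ to an $\bar f_*$--acyclic resolution of $p'^*\mathcal{A}$, while the exactness of $p^*$ lets one compute cohomology objects after applying $p^*$. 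Combining the two steps gives the derived base change, hence the identification $R^jf_*\mathcal{A}\simeq H^j_S(X_{fl},\mathcal{A})$ as abelian objects of $B_G$.

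I expect the base--change step to be the only genuine obstacle: verifying the underived isomorphism $p^*f_*\simeq\bar f_*p'^*$ and justifying that the localization inverse images preserve injectives, so that the comparison descends to derived functors. Everything else---the Leray spectral sequence, the equivalences $B_G/E_G\simeq S_{fl}$ and $(B_G/X)/(E_G\times X)\simeq X_{fl}$, and the reading of the abutment as $H^{i+j}(G,X,\mathcal{A})$---is formal and already available from the earlier parts of this section.
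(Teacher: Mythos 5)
Your proposal is correct and follows essentially the same route as the paper: the Leray spectral sequence of the localization morphism $f\colon B_G/X\to B_G$, the localization pull--back square relating $X_{fl}\to S_{fl}$ to $f$, the base--change isomorphism $p^*R^jf_*\simeq R^j\bar f_*\,p'^*$ obtained from the underived isomorphism together with the fact that localization inverse images preserve injective abelian objects, and the observation that $p^*$ and $p'^*$ are the forgetful functors. The only cosmetic slip is in your adjunction computation, where the represented presheaf should involve $f^*(p_!V)\simeq p'_!(\bar f^*V)$ rather than $f^*V$ (which does not typecheck for $V$ in $S_{fl}$); this does not affect the argument.
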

\begin{proof}

The composite map
$$B_G/X\rightarrow B_G\rightarrow {\bf  Sets}$$
provides us with the spectral sequence
$$H^i(B_G,R^j(f_*)\mathcal{A})\Rightarrow H^{i+j}(B_G/X,\mathcal{A})$$
which is functorial in the abelian object $\mathcal{A}$ of $B_G/X$. In view of the canonical equivalences
$$B_G/E_G\simeq S_{fl} \mbox{ and }(B_G/X)/(E_G\times X)\simeq S_{fl}/X\simeq X_{fl},$$
we have a pull--back square
\[\xymatrix{
X_{fl}\ar[r]^{f'}\ar[d]_{p'}&S_{fl}\ar[d]_{p} \\
B_G/X\ar[r]^f&B_G
}\]
where the vertical maps are localization maps. For any such localization pull--back, there is a natural transformation
$$p^*f_*\rightarrow f'_*p'^*$$
which is an isomorphism. Moreover the functor $p'^*$, being a localization functor, preserves injective abelian objects. Thus  we obtain an isomorphism
$$p^*R^n(f_*)\simeq R^n(f'_*)p'^*.$$
The result follows from the fact that the functors $p^*:B_G\rightarrow S_{fl}$ and $p'^*:B_G/X\rightarrow X_{fl}$ are the forgetful functors (where  we forget the $G$--action).
\end{proof}

\begin{prop}
Let $G$ be an $S$-group scheme acting on a scheme $X$ such that $Y=X/G$ exists as a scheme. There is a spectral sequence
$$H^i(Y_{fl},R^j(g_*)\mathcal{A})\Rightarrow  H^{i+j}(G,X,\mathcal{A})$$
functorial in the abelian object $\mathcal{A}$ of $B_G/X$, where $g:B_G/X\rightarrow Y_{fl}$ is the morphism induced by the $G$--equivariant map $X\rightarrow Y$.
\end{prop}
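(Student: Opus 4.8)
The asserted spectral sequence is nothing other than the Leray spectral sequence attached to the morphism $g: B_G/X\to Y_{fl}$, read through the general formalism recalled just before Proposition \ref{localproof}. So the plan is: (i) confirm that $g$ is a genuine morphism of topoi, making $B_G/X$ a topos over $Y_{fl}$; (ii) feed $g$ into the general Leray spectral sequence for a topos over a base; and (iii) match the two sides against Definition \ref{def-relative-cohomology} and the definition of equivariant cohomology. Once (i) is in hand, the rest is purely formal bookkeeping.

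For (i) I would argue as follows. The inverse image $g^*: Y_{fl}\to B_G/X$ sends $(\mathcal{F}\to Y)$ to $\mathcal{F}\times_YX\to X$, carrying the $G$--action induced by the action on the second factor; this is legitimate because $X\to Y=X/G$ is $G$--equivariant for the trivial action on $Y$. Forgetting the $G$--action, $g^*$ becomes the base-change functor $\mathcal{F}\mapsto \mathcal{F}\times_YX$, which is the inverse image of the localization morphism $X_{fl}\simeq S_{fl}/yX\to S_{fl}/yY\simeq Y_{fl}$ induced by $yX\to yY$, and hence commutes with finite projective limits and with all inductive limits. Since such limits in $B_G/X$ are created by the forgetful functor to $X_{fl}$, the functor $g^*$ inherits these exactness properties; being left exact and colimit-preserving between topoi it is the inverse image of a morphism of topoi $g$. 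Equivalently, $g$ is the composite of the localization map $B_G/X\to B_G/\pi^*(yY)=B_G/Y$ induced by $yX\to yY$ with the projection $B_G/\pi^*(yY)\to S_{fl}/yY\simeq Y_{fl}$ furnished by the pull-back square appearing in the proof of Proposition \ref{prop-BGY}.

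For (ii) and (iii), applying the general Leray sequence with $t=g$, $\mathcal{E}=B_G/X$ and $\mathcal{S}=Y_{fl}$ gives
$$H^i(Y_{fl},H^j_{Y_{fl}}(B_G/X,\mathcal{A}))\Longrightarrow H^{i+j}(B_G/X,\mathcal{A}).$$
By Definition \ref{def-relative-cohomology} the coefficient object is $H^j_{Y_{fl}}(B_G/X,\mathcal{A})=R^j(g_*)\mathcal{A}$, and by the definition of equivariant cohomology the abutment is $H^{i+j}(B_G/X,\mathcal{A})=H^{i+j}(G,X,\mathcal{A})$; substituting yields exactly the stated sequence. Functoriality in $\mathcal{A}$ is inherited from that of the Leray spectral sequence in its coefficients. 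The only genuine point, and hence the main obstacle, is the verification in (i) that $g^*$ is exact with a right adjoint $g_*$; in contrast with Proposition \ref{localproof}, no further identification of $R^j(g_*)\mathcal{A}$ (via an auxiliary localization pull-back) is required here, since the statement retains $R^j(g_*)\mathcal{A}$ itself as the coefficient system.
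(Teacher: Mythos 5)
Your proposal is correct and follows essentially the same route as the paper, which disposes of the statement in one line as the Leray spectral sequence for the composite $B_G/X\rightarrow Y_{fl}\rightarrow {\bf Sets}$. The extra verification you supply in step (i) that $g$ is a genuine morphism of topoi is sound and merely makes explicit what the paper takes for granted from its description of $g^*$ in the paragraph preceding the proposition.
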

\begin{proof}
This is the Leray spectral sequence associated with the composite morphism
$$B_G/X\rightarrow Y_{fl}\rightarrow {\bf Sets}.$$
\end{proof}

\subsection{Giraud's exact sequence}

Let $A$ be a commutative $S$--group scheme endowed with a left action of $G$. We denote by  $\textrm{Ext}_{S}(G,A)$ the abelian group of extensions of $G$ by $A$ in the topos $S_{fl}$. More precisely, $\textrm{Ext}_{S}(G,A)$ is the group of equivalence classes of exact sequences in $S_{fl}$
$$1\rightarrow yA\rightarrow\mathcal{G}\rightarrow yG \rightarrow 1$$
where $yA$ and $yG$ denote the sheaves in $S_{fl}$ represented by $A$ and $G$, and
such that the action of $\mathcal{G}$ on $yA$ by inner automorphisms induces the given action of $G$ on $A$. Note that $\mathcal{G}$ is not a scheme in general. We denote by $\textrm{Crois}_S(G,A)$ the abelian group of crossed morphisms $f:G\rightarrow A$. Recall that a crossed morphism is a map of $S$-schemes $f:G\rightarrow A$ such that
\begin{equation}\label{crossed}
f(g g')=f(g)+g\cdot f(g').
\end{equation}
This identity makes sense on points. Equivalently (\ref{crossed}) can be seen as a commutative diagram in ${\bf Sch}/S$. Note that, if $G$ acts trivially on $A$, then $\textrm{Crois}_S(G,A)=\textrm{Hom}_S(G,A)$.

More generally, for any abelian object $\mathcal{A}$ of $B_G$, one defines $\textrm{Ext}_{S}(G,\mathcal{A})$ and $\textrm{Crois}_S(G,\mathcal{A})$ in the very same way (of course one has $\textrm{Ext}_{S}(G,A)=\textrm{Ext}_{S}(G,yA)$ and $\textrm{Crois}_S(G,A)=\textrm{Crois}_S(G,yA)$).

\begin{prop}\label{prop-Giraud-sequence}
We have an exact sequence of abelian groups
$$0\rightarrow H^0(B_G,\mathcal{A})\rightarrow H^0(S_{fl},\mathcal{A})\rightarrow \emph{\textrm{Crois}}_S(G,\mathcal{A})\rightarrow H^1(B_G,\mathcal{A})$$
$$\rightarrow H^1(S_{fl},\mathcal{A})\rightarrow \emph{\textrm{Ext}}_{S}(G,\mathcal{A})\rightarrow
H^2(B_G,\mathcal{A})\rightarrow H^2(S_{fl},\mathcal{A}) .$$
\end{prop}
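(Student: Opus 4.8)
The plan is to realize this seven-term exact sequence as the five-term (and its one-step continuation) exact sequence attached to the Leray spectral sequence for the canonical projection $\pi:B_G\rightarrow S_{fl}$. Recall that $\pi^*$ equips an object of $S_{fl}$ with the trivial $G$-action and that $\pi_*$ is the $G$-invariants functor. The Leray spectral sequence reads
\begin{equation}\label{leray-giraud}
E_2^{i,j}=H^i(S_{fl},R^j\pi_*\mathcal{A})\Longrightarrow H^{i+j}(B_G,\mathcal{A}).
\end{equation}
The seven-term sequence of the Proposition is exactly the low-degree exact sequence one extracts from such a spectral sequence, provided one identifies the sheaves $R^0\pi_*\mathcal{A}$, $R^1\pi_*\mathcal{A}$ and $R^2\pi_*\mathcal{A}$ correctly. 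So the first step is to set up \eqref{leray-giraud} and to write out its general low-degree exact sequence, which takes the shape
$$0\rightarrow H^1(S_{fl},R^0\pi_*\mathcal{A})\rightarrow H^1(B_G,\mathcal{A})\rightarrow H^0(S_{fl},R^1\pi_*\mathcal{A})\rightarrow H^2(S_{fl},R^0\pi_*\mathcal{A})\rightarrow\cdots$$
together with the edge maps in degrees $0$ and the continuation $\rightarrow H^2(B_G,\mathcal{A})\rightarrow H^2(S_{fl},R^0\pi_*\mathcal{A})$ coming from the transgression-type differential and the edge morphism onto the base.

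The heart of the matter is therefore the identification of the higher direct images as sheaves on $S_{fl}$, namely
$$R^0\pi_*\mathcal{A}=\mathcal{A}^G,\qquad R^1\pi_*\mathcal{A}=\underline{\textrm{Crois}}_S(G,\mathcal{A}),\qquad R^2\pi_*\mathcal{A}\hookleftarrow\underline{\textrm{Ext}}_S(G,\mathcal{A}),$$
where the underlined objects are the sheafifications of the crossed-morphism and extension functors. For this I would compute the stalks, or rather the values on a scheme $U$, of $R^j\pi_*\mathcal{A}$. Here the clean way is to use the localization formalism: for a $U$ over $S$ one has the pull-back square relating $B_G/U$ to $U_{fl}$, and $R^j\pi_*\mathcal{A}$ is the sheaf associated to $U\mapsto H^j(B_{G_U},\mathcal{A}|_U)$ by the same base-change argument used in Proposition \ref{localproof} (the forgetful functor $p'^*$ preserves injectives, giving $p^*R^n\pi_*\simeq R^n(\textrm{loc})_*p'^*$). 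Thus one is reduced to interpreting $H^0$, $H^1$ and $H^2$ of $B_G$ itself with coefficients in $\mathcal{A}$. The identification $H^0(B_G,\mathcal{A})=\mathcal{A}^G$ is immediate since global sections of a $G$-object are its $G$-invariant global sections. For $H^1(B_G,\mathcal{A})$ one uses that, $\mathcal{A}$ being abelian, $H^1$ classifies $\mathcal{A}$-torsors in $B_G$; unwinding a torsor over the trivial $G$-object and tracking the $G$-equivariance data shows precisely that a class is given by a crossed homomorphism $G\rightarrow\mathcal{A}$ modulo principal ones, i.e. $H^1(B_G,\mathcal{A})=\textrm{Crois}_S(G,\mathcal{A})/\textrm{im}\,H^0(S_{fl},\mathcal{A})$ after one has already split off the base contribution $H^1(S_{fl},\mathcal{A})$. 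The degree-two identification, that the relevant subquotient of $H^2(B_G,\mathcal{A})$ is $\textrm{Ext}_S(G,\mathcal{A})$, is Giraud's interpretation of $H^2$ of a classifying topos in terms of extensions: a central (more generally $G$-equivariant) extension $1\rightarrow\mathcal{A}\rightarrow\widetilde{G}\rightarrow G\rightarrow 1$ in $S_{fl}$ is exactly the data of an $\mathcal{A}$-gerbe over $B_G$ banded by $\mathcal{A}$, hence a class in $H^2(B_G,\mathcal{A})$.

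I expect the main obstacle to be the degree-two identification and the precise matching of the spectral-sequence differentials with the maps named in the statement. Two points require care. First, one must check that the transgression $d_2:H^0(S_{fl},R^1\pi_*\mathcal{A})\rightarrow H^2(S_{fl},R^0\pi_*\mathcal{A})$ and the edge maps genuinely recover the arrows $\textrm{Crois}_S(G,\mathcal{A})\rightarrow H^1(B_G,\mathcal{A})$ and $H^1(S_{fl},\mathcal{A})\rightarrow\textrm{Ext}_S(G,\mathcal{A})$ of the Proposition, rather than merely being abstractly isomorphic to some such maps; this is a diagram-chase but the signs and the direction of the connecting homomorphism must be pinned down. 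Second, the continuation past the five-term sequence to $H^2(B_G,\mathcal{A})\rightarrow H^2(S_{fl},\mathcal{A})$ uses the surjectivity of $E_2^{2,0}\rightarrow H^2$ onto the lowest filtration step together with the edge morphism $\pi^*$ to the base; I would verify that $E_2^{1,1}$ and $E_2^{0,2}$ contribute the crossed-morphism and extension terms and that no further differential obstructs exactness at $\textrm{Ext}_S(G,\mathcal{A})$. A convenient technical route that sidesteps delicate bookkeeping is to first establish the sheaf-level identifications of $R^j\pi_*\mathcal{A}$ for $j=0,1$ (and the relevant part for $j=2$) as above, and then simply read off the seven-term sequence as the standard exact sequence of a Grothendieck spectral sequence whose $E_2^{p,q}$ vanish appropriately in low degree, invoking the classical low-degree exact sequence verbatim once the coefficient sheaves are named.
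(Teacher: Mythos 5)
The paper offers no argument at all here: its ``proof'' is the single line ``This is a special case of \cite{Giraud}[VIII.7.1.5]''. So the real question is whether your derivation is correct, and unfortunately its central mechanism fails. The sequence of the Proposition is \emph{not} the low-degree exact sequence of the Leray spectral sequence for $\pi:B_G\rightarrow S_{fl}$. That five-term sequence reads
$$0\rightarrow H^1(S_{fl},\mathcal{A}^G)\rightarrow H^1(B_G,\mathcal{A})\rightarrow H^0(S_{fl},R^1\pi_*\mathcal{A})\rightarrow H^2(S_{fl},\mathcal{A}^G)\rightarrow H^2(B_G,\mathcal{A}),$$
whose outer maps are inflation-type maps \emph{from the base into} $B_G$ with coefficients $\pi_*\mathcal{A}=\mathcal{A}^G$, and whose middle term is a group of global sections of the \emph{sheafified} crossed-homomorphism object (which, by the paper's own Corollary following Proposition \ref{oneprop}, is $\underline{\textrm{Crois}}_S(G,\mathcal{A})/\textrm{Im}(\tau)$, not $\underline{\textrm{Crois}}_S(G,\mathcal{A})$). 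The Proposition's sequence instead contains restriction maps $H^i(B_G,\mathcal{A})\rightarrow H^i(S_{fl},\mathcal{A})$ going in the \emph{opposite} direction (they are $\eta^*$ for the canonical section $\eta:S_{fl}\rightarrow B_G$, i.e.\ ``forget the $G$-equivariant structure''), with coefficients $\mathcal{A}$ rather than $\mathcal{A}^G$, and with the global, unsheafified groups $\textrm{Crois}_S(G,\mathcal{A})$ and $\textrm{Ext}_S(G,\mathcal{A})$. A decisive test case: take $S$ a point and $G$ a discrete group, so $H^{i}(S_{fl},-)=0$ for $i\geq 1$. Then the Leray spectral sequence degenerates and its five-term sequence is vacuous, whereas the Giraud sequence carries all the content ($H^1(G,A)=\textrm{Crois}(G,A)/\textrm{principal}$ and $H^2(G,A)=\textrm{Ext}(G,A)$). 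No identification of the $R^j\pi_*\mathcal{A}$ can bridge this gap; what you would actually obtain from your spectral sequence is the $N=G$ case of the Hochschild--Serre sequence of Corollary \ref{cor-short-exact-HS-spectralsequ}, which is a different (and here nearly empty) statement.

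There is also a circularity in the order of the argument: in the paper the identifications of $R^j\pi_*\mathcal{A}$ for $j=0,1,2$ are \emph{deduced from} the Giraud sequence by applying it over every $T\rightarrow S$ and sheafifying, and your sketch of the identification of $H^1(B_G,\mathcal{A})$ ``after one has already split off the base contribution'' is essentially the exactness statement you are trying to prove. A workable route that stays close to your instincts is to replace the Leray spectral sequence for $\pi$ by the Cartan--Leray (descent) spectral sequence for the covering $E_G\rightarrow e_{B_G}$, i.e.\ the spectral sequence (\ref{spectralsequence-deplus}) with $X=S$: its $E_2^{\ast,0}$-row is the Hochschild complex $\mathcal{A}(S)\rightarrow\mathcal{A}(G)\rightarrow\mathcal{A}(G^2)\rightarrow\cdots$, so $E_2^{0,0}=\mathcal{A}^G(S)$, $E_2^{1,0}=\textrm{Crois}_S(G,\mathcal{A})/\textrm{principal}$, the edge maps do point from $H^i(B_G,\mathcal{A})$ towards $H^i(S_{fl},\mathcal{A})$ as in the statement, and the identification of the degree-two term with $\textrm{Ext}_S(G,\mathcal{A})$ proceeds via factor systems; alternatively one argues directly, as Giraud does, that an $\mathcal{A}$-torsor (resp.\ $\mathcal{A}$-gerbe) on $B_G$ is an $\mathcal{A}$-torsor (resp.\ gerbe) on $S_{fl}$ together with a $G$-linearization, the obstruction and classification of such linearizations producing exactly the stated maps.
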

\begin{proof}
This is a special case of \cite{Giraud}[VIII.7.1.5].
\end{proof}

\subsection{The sheaves $H^i_{S}(B_G,\mathcal{A})$ for $i=0,1,2$}
Recall that we denote by
$$\pi: B_G\rightarrow S_{fl}$$
the canonical map and,  for an abelian object $\mathcal{A}$ of $B_G$, we denote by
$$H^i_{S}(B_G,\mathcal{A}):=R^i(\pi_*)\mathcal{A}$$
the cohomology of $B_G$ with values in the topos $S_{fl}$. The sheaf $H^i_{S}(B_G,\mathcal{A})$ may be described as follows: 
\begin{prop}\label{oneprop}
For any abelian object $\mathcal{A}$ of $B_G$ and for any $i\geq0$, the sheaf $H^i_{S}(B_G,\mathcal{A})$ is the sheaf associated to the presheaf
$$\appl{{\bf Sch}/S}{{\bf Ab}}{T}{H^i(G_{T},\mathcal{A}_{T})}$$
where $G_{T}$ is the $T$-group scheme $G\times_ST$ and $\mathcal{A}_{T}$ is the abelian object of $B_{G_{T}}$
induced by $\mathcal{A}$.
\end{prop}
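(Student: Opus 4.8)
The plan is to compute $R^i(\pi_*)\mathcal{A}$ directly from an injective resolution and then to recognize the resulting presheaf cohomology via the base-change equivalence of Proposition \ref{prop-BGY}. Recall that sheafification is exact, so for any complex of abelian sheaves on $S_{fl}$ the $i$-th cohomology sheaf is the sheaf associated to the presheaf $i$-th cohomology. Thus, choosing an injective resolution $\mathcal{A}\hookrightarrow I^\bullet$ of abelian objects in $B_G$, we have $R^i(\pi_*)\mathcal{A}=\mathcal{H}^i(\pi_*I^\bullet)$, and this is precisely the sheaf on ${\bf Sch}/S$ associated to the presheaf sending $T$ to $H^i\big((\pi_*I^\bullet)(T)\big)$. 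So everything reduces to identifying the complex of abelian groups $(\pi_*I^\bullet)(T)$ for each $S$-scheme $T$.

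First I would evaluate $(\pi_*I^\bullet)(T)$. Since the $\mathrm{fppf}$-topology is subcanonical, evaluation at $T$ is the functor $\mathrm{Hom}_{S_{fl}}(yT,-)$, equivalently global sections over the slice $S_{fl}/yT$; by the $\pi^*\dashv\pi_*$ adjunction this gives $(\pi_*I^\bullet)(T)=\mathrm{Hom}_{B_G}(\pi^*(yT),I^\bullet)$. Here $\pi^*(yT)$ is the object $yT$ equipped with the trivial $G$-action. Writing $j:B_G/\pi^*(yT)\to B_G$ for the localization morphism, the right-hand side is the complex of global sections $\Gamma\big(B_G/\pi^*(yT),\,j^*I^\bullet\big)$. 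Now $j^*$, being a localization functor, preserves injective abelian objects, so $j^*I^\bullet$ is an injective resolution of $j^*\mathcal{A}$ in $B_G/\pi^*(yT)$; consequently the $i$-th cohomology of the complex $\Gamma(B_G/\pi^*(yT),j^*I^\bullet)$ is exactly $H^i\big(B_G/\pi^*(yT),\,j^*\mathcal{A}\big)$.

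It remains to identify this relative cohomology with $H^i(G_T,\mathcal{A}_T)$. Since $G$ acts trivially on $\pi^*(yT)=yT$, Proposition \ref{prop-BGY} furnishes a canonical equivalence $B_G/\pi^*(yT)\simeq B_{G_T}$ under which $j^*\mathcal{A}$ corresponds to the induced abelian object $\mathcal{A}_T$ of $B_{G_T}$. Hence $H^i\big(B_G/\pi^*(yT),j^*\mathcal{A}\big)=H^i(B_{G_T},\mathcal{A}_T)=H^i(G_T,\mathcal{A}_T)$, the last equality being the very definition of the cohomology of the group scheme $G_T$. Feeding this back, the presheaf $T\mapsto H^i\big((\pi_*I^\bullet)(T)\big)$ is identified with $T\mapsto H^i(G_T,\mathcal{A}_T)$, and taking the associated sheaf gives $R^i(\pi_*)\mathcal{A}=H^i_S(B_G,\mathcal{A})$, as claimed.

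The main obstacle I anticipate is bookkeeping rather than anything conceptual: one must verify that the equivalence of Proposition \ref{prop-BGY} carries $j^*\mathcal{A}$ to $\mathcal{A}_T$ (that is, that restriction along $j$ agrees with base change of the coefficient system from $G$ to $G_T$), and that these identifications are functorial in $T$, so that they assemble into a genuine isomorphism of presheaves prior to sheafification. Both points follow by unwinding the explicit description of the equivalence in Proposition \ref{prop-BGY} together with the fact that $\pi^*$ endows every object with the trivial $G$-action; the functoriality in $T$ is automatic because all the functors involved ($\pi^*$, localization $j^*$, and the equivalence of Proposition \ref{prop-BGY}) are natural in the $S$-scheme $T$.
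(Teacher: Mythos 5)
Your proof is correct and follows essentially the same route as the paper: the paper simply quotes the standard fact that $R^i(\pi_*)\mathcal{A}$ is the sheaf associated to the presheaf $T\mapsto H^i(B_G/T,\mathcal{A}\times T)$ and then invokes Proposition \ref{prop-BGY}, whereas you additionally spell out the proof of that standard fact (injective resolution, adjunction, localization preserving injectives). The extra detail is harmless and the key step --- the identification $H^i(B_G/\pi^*(yT),j^*\mathcal{A})\simeq H^i(B_{G_T},\mathcal{A}_T)$ via Proposition \ref{prop-BGY} --- is exactly the paper's argument.
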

\begin{proof}
The sheaf $H^i_{S}(B_G,\mathcal{A})$ is the sheaf associated to the presheaf
$$
{T}\rightarrow {H^i(B_{G}/T,\mathcal{A}\times{T}), }
$$
but by virtue of Proposition \ref{prop-BGY} we have
$$H^i(B_{G}/T,\mathcal{A}\times T)=H^i(B_{G_{T}},\mathcal{A}_{T})=:H^i(G_{T},\mathcal{A}_{T}).$$
\end{proof}

For any abelian sheaf $\mathcal{A}$ of $B_G$, we denote by $\mathcal{A}^G$ the largest sub-object of $\mathcal{A}$ on which $G$ acts trivially. Then we consider the abelian  presheaf
\begin{equation}\label{sheaf-Crois}
\appl{S_{fl}}{{\bf Ab}}{F}{\textrm{Crois}_{S_{fl}/F}(G\times F,\mathcal{A}\times F)}
\end{equation}
This presheaf is easily seen to be a subsheaf of the sheaf of homomorphisms  $\underline{\textrm{Map}}(G, \mathcal{A})$ (here the group structure is not taken into account) in the topos $S_{fl}$ endowed with the canonical topology. The sheaf (\ref{sheaf-Crois}) is therefore representable by an abelian object $\underline{\textrm{Crois}}_{S}(G,\mathcal{A})$ of $S_{fl}$ (recall that any sheaf on a topos endowed with the canonical topology is representable). There is a morphism:
$$\fonc{\tau}{\mathcal{A}}{\underline{\textrm{Crois}}_{S}(G,\mathcal{A})}{a}{g\mapsto g\cdot a-a} . $$
Finally we consider the presheaf:
$$\appl{S_{fl}}{{\bf Ab}}{F}{\textrm{Ext}_{S_{fl}/F}(G\times F,\mathcal{A}\times F)} . $$
Basic descent theory in topoi shows that this is a sheaf for the canonical topology. We denote by $\underline{\textrm{Ext}}_{S}(G,\mathcal{A})$ the corresponding abelian object of $S_{fl}$.
\begin{cor}
We have $$H^0_{S}(B_G,\mathcal{A})\simeq \mathcal{A}^{G}\,,\,\,\, H^2_{S}(B_G,\mathcal{A})\simeq\underline{\textrm{\emph{Ext}}}_S(G,\mathcal{A})$$
and an exact sequence (of abelian objects in $S_{fl}$)
$$0\rightarrow\mathcal{A}^G\rightarrow \mathcal{A}\stackrel{\tau}{\rightarrow}\underline{\textrm{\emph{Crois}}}_S(G,\mathcal{A})\rightarrow H_S^1(B_{G},\mathcal{A})\rightarrow 0.$$
In particular, if $G$ acts trivially on $\mathcal{A}$,  then
$$H_S^1(B_{G},\mathcal{A})=\underline{\textrm{\emph{Hom}}}(G,\mathcal{A}).$$
\end{cor}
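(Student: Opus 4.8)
The plan is to deduce the Corollary by applying the machinery of Proposition \ref{oneprop} together with the relative (sheafified) version of Giraud's exact sequence. First I would observe that the entire Proposition \ref{prop-Giraud-sequence} is functorial in the base topos, so that it holds verbatim when $S_{fl}$ is replaced by a slice topos $S_{fl}/F$, the group $G$ by its base change $G\times F$, and $\mathcal{A}$ by $\mathcal{A}\times F$. Since all six intermediate terms of that sequence are precisely the presheaves appearing in the definitions of $\mathcal{A}^G$, $\underline{\textrm{Crois}}_S(G,\mathcal{A})$, $\underline{\textrm{Ext}}_S(G,\mathcal{A})$, and (by Proposition \ref{oneprop}) of the sheaves $H^i_S(B_G,\mathcal{A})$, the strategy is to sheafify the whole exact sequence for the canonical topology on $S_{fl}$ and read off the identifications.

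The key steps, in order, are as follows. First I would identify the $H^0$ term: by Proposition \ref{oneprop}, $H^0_S(B_G,\mathcal{A})$ is the sheaf associated to $T\mapsto H^0(G_T,\mathcal{A}_T)=H^0(B_{G_T},\mathcal{A}_T)$, which is the $G_T$-invariants of $\mathcal{A}_T$; since this presheaf is already a sheaf (a subobject of the sheaf $\mathcal{A}$), it equals $\mathcal{A}^G$, giving the first asserted isomorphism. Second, I would identify $H^2_S(B_G,\mathcal{A})$ similarly as the sheafification of $T\mapsto H^2(B_{G_T},\mathcal{A}_T)$, and then invoke the relative Giraud sequence to match this with the sheafified Ext presheaf, which by hypothesis is already a sheaf, yielding $H^2_S(B_G,\mathcal{A})\simeq \underline{\textrm{Ext}}_S(G,\mathcal{A})$; here I would use that the final map $H^2(B_G,\mathcal{A})\to H^2(S_{fl},\mathcal{A})$ becomes, after sheafification, a map into $H^2_S(S_{fl}/(-),\mathcal{A})$, whose relevant portion vanishes or is controlled so that the Ext-term surjects and the identification holds. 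Third, truncating the relative Giraud sequence in low degrees gives, upon sheafification, the four-term exact sequence $0\to\mathcal{A}^G\to\mathcal{A}\xrightarrow{\tau}\underline{\textrm{Crois}}_S(G,\mathcal{A})\to H^1_S(B_G,\mathcal{A})\to 0$, where I must check that the connecting map $H^0(S_{fl},\mathcal{A})\to\textrm{Crois}_S(G,\mathcal{A})$ sheafifies to the explicit map $\tau\colon a\mapsto(g\mapsto g\cdot a-a)$.

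The main obstacle I anticipate is justifying that sheafification is exact and interacts correctly with the Leray/relative cohomology sheaves — in other words, confirming that the sheaf associated to the presheaf $T\mapsto H^i(B_{G_T},\mathcal{A}_T)$ really is $R^i\pi_*\mathcal{A}$ (already supplied by Proposition \ref{oneprop}) and that the terms $H^i(S_{fl}/T,\mathcal{A}_T)$ sheafify to zero for $i=1,2$ on the relevant range, so that the boundary maps in the sheafified sequence collapse to give exactly the stated four-term sequence and the two isomorphisms. This vanishing is where the canonical-topology representability arguments for $\underline{\textrm{Crois}}$ and $\underline{\textrm{Ext}}$ play their role: the presheaves $T\mapsto H^i(S_{fl}/T,\mathcal{A}_T)$ have vanishing sheafification because cohomology classes die locally. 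Finally, the special case follows immediately: if $G$ acts trivially then $\tau=0$, so $\mathcal{A}^G=\mathcal{A}$ and the four-term sequence forces $H^1_S(B_G,\mathcal{A})\simeq\underline{\textrm{Crois}}_S(G,\mathcal{A})=\underline{\textrm{Hom}}(G,\mathcal{A})$, using the remark that crossed morphisms reduce to homomorphisms under a trivial action.
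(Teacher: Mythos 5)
Your proposal is correct and follows essentially the same route as the paper: apply the Giraud sequence of Proposition \ref{prop-Giraud-sequence} over each $T\rightarrow S$, view the result as an exact sequence of abelian presheaves, and sheafify, using Proposition \ref{oneprop} to identify the sheafified $H^i(B_{G_T},\mathcal{A}_T)$ terms and the local vanishing of flat cohomology (the paper cites \cite{SGA4}[V, Proposition 5.1]) to kill the terms $T\mapsto H^i(T_{fl},\mathcal{A}_T)$ for $i\geq 1$. The only slight imprecision is attributing that vanishing to the canonical-topology representability of $\underline{\textrm{Crois}}$ and $\underline{\textrm{Ext}}$ — these are independent facts — but this does not affect the argument.
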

\begin{proof}
By Prop. \ref{prop-Giraud-sequence}, for any $T$ over $S$ we have the exact sequence
$$0\rightarrow H^0(B_{G_T},\mathcal{A}_{T})\rightarrow H^0(T_{fl},\mathcal{A}_{T})\rightarrow\textrm{Crois}_T(G_T,\mathcal{A}_{T})\rightarrow H^1(B_{G_T},\mathcal{A}_{T})$$
$$\rightarrow H^1(T_{fl},\mathcal{A}_{T})\rightarrow\textrm{Ext}_{T}(G_T,\mathcal{A}_{T})\rightarrow
H^2(B_{G_T},\mathcal{A}_{T})\rightarrow H^2(T_{fl},\mathcal{A}_{T}).$$
This exact sequence is functorial in $T$,  so that it may be viewed  as an exact sequence of abelian presheaves on ${\bf Sch}/S$. Applying the associated sheaf functor together with Proposition \ref{oneprop}, we obtain the exact sequence of sheaves
$$0\rightarrow\mathcal{A}^G\rightarrow \mathcal{A}\rightarrow\underline{\textrm{Crois}}_S(G,\mathcal{A})\rightarrow H_S^1(B_{G},\mathcal{A})$$
$$\rightarrow 0\rightarrow\underline{\textrm{Ext}}_{S}(G,\mathcal{A})\rightarrow
H_S^2(B_{G},\mathcal{A})\rightarrow 0$$
since the sheafification of the presheaf $T\mapsto H^i(T_{fl},\mathcal{A}_{T})$ is trivial for $i\geq1$ (as it follows from \cite{SGA4}[V, Proposition 5.1] applied to the identity map $Id:S_{fl}\rightarrow S_{fl}$).
\end{proof}

\subsection{The Hochschild-Serre spectral sequence}

We consider a sequence of $S$-group schemes
$$1\rightarrow N\rightarrow G\rightarrow G/N\rightarrow 1$$
 which is exact with respect to the fppf-topology.

\begin{prop}\label{propHS}
For any abelian object $\mathcal{A}$ of $B_G$, there is a natural $G/N$--action on $H^j_{S}(B_N,\mathcal{A})$ and there is  a spectral sequence
$$H^i(G/N,H^j_{S}(B_N,\mathcal{\mathcal{A}}))\Longrightarrow H^{i+j}(G,\mathcal{A}).$$
\end{prop}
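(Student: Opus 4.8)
The plan is to exhibit this spectral sequence as the Leray spectral sequence of a morphism of topoi $q\colon B_G\rightarrow B_{G/N}$ induced by the quotient $G\rightarrow G/N$, composed with the structure map $B_{G/N}\rightarrow{\bf Sets}$. The quotient homomorphism provides a functor $q^*\colon B_{G/N}\rightarrow B_G$ which restricts a $G/N$--action to a $G$--action along $G\rightarrow G/N$; since all limits and colimits in a classifying topos are computed on the underlying objects of $S_{fl}$, this functor is both left exact and cocontinuous, hence is the inverse image of a morphism $q$. One checks immediately that $q^*\pi_{G/N}^*=\pi_G^*$ (both produce trivial actions), so that $q$ lies over $S_{fl}$. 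The Leray spectral sequence of the composite then reads
$$H^i(B_{G/N},R^jq_*\mathcal{A})\Longrightarrow H^{i+j}(B_G,\mathcal{A}),$$
whose abutment is $H^{i+j}(G,\mathcal{A})$ and whose second page is $H^i(G/N,R^jq_*\mathcal{A})$ by the very definition of the cohomology of $G/N$. Everything thus reduces to identifying $R^jq_*\mathcal{A}$, as an abelian object of $B_{G/N}$, with $H^j_S(B_N,\mathcal{A})$, and to checking that the $G/N$--action it thereby carries is the asserted natural one.

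To compute $R^jq_*\mathcal{A}$ I would localize over the universal torsor, reproducing the argument of Proposition~\ref{localproof}. Write $p\colon S_{fl}\rightarrow B_{G/N}$ for the canonical point, so that $B_{G/N}/E_{G/N}\simeq S_{fl}$ and $p^*$ is the functor forgetting the $G/N$--action. Pulling $q$ back along the localization $p$ and using \cite{SGA4}[IV, 5.10] yields a pull--back square
\[\xymatrix{
B_G/q^*E_{G/N}\ar[r]^-{p'}\ar[d]&B_G\ar[d]^q \\
S_{fl}\ar[r]^p&B_{G/N}
}\]
in which $p'$ is again a localization morphism. The crucial point is the equivalence $B_G/q^*E_{G/N}\simeq B_N$: the object $q^*E_{G/N}$ is $y(G/N)$ with $G$ acting by left translation through the quotient, and since $1\rightarrow N\rightarrow G\rightarrow G/N\rightarrow1$ is exact for the fppf--topology the map $G\rightarrow G/N$ is an $N$--torsor, so a $G$--equivariant sheaf over the homogeneous object $y(G/N)$ is, by descent along the fibre over the unit coset, the same datum as an $N$--equivariant sheaf over $S$. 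This is the relative analogue of the equivalence $B_G/E_G\simeq S_{fl}$ used in Proposition~\ref{localproof}, with the trivial stabilizer there replaced by $N$. Under it the left vertical map becomes $\pi_N\colon B_N\rightarrow S_{fl}$ and $p'^*$ becomes the functor restricting a $G$--action to an $N$--action.

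Granting this equivalence, the identification is formal and follows the template of Proposition~\ref{localproof}: the localization functor $p'^*$ preserves injective abelian objects, so the base--change map is an isomorphism and gives
$$p^*R^jq_*\mathcal{A}\simeq R^j\pi_{N*}(p'^*\mathcal{A})=R^j\pi_{N*}(\mathcal{A}|_N)=H^j_S(B_N,\mathcal{A}),$$
where $\mathcal{A}|_N$ denotes $\mathcal{A}$ regarded as an abelian object of $B_N$. Because $p^*$ merely forgets the $G/N$--action, this computes the underlying $S_{fl}$--sheaf of $R^jq_*\mathcal{A}$ as $H^j_S(B_N,\mathcal{A})$ and simultaneously equips the latter with the $G/N$--action carried by $R^jq_*\mathcal{A}$; this is the natural action of the statement, and the spectral sequence above becomes the Hochschild--Serre spectral sequence as claimed.

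I expect the one genuinely substantial step to be the equivalence $B_G/q^*E_{G/N}\simeq B_N$ together with the identification of the two structure maps, since this is exactly where the fppf--exactness hypothesis enters: it is needed to realize $G\rightarrow G/N$ as an $N$--torsor and hence to run the descent identifying equivariant sheaves on $y(G/N)$ with $N$--equivariant sheaves on $S$. This should follow from basic descent theory in topoi, in the same spirit as the descent arguments in Lemma~\ref{lem-2} and Proposition~\ref{prop-BGY}, but it is the place where care is required; the construction of $q$, the Leray spectral sequence, and the localization base change are all formal and parallel to material already established above.
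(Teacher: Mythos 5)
Your proposal is correct and follows essentially the same route as the paper: the morphism $f:B_G\rightarrow B_{G/N}$ induced by the quotient, its Leray spectral sequence, the localization pull--back square obtained from $B_{G/N}/E_{G/N}\simeq S_{fl}$ and $B_G/(G/N)\simeq B_N$, and the base--change isomorphism $p^*R^jf_*\simeq R^j\pi_{N*}p'^*$ using that the localization functor preserves injective abelian objects. The only difference is that you flag and sketch a justification (via the $N$--torsor structure of $G\rightarrow G/N$ and descent) for the equivalence $B_G/(G/N)\simeq B_N$, which the paper simply asserts as canonical; that is a reasonable elaboration rather than a different argument.
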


\begin{proof}
The quotient map
$G\rightarrow G/N$ induces a morphism of classifying topoi
$$f:B_G\rightarrow B_{G/N}, $$
and hence a spectral sequence
$$H^i(B_{G/N},R^j(f_*)\mathcal{A})\Longrightarrow H^{i+j}(B_{G},\mathcal{A})$$
which is functorial in the abelian object $\mathcal{A}$ of $B_G$.
In view of the canonical equivalences
$$B_{G/N}/E_{G/N}\simeq S_{fl} \mbox{ and }B_G/(G/N)\simeq B_N,$$
we have a pull--back square
\[\xymatrix{
B_N\ar[r]^{\pi}\ar[d]_{p'}&S_{fl}\ar[d]_{p} \\
B_G\ar[r]^f&B_{G/N}
}\]
where the vertical maps are localization maps. As previously this "localization pull--back" yields isomorphisms for all $n\geq 0$
$$p^*R^n(f_*)\simeq R^n(\pi_*)p'^*.$$
Moreover, the functor $p'^*:B_G\rightarrow B_N$ maps a $G$-object $F$ to $F$ on which $N$ acts via $N\rightarrow G$, so that, for $\mathcal{A}$ an abelian object of $B_G$, $R^n(\pi_*)p'^*\mathcal{A}$ is really what we (slightly abusively) denote by $H_S^n(B_N,\mathcal{A})$.
\end{proof}

Recall that there is a canonical map $\tau:\mathcal{A}\rightarrow\underline{\textrm{Crois}}_{S}(G,\mathcal{A})$, and that this map is the zero map if $G$ acts trivially on $\mathcal{A}$.
\begin{cor}\label{cor-short-exact-HS-spectralsequ}
There is an exact sequence
$$0\rightarrow H^1(B_{G/N},\mathcal{A}^N)\rightarrow H^1(B_G,\mathcal{A})\rightarrow
H^0(B_{G/N},\underline{\emph{Crois}}_{S}(N,\mathcal{A})/\textrm{\emph{Im}}(\tau))$$
$$\rightarrow H^2(B_{G/N},\mathcal{A}^N)\rightarrow H^2(B_G,\mathcal{A}).$$
If $N$ acts trivially on $\mathcal{A}$, we obtain an exact sequence
$$0\rightarrow H^1(B_{G/N},\mathcal{A})\rightarrow H^1(B_G,\mathcal{A})\rightarrow
H^0(B_{G/N},\underline{\emph{\textrm{Hom}}}(N,\mathcal{A}))$$
$$\rightarrow H^2(B_{G/N},\mathcal{A})\rightarrow H^2(B_G,\mathcal{A}).$$
\end{cor}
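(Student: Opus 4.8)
The plan is to deduce both exact sequences from the five-term (low-degree) exact sequence attached to the Hochschild--Serre spectral sequence of Proposition \ref{propHS}. Recall that any first-quadrant cohomological spectral sequence $E_2^{i,j}\Rightarrow H^{i+j}$ gives rise to the low-degree exact sequence
$$0\rightarrow E_2^{1,0}\rightarrow H^1\rightarrow E_2^{0,1}\stackrel{d_2}{\rightarrow} E_2^{2,0}\rightarrow H^2 .$$
Applying this to
$$E_2^{i,j}=H^i(B_{G/N},H^j_{S}(B_N,\mathcal{A}))\Longrightarrow H^{i+j}(B_G,\mathcal{A})$$
reduces the entire statement to identifying the two sheaves $H^0_{S}(B_N,\mathcal{A})$ and $H^1_{S}(B_N,\mathcal{A})$ together with the $G/N$--actions they carry.

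These identifications are precisely what the corollary of the previous subsection provides, read with $N$ in place of $G$: it gives a canonical isomorphism $H^0_{S}(B_N,\mathcal{A})\simeq\mathcal{A}^N$ together with the four-term exact sequence
$$0\rightarrow\mathcal{A}^N\rightarrow\mathcal{A}\stackrel{\tau}{\rightarrow}\underline{\textrm{Crois}}_S(N,\mathcal{A})\rightarrow H^1_{S}(B_N,\mathcal{A})\rightarrow0,$$
whence $H^1_{S}(B_N,\mathcal{A})\simeq\underline{\textrm{Crois}}_S(N,\mathcal{A})/\textrm{Im}(\tau)$. Substituting $E_2^{1,0}=H^1(B_{G/N},\mathcal{A}^N)$, then $E_2^{2,0}=H^2(B_{G/N},\mathcal{A}^N)$, and finally $E_2^{0,1}=H^0(B_{G/N},\underline{\textrm{Crois}}_S(N,\mathcal{A})/\textrm{Im}(\tau))$ into the five-term sequence yields the first assertion.

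The step I expect to demand the most care — the genuine content beyond bookkeeping — is verifying that these identifications are compatible with the $G/N$--actions furnished by Proposition \ref{propHS}, so that forming $H^i(B_{G/N},-)$ of them is legitimate. I would establish this by tracing the construction of the $G/N$--action in the proof of Proposition \ref{propHS}: it arises from the localization pull--back exhibiting $B_N$ as $B_G/(G/N)$ and the isomorphism $p^*R^n(f_*)\simeq R^n(\pi_*)p'^*$. Since both $\mathcal{A}^N$ and the presheaf-level description of $\underline{\textrm{Crois}}_S(N,\mathcal{A})$ are manifestly functorial in the group variable, the four-term sequence above is natural in $N$, and hence the isomorphisms are $G/N$--equivariant by naturality; I would spell this out just enough to see the action is transported correctly.

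Finally, the second exact sequence is the specialization of the first to the case where $N$ acts trivially on $\mathcal{A}$. In that case $\mathcal{A}^N=\mathcal{A}$, the map $\tau: a\mapsto(n\mapsto n\cdot a-a)$ vanishes identically, and $\underline{\textrm{Crois}}_S(N,\mathcal{A})=\underline{\textrm{Hom}}(N,\mathcal{A})$ by the remark preceding Proposition \ref{prop-Giraud-sequence}. Substituting these simplifications into the first sequence replaces $\mathcal{A}^N$ by $\mathcal{A}$ and $\underline{\textrm{Crois}}_S(N,\mathcal{A})/\textrm{Im}(\tau)$ by $\underline{\textrm{Hom}}(N,\mathcal{A})$, giving the stated sequence.
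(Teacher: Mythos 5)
Your proposal is correct and follows essentially the same route as the paper, which simply cites the five-term exact sequence of the Hochschild--Serre spectral sequence from Proposition \ref{propHS} and implicitly invokes the identifications $H^0_S(B_N,\mathcal{A})\simeq\mathcal{A}^N$ and $H^1_S(B_N,\mathcal{A})\simeq\underline{\textrm{Crois}}_S(N,\mathcal{A})/\textrm{Im}(\tau)$ from the corollary of the preceding subsection. You are in fact more careful than the paper about checking that these identifications are compatible with the $G/N$--action, a point the paper leaves entirely tacit.
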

\begin{proof} This is the five--term  exact sequence given by the Hochschild--Serre spectral sequence of Proposition \ref{propHS} above.
\end{proof}

\section{Invariants of symmetric bundles}

In the next  sections we fix a scheme  $Y\rightarrow \mathrm{Spec}(\mathbb{Z}[1/2])$ in which $2$ is invertible. The principal goal of this section is to associate to any symmetric bundle over $Y$ cohomological invariants which generalize the
classical Hasse-Witt invariants associated to quadratic forms on vector spaces over fields.

\subsection{Symmetric bundles}

A bilinear form on $Y$ consists of a locally free $\mathcal{O}_Y$-module $V$ (which one may see as a vector bundle on $Y$) and a morphism of $\mathcal{O}_Y$-modules
$$B: V\otimes_{\mathcal{O}_Y} V\rightarrow \mathcal{O}_{Y}$$
such that for any affine open subscheme $Z$ of $Y$ the induced map
$$B_{Z}: V(Z)\times V(Z)\rightarrow \mathcal{O}_{Y}(Z)$$ is a symmetric
bilinear form on the $\mathcal{O}_{Y}(Z)$-module $V(Z)$. Let $V^{\vee}$ be the dual of $V$. The form $B$ induces a
morphism of
bundles
$$\varphi_{B}: V\rightarrow V^{\vee}$$ which is self-adjoint. We call $B$ \emph{non-degenerate} or \emph{unimodular}
if $\varphi_{B}$  is an isomorphism. A \emph{symmetric bundle} is a pair $(V, B)$ consisting of a $Y$-symmetric bundle $V$ endowed with a
unimodular form $B$. In general we will denote such a bundle by $(V, q)$ where $q$ is the quadratic form associated to $B$. Since
$2$ is invertible in $Y$, we will refer to $(V, q)$  either as a symmetric bundle or as a quadratic form over $Y$. In the case where  $Y=\mbox {Spec}(R)$ is affine, a symmetric bundle $(V, q)$ is given by a pair $(M, B)$ where $M$
is a locally free $R$-module and $B$ is a unimodular,  symmetric, bilinear form on $M$.

Let $(V, q)$ be a symmetric bundle over $Y$ and let $f: T\rightarrow Y$ be a morphism of schemes.  We define the pull back of
$(V, q)$ by $f$ as the symmetric bundle $(f^{*}(V), f^{*}(q))$ on $T$ where $f^{*}(V)$ is the pull back of
$V$ endowed with the form  $f^{*}(q)$   defined
on any affine open subsets $U'$ and $U$ of $T$ and $Y$,  such that $f(U') \subset U$,   by scalar extension from $q$.
We denote by $(V_T, q_T)$ the resulting symmetric bundle on $T$.

An isometry of symmetric bundles  $u:(V, q)\rightarrow (E, r)$ on $Y$ is an isomorphism of locally free $\mathcal{O}_Y$-modules $u: V\rightarrow E$
such that $r(u(x))=q(x)$ for any open affine subscheme $U$ of $Y$ and any $x$ in $V(U)$. We denote this set by  $\mbox{Isom}(q, r)$.  It follows
from (\cite{DG} III, Section 5, n° 2) that $T\rightarrow \mbox{Isom}(q_{T}, r_{T})$ is a sheaf of sets on ${\bf Sch}/Y$ endowed with the fppf--topology. We denote this sheaf by ${\bf Isom}(q, r)$, or by ${\bf Isom}_Y(q, r)$. We define the \emph{orthogonal group} ${\bf O}(q)$ as the group ${\bf Isom}(q, q)$ of $Y_{fl}$. This sheaf is representable by a smooth algebraic group scheme over $Y$, which we also denote by ${\bf O}(q)$. We denote by $(\mathcal{O}^{n}_{Y}, t_{n}=x_{1}^{2}+...+x_{n}^{2})$ the standard form over $Y$ of rank $n$ and by
 ${\bf O}(n)$ (or by ${\bf O}(n)_Y$ if we wish to stress  that the base scheme is $Y$) the orthogonal group of $t_{n}$.

\subsection{Twisted forms}\label{sect-twisted-forms}
Let $(V, q)$ be a symmetric bundle over $Y$. A  symmetric bundle $(F, r)$ is  called a {\it twisted form of $(V, q)$} if there exists
a fppf-covering $\{U_{i}\rightarrow Y,\,i\in I\}$ such that there exists an isometry
$$(V\otimes_{Y}U_{i}, q\otimes_{Y}U_{i})\simeq (F\otimes_{Y}U_{i}, r\otimes_{Y}U_{i})\ \ \forall i\in I.$$
Recall that a \emph{groupoid} is a small category whose morphisms are all isomorphisms. We denote by ${\bf Twist}(q)$ the groupoid whose objects are twisted forms of $(V,q)$ and morphisms are isometries. Let ${\bf Twist}(q)/_\sim$ be the set of isometry classes of twists of $(V, q)$, which we consider as a set pointed by the class of $(V, q)$. If  $(F, r)$ is a twist of $(V, q)$ then ${\bf Isom}(q, r)$ is an ${\bf O}(q)$--torsor of $Y_{fl}$. We denote by ${\bf Tors}(Y_{fl}, {\bf O}(q))/_\sim$ the pointed set of isometry classes of ${\bf Tors}(Y_{fl}, {\bf O}(q))$, pointed by the class of the trivial torsor.

\begin{prop}\label{propone}
The canonical functor
$$\appl{{\bf Twist}(q)}{{\bf Tors}(Y_{fl}, {\bf O}(q))}{(F, r)}{{\bf Isom}(q, r)}
$$
is an equivalence.
\end{prop}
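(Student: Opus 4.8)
The plan is to establish that the functor $(F,r)\mapsto {\bf Isom}(q,r)$ is an equivalence of groupoids by checking it is fully faithful and essentially surjective, exploiting the fact that both source and target are groupoids (so there are no composability subtleties beyond isomorphisms). First I would verify that the functor is well-defined at the level of morphisms: given an isometry $u:(F,r)\to(F',r')$ between two twists of $(V,q)$, one obtains an induced morphism ${\bf Isom}(q,r)\to{\bf Isom}(q,r')$ by post-composition with $u$, namely $\phi\mapsto u\circ\phi$ on each affine open (equivalently, on each fppf-test scheme $T$), and this is manifestly ${\bf O}(q)$-equivariant for the right action by pre-composition. One checks functoriality and that it respects the identity and composition, which is routine.

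For \emph{full faithfulness}, the key observation is that both $\mathrm{Hom}_{{\bf Twist}(q)}((F,r),(F',r'))$ and $\mathrm{Hom}_{{\bf Tors}}({\bf Isom}(q,r),{\bf Isom}(q,r'))$ are either empty or torsors under the respective automorphism groups, and the automorphism group of any torsor ${\bf Isom}(q,r)$ as an ${\bf O}(q)$-torsor is canonically ${\bf O}(q)$-equivariant-trivial, i.e. isomorphisms of $\mathcal{O}(q)$-torsors that commute with the action correspond exactly to elements realizing an isometry. More concretely, I would argue that any ${\bf O}(q)$-equivariant morphism $\psi:{\bf Isom}(q,r)\to{\bf Isom}(q,r')$ is automatically an isomorphism (a morphism of torsors over the same group is always an isomorphism), and that $\psi$ is induced by a \emph{unique} isometry $u:(F,r)\to(F',r')$. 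Existence and uniqueness of such $u$ can be extracted locally: over an fppf-covering trivializing both torsors, $\psi$ is given by left translation by a section of ${\bf O}(q)$, and the torsor compatibility forces these local sections to glue into a global isometry $u$. Uniqueness follows because the right action of ${\bf O}(q)$ on ${\bf Isom}(q,r)$ is simply transitive, so a morphism is determined by the image of any single (local) point.

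For \emph{essential surjectivity}, I would use the equivalence already recorded in the paper between ${\bf O}(q)$-torsors of $Y_{fl}$ and twisted forms: by the general theory of torsors under an automorphism group scheme, the pointed set ${\bf Tors}(Y_{fl},{\bf O}(q))/_\sim$ classifies isometry classes of twists, which is precisely the content that ${\bf Isom}(q,-)$ hits every torsor up to isomorphism. Concretely, given an arbitrary ${\bf O}(q)$-torsor $P$, one forms the contracted product (twist) $(F_P,r_P):=P\wedge^{{\bf O}(q)}(V,q)$, using the left ${\bf O}(q)$-action on $(V,q)$ by isometries; this is a symmetric bundle that is fppf-locally isometric to $(V,q)$, hence a twist, and one checks that ${\bf Isom}(q,r_P)\simeq P$ as ${\bf O}(q)$-torsors via the canonical evaluation map. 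The verification that $r_P$ descends to a genuine symmetric bundle on $Y$ uses that the form is ${\bf O}(q)$-invariant together with fppf-descent for locally free modules with bilinear forms (this is where \cite{DG} III, Section 5 enters, guaranteeing representability and descent).

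The main obstacle I expect is the essential-surjectivity step, specifically confirming that the contracted product $P\wedge^{{\bf O}(q)}(V,q)$ really produces a symmetric bundle representable by a locally free $\mathcal{O}_Y$-module rather than merely an abstract fppf-sheaf: one must invoke fppf-descent for quasi-coherent sheaves (effectivity of descent for vector bundles) to pass from the locally trivial data back to an honest bundle on $Y$. The full-faithfulness step, by contrast, is essentially formal once one knows that a morphism of torsors under a fixed group is automatically an isomorphism and that the action is simply transitive.
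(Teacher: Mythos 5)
Your proposal is correct, but it takes a genuinely different route from the paper. The paper outsources the hard part to Demazure--Gabriel: it cites \cite{DG}[III, \S 5, n$^\circ$2, 2.1] for the statement that $(F,r)\mapsto[{\bf Isom}(q,r)]$ is a bijection of pointed sets ${\bf Twist}(q)/_\sim\;\to\;{\bf Tors}(Y_{fl},{\bf O}(q))/_\sim$ (which packages both essential surjectivity and injectivity on isomorphism classes), and then, since both categories are groupoids, reduces full faithfulness to comparing automorphism groups; that comparison is disposed of in one line by observing that ${\bf Isom}(q,r)$ is a \emph{left} ${\bf O}(r)$--torsor, so that ${\bf O}(r)(Y)\to\mathrm{Aut}_{{\bf Tors}}({\bf Isom}(q,r))$ is bijective. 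You instead prove everything from scratch: essential surjectivity via the contracted product $P\wedge^{{\bf O}(q)}(V,q)$ together with fppf descent for locally free modules with forms (and the evaluation map $P\to{\bf Isom}(q,r_P)$), and full faithfulness via the local-gluing argument that an equivariant map of torsors is post-composition by the unique isometry $u_U=\psi(\phi_U)\circ\phi_U^{-1}$, these local sections being independent of the trivialization by simple transitivity and hence glueing. Your full-faithfulness argument is really the same simple-transitivity fact as the paper's, just phrased locally rather than via the left ${\bf O}(r)$--action; what your approach buys is self-containedness (no appeal to the classification result in \cite{DG}), at the cost of having to justify effectivity of fppf descent for the twisted bundle --- an obstacle you correctly flag and which the paper's citation silently absorbs.
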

\begin{proof}  By \cite{DG}[III, § 5, n°2, 2.1] the functor above induces an isomorphism of pointed sets:
$$\appl{{\bf Twist}(q)/_\sim}{{\bf Tors}(Y_{fl}, {\bf O}(q))/_\sim}{(F, r)}{[{\bf Isom}(q, r)]}
$$
where $[{\bf Isom}(q, r)]$ is the isometry class of the torsor ${\bf Isom}(q, r)$. Since ${\bf Twist}(q)$ and ${\bf Tors}(Y_{fl}, {\bf O}(q))$ are both groupoids, we are  reduced to showing that the automorphism group of ${\bf Isom}(q, r)$ in ${\bf Tors}(Y_{fl}, {\bf O}(q))$ is in bijection with the automorphism group of $(F, r)$ in  ${\bf Twist}(q)$. In other words, one has to show that the map $${\bf O}(r)(Y)\longrightarrow \mathrm{Hom}_{{\bf Tors}(Y_{fl}, {\bf O}(q))}({\bf Isom}(q, r),{\bf Isom}(q, r))$$
is bijective. But this follows from the fact that ${\bf Isom}(q, r)$ is a left ${\bf O}(r)$-torsor.

\end{proof}

Since ${\bf O}(q)$ is smooth,  the functor ${\bf Tors}(Y_{et}, {\bf O}(q))\rightarrow {\bf Tors}(Y_{fl}, {\bf O}(q))$
is an equivalence.  Hence any twist of $q$ is already split by an \'etale covering family.
This can also be seen as follows: let $(\mathcal{O}^{n}_{Y}, t_{n}=x_{1}^{2}+...+x_{n}^{2})$ be the standard form over $Y$ of rank $n$ and let
 ${\bf O}(n)$ be the orthogonal group of $t_{n}$.  Since on any strictly
henselian local ring (in which $2$ is invertible) a quadratic form of rank $n$ is isometric to the standard form,  any symmetric bundle $(V,q)$ on $Y$ is locally isometric to the standard form $t_{n}$ for the \'etale topology. We denote by ${\bf Quad}_{n}(Y)$  the groupoid whose objects are symmetric bundles of rank $n$ over $Y$ and whose morphisms are isometries. There are canonical equivalences of categories
\begin{eqnarray}
{\bf Quad}_{n}(Y)&\simeq& {\bf Twist}(t_{n})\\
&\simeq&{\bf Tors}(Y_{et}, {\bf O}(n))\\
&\simeq&{\bf Tors}(Y_{fl}, {\bf O}(n))\\
&\simeq&{\bf Homtop}_{Y_{fl}}(Y_{fl}, B_{{\bf O}(n)})^{op}. 
\end{eqnarray}
Given a symmetric bundle $(V,q)$ on $Y$, we denote by $\{q\}: Y_{fl}\rightarrow B_{{\bf O}(n)}$ the
morphism of topoi associated to the quadratic form $q$ by this equivalence.
\begin{prop}\label{prop-define-Quad-as-Homtop}
There is an equivalence of categories
$$\appl{{\bf Quad}_{n}(Y)}{{\bf Homtop}_{Y_{fl}}(Y_{fl}, B_{{\bf O}(n)})^{op}}{(V,q)}{\{q\}} .$$
\end{prop}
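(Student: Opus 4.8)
The plan is to exhibit the functor $(V,q)\mapsto\{q\}$ as the composite of the chain of canonical equivalences displayed just above the statement, each factor of which has already been established. Since $\{q\}$ was by construction the image of $q$ under this chain, the only real work is to assemble the pieces and record that they compose to an equivalence carrying $(V,q)$ to $\{q\}$.

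First I would identify the groupoids ${\bf Quad}_{n}(Y)$ and ${\bf Twist}(t_n)$. Because $2$ is invertible and every non-degenerate quadratic form of rank $n$ over a strictly henselian local ring is isometric to $t_n$, any rank-$n$ symmetric bundle on $Y$ is \'etale-locally (hence fppf-locally) isometric to the standard form $t_n$, and is therefore a twisted form of $t_n$. Conversely every twist of $t_n$ is a rank-$n$ symmetric bundle, and the morphisms of both groupoids are the isometries. Hence the identity assignment on objects and morphisms realizes the equality ${\bf Quad}_{n}(Y)={\bf Twist}(t_n)$.

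Next I would apply Proposition \ref{propone} to the standard form, noting that ${\bf O}(t_n)={\bf O}(n)$: the functor $(F,r)\mapsto{\bf Isom}(t_n,r)$ yields an equivalence ${\bf Twist}(t_n)\simeq{\bf Tors}(Y_{fl},{\bf O}(n))$. Finally, Theorem \ref{Thm-classifying--torsors}, applied with $\mathcal{E}=\mathcal{S}=Y_{fl}$ and $G={\bf O}(n)$, gives the canonical equivalence ${\bf Tors}(Y_{fl},{\bf O}(n))\simeq{\bf Homtop}_{Y_{fl}}(Y_{fl},B_{{\bf O}(n)})^{op}$, sending a torsor $T$ to the morphism of topoi that classifies it. Composing the three equivalences sends $(V,q)$ to the morphism classifying the ${\bf O}(n)$-torsor ${\bf Isom}(t_n,q)$, which is exactly $\{q\}$.

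I do not expect a genuine obstacle here: the mathematical substance is carried entirely by Proposition \ref{propone}, Theorem \ref{Thm-classifying--torsors}, and the local triviality of quadratic forms over strictly henselian rings. The one point to keep straight is the bookkeeping of opposite categories — the torsor-classification equivalence of Theorem \ref{Thm-classifying--torsors} is contravariant — together with the (automatic) verification that the composite agrees on objects with the explicit rule $(V,q)\mapsto\{q\}$, since $\{q\}$ was introduced precisely as the image of $q$ under this composite.
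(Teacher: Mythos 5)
Your proposal is correct and follows exactly the route the paper takes: the proposition is, in the paper, an immediate consequence of the displayed chain of equivalences ${\bf Quad}_{n}(Y)\simeq{\bf Twist}(t_{n})\simeq{\bf Tors}(Y_{fl},{\bf O}(n))\simeq{\bf Homtop}_{Y_{fl}}(Y_{fl},B_{{\bf O}(n)})^{op}$, assembled from the strictly henselian local triviality of rank-$n$ forms, Proposition \ref{propone} applied to $t_n$, and Theorem \ref{Thm-classifying--torsors}, with $\{q\}$ defined as the image of $q$ under this composite. Your omission of the intermediate \'etale-torsor category is harmless, since Proposition \ref{propone} lands directly in flat torsors.
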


\subsection{Invariants in low degree: $\mathrm{det}[q]$ and $[C_q]$}\label{subsect-invariants}

For a symmetric bundle $(V, q)$ over $Y$, there are canonical cohomology classes in $H^1(B_{{\bf O}(q)}, {\bf Z}/2{\bf Z})$ and $H^2(B_{{\bf O}(q)}, {\bf Z}/2{\bf Z})$.

\subsubsection{Degree $1$}
The determinant map
$$\mbox{det}_{{\bf O}(q)}: {\bf O}(q)\rightarrow \mu_2\stackrel{\sim}{\rightarrow}{\bf Z}/2{\bf Z}$$
is a morphism of $Y$-group schemes. By Proposition \ref{prop-Giraud-sequence}, there is a canonical map
$$\mbox{Hom}_Y({{\bf O}(q)}, {\bf Z}/2{\bf Z})\rightarrow H^1(B_{{\bf O}(q)}, {\bf Z}/2{\bf Z}).$$
This yields a class $\mbox{det}[q]\in H^1(B_{{\bf O}(q)}, {\bf Z}/2{\bf Z})$.
\begin{definition}
The class $\mbox{\emph{det}}[q]\in H^1(B_{{\bf O}(q)}, {\bf Z}/2{\bf Z})$ is the class defined by the morphism $\mbox{\emph{det}}_{{\bf O}(q)}:{\bf O}(q)\rightarrow {\bf Z}/2{\bf Z}$.
\end{definition}
The cohomology class $\mbox{det}[q]$ is represented by the morphism
$$B_{\mbox{det}_{{\bf O}(q)}}:B_{{\bf O}(q)}\rightarrow B_{{\bf Z}/2{\bf Z}}.$$
The ${\bf Z}/2{\bf Z}$--torsor of $B_{{\bf O}(q)}$ corresponding to this morphism is given by ${\bf O}(q)/{\bf SO}(q)$ with its natural left ${\bf O}(q)$--action and its right ${\bf Z}/2{\bf Z}$--action via ${\bf O}(q)/{\bf SO}(q)\simeq{\bf Z}/2{\bf Z}$. In other words, we may write
$$\mbox{det}[q]=[{\bf O}(q)/{\bf SO}(q)]\in H^1(B_{{\bf O}(q)}, {\bf Z}/2{\bf Z}).$$

\subsubsection{Degree $2$}
One can define the Clifford algebra of the  symmetric bundle $(V, q)$; this is a sheaf of algebras over $Y$.
This  leads us to consider the  group $\widetilde {\bf O}(q)$ which, in this context, is the generalization of the group $\mbox{Pin} (q)$ (see [5], Section 1.9 and [17], Appendix 4). 
The group $\widetilde {\bf O}(q)$ is
a smooth group scheme over $Y$ which  is an extension of ${\bf O}(q)$ by ${\bf Z}/2{\bf Z}$, i.e. there is
an exact sequence of groups in $Y_{fl}$
\begin{equation}\label{ext-Cq}
1\rightarrow {\bf Z}/2{\bf Z}\rightarrow \widetilde {\bf O}(q)\rightarrow {\bf O}(q)\rightarrow 1.
\end{equation}
Such a sequence defines a class $C_q\in \mbox{Ext}_{Y}({\bf O}(q), {\bf Z}/2{\bf Z})$ and therefore (see Proposition \ref{prop-Giraud-sequence}) a cohomology class in $H^{2}(B_{{\bf O}(q)}, {\bf Z}/2{\bf Z})$ that we denote by $[C_{q}]$.
\begin{definition}
The class $[C_{q}]\in H^2(B_{{\bf O}(q)}, {\bf Z}/2{\bf Z})$ is the class defined by the extension $C_q\in \mbox{\emph{Ext}}_{Y}({\bf O}(q), {\bf Z}/2{\bf Z})$.
\end{definition}

\subsection{The fundamental morphisms $T_q$ and $\Theta_q$}
Let $(\mathcal{O}^{n}_{Y}, t_{n}=x_{1}^{2}+...+x_{n}^{2})$ denote the standard form over $Y$ of rank $n$ and let
${\bf O}(n):={\bf Isom}(t_{n}, t_n)$ be the orthogonal group of $t_{n}$. Let $\pi: B_{{\bf O}(q)}\rightarrow Y_{fl}$ be the morphism of topoi associated to the group homomorphism ${\bf O}(q)\rightarrow \{e\}$.
The sheaf ${\bf Isom}(t_{n}, q)$ has a natural left action of ${\bf O}(q)$ and a natural right action of ${\bf O}(n)$:
$$\appl{{\bf O}(q)\times{\bf Isom}(t_{n}, q)\times {\bf O}(n)}{{\bf Isom}(t_{n}, q)}{(\tau,f,\sigma)}{\tau\circ f\circ\sigma}$$
These actions are compatible; more precisely, ${\bf Isom}(t_{n}, q)$ is naturally an object of $B_{{\bf O}(q)}$ which carries a right action of $\pi^*{\bf O}(n)$.

\begin{lem} The sheaf ${\bf Isom}(t_{n}, q)$ is a $\pi^{*}{\bf O}(n)$--torsor of $B_{{\bf O}(q)}$.
\end{lem}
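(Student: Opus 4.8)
The plan is to verify directly the two conditions in the definition of a torsor given in Section~2.2, applied to the group $\pi^{*}{\bf O}(n)$ acting on the right of $T:={\bf Isom}(t_{n},q)$ in the topos $\mathcal{E}=B_{{\bf O}(q)}$. The compatibility of the left ${\bf O}(q)$-action and the right ${\bf O}(n)$-action already guarantees that $T$ is an object of $B_{{\bf O}(q)}$ and that the right action $\mu\colon T\times\pi^{*}{\bf O}(n)\to T$, $(f,\sigma)\mapsto f\circ\sigma$, is a morphism of $B_{{\bf O}(q)}$. So it remains to check that $(p_{1},\mu)$ is an isomorphism and that $T\to e_{\mathcal{E}}$ is an epimorphism.

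For the isomorphism condition I would write down an explicit inverse. Since every section $f$ of $T$ is an isometry $t_{n}\to q$ it is invertible, and for two sections $f,g$ of $T$ the composite $f^{-1}\circ g$ is an isometry $t_{n}\to t_{n}$, i.e. a section of ${\bf O}(n)$. Hence
$$(f,g)\longmapsto (f,f^{-1}\circ g)$$
defines a morphism $T\times T\to T\times\pi^{*}{\bf O}(n)$ which is a two-sided inverse of $(p_{1},\mu)\colon(f,\sigma)\mapsto(f,f\circ\sigma)$. A short bookkeeping check shows this inverse is ${\bf O}(q)$-equivariant: on $T\times T$ the group acts diagonally by $\tau\cdot(f,g)=(\tau f,\tau g)$, on $T\times\pi^{*}{\bf O}(n)$ it acts only on the first factor, and $(\tau f)^{-1}\circ(\tau g)=f^{-1}\circ g$. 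Thus both maps are morphisms of $B_{{\bf O}(q)}$ and $(p_{1},\mu)$ is an isomorphism there.

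The remaining, and genuinely substantive, point is the epimorphism condition. I would reduce it to $Y_{fl}$ using the canonical morphism $p\colon Y_{fl}\to B_{{\bf O}(q)}$ whose inverse image $p^{*}$ is the forgetful functor. Being faithful, $p^{*}$ reflects epimorphisms; and since it is an inverse image functor it is left exact, so $p^{*}(e_{\mathcal{E}})=e_{Y_{fl}}$. Therefore $T\to e_{\mathcal{E}}$ is an epimorphism in $B_{{\bf O}(q)}$ as soon as the underlying morphism of sheaves $p^{*}T={\bf Isom}(t_{n},q)\to Y$ is an epimorphism in $Y_{fl}$, i.e. locally surjective. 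This is exactly where the geometry of quadratic forms enters: over a strictly henselian local ring in which $2$ is invertible every rank-$n$ symmetric bundle is isometric to $t_{n}$, so $q$ becomes isometric to $t_{n}$ over an \'etale covering $\{U_{i}\to Y\}$, which means that ${\bf Isom}(t_{n},q)$ admits a section over each $U_{i}$. Since \'etale covers are fppf covers, ${\bf Isom}(t_{n},q)\to Y$ is an epimorphism of $Y_{fl}$, and the verification is complete. The main obstacle is precisely this last step, namely transporting the local triviality of $q$ (a statement in $Y_{fl}$) back into a statement about the final object of $B_{{\bf O}(q)}$; this is handled cleanly by the observation that the forgetful functor $p^{*}$ is faithful and hence reflects epimorphisms.
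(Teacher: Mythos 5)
Your proof is correct, but it takes a different route from the paper's. You verify the two conditions in the raw definition of a torsor: the isomorphism $(p_1,\mu)\colon T\times\pi^*{\bf O}(n)\to T\times T$ is exhibited globally via the explicit inverse $(f,g)\mapsto(f,f^{-1}\circ g)$ on sections over schemes (a generating family of $Y_{fl}$), with a direct check of ${\bf O}(q)$-equivariance, and the epimorphism condition is transported from $Y_{fl}$ using the fact that the forgetful inverse-image functor is faithful and hence reflects epimorphisms. The paper instead invokes the equivalent characterization of torsors by local triviality: it takes an \'etale cover $U\to Y$ splitting $q$ and shows that $U\times E_{{\bf O}(q)}\to *$ is a cover of the final object of $B_{{\bf O}(q)}$ over which ${\bf Isom}(t_n,q)$ becomes the trivial $\pi^*{\bf O}(n)$-torsor, using the equivalence $B_{{\bf O}(q)}/E_{{\bf O}(q)}\simeq Y_{fl}$ to strip off the ${\bf O}(q)$-action. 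Both arguments rest on the same geometric input (local isometry of $q$ with $t_n$ over a strictly henselian base). Your version is more elementary and avoids the slice-topos bookkeeping, and it establishes the simple-transitivity isomorphism globally rather than only locally; the paper's version has the side benefit of producing the explicit trivializing cover $U\times E_{{\bf O}(q)}$, a device that reappears in the \v{C}ech computations of Section 5. The only point worth making explicit in your write-up is that defining $(f,g)\mapsto(f,f^{-1}\circ g)$ on sections over schemes and checking functoriality does determine a morphism of sheaves because representables generate $Y_{fl}$; the paper uses this convention elsewhere, so this is a presentational remark rather than a gap.
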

\begin{proof}
On the one hand, ${\bf Isom}(t_{n}, q)$ is an ${\bf O}(n)$--torsor of $Y_{fl}$,  since there exists
a fppf-covering (or equivalently, an \'etale covering) $U\rightarrow Y$  and an isometry
$$(V\otimes_{Y}U, q\otimes_{Y}U)\simeq (\mathcal{O}_Y^n\otimes_{Y}U, t_n\otimes_{Y}U).$$
We obtain a covering $U\rightarrow Y$ in $Y_{fl}$ and an ${\bf O}(n)$--equivariant isomorphism in $U_{fl}\simeq Y_{fl}/U$:
$$U\times{\bf Isom}(t_{n}, q)\stackrel{\sim}{\rightarrow}U\times {\bf O}(n).$$
On the other hand, there is a canonical equivalence $Y_{fl}\simeq B_{{\bf O}(q)}/E_{{\bf O}(q)}$ such that
the composite morphism $$f:Y_{fl}\stackrel{\sim}{\rightarrow} B_{{\bf O}(q)}/E_{{\bf O}(q)}\rightarrow B_{{\bf O}(q)}$$
is the map induced by the morphism of groups $1\rightarrow {\bf O}(q)$; in other words, the inverse image functor $f^*$ forgets the ${\bf O}(q)$--action. We may therefore view  respectively  the right $(E_{{\bf O}(q)}\times \pi^*{\bf O}(n))$-objects $E_{{\bf O}(q)}\times{\bf Isom}(t_{n}, q)$ and $E_{{\bf O}(q)}\times \pi^*{\bf O}(n)$ of the topos $B_{{\bf O}(q)}/E_{{\bf O}(q)}$ as the right ${\bf O}(n)$-objects ${\bf Isom}(t_{n}, q)$ (with no ${\bf O}(q)$--action) and ${\bf O}(n)$ of the topos $Y_{fl}$. We obtain an ${\bf O}(n)$--equivariant isomorphism
$$U\times E_{{\bf O}(q)}\times{\bf Isom}(t_{n}, q)\stackrel{\sim}{\rightarrow}U\times E_{{\bf O}(q)}\times \pi^*{\bf O}(n)$$
in the topos $$B_{{\bf O}(q)}/(U\times E_{{\bf O}(q)})\simeq (B_{{\bf O}(q)}/E_{{\bf O}(q)})/(U\times E_{{\bf O}(q)})\simeq Y_{fl}/f^*U\simeq U_{fl}.$$
The result follows since $U\times E_{{\bf O}(q)}\rightarrow *$ covers the final object  of $B_{{\bf O}(q)}$.
\end{proof}

\begin{definition}\label{defTq}
We denote by $T_{q}$ the sheaf ${\bf Isom}(t_{n}, q)$ endowed with its structure of $\pi^{*}({\bf O}(n))$--torsor of
$B_{{\bf O}(q)}$, and we define
$$T_q:B_{{\bf O}(q)}\rightarrow B_{{\bf O}(n)}$$
to be the morphism associated to the torsor $T_q$.
\end{definition}

\begin{prop}\label{prop-fundamental-equivalence}
The map $T_q:B_{{\bf O}(q)}\rightarrow B_{{\bf O}(n)}$ is an equivalence.
\end{prop}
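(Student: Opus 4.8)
The plan is to construct an explicit quasi-inverse and to identify the two composites with the respective identity morphisms via the dictionary of Theorem \ref{Thm-classifying--torsors}. First I would observe that the preceding lemma is symmetric in $t_n$ and $q$: its proof used only the existence of an fppf- (equivalently \'etale-) covering $U\to Y$ splitting both forms, a condition unchanged under interchanging $t_n$ and $q$. Hence ${\bf Isom}(q,t_n)$, equipped with its left ${\bf O}(n)$--action (post-composition) and its right ${\bf O}(q)$--action (pre-composition), is a $\pi'^*{\bf O}(q)$--torsor of $B_{{\bf O}(n)}$, where $\pi':B_{{\bf O}(n)}\to Y_{fl}$ is the structure map. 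By Theorem \ref{Thm-classifying--torsors} this torsor defines a $Y_{fl}$--morphism $S_q:B_{{\bf O}(n)}\to B_{{\bf O}(q)}$, the candidate quasi-inverse of $T_q$.

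Next I would compute the composite $S_q\circ T_q$. Since the functor $\Psi$ of Theorem \ref{Thm-classifying--torsors} is an equivalence, it suffices to compare the associated torsors: the identity $\mathrm{id}_{B_{{\bf O}(q)}}$ corresponds to $E_{{\bf O}(q)}$, while $S_q\circ T_q$ corresponds to $(S_q\circ T_q)^*E_{{\bf O}(q)}=T_q^*(S_q^*E_{{\bf O}(q)})=T_q^*({\bf Isom}(q,t_n))$. Using the explicit description of the inverse-image functor $a_T^*$ from the proof of Theorem \ref{Thm-classifying--torsors}, with $T={\bf Isom}(t_n,q)$ and $p:Y_{fl}\to B_{{\bf O}(n)}$ the canonical map, this is the contracted product
$$T_q^*({\bf Isom}(q,t_n))={\bf Isom}(t_n,q)\wedge^{\pi^*{\bf O}(n)}\pi^*p^*{\bf Isom}(q,t_n),$$
whose underlying sheaf on $Y_{fl}$ is $\bigl({\bf Isom}(t_n,q)\times{\bf Isom}(q,t_n)\bigr)/{\bf O}(n)$ for the action $(f,g)\cdot\sigma=(f\sigma,\sigma^{-1}g)$, and whose left ${\bf O}(q)$--action and right $\pi^*{\bf O}(q)$--torsor structure are inherited from the left and right ${\bf O}(q)$--actions on the two factors respectively.

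The point is then that the composition of isometries $[f,g]\mapsto f\circ g$ defines a morphism from this contracted product to ${\bf O}(q)=E_{{\bf O}(q)}$ that intertwines the left ${\bf O}(q)$--action with left multiplication and the torsor structure with right multiplication. I would check that this morphism is an isomorphism by working fppf-locally on $Y$: over a covering $U\to Y$ on which $q$ becomes isometric to $t_n$, a choice of isometry trivializes both torsors and the composition map reduces to the standard isomorphism ${\bf O}(n)\wedge^{{\bf O}(n)}{\bf O}(n)\stackrel{\sim}{\rightarrow}{\bf O}(n)$, which is bijective on sections. Hence $S_q\circ T_q\simeq\mathrm{id}_{B_{{\bf O}(q)}}$. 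The interchanged computation, applied to $T_q^*E_{{\bf O}(n)}={\bf Isom}(t_n,q)$, identifies $(T_q\circ S_q)^*E_{{\bf O}(n)}={\bf Isom}(q,t_n)\wedge^{{\bf O}(q)}{\bf Isom}(t_n,q)$ with $E_{{\bf O}(n)}$ via the same composition law, giving $T_q\circ S_q\simeq\mathrm{id}_{B_{{\bf O}(n)}}$; thus $T_q$ is an equivalence.

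I expect the main obstacle to be the bookkeeping in the middle step: unwinding $T_q^*({\bf Isom}(q,t_n))$ through the functors $\pi^*$ and $p^*$, and correctly matching the two distinct ${\bf O}(q)$--actions — the left one providing the $B_{{\bf O}(q)}$--structure and the right one providing the torsor structure — against left and right multiplication on $E_{{\bf O}(q)}$ under the composition isomorphism. Once this identification is in place, local triviality of torsors makes the verification that composition of isometries is an isomorphism routine.
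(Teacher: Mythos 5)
Your proposal is correct and follows essentially the same route as the paper: the quasi-inverse is the morphism classified by the $\pi'^*{\bf O}(q)$--torsor ${\bf Isom}(q,t_n)$ of $B_{{\bf O}(n)}$, and both composites are identified with the identities by computing the pullback of $E_{{\bf O}(q)}$ (resp.\ $E_{{\bf O}(n)}$) as a contracted product and using composition of isometries to produce an equivariant isomorphism with ${\bf Isom}(q,q)$ (resp.\ ${\bf Isom}(t_n,t_n)$). The extra details you supply (the symmetry of the preceding lemma and the fppf-local verification that the composition map is an isomorphism) are sound and merely make explicit what the paper leaves implicit.
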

\begin{proof} Let $X_q:B_{{\bf O}(n)}\rightarrow B_{{\bf O}(q)}$ be the map associated to the ${\bf O}(q)$--torsor of $B_{{\bf O}(n)}$ given by
${\bf Isom}(q,t_{n})$, and consider
$$X_q\circ T_q:B_{{\bf O}(q)}\rightarrow B_{{\bf O}(n)}\rightarrow B_{{\bf O}(q)}.$$
We have
$$(X_q\circ T_q)^*E_{{\bf O}(q)}=T_q^* X_q^*(E_{{\bf O}(q)})=T_q^*({\bf Isom}(q,t_{n}))={\bf Isom}(t_{n},q)\wedge^{{\bf O}(n)}{\bf Isom}(q,t_n). $$
But the map
$$\appl{{\bf Isom}(t_{n},q)\times {\bf Isom}(q,t_n)}{{\bf Isom}(q,q)}{(f,g)}{f\circ g}
$$
induces a $\pi^*{\bf O}(q)$--equivariant isomorphism
$${\bf Isom}(t_{n},q)\wedge^{{\bf O}(n)}{\bf Isom}(q,t_n)\simeq {\bf Isom}(q,q)=E_{{\bf O}(q)}. $$
Hence we have a canonical isomorphism
$$(X_q\circ T_q)^*E_{{\bf O}(q)}={\bf Isom}(t_{n},q)\wedge^{{\bf O}(n)}{\bf Isom}(q,t_n)\simeq E_{{\bf O}(q)}$$
of $\pi^*{\bf O}(q)$--torsors in $B_{{\bf O}(q)}$. By Theorem \ref{Thm-classifying--torsors}, we obtain an isomorphism $X_q\circ T_q\simeq \mbox{Id}_{B_{{\bf O}(q)}}$ of morphisms of topoi.
Similarly, we have
$$(T_q\circ X_q)^*E_{{\bf O}(n)}={\bf Isom}(q,t_{n})\wedge^{{\bf O}(q)}{\bf Isom}(t_n,q)\simeq E_{{\bf O}(n)}$$
hence $T_q\circ X_q\simeq\mbox{Id}_{B_{{\bf O}(n)}}.$
\end{proof}

Let $\eta : Y_{fl}\rightarrow B_{{\bf O}(q)}$ be the morphism of topoi induced by the morphism of groups $1\rightarrow {\bf O}(q)$, while
$\pi: B_{{\bf O}(q)}\rightarrow Y_{fl}$ is induced by the morphism of groups ${\bf O}(q)\rightarrow 1$. Note that
$$\mbox{Id}_{Y_{fl}}\cong \pi\circ\eta:Y_{fl}\rightarrow B_{{\bf O}(q)}\rightarrow Y_{fl}. $$
However, $\eta\circ\pi\ncong Id_{B_{{\bf O}(q)}}$, since $(\eta\circ\pi)^*$ sends an object of $B_{{\bf O}(q)}$ given by a sheaf $\mathcal{F}$ of $Y_{fl}$ endowed with a (possibly non-trivial) ${\bf O}(q)$--action to the object of $B_{{\bf O}(q)}$ given by $\mathcal{F}$ with trivial ${\bf O}(q)$--action.  The pull--back $\eta^*{\bf Isom}(t_{n}, q)$ is an ${\bf O}(n)$--torsor of $Y_{fl}$, and $\pi^*\eta^*({\bf Isom}(t_{n}, q))$ is a $\pi^{*}({\bf O}(n))$--torsor of $B_{{\bf O}(q)}$. We denote this torsor by $\Theta_{q}$ . This is the sheaf ${\bf Isom}(t_{n}, q)$ endowed with the trivial left action of ${\bf O}(q)$ and its natural right action of ${\bf O}(n)$.
\begin{definition}
We consider the $\pi^{*}({\bf O}(n))$--torsor of
$B_{{\bf O}(q)}$ given by $$\Theta_q:=\pi^*\eta^*({\bf Isom}(t_{n}, q)),$$
and we define
$$\Theta_q:B_{{\bf O}(q)}\rightarrow B_{{\bf O}(n)}$$
to be the morphism associated to the torsor $\Theta_q$.
\end{definition}
We have a commutative diagram:
\[
\xymatrix{
&&B_{{\bf O}(q)}\ar[d]^{\pi}\ar[rrrd]^{\Theta_q}&&&\\
Y_{fl}\ar[rr]_{\small{\mbox{Id}}}\ar[rru]^{\eta}\ar[rrd]_{\eta}&&Y_{fl}\ar[rrr]_{\{q\}=\eta^*{\bf Isom}(t_{n}, q)\,\,\,\,\,\,\,\,\,\,\,\,}&&&B_{{\bf O}(n)}\\
&&B_{{\bf O}(q)}\ar[rrru]_{T_q}&&&}\]
In other words, we have canonical isomorphisms:
$$ \pi\circ\eta\simeq id,\ \{q\}\circ \pi\simeq \Theta_{q}, \ T_{q}\circ \eta\simeq \Theta_{q}\circ \eta\simeq \{q\} .$$

\subsection{Hasse-Witt invariants}\label{subsect-HW}

In  Theorem 2.8 of \cite{J5},  Jardine  proved the following result,  which is a basic source for the  definitions of our invariants:

 \begin{thm}\label{thm-Jardine} Let $Y$ be a scheme in which $2$ is invertible and let $A$ denote the algebra $H^{*}_{et }(Y, {\bf Z}/2{\bf Z})$. Assume that $Y$ is the disjoint union of its connected components. Then there is a canonical isomorphism of graded $A$-algebras of the form
 $$H^*(\textsc{Top}(B({\bf O}(n),Y)_{Et}), {\bf Z}/2{\bf Z})\simeq A[HW_{1}, ...,HW_{n}], $$
 where the polynomial generator $HW_{i}$ has degree $i$.
 \end{thm}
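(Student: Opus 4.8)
The plan is to reduce the statement to a computation of the cohomology of the classifying topos $B_{\mathbf{O}(n)}$ and then to imitate, in the étale setting, Borel's computation of $H^*(BO(n);\mathbf{F}_2)=\mathbf{F}_2[w_1,\dots,w_n]$, using a quadric ("sphere bundle") fibration and induction on $n$. By Lemma \ref{lem-1} the big and small étale cohomologies of $\mathbf{B}(\mathbf{O}(n),Y)$ agree, and $\mathbf{Z}/2\mathbf{Z}$ is representable by the smooth étale $Y$-scheme $Y\sqcup Y$; hence Theorem \ref{thm-cohomologyBG} together with Proposition \ref{prop-BGY} gives
$$H^*(\textsc{Top}(\mathbf{B}(\mathbf{O}(n),Y)_{Et}),\mathbf{Z}/2\mathbf{Z})\simeq H^*(B_{\mathbf{O}(n)},\mathbf{Z}/2\mathbf{Z}),\qquad \mathbf{O}(n)=\mathbf{O}(n)_Y,$$
so it suffices to identify the right-hand ring with $A[HW_1,\dots,HW_n]$. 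I induct on $n$. For $n=1$ one has $\mathbf{O}(1)=\mu_2\cong\mathbf{Z}/2\mathbf{Z}$ (as $2$ is invertible), and the base case is the computation $H^*(B_{\mathbf{Z}/2\mathbf{Z}},\mathbf{Z}/2\mathbf{Z})=A[HW_1]$: using the description of the relative cohomology sheaves $H^j_S(B_{\mathbf{Z}/2\mathbf{Z}},\mathbf{Z}/2\mathbf{Z})$ (Proposition \ref{oneprop} and its corollary, together with the local fact $H^*(B\mathbf{Z}/2\mathbf{Z};\mathbf{F}_2)=\mathbf{F}_2[t]$) one sees they are constant equal to $\mathbf{Z}/2\mathbf{Z}$ in each degree, so the Leray spectral sequence of $\pi\colon B_{\mathbf{Z}/2\mathbf{Z}}\to Y_{fl}$ degenerates to $A[HW_1]$.

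For the inductive step let $Q=\{t_n=1\}\subset\mathbf{A}^n_Y$ be the unit affine quadric. The group $\mathbf{O}(n)$ acts on $Q$ transitively with stabiliser $\mathbf{O}(n-1)$, so $Q\cong\mathbf{O}(n)/\mathbf{O}(n-1)$ as fppf sheaves and consequently $B_{\mathbf{O}(n)}/Q\simeq B_{\mathbf{O}(n-1)}$ (the homogeneous-space form of the equivalence $B_G/(G/H)\simeq B_H$ already used in the proof of Proposition \ref{propHS}). Applying the equivariant Leray spectral sequence of Proposition \ref{localproof} to the localization $f\colon B_{\mathbf{O}(n)}/Q\to B_{\mathbf{O}(n)}$ yields
$$E_2^{i,j}=H^i\!\big(B_{\mathbf{O}(n)},\,R^jf_*\,\mathbf{Z}/2\mathbf{Z}\big)\Longrightarrow H^{i+j}(B_{\mathbf{O}(n-1)},\mathbf{Z}/2\mathbf{Z}).$$
Étale-locally on $Y$ the form is standard and $Q$ is the split unit quadric, whose geometric $\mathbf{Z}/2\mathbf{Z}$-cohomology is that of $S^{n-1}$, the determinant twist on the top class being trivial modulo $2$; hence $R^jf_*\mathbf{Z}/2\mathbf{Z}$ is the constant sheaf $\mathbf{Z}/2\mathbf{Z}$ for $j\in\{0,n-1\}$ and vanishes otherwise. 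The spectral sequence thus has two rows and collapses to a Gysin long exact sequence whose only differential is cup-product with a class $HW_n:=e_n\in H^n(B_{\mathbf{O}(n)},\mathbf{Z}/2\mathbf{Z})$.

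To pin down the generators and split this sequence I restrict along the elementary abelian maximal torus $\mu_2^n\hookrightarrow\mathbf{O}(n)$ of diagonal sign matrices. The base case together with the Hochschild--Serre sequence of Proposition \ref{propHS} (or a Künneth argument) gives $H^*(B_{\mu_2^n},\mathbf{Z}/2\mathbf{Z})=A[t_1,\dots,t_n]$, and since the Weyl group $S_n$ of permutation matrices normalises $\mu_2^n$, the image of $H^*(B_{\mathbf{O}(n)})$ lies in the invariants $A[t_1,\dots,t_n]^{S_n}=A[\sigma_1,\dots,\sigma_n]$, with $HW_i\mapsto\sigma_i$. This shows that the restriction $\rho\colon H^*(B_{\mathbf{O}(n)})\to H^*(B_{\mathbf{O}(n-1)})$ satisfies $\rho(HW_i)=HW_i$ for $i<n$ and $\rho(HW_n)=0$, so by the inductive hypothesis $\rho$ is surjective. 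Surjectivity forces the connecting maps of the Gysin sequence to vanish, giving short exact sequences
$$0\to H^{*-n}(B_{\mathbf{O}(n)})\xrightarrow{\ \cup HW_n\ }H^*(B_{\mathbf{O}(n)})\xrightarrow{\ \rho\ }H^*(B_{\mathbf{O}(n-1)})\to 0.$$
Thus $HW_n$ is a non-zero-divisor and $H^*(B_{\mathbf{O}(n)})/(HW_n)\cong A[HW_1,\dots,HW_{n-1}]$; comparing Poincaré series then gives $H^*(B_{\mathbf{O}(n)})=A[HW_1,\dots,HW_n]$, completing the induction.

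The \textbf{main obstacle} is the fibre computation: establishing that $R^jf_*\mathbf{Z}/2\mathbf{Z}$ really is the constant sheaf supported in degrees $0$ and $n-1$, \emph{as an equivariant sheaf on} $B_{\mathbf{O}(n)}$. This requires identifying the relative étale cohomology of the affine quadric with that of $S^{n-1}$ via smooth base change to the split fibres, checking that the determinant twist on the orientation module disappears with $\mathbf{Z}/2\mathbf{Z}$ coefficients, and invoking the hypothesis that $Y$ is a disjoint union of its connected components to ensure that $R^0$ and $R^{n-1}$ are genuinely constant and that the coefficient algebra is exactly $A$. The secondary difficulty, the degeneration into the displayed short exact sequences, is overcome only because the torus restriction supplies the surjectivity of $\rho$ a priori.
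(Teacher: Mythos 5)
You should first be aware that the paper does not prove this statement at all: it is imported verbatim from Jardine, \emph{Higher spinor classes} (\cite{J5}, Theorem 2.8), and the only thing the paper adds is the identification of $H^*(\textsc{Top}({\bf B}({\bf O}(n),Y)_{Et}),{\bf Z}/2{\bf Z})$ with $H^*(B_{{\bf O}(n)},{\bf Z}/2{\bf Z})$ via Lemma \ref{lem-1} and Theorem \ref{thm-cohomologyBG} — which is exactly your first reduction, so that part is consistent with the text. Your Borel-style induction on $n$ through the quadric fibration is a perfectly reasonable strategy for an actual proof, and the homogeneous-space identification $B_{{\bf O}(n)}/Q\simeq B_{{\bf O}(n-1)}$ together with Proposition \ref{localproof} is sound. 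But the argument has two real gaps, one of which you name and one of which you do not.

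The gap you name — that $R^jf_*{\bf Z}/2{\bf Z}$ is the constant sheaf in degrees $0$ and $n-1$ and zero otherwise — is genuinely the hard part and is not a formality: $Q\rightarrow Y$ is smooth but \emph{not proper}, so smooth base change does not identify the stalks of $R^jf_*$ (which are cohomologies over strictly henselian local bases) with the cohomology of the geometric fibres, and "étale-locally the fibre looks like $S^{n-1}$" does not by itself produce constancy; one must exhibit global classes (e.g.\ via the projective quadric and a Gysin sequence for its hyperplane section) and control the ${\bf O}(n)$-equivariance. This is essentially the content of Jardine's memoir. The gap you do not name is worse: the surjectivity of $\rho\colon H^*(B_{{\bf O}(n)})\rightarrow H^*(B_{{\bf O}(n-1)})$, which you need to split the Gysin sequence, is deduced from "$HW_i\mapsto\sigma_i$ under restriction to $\mu_2^n$". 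At stage $n$ of your induction the only class you have constructed is $HW_n:=e_n$; the classes $HW_i\in H^i(B_{{\bf O}(n)})$ for $i<n$ have not been defined, so "$\rho(HW_i)=HW_i$" is not yet a statement, and even after choosing a definition the computation of their torus restrictions is precisely the splitting-principle / Leray--Hirsch input (freeness of $H^*(B_{\mu_2^n})$ over $H^*(B_{{\bf O}(n)})$, or a Whitney-type formula for the Euler class) that the classical Borel argument supplies and yours omits. Without an independent proof that $\rho$ is onto, the long exact sequence does not break into the displayed short exact sequences and the induction does not close. (A small further point: the degeneration of the Leray spectral sequence in the base case $n=1$ is not automatic from constancy of the $H^j_S$; it requires observing that the class of $\mathrm{id}\in\mathrm{Hom}_Y(\mu_2,{\bf Z}/2{\bf Z})$ survives via Proposition \ref{prop-Giraud-sequence} and then invoking multiplicativity — easily fixed, but it should be said.)
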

By Theorem \ref{thm-cohomologyBG}  there is a canonical isomorphism:
$$H^{*}(B_{{\bf O}(n)}, {\bf Z}/2{\bf Z})\simeq H^*(\textsc{Top}(B({\bf O}(n),Y)_{Et}), {\bf Z}/2{\bf Z}).$$
We use this isomorphism to identify these two groups and from now on we view  $HW_{i}$ as an element of
$H^{i}(B_{{\bf O}(n)}, {\bf Z}/2{\bf Z})$. 

\begin{definition}{\it The universal Hasse-Witt $i$th-invariant of the quadratic form $q$ is
$$HW_{i}(q)=T_{q}^{*}(HW_{i})\in H^{i}(B_{{\bf O}(q)}, {\bf Z}/2{\bf Z}).$$ }
\end{definition}
When $q$ is the standard form $t_{n}$,  the invariants $HW_{i}(q)$ coincide in degree $1$ and $2$ with the invariants we introduced in subsection \ref{subsect-invariants}, i.e. we have
$$HW_{1}=HW_{1}(t_{n})=\mbox {det}[t_{n}]\ \mbox{and}\ HW_{2}=HW_{2}(t_{n})=[C_{n}]. $$ 
\begin{cor}\label{cor-cohomologyofOq}
There is a canonical isomorphism of graded $A$-algebras of the form
$$H^*(B_{{\bf O}(q)}, {\bf Z}/2{\bf Z})\simeq A[HW_{1}(q), ...,HW_{n}(q)], $$
where the polynomial generator $HW_{i}(q)$ has degree $i$.
\end{cor}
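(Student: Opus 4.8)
The plan is to obtain the statement simply by transporting Jardine's computation across the equivalence $T_q$, so that essentially all the substantive work has already been carried out in the preceding results. First I would record the computation of the cohomology of $B_{{\bf O}(n)}$: applying Theorem \ref{thm-cohomologyBG} to the group ${\bf O}(n)$ acting trivially on $X=Y$, with coefficients in the smooth $Y$-group scheme ${\bf Z}/2{\bf Z}$, gives a canonical isomorphism $H^*(B_{{\bf O}(n)}, {\bf Z}/2{\bf Z})\simeq H^*(\textsc{Top}(B({\bf O}(n),Y)_{Et}), {\bf Z}/2{\bf Z})$, and Jardine's Theorem \ref{thm-Jardine} identifies the right-hand side with the polynomial $A$-algebra $A[HW_1,\dots,HW_n]$, where $HW_i$ has degree $i$. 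Composing these two identifications yields the graded $A$-algebra isomorphism $H^*(B_{{\bf O}(n)}, {\bf Z}/2{\bf Z})\simeq A[HW_1,\dots,HW_n]$ that serves as the starting point.

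Next I would invoke Proposition \ref{prop-fundamental-equivalence}, which asserts that $T_q\colon B_{{\bf O}(q)}\rightarrow B_{{\bf O}(n)}$ is an equivalence of topoi. Since cohomology with constant coefficients is invariant under equivalences of topoi, the induced map $T_q^*\colon H^*(B_{{\bf O}(n)}, {\bf Z}/2{\bf Z})\rightarrow H^*(B_{{\bf O}(q)}, {\bf Z}/2{\bf Z})$ is an isomorphism of graded groups; as the inverse-image functor of a morphism of topoi, $T_q^*$ is moreover compatible with cup products, hence is an isomorphism of graded rings. By the very definition $HW_i(q):=T_q^*(HW_i)$, the map $T_q^*$ carries the polynomial generators $HW_i$ to the classes $HW_i(q)$, preserving degrees.

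The one point that genuinely requires attention is the verification that $T_q^*$ is an isomorphism of $A$-\emph{algebras}, and not merely of graded rings. For this I would use that $T_q$ is a morphism of $Y_{fl}$-topoi: indeed, by Theorem \ref{Thm-classifying--torsors}, $T_q$ is precisely the $Y_{fl}$-morphism classifying the $\pi^*{\bf O}(n)$-torsor ${\bf Isom}(t_n,q)$ of $B_{{\bf O}(q)}$, so that it commutes up to canonical isomorphism with the structure maps of $B_{{\bf O}(q)}$ and $B_{{\bf O}(n)}$ to $Y_{fl}$. Consequently $T_q^*$ commutes with the pullbacks from $H^*(Y_{fl}, {\bf Z}/2{\bf Z})\simeq A$ that define the $A$-module structures on both sides, and is therefore $A$-linear in addition to being multiplicative. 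This is the main (though mild) obstacle, in the sense that it is where one must keep careful track of the base $Y_{fl}$ rather than argue abstractly with an equivalence of topoi.

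Combining these steps, $T_q^*$ restricts to an $A$-algebra isomorphism $A[HW_1,\dots,HW_n]\xrightarrow{\sim} H^*(B_{{\bf O}(q)}, {\bf Z}/2{\bf Z})$ sending each $HW_i$ to $HW_i(q)$, which is exactly the asserted identification $H^*(B_{{\bf O}(q)}, {\bf Z}/2{\bf Z})\simeq A[HW_1(q),\dots,HW_n(q)]$ with $HW_i(q)$ in degree $i$. Canonicity of the isomorphism is immediate, since the torsor ${\bf Isom}(t_n,q)$, and hence $T_q$, is canonically determined by the symmetric bundle $(V,q)$.
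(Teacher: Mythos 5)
Your proposal is correct and follows exactly the paper's route: the paper's proof of this corollary is the one-line observation that it follows from Jardine's Theorem \ref{thm-Jardine} (combined with the identification of Theorem \ref{thm-cohomologyBG}) together with the equivalence $T_q$ of Proposition \ref{prop-fundamental-equivalence}, which is precisely your argument. Your additional care in verifying that $T_q^*$ is an isomorphism of $A$-algebras, using that $T_q$ is a $Y_{fl}$-morphism, is a worthwhile detail that the paper leaves implicit.
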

\begin{proof}
This follows from Theorem \ref{thm-Jardine} and Proposition \ref{prop-fundamental-equivalence}.
\end{proof}

We note that the "usual" $i$-th Hasse-Witt invariant of $q$ is the class of $H^{i}(Y_{fl}, {\bf Z}/2{\bf Z})$ obtained by pulling-back $HW_{i}(q)$  by $\eta$. To be more precise, the $i$-th Hasse-Witt invariant of $q$ is defined by:
$$w_{i}(q)=\eta^{*}(HW_{i}(q))=\{q\}^{*}(HW_{i}).$$
Since $\eta^{*}\circ \pi^{*}\simeq\mbox{Id}$, the group homomorphism
$$\eta^{*}: H^{i}(B_{{\bf O}(q)}, {\bf Z}/2{\bf Z})\rightarrow H^{i}(Y_{fl}, {\bf Z}/2{\bf Z})  $$ is split for any $i\geq0$.
Therefore we may identify via $\pi^{*}$ the group $ H^{i}(Y_{fl}, {\bf Z}/2{\bf Z})$ as
a direct factor of $H^{i}(B_{{\bf O}(q)}, {\bf Z}/2{\bf Z})$. Since $\Theta_{q}\simeq \{q\}\circ \pi$,  we note that
under this identification  ${\Theta_{q}}^{*}(HW_{i})$ identifies with $w_{i}(q)$. This leads to the following
\begin{definition}
The $i$-th Hasse-Witt invariant of $q$ is defined by:
$$w_{i}(q)=\Theta_q^*(HW_{i})\in H^{i}(B_{{\bf O}(q)}, {\bf Z}/2{\bf Z}). $$
\end{definition}

Let $G_Y$ be a $Y$-group scheme and let  $(V, q, \rho )$ be a $G_Y$--equivariant symmetric bundle on $Y$.
The group-scheme homomorphism $\rho:G_Y\rightarrow{\bf O}(q)$ induces a morphism of topoi $\rho: B_{G_Y}\rightarrow B_{{\bf O}(q)}$. We obtain $$T_{q}\circ \rho:B_{G_{Y}}\rightarrow B_{{\bf O}(q)}\rightarrow B_{{\bf O}(n)}.$$
This morphism corresponds to the ${\bf O}(n)$--torsor $\rho^{*}(T_{q})$ of $B_{G_Y}$ which is given by the sheaf ${\bf Isom}(t_{n}, q)$ endowed with a left action of $G_Y$ via $\rho$ and a right action of ${\bf O}(n)$.

\begin{definition}{\it The $i$-th equivariant Hasse-Witt invariant of $(V, q, \rho)$ is defined by:
$$w_{i}(q, \rho)=\rho^{*}(HW_{i}(q))=\rho^{*}T_{q}^{*}(HW_{i})\in H^{i}(B_{G_{Y}}, {\bf Z}/2{\bf Z}).$$}

\end{definition}

\section{Universal Comparison Formulas}

Let $(V, q)$  be a symmetric bundle on the scheme $Y$. We assume that $2$ is invertible in $Y$ and that $Y=\Coprod_{\alpha\in A}Y_{\alpha}$ is the disjoint union of its connected components. This second condition is rather weak but not automatic. 

By Corollary \ref{cor-cohomologyofOq} we have a canonical isomorphism
$$H^*(B_{{\bf O}(q)}, {\bf Z}/2{\bf Z})\simeq A[HW_{1}(q), ...HW_{n}(q)]$$
where $A=H_{et}^*(Y_{fl},{\bf Z}/2{\bf Z})$. Under this identification, we have classes
$$w_1(q),\,w_2(q)\in A\mbox{ and } \mbox{det}[q],\,[C_{q}]\in A[HW_{1}(q), ...HW_{n}(q)]$$
defined in subsection \ref{subsect-HW}. Theorems \ref{mainthm1} and \ref{mainthm2} provide an explicit expression of $\mbox{det}[q]$ and $[C_{q}]$ as polynomials in $HW_{1}(q)$ and $HW_{2}(q)$ with coefficients in $A$ written in terms of $w_1(q),w_2(q)\in A$.
More precisely, we shall prove the following
\begin{thm}\label{mainthm} Let $(V, q)$ be a symmetric bundle of rank $n$ on the scheme $Y$ in which $2$ is invertible. Assume that $Y$ is the disjoint union of its connected components. Then we have
$$\textrm{\emph{det}}[q]=w_{1}(q)+HW_{1}(q)$$
and
$$[C_q]=(w_{1}(q)\cdot w_{1}(q)+w_{2}(q))+w_{1}(q)\cdot HW_{1}(q)+HW_{2}(q)$$
in the polynomial ring $$H^*(B_{{\bf O}(q)}, {\bf Z}/2{\bf Z})\simeq A[HW_{1}(q), ...HW_{n}(q)].$$
\end{thm}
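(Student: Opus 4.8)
The plan is to use the polynomial decomposition of Corollary~\ref{cor-cohomologyofOq} together with the splitting $\eta^*\circ\pi^*\simeq\mathrm{Id}$: via $\pi^*$ the ring $A$ sits as a direct factor of $H^*(B_{{\bf O}(q)},{\bf Z}/2{\bf Z})$, and for each class the coefficients of the monomials in the $HW_i(q)$ are pinned down by applying $\eta^*$ (recall $\eta^*HW_i(q)=w_i(q)$) together with a base-change argument. I first treat the degree-one identity by the torsor calculus of Theorem~\ref{Thm-classifying--torsors}. Writing $HW_1=\det[t_n]=[{\bf O}(n)/{\bf SO}(n)]$, the class $HW_1(q)=T_q^*(HW_1)$ is represented by the contracted product $D:={\bf Isom}(t_n,q)\wedge^{{\bf O}(n)}({\bf O}(n)/{\bf SO}(n))={\bf Isom}(t_n,q)/{\bf SO}(n)$, a ${\bf Z}/2{\bf Z}$-torsor carrying its natural left ${\bf O}(q)$-action, while $w_1(q)=\Theta_q^*(HW_1)$ is represented by the \emph{same} torsor $D$ equipped instead with the trivial ${\bf O}(q)$-action. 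A computation on points shows that $\tau\in{\bf O}(q)$ acts on $D$ by translation through $\det_{{\bf O}(q)}(\tau)$; hence the two classes differ exactly by the image of $\det_{{\bf O}(q)}$ under the map $\mathrm{Hom}_Y({\bf O}(q),{\bf Z}/2{\bf Z})\to H^1(B_{{\bf O}(q)},{\bf Z}/2{\bf Z})$ of Proposition~\ref{prop-Giraud-sequence}, namely $\det[q]$. Thus $HW_1(q)=w_1(q)+\det[q]$, which is the first identity.

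For the degree-two identity I write
$$[C_q]=\hat\alpha+\hat\beta\cdot HW_1(q)+\gamma\cdot HW_1(q)^2+\delta\cdot HW_2(q),$$
with $\hat\alpha\in A^2$, $\hat\beta\in A^1$ and $\gamma,\delta\in A^0$. Since the composite $\mathrm{Ext}_Y({\bf O}(q),{\bf Z}/2{\bf Z})\to H^2(B_{{\bf O}(q)},{\bf Z}/2{\bf Z})\to H^2(Y_{fl},{\bf Z}/2{\bf Z})$ in Proposition~\ref{prop-Giraud-sequence} is zero, we get $\eta^*[C_q]=0$, giving the relation $\hat\alpha+\hat\beta\cdot w_1(q)+\gamma\cdot w_1(q)^2+\delta\cdot w_2(q)=0$ in $A$. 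The degree-zero coefficients $\gamma,\delta$ are locally constant, so may be computed after base change to a geometric point of each connected component of $Y$; there $q$ becomes isometric to $t_n$, all positive-degree classes of $A$ vanish (so $w_i(q)\mapsto0$), and $[C_q]$ specializes to $[C_n]=HW_2$. Linear independence of $HW_1^2$ and $HW_2$ then forces $\gamma=0$ and $\delta=1$, whereupon the relation above reads $\hat\alpha=\hat\beta\cdot w_1(q)+w_2(q)$. It therefore remains only to prove $\hat\beta=w_1(q)$, for then $\hat\alpha=w_1(q)^2+w_2(q)$ and the theorem follows.

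To compute the last coefficient $\hat\beta\in A^1$ I would use the Hochschild--Serre five-term sequence of Corollary~\ref{cor-short-exact-HS-spectralsequ} attached to the central extension (\ref{ext-Cq}), $1\to{\bf Z}/2{\bf Z}\to\widetilde{\bf O}(q)\to{\bf O}(q)\to1$. Exactness identifies $[C_q]$ with the transgression of the canonical generator of $H^0(B_{{\bf O}(q)},\underline{\mathrm{Hom}}({\bf Z}/2{\bf Z},{\bf Z}/2{\bf Z}))$ and shows that $[C_q]$ inflates to $0$ in $H^2(B_{\widetilde{\bf O}(q)},{\bf Z}/2{\bf Z})$; this is the technical reduction that turns the degree-two computation into the tracking of a factor set one degree lower. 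Concretely one represents $[C_q]$ by an explicit Cech $2$-cocycle in the bar complex of $B_{{\bf O}(q)}$ for the covering $E_{{\bf O}(q)}\to\ast$ coming from a Clifford-algebra factor set for $\widetilde{\bf O}(q)$, and compares it with the factor set of $\widetilde{\bf O}(n)$ transported along a $1$-cocycle in $Z^1(Y_{et},{\bf O}(n))$ representing the torsor ${\bf Isom}(t_n,q)$.

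\textbf{The main obstacle} is exactly this last bookkeeping. Because ${\bf O}(q)$ is a nonconstant group scheme --- a twist of ${\bf O}(n)$ --- the Clifford factor set has mixed components in the terms $E_1^{i,j}=H^j(({\bf O}(q)^i)_{fl},{\bf Z}/2{\bf Z})$ of the spectral sequence of Section~3, and one must isolate the cross-term that pairs the determinant part of the $q$-cocycle with the $HW_1(q)$-direction and verify that its contribution is $w_1(q)\cdot HW_1(q)$ rather than $0$. The Hochschild--Serre reduction is what keeps this cocycle computation finite and tractable; carrying it out carefully, and checking that no spurious $HW_1(q)^2$ or $A^2$ terms are introduced beyond those already fixed above, is the heart of the argument.
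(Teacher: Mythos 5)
Your degree-one argument is essentially the paper's proof of Theorem \ref{mainthm1}: both identify $HW_1(q)$ and $w_1(q)$ with the contracted products $T_q\wedge^{{\bf O}(n)}({\bf O}(n)/{\bf SO}(n))$ and $\Theta_q\wedge^{{\bf O}(n)}({\bf O}(n)/{\bf SO}(n))$, and check on points that the two ${\bf O}(q)$-actions differ exactly by $\mathrm{det}_{{\bf O}(q)}$; this part is correct. Your set-up for degree two is also sound, and is organized a little differently from the paper's: you pin down the $A^0$-coefficients of $HW_1(q)^2$ and $HW_2(q)$ by specializing to a geometric point of each connected component, and you use $\eta^*[C_q]=0$ (from Proposition \ref{prop-Giraud-sequence}) to tie the constant term to the linear one, whereas the paper proves the equivalent identity $HW_2(q)=w_2(q)+w_1(q)\cup\mathrm{det}[q]+[C_q]$ and reduces it, via Lemmas \ref{lem-Cdies} and \ref{lem-reduc} together with the polynomial structure, to the same identity pulled back to $B_{\widetilde{{\bf O}}(q)}$.

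But your proof stops exactly where the real work begins. Everything you have established is consistent with $\hat\beta=0$ just as much as with $\hat\beta=w_1(q)$: the cross-term is annihilated by $\eta^*$ and vanishes after specialization to a geometric point, so no further formal manipulation of the Giraud or Hochschild--Serre sequences can distinguish the two cases. The paper's Steps 2--5 are devoted precisely to computing this coefficient. One needs the Clifford-algebra comparison isomorphisms $\widetilde\psi_t$ of Lemma \ref{lem-Ph} --- in particular parts iii) and iv), which record that for $t\in{\bf O}_-$ the map $\widetilde\psi_t$ agrees with $i_{s(t)}$ on $\widetilde{{\bf O}}_+$ but with $-i_{s(t)}$ on $\widetilde{{\bf O}}_-$; this sign is the entire source of the term $w_1(q)\cup\mathrm{det}[q]$ --- then the explicit cocycles $\alpha,\beta,\gamma$ on the covering $\widetilde E_{{\bf O}(q)}\times\widetilde\Theta_q$ of $B_{\widetilde{{\bf O}}(q)}$, the existence of their lifts, and the pointwise verification (over a generating family of objects $E_{\widetilde{{\bf O}}(q)}\times\mathrm{Spec}(R)$) that the discrepancy $2$-cocycle $\xi$ equals $\mathrm{det}_{{\bf O}(n)}(\beta)\cup\mathrm{det}_{{\bf O}(q)}(\alpha)$. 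You correctly identify this computation as the heart of the argument, but naming it is not doing it: as written, the degree-two identity is not proved.
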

These formulas are the source for many other comparison formulas that either
have been proved in previous paper (\cite{EKV},  \cite{CNET1}) or
that we shall  establish, using the following principle. For any topos $\mathcal{E}$ given with an ${\bf O}(q)$--torsor, we have a canonical map $f:\mathcal{E}\rightarrow B_{{\bf O}(q)}$, and we obtain comparison formulas in $H^*(\mathcal{E},{\bf Z}/2{\bf Z})$ by applying the functor $f^*$ to the universal comparison formulas of Theorem \ref{mainthm}.

We split Theorem \ref{mainthm} in two theorems according to the degree. 

 \begin{thm}\label{mainthm1} Let $(V, q)$ be a symmetric bundle of rank $n$ on the scheme $Y$ in which $2$ is invertible. Then
$$HW_{1}(q)=w_{1}(q)+ \textrm{\emph{det}}[q]$$
in $H^{1}(B_{{\bf O}(q)}, {\bf Z}/2{\bf Z})$.
 \end{thm}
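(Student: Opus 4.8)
The plan is to prove the equivalent identity $\det[q]=w_1(q)+HW_1(q)$ by identifying all three classes with explicit ${\bf Z}/2{\bf Z}$--torsors of $B_{{\bf O}(q)}$ and comparing them directly, using that $H^1(B_{{\bf O}(q)},{\bf Z}/2{\bf Z})$ classifies such torsors with the contracted product $\wedge^{{\bf Z}/2{\bf Z}}$ as its group law. Recall that $HW_1=\det[t_n]$ is represented on $B_{{\bf O}(n)}$ by the torsor ${\bf O}(n)/{\bf SO}(n)$, which is the pushout $E_{{\bf O}(n)}\wedge^{{\bf O}(n)}{\bf Z}/2{\bf Z}$ of the universal torsor $E_{{\bf O}(n)}$ along $\det_{{\bf O}(n)}$, while $\det[q]$ is represented by ${\bf O}(q)/{\bf SO}(q)$ with its $\det_{{\bf O}(q)}$--equivariant structure.

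First I would compute $HW_1(q)=T_q^*(HW_1)$. Since $T_q^*E_{{\bf O}(n)}={\bf Isom}(t_n,q)$ by Definition \ref{defTq} and Theorem \ref{Thm-classifying--torsors}, and since the inverse image functor $T_q^*$ is exact and commutes with the structural pullbacks $\pi^*$ from $Y_{fl}$ (hence carries $\pi^*{\bf Z}/2{\bf Z}$ to $\pi^*{\bf Z}/2{\bf Z}$ and commutes with the contracted product, which is a colimit), I obtain
$$HW_1(q)=T_q^*\bigl(E_{{\bf O}(n)}\wedge^{{\bf O}(n)}{\bf Z}/2{\bf Z}\bigr)={\bf Isom}(t_n,q)\wedge^{{\bf O}(n)}{\bf Z}/2{\bf Z}=:D_q,$$
the ${\bf Z}/2{\bf Z}$--torsor of $B_{{\bf O}(q)}$ obtained by pushing the $\pi^*{\bf O}(n)$--torsor ${\bf Isom}(t_n,q)$ along $\det_{{\bf O}(n)}$ and retaining the residual left ${\bf O}(q)$--action; concretely $D_q={\bf Isom}(t_n,q)/{\bf SO}(n)$.

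Next I would unwind the ${\bf O}(q)$--equivariant structure of $D_q$. Applying $\eta^*$ (forgetting the ${\bf O}(q)$--action) yields the underlying ${\bf Z}/2{\bf Z}$--torsor $E:=\eta^*D_q$ of $Y_{fl}$, which is exactly $\{q\}^*(HW_1)$; pulling back along $\pi$ recovers the summand $w_1(q)=\Theta_q^*(HW_1)=\pi^*[E]$. For the residual action, given a local isometry $f\in{\bf Isom}(t_n,q)$ and $\tau\in{\bf O}(q)$ one has $\tau\cdot[f]=[\tau\circ f]$, and $[\tau f]=[f]$ in $D_q$ precisely when $f^{-1}\tau f\in{\bf SO}(n)$; since the determinant is invariant under conjugation by the isomorphism $f$, this holds iff $\det_{{\bf O}(q)}(\tau)=1$. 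Thus ${\bf O}(q)$ acts on $D_q$ through translation by $\det_{{\bf O}(q)}$, which is exactly the equivariant structure of ${\bf O}(q)/{\bf SO}(q)$. As $\eta^*({\bf O}(q)/{\bf SO}(q))$ is the trivial torsor (the identity coset is a global section), the contracted product $\pi^*E\wedge^{{\bf Z}/2{\bf Z}}({\bf O}(q)/{\bf SO}(q))$ has underlying sheaf $E$ and ${\bf O}(q)$--action via $\det_{{\bf O}(q)}$--translation, i.e. it is ${\bf O}(q)$--equivariantly isomorphic to $D_q$. Passing to cohomology classes gives $HW_1(q)=[D_q]=\pi^*[E]+[{\bf O}(q)/{\bf SO}(q)]=w_1(q)+\det[q]$.

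The only genuinely delicate point is the identification of the residual ${\bf O}(q)$--action on the pushout torsor $D_q$ with $\det_{{\bf O}(q)}$--translation: this rests on the conjugation invariance $\det_{{\bf O}(q)}(\tau)=\det_{{\bf O}(n)}(f^{-1}\tau f)$, which I would verify on an fppf (equivalently \'etale) cover $U\to Y$ splitting ${\bf Isom}(t_n,q)$ and then check descends, together with the bookkeeping that the contracted product of equivariant ${\bf Z}/2{\bf Z}$--torsors adds their translation characters. Everything else is formal manipulation of the equivalence of Theorem \ref{Thm-classifying--torsors} and the exactness of inverse image functors.
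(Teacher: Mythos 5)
Your proposal is correct and is essentially the paper's own argument: both identify $HW_1(q)$, $w_1(q)$ and $\mathrm{det}[q]$ with explicit ${\bf Z}/2{\bf Z}$--torsors of $B_{{\bf O}(q)}$ (namely $T_q\wedge^{{\bf O}(n)}{\bf O}(n)/{\bf SO}(n)$, $\Theta_q\wedge^{{\bf O}(n)}{\bf O}(n)/{\bf SO}(n)$ and ${\bf O}(q)/{\bf SO}(q)$), and both hinge on exactly the same key computation, the conjugation invariance $\det_{{\bf O}(n)}(f^{-1}\tau f)=\det_{{\bf O}(q)}(\tau)$ for $f$ a local isometry $t_n\to q$. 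The only difference is presentational: the paper packages the comparison as an explicit equivariant isomorphism $\iota$ into $\Theta_q\wedge^{{\bf O}(n)}{\bf Z}/2{\bf Z}$ followed by associativity of the contracted product, whereas you read off the residual ${\bf O}(q)$--action on the pushout torsor directly.
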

\begin{proof} The  group $H^{1}(B_{{\bf O}(q)}, {\bf Z}/2{\bf Z})$ can be understood as the group of isomorphism classes of ${\bf Z}/2{\bf Z}$--torsors
of the topos $B_{{\bf O}(q)}$. Hence the theorem may be proved  by describing an isomorphism between the  torsors representing both sides of the required equality. The cohomology class $HW_{1}(q)= T_q^*(HW_{1})=T_q^*(\mbox{det}[t_{n}])$ is represented by the morphism
$$B_{\mbox{det}_{{\bf O}(n)}}\circ T_q:B_{{\bf O}(q)}\rightarrow B_{{\bf O}(n)}\rightarrow B_{{\bf Z}/2{\bf Z}}$$
where $B_{\mbox{det}_{{\bf O}(n)}}$ is the map of classifying topoi induced by the morphism of groups $\mbox{det}_{{\bf O}(n)}:{\bf O}(n)\rightarrow{\bf Z}/2{\bf Z}$. Therefore, $HW_{1}(q)$ is represented by the ${\bf Z}/2{\bf Z}$--torsor
$$(B_{\mbox{det}_{{\bf O}(n)}}\circ T_q)^*E_{{\bf Z}/2{\bf Z}}=T_q^* B_{\mbox{det}_{{\bf O}(n)}}^*E_{{\bf Z}/2{\bf Z}}
=T_q^* ({\bf O}(n)/{\bf SO}(n))= T_{q}\wedge^{{\bf O}(n)}({\bf O}(n)/{\bf SO}(n)).$$
Similarly, $w_{1}(q)$ is represented by the ${\bf Z}/2{\bf Z}$--torsor $\Theta_{q}\wedge^{{\bf O}(n)}({\bf O}(n)/{\bf SO}(n))$.
Notice that ${\bf O}(q)$ acts on  $T_{q}\wedge^{{\bf O}(n)}{\bf O}(n)/{\bf SO}(n)$ via its left action on $T_{q}$ while it acts trivially on $\Theta_{q}\wedge^{{\bf O}(n)}{\bf O}(n)/{\bf SO}(n)$. In both cases
${\bf Z}/2{\bf Z}$ acts by right multiplication on  ${\bf O}(n)/{\bf SO}(n)\simeq {\bf Z}/2{\bf Z}$. The group  ${\bf O}(q)$ acts on
$T_{q}\wedge^{{\bf O}(n)}({\bf O}(n)/{\bf SO}(n))$ as follows:
$$\appl{{\bf O}(q)\times (T_{q}\wedge^{{\bf O}(n)}({\bf O}(n)/{\bf SO}(n)))}{T_{q}\wedge^{{\bf O}(n)}({\bf O}(n)/{\bf SO}(n)).}{(f,[\sigma,\overline{g}])}{[f\circ\sigma,\overline{g}]}$$
We now consider ${\bf Z}/2{\bf Z}$ as an object of $Y_{fl}$ endowed with a right action of ${\bf O}(q)$ via $\mbox{det}_{{\bf O}(q)}$ and right multiplication on the one hand, and with a left action of ${\bf O}(n)$ via $\mbox{det}_{{\bf O}(n)}$ and left multiplication on the other. 
Then ${\bf O}(q)$ acts on the left on
$\Theta_{q}\wedge^{{\bf O}(n)}{\bf Z}/2{\bf Z}$ as follows:
$$\appl{{\bf O}(q)\times (\Theta_{q}\wedge^{{\bf O}(n)}{\bf Z}/2{\bf Z})}{\Theta_{q}\wedge^{{\bf O}(n)}{\bf Z}/2{\bf Z}.}{(f,[\sigma,\epsilon])}{[\sigma,\epsilon\cdot\mbox{det}_{{\bf O}(q)}(f)^{-1}]}$$
Let us show that the map
$$\fonc{\iota}{T_{q}\wedge^{{\bf O}(n)}{\bf O}(n)/{\bf SO}(n)}{\Theta_{q}\wedge^{{\bf O}(n)}{\bf Z}/2{\bf Z}}{[\sigma,\bar{g}]}{[\sigma,\mbox{det}_{{\bf O}(n)}(g)]}$$
is an isomorphism of ${\bf Z}/2{\bf Z}$--torsors of $B_{{\bf O}(q)}$. The map $\iota$ is an isomorphism of ${\bf Z}/2{\bf Z}$--torsors of $Y_{fl}$ since $\mbox{det}_{{\bf O}(n)}$ induces ${\bf O}(n)/{\bf SO}(n)\simeq{\bf Z}/2{\bf Z}$. It remains to check that $\iota$ respects the left action of ${\bf O}(q)$. On points, we have
$$\iota(f*[\sigma,\bar{g}])=\iota[f\circ\sigma,\bar{g}]= \iota[\sigma\circ(\sigma^{-1}\circ f^{-1}\circ \sigma)^{-1}, \overline{\sigma^{-1}f^{-1}\sigma}\cdot\overline{g}]=[\sigma,\mbox{det}_{t_n}(g)\mbox{det}_{{\bf O}(q)}(f)^{-1}]=f*\iota[\sigma,\bar{g}]$$
since $\sigma^{-1}\circ f\circ \sigma $ is a section of ${\bf O}(n)$. We remark that
$${\bf O}(n)/{\bf SO}(n)
\wedge^{{\bf Z}/2{\bf Z}}{\bf O}(q)/{\bf SO}(q)$$
is canonically isomorphic to ${\bf Z}/2{\bf Z}$ and is naturally given with a right action of ${\bf O}(q)$ and a left action of ${\bf O}(n)$. Hence $\iota$ yields a canonical isomorphism of ${\bf Z}/2{\bf Z}$--torsors
in the topos $B_{{\bf O}(q)}$:
\begin{equation}\label{iota-iso}
T_{q}\wedge^{{\bf O}(n)}{\bf O}(n)/{\bf SO}(n)\simeq\Theta_{q}\wedge^{{\bf O}(n)}({\bf O}(n)/{\bf SO}(n)\wedge^{{\bf Z}/2{\bf Z}}{\bf O}(q)/{\bf SO}(q)).
\end{equation}
Recall that the class $\mbox{det}[q]\in H^{1}(B_{{\bf O}(q)}, {\bf Z}/2{\bf Z})$ is represented by the ${\bf Z}/2{\bf Z}$--torsor
${\bf O}(q)/{\bf SO}(q)$.  We have
\begin{eqnarray}
w_{1}(q)+ \mbox{det}[q]&=&[(\Theta_q\wedge^{{\bf O}(n)}{\bf O}(n)/{\bf SO}(n))
\wedge^{{\bf Z}/2{\bf Z}}{\bf O}(q)/{\bf SO}(q)]\\
\label{iso1}&=&[ \Theta_{q}\wedge ^{{\bf O}(n)}({\bf O}(n)/{\bf SO}(n)
\wedge^{{\bf Z}/2{\bf Z}}{\bf O}(q)/{\bf SO}(q))]\\
\label{iso2}&=&[ T_{q}\wedge^{{\bf O}(n)}{\bf O}(n)/{\bf SO}(n)]\\
&=&HW_1(q)
\end{eqnarray}
Here (\ref{iso1}) is given by associativity of the contracted product (see \cite{Giraud} 1.3.5)  and the isomorphism (\ref{iso2}) is just (\ref{iota-iso}).
\end{proof}

\begin{thm}\label{mainthm2} Let $(V, q)$ be a symmetric bundle of rank $n$ on the scheme $Y$ in which $2$ is invertible. Assume that $Y$ is the disjoint union of its connected components. Then
$$HW_{2}(q)=w_{2}(q)+w_{1}(q)\cup \mbox{\emph{det}}[q]+[C_{q}]$$
in $H^{2}(B_{{\bf O}(q)}, {\bf Z}/2{\bf Z})$.
\end {thm}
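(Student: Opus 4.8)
The plan is to compute the universal class $HW_2(q)=T_q^*(HW_2)=T_q^*([C_n])$ directly and to recognise its decomposition as the right-hand side. Since every cohomology group in sight is a ${\bf Z}/2{\bf Z}$-vector space, the assertion is equivalent to
\[
HW_2(q)+w_2(q)+w_1(q)\cup\mbox{det}[q]+[C_q]=0
\]
in $H^2(B_{{\bf O}(q)},{\bf Z}/2{\bf Z})$. Throughout I will pass freely between the polynomial generators $HW_i(q)$ of Corollary \ref{cor-cohomologyofOq} and the ``bidegree-adapted'' classes $w_i(q)$ and $\mbox{det}[q]$ that arise in the cocycle computation, using the degree-one identity $\mbox{det}[q]=w_1(q)+HW_1(q)$ of Theorem \ref{mainthm1} as a dictionary.

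The decisive technical reduction uses Hochschild--Serre. Let $b\colon B_{\widetilde{\bf O}(q)}\rightarrow B_{{\bf O}(q)}$ be the morphism of classifying topoi induced by the projection of the central extension (\ref{ext-Cq}). Applying the five-term sequence of Corollary \ref{cor-short-exact-HS-spectralsequ} to (\ref{ext-Cq}), with $N={\bf Z}/2{\bf Z}$ acting trivially on ${\bf Z}/2{\bf Z}$, the transgression
\[
H^0\big(B_{{\bf O}(q)},\underline{\mbox{Hom}}({\bf Z}/2{\bf Z},{\bf Z}/2{\bf Z})\big)\longrightarrow H^2(B_{{\bf O}(q)},{\bf Z}/2{\bf Z})
\]
carries the identity homomorphism to $[C_q]$; since $H^0(B_{{\bf O}(q)},{\bf Z}/2{\bf Z})=A^0$, exactness shows $\ker(b^*)=A^0\cdot[C_q]$ and, in particular, $b^*[C_q]=0$. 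Hence it suffices to prove $b^*HW_2(q)=b^*\big(w_2(q)+w_1(q)\cup\mbox{det}[q]\big)$ in $H^2(B_{\widetilde{\bf O}(q)},{\bf Z}/2{\bf Z})$: this alone places $HW_2(q)+w_2(q)+w_1(q)\cup\mbox{det}[q]$ inside the line $A^0\cdot[C_q]$, after which only a scalar in $A^0$ remains to be pinned down.

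This equality of $b^*$-images is where the Cech cocycle computation enters, and the point of having passed to $B_{\widetilde{\bf O}(q)}$ is that the Clifford extension (\ref{ext-Cq}) there acquires the tautological section, so the pure-group (bar) contribution to $(T_q\circ b)^*([C_n])=b^*HW_2(q)$ becomes a coboundary and drops out. Concretely I would choose an étale covering $\{U_i\to Y\}$ with isometries $\psi_i\colon(t_n)_{U_i}\stackrel{\sim}{\to}q_{U_i}$; the transition sections $g_{ij}=\psi_i^{-1}\psi_j\in{\bf O}(n)(U_{ij})$ represent $\{q\}$, and their lifts to the Pin group $\widetilde{\bf O}(n)$ yield the $2$-cochain computing $w_2(q)=\{q\}^*([C_n])$. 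Each $\psi_i$ provides a local isomorphism $c_{\psi_i}\colon{\bf O}(q)_{U_i}\stackrel{\sim}{\to}{\bf O}(n)_{U_i}$ trivialising the torsor $T_q$, so $b^*HW_2(q)$ is represented, in the total complex of ${\bf B}({\bf O}(q),Y)$, by the pullback of the bar cocycle of $\widetilde{\bf O}(n)$ along the $c_{\psi_i}$. Separating homogeneous pieces, the base piece (bidegree $(0,2)$) is the factor set above, namely $w_2(q)$; the mixed piece (bidegree $(1,1)$), coming from the discrepancy of the $\psi_i$ over double overlaps, is the cup product of $\mbox{det}[q]$ with the determinant class of the $g_{ij}$, namely $w_1(q)\cup\mbox{det}[q]$; and the pure-group piece is the coboundary just mentioned. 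This yields $b^*HW_2(q)=b^*\big(w_2(q)+w_1(q)\cup\mbox{det}[q]\big)$, so the left-hand side lies in $A^0\cdot[C_q]$. The residual scalar is fixed by specialising to $q=t_n$: there $w_1=w_2=0$, $\mbox{det}[t_n]=HW_1$ and $[C_{t_n}]=HW_2$, so the identity collapses to the tautology $HW_2=HW_2$; by naturality of every class under pullback of symmetric bundles, together with the splitting $A^0=\prod_\alpha{\bf Z}/2{\bf Z}$ over the components of $Y=\coprod_\alpha Y_\alpha$, the same value holds for all $q$.

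The main obstacle is the bidegree-$(1,1)$ bookkeeping: showing that the failure of the local isometries $\psi_i$ to agree over $U_{ij}$ contributes exactly $w_1(q)\cup\mbox{det}[q]$ and no spurious term. This rests on controlling how the Clifford/Pin extension transforms under conjugation by an isometry and under the transition sections $g_{ij}$, i.e. on the interplay between the spinor norm and the determinant, and on keeping the transitivity isomorphisms of the nerve functor $\mbox{Ner}$ of Lemma \ref{lem-2} coherent throughout the expansion. A secondary subtlety is to justify that the residual multiple of $[C_q]$ is genuinely universal, so that its evaluation at $q=t_n$ suffices; this is where the standing hypothesis that $Y$ be the disjoint union of its connected components, which also underlies the clean polynomial description of Corollary \ref{cor-cohomologyofOq}, intervenes.
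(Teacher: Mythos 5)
Your reduction steps are sound and agree with the paper's: the Hochschild--Serre five-term sequence for $1\rightarrow {\bf Z}/2{\bf Z}\rightarrow\widetilde{{\bf O}}(q)\rightarrow{\bf O}(q)\rightarrow1$ does show that $\ker(b^*)$ is generated by $[C_q]$ componentwise and that $b^*[C_q]=0$, so the theorem is indeed equivalent to $b^*HW_2(q)=b^*\bigl(w_2(q)+w_1(q)\cup\mathrm{det}[q]\bigr)$ plus a determination of the residual scalar. Two caveats there. First, your disambiguation of the scalar by ``specialising to $q=t_n$ and invoking naturality'' is not a valid deduction as stated, because a general $q$ over $Y$ is not a pullback of $t_n$ over $Y$; what works (and is what the paper does in its Lemma on the Hochschild--Serre sequence, and, in a different form, via the polynomial structure of $H^*(B_{{\bf O}(q)},{\bf Z}/2{\bf Z})$) is to restrict along an \'etale cover $U\rightarrow Y$ trivialising $q$, observe that $w_i(q)$ restricts to $0$ while $HW_2(q)$ and $[C_q]$ both restrict to $HW_2\neq0$, and conclude the class cannot vanish. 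This is repairable in one line, so it is not the real problem.

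The real gap is that the identity $b^*HW_2(q)=b^*\bigl(w_2(q)+w_1(q)\cup\mathrm{det}[q]\bigr)$ --- which is the entire content of the theorem --- is not proved: you yourself flag the bidegree-$(1,1)$ bookkeeping as ``the main obstacle'' and leave it unresolved. The cross term $w_1(q)\cup\mathrm{det}[q]$ is precisely where the theorem lives, and extracting it requires a concrete computation with the Clifford/Pin extension: one must know how the canonical lift $\widetilde\psi_t$ of conjugation by an isometry $t$ compares with the inner automorphism $i_{s(t)}$ by a lift $s(t)$, namely that $\widetilde\psi_t=\varepsilon\, i_{s(t)}$ on $\widetilde{{\bf O}}_{\varepsilon}$ with a sign governed by $\mathrm{det}(t)$; it is exactly this sign discrepancy, evaluated case by case, that produces $\mathrm{det}_{{\bf O}(n)}(\beta)\cup\mathrm{det}_{{\bf O}(q)}(\alpha)$ and nothing else. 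The paper organises this not over a bisimplicial \v Cech--bar complex for an \'etale cover $\{U_i\rightarrow Y\}$ (where the coherence of the nerve's transitivity isomorphisms is genuinely delicate), but over the single covering $U=\widetilde E_{{\bf O}(q)}\times\widetilde\Theta_q$ of the final object of $B_{\widetilde{{\bf O}}(q)}$, which trivialises the three torsors $\widetilde E_{{\bf O}(q)}$, $\widetilde\Theta_q$, $\widetilde T_q$ simultaneously and yields explicit $1$-cocycles $\alpha$, $\beta$, $\gamma$ satisfying $\gamma=\theta_p(\alpha)\cdot\beta$; the coboundary computation then reduces cleanly to the sign analysis above on points of affine schemes. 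Until you carry out that computation (or an equivalent one in your bidegree decomposition, verifying that no spurious $(1,1)$ or $(2,0)$ contribution survives), the proposal is a correct strategy but not a proof.
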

\begin{proof}
For the convenience of the reader we split the proof into several steps.

\bigskip
{\bf Step 0: Cech cocycles.}
\bigskip

Let $\mathcal{E}$ be a topos, let $A$ be an abelian object in $\mathcal{E}$ and let $U$ be a covering of the final object of $\mathcal{E}$, i.e. $U\rightarrow*$ is an epimorphism. We consider the covering $\mathcal{U}:=\{U\rightarrow*\}$. The Cech complex $\check{C}^*(\mathcal{U},A)$ with value in $A$ with respect to the covering $\mathcal{U}$ is
$$0\longrightarrow A(U)\stackrel{d_0}{\longrightarrow} A(U\times U)\stackrel{d_1}{\longrightarrow} A(U\times U\times U)\stackrel{d_2}{\longrightarrow}... \stackrel{d_n}{\longrightarrow} A(U^{n+2})\stackrel{d_{n+1}}{\longrightarrow}...$$
where, for $f\in A(U^{n+1})=\textrm{Hom}_{\mathcal{E}}(U^{n+1},A)$, we have $$d_n(f)=\sum_{1\leq i\leq n+2}(-1)^{i-1}f\circ p_{1,2,...,\widehat{i},...,n+2}\in A(U^{n+2})=\textrm{Hom}_{\mathcal{E}}(U^{n+2},A).$$
Here $p_{1,2,...,\widehat{i},...,n+2}:U^{n+2}\rightarrow U^{n+1}$ is the projection obtained by omitting the i-th coordinate. We denote by $\mathcal{Z}^n(\mathcal{U},A):=\textrm{Ker}(d_n)$ the group of $n$-cocycles, and Cech cohomology with respect to  the covering $U$ with coefficients in $A$ is defined as follows:
$$\check{H}^n(\mathcal{U},A):=\textrm{Ker}(d_n)/\textrm{Im}(d_{n-1}).$$
For a refinement $V\rightarrow U$, i.e. $V$ covers the final object and is given with a map to $U$, we have an induced map of complexes
$\check{C}^*(\mathcal{U},A)\rightarrow \check{C}^*(\mathcal{V},A)$ hence a map $\check{H}^n(\mathcal{U},A)\rightarrow \check{H}^n(\mathcal{V},A)$ for any $n$, which can be shown to be independent of the map $V\rightarrow U$, using the fact that the cohomology $\check{H}^*(\mathcal{U},A)=\check{H}^*(\mathcal{R}_U,A)$ (resp. $\check{H}^*(\mathcal{V},A)=\check{H}^*(\mathcal{R}_{V},A)$) only depends on the sieve $\mathcal{R}_U$ (resp. $\mathcal{R}_V$) generated by $U$ (resp. by $V$). We  then define
$$\check{H}^n(\mathcal{E},A):=\underrightarrow{\textrm{lim}}\,\check{H}^n(\mathcal{U},A).$$
There are  always  canonical maps $\check{H}^n(\mathcal{E},A)\rightarrow H^n(\mathcal{E},A)$ (given by the Cartan-Leray spectral sequence) but this map is not an isomorphism in general. However, the map
$$\check{H}^1(\mathcal{E},A)\rightarrow H^1(\mathcal{E},A)$$
is an isomorphism for any topos $\mathcal{E}$. Fix a ring $R$ in the topos $\mathcal{E}$, and let $A$ and $B$ be $R$-modules. We have cup-products $$\check{H}^n(\mathcal{E},A)\times \check{H}^m(\mathcal{E},B)\stackrel{\cup}{\longrightarrow}  \check{H}^{n+m}(\mathcal{E},A\otimes_R B)$$
induced by the maps:
$$\appl{\check{C}^n(\mathcal{U},A)\times\check{C}^m(\mathcal{U},B)}{\check{C}^{n+m}(\mathcal{U},A\otimes_{R}B)}{(f:U^{n+1}\rightarrow A,g:U^{m+1}\rightarrow B)}
{f\circ p_{1,...,n+1}\otimes g\circ p_{n+1,...,n+m+1}}
$$
where $p_{1,...,n+1}$ is the projection on the $(1,...,n+1)$-components. We obtain a cup-product
$$H^1(\mathcal{E},A)\times H^1(\mathcal{E},B)\stackrel{\sim}{\rightarrow}\check{H}^1(\mathcal{E},A)\times \check{H}^1(\mathcal{E},B)\rightarrow \check{H}^{2}(\mathcal{E},A\otimes_R B)\rightarrow H^{2}(\mathcal{E},A\otimes_R B). $$
For $A=B=R$,  composing with the multiplication map $R\times R\rightarrow R$, we obtain the following
\begin{lem}
For any ringed topos $(\mathcal{E},R)$ there is a cup-product
$$H^1(\mathcal{E},R)\times H^1(\mathcal{E},R)\longrightarrow H^{2}(\mathcal{E},R)$$
compatible with cup-product of Cech cocycles.
\end{lem}
For a (not necessarily commutative) group $G$ in $\mathcal{E}$, we denote by $H^1(\mathcal{E},G)$ the pointed set of isomorphism classes of $G$--torsors in $\mathcal{E}$. Let $\mathcal{U}=\{U\rightarrow *\}$ be a  covering. The definitions of $\mathcal{C}^1(\mathcal{U},G)$, $\mathcal{Z}^1(\mathcal{U},G)$ and $\check{H}^1(\mathcal{U},G)$ extend to the non-abelian case. Indeed, we set
$\mathcal{C}^1(\mathcal{U},G):=G(\mathcal{U}\times \mathcal{U})$ and
$$\mathcal{Z}^1(\mathcal{U},G):=\{s\in G(U\times U),\,(s\circ p_{23})(s\circ p_{13})^{-1}(s\circ p_{12})=1\},$$
where $p_{ij}:U\times U \times U\rightarrow U \times U$ is the projection on the $(i,j)$-components. There is a natural action of $\mathcal{C}^0(\mathcal{U},G):=G(U)$ on $\mathcal{Z}^1(\mathcal{U},G)$ which is defined as follows: if $\sigma\in G(U)$ and $s\in \mathcal{Z}^1(\mathcal{U},G)$ then
$$\sigma\star s=(\sigma \circ p_1)\cdot s\cdot (\sigma \circ p_2)^{-1}\in \mathcal{Z}^1(\mathcal{U},G)$$
where $p_1,p_2:U \times U\rightarrow U$ are the projections. Then one defines
$$\check{H}^1(\mathcal{U},G):=\mathcal{Z}^1(\mathcal{U},G)/G(U)$$
to be the quotient of $\mathcal{Z}^1(\mathcal{U},G)$ by this group action. Notice that $\mathcal{Z}^1(\mathcal{U},G)$ hence $\check{H}^1(\mathcal{U},G)$ both have the structure of a pointed set. Then
there is an isomorphism
$$\check{H}^1(\mathcal{E},A):=\underrightarrow{\textrm{lim}}\,\check{H}^1(\mathcal{U},G)\stackrel{\sim}{\longrightarrow} H^1(\mathcal{E},A)$$
of pointed sets.\\

{\bf The 1-cocycle associated to a torsor.} Let $G$ be a group in the topos $\mathcal{E}$ and let $T$ be a $G$--torsor in $\mathcal{E}$. In order to obtain a
$1$-cocycle which represents  $T$, we proceed as follows. By definition
$T\rightarrow *$ is a covering of the final object in $\mathcal{E}$ which trivializes $T$. More precisely the canonical map
$$\fonc{\mu}{T\times G}{T\times T}{(t,g)}{(t,t\cdot g)}
$$
is an isomorphism in $\mathcal{E}/T$. Here the assignment $(t,g)\mapsto (t,t\cdot g)$ makes sense on sections. Indeed, for any object $X$ in $\mathcal{E}$, the set $T(X)$ carries a right action of the  group $G(X)$ such that the map
$$\fonc{\mu(X)}{T(X)\times G(X)}{T(X)\times T(X)}{(t,g)}{(t,t\cdot g)}
$$
is a bijection, i.e. $G(X)$ acts simply and transitively on $T(X)$. Let $\mu^{-1}: T\times T\rightarrow T\times G$ be the inverse map. For any $X$ in $\mathcal{E}$, we have
$$\fonc{\mu^{-1}(X)}{T(X)\times T(X)}{T(X)\times G(X)}{(t,u)}{(t,t^{-1}u)}
$$
where, by the the notation $g=t^{-1}u$, we mean the unique element of the group $G(X)$ such that $t\cdot g=u$.
If $f: U\rightarrow T$ is a morphism of $\mathcal{E}$ such that $U\rightarrow *$ is a covering, we obtain a $1$-cocycle representing  $T$,  by considering  $$c_T\in \mathcal{Z}^1(\{U\rightarrow*\},G) \subset G(U\times U)$$ defined (on sections) by
$$\fonc{c_T}{U\times U}{G}{(t, u)}{f(t)^{-1}f(u)}
$$
where $\mathcal{Z}^1(\{U\rightarrow*\},G)$ is the pointed set of 1-cocycles with respect to the cover $\{U\rightarrow*\}$ with values in $G$. Recall that $$\mathcal{Z}^1(\{U\rightarrow*\},G):=\{s\in G(U\times U),\,(s\circ p_{23})(s\circ p_{13})^{-1}(s\circ p_{12})=1\},$$
where $p_{ij}:U\times U \times U\rightarrow U \times U$ is the projection on the $(i,j)$-components.
The cocycle $c_T$ represents the $G$--torsor $T$ in the sense that
$$\mathcal{Z}^1(\{U\rightarrow*\},G)\twoheadrightarrow \check{H}^1(\{U\rightarrow*\},G)\rightarrow \check{H}^1(\mathcal{E},G)\stackrel{\sim}{\rightarrow}H^1(\mathcal{E},G)$$
maps $c_T$ to $[T]$, where $[T]$ is the class of the $G$--torsor $T$.\\

{\bf The 2-cocycle associated to a central extension with local sections.} We consider an exact sequence of groups in $\mathcal{E}$
\begin{equation}\label{extensionC}
1\rightarrow A\rightarrow \widetilde G\rightarrow G\rightarrow 1
\end{equation}
such that $A$ is \emph{central} in $\widetilde G$. We have a boundary map $H^1(\mathcal{E},G)\rightarrow H^2(\mathcal{E},A)$ defined as follows:
$$\fonc{\delta}{H^{1}(\mathcal{E}, G)}{H^{2}(\mathcal{E}, A)}{[T]}{T^*([C])}$$
where $[C]\in H^2(B_{G},A)$ is the class defined (see Proposition \ref{prop-Giraud-sequence}) by the extension (\ref{extensionC}).
The class $\delta (T)$ is trivial if and only if the $G$--torsor can be lifted into a $\widetilde G$--torsor $\widetilde T$ (i.e. if and only if  there exists a $\widetilde G$--torsor $\widetilde T$ and an isomorphism of $G$--torsors $\widetilde T\wedge^{\widetilde{G}}G\simeq T$). Let $T$ be a $G$--torsor, let $\mathcal{U}=\{U\rightarrow *\}$ be a covering and let $c_T\in\mathcal{Z}^1(\mathcal{U},G)$ be a 1-cocycle representing $T$. Assume that there exists a lifting $\widetilde{c}_T:U\times U\rightarrow \widetilde{G}$ of $c_T$. Then
$$\check{\delta}(c_T):=\widetilde{c}_T\circ p_{23}-\widetilde{c}_T\circ p_{13}+\widetilde{c}_T\circ p_{12}$$
is a 2-cocycle with values in $A$, i.e. one has $\check{\delta}(c_T)\in\mathcal{Z}^2(\mathcal{U},A)$. The class of $\check{\delta}(c_T)$ in $H^2(\mathcal{U},A)$ does not depend on the choice of $\widetilde{c}_T$. Moreover, the image of $\check{\delta}(c_T)$ in $H^2(\mathcal{E},A)$ is $\delta([T])=T^*([C])$ (see Giraud IV.3.5.4).

\bigskip
{\bf Step 1: First reduction.}
\bigskip

The aim of this step is to prove the following result. Denote by
$$B_{r_q}:B_{\widetilde{{\bf O}}(q)}\longrightarrow B_{{\bf O}(q)}$$ the map induced by the morphism $r_q: \widetilde {\bf O}(q)\rightarrow {\bf O}(q)$ (see Step 2 for a precise definition of $r_q$).
\begin{prop}
The identity
\begin{equation}\label{whatwewant}
HW_{2}(q)=w_{2}(q)+w_{1}(q)\cup \mbox{\emph{det}}[q]+[C_{q}]
\end{equation}
in  $H^{2}(B_{{\bf O}(q)}, {\bf Z}/2{\bf Z})$ is equivalent to the identity
\begin{equation}\label{whatwehave}
B_{r_q}^*(HW_{2}(q))=B_{r_q}^*(w_{2}(q)+w_{1}(q)\cup \mbox{\emph{det}}[q]+[C_{q}])
\end{equation}
in  $H^{2}(B_{\widetilde{{\bf O}}(q)}, {\bf Z}/2{\bf Z})$.
\end{prop}
\begin{proof}
By functoriality (\ref{whatwewant}) implies (\ref{whatwehave}). Let us show the converse. Assume that (\ref{whatwehave}) holds, so that (see Lemma \ref{lem-Cdies})
$$B_{r_q}^*(HW_{2}(q)+w_{2}(q)+w_{1}(q)\cup \mbox{det}[q])=0$$
Recall that $Y=\Coprod_{\alpha}Y_{\alpha}$ is the disjoint union of its connected components. Since cohomology sends disjoint sums to direct products, we may assume $Y$ to be connected. By Lemma \ref{lem-reduc}, we have either $$HW_{2}(q)+w_{2}(q)+w_{1}(q)\cup \mbox{det}[q]=0$$ or
$$HW_{2}(q)+w_{2}(q)+w_{1}(q)\cup \mbox{det}[q]=[C_q].$$ Assume that
$$HW_{2}(q)+w_{2}(q)+w_{1}(q)\cup \mbox{det}[q]=0.$$
By Theorem \ref{mainthm1}, we would obtain
$$HW_{2}(q)+w_{1}(q)\cdot HW_{1}(q)+(w_{1}(q)\cdot w_{1}(q)+w_{2}(q))=0$$
in the polynomial ring $A[HW_1(q),...,HW_n(q)]$. This is a contradiction since $w_{1}(q),w_{2}(q)\in A$. Identity (\ref{whatwewant}) follows.
\end{proof}

\begin{lem}\label{lem-Cdies}
Consider an extension
$$1\rightarrow A\rightarrow \widetilde{G}\rightarrow G\rightarrow 1$$
of a group $G$ by an abelian group $A$ in some topos $\mathcal{E}$. Let $C\in \textrm{\emph{Ext}}(G,A)$ be the class of this extension and let $[C]$ be its cohomology class in $H^2(B_G,A)$. Then $C$ vanishes in $\textrm{\emph{Ext}}(\widetilde{G},A)$. A fortiori, $[C]$ vanishes in $H^2(B_{\widetilde{G}},A)$.
\end{lem}
\begin{proof}
The natural map
$$\textrm{Ext}(G,A)\rightarrow \textrm{Ext}(\widetilde{G},A)$$
sends the class $C$ of the extension $1\rightarrow A\rightarrow \widetilde{G}\rightarrow G\rightarrow 1$ to the class $\widetilde{C}$ of
$$1\rightarrow A\rightarrow \widetilde{G}\times_G\widetilde{G}\rightarrow \widetilde{G}\rightarrow 1$$
which is split by the diagonal $\widetilde{G}\rightarrow \widetilde{G}\times_G\widetilde{G}$, so that $\widetilde{C}=0$.

Giraud's exact sequence (Proposition \ref{prop-Giraud-sequence}) is functorial in $G$, so that one has a commutative square:
\[\xymatrix{
\textrm{Ext}(G,A)\ar[r]\ar[d]&H^2(B_G,A)\ar[d] \\
\textrm{Ext}(\widetilde{G},A)\ar[r]&H^2(B_{\widetilde{G}},A)
}\]
hence $[C]\in H^2(B_G,A)$ maps to $[\widetilde{C}]=0\in H^2(B_{\widetilde{G}},A)$.

\end{proof}
Recall that we denote by $C_q$ the class of the canonical extension
$$1\rightarrow {\bf Z}/2{\bf Z}\rightarrow \widetilde{{\bf O}}(q)\rightarrow{\bf O}(q)\rightarrow1.$$
\begin{lem}\label{lem-reduc}
The Hochschild-Serre spectral sequence associated to the above extension of group--schemes  induces an exact sequence
$$0\rightarrow H^0(Y,{\bf Z}/2{\bf Z})\rightarrow H^2(B_{{\bf O}(q)}, {\bf Z}/2{\bf Z})\rightarrow H^{2}(B_{\widetilde{{\bf O}}(q)}, {\bf Z}/2{\bf Z}).$$
Moreover, if $Y$ is connected, then ${\bf Z}/2{\bf Z}\rightarrow H^2(B_{{\bf O}(q)}, {\bf Z}/2{\bf Z})$ maps $1$ to $[C_q]$.
\end{lem}
\begin{proof}
The scheme $Y$ is the disjoint union of its connected components. Since cohomology sends disjoint unions to direct products, we may suppose $Y$ to be connected. By Corollary \ref{cor-short-exact-HS-spectralsequ} we have an exact sequence
$$0\rightarrow H^1(B_{{\bf O}(q)},{\bf Z}/2{\bf Z})\rightarrow H^1(B_{\widetilde{{\bf O}}(q)},{\bf Z}/2{\bf Z})\rightarrow
H^0(B_{{\bf O}(q)},\underline{\textrm{Hom}}({\bf Z}/2{\bf Z},{\bf Z}/2{\bf Z}))$$
$$\rightarrow H^2(B_{{\bf O}(q)},{\bf Z}/2{\bf Z})\rightarrow H^2(B_{\widetilde{{\bf O}}(q)},{\bf Z}/2{\bf Z}).$$
But $\underline{\textrm{Hom}}({\bf Z}/2{\bf Z},{\bf Z}/2{\bf Z})={\bf Z}/2{\bf Z}$ and $H^0(B_{{\bf O}(q)},{\bf Z}/2{\bf Z})={\bf Z}/2{\bf Z}$ hence we obtain an exact sequence
$${\bf Z}/2{\bf Z}\longrightarrow H^2(B_{{\bf O}(q)},{\bf Z}/2{\bf Z})\stackrel{r^*_q}{\longrightarrow} H^2(B_{\widetilde{{\bf O}}(q)},{\bf Z}/2{\bf Z}).$$
By Lemma \ref{lem-Cdies}, the class $[C_q]$ lies in $\textrm{Ker}(r^*_q)$. It remains to show that $[C_q]\neq 0$ in $H^2(B_{{\bf O}(q)},{\bf Z}/2{\bf Z})$. Let $U\rightarrow Y$ be an \'etale map (or any map) such that $q_U$ is isomorphic to the standard form $t_{n,U}$ on $U$. Then there is an isomorphism $$B_{{\bf O}(q_U)}\simeq B_{{\bf O}(q)}/U\simeq B_{{\bf O}(n)}/U\simeq B_{{\bf O}(t_{n,U})}$$ such that $[C_{q_U}]$ maps to $[C_{t_n,U}]=HW_2\in H^2(B_{{\bf O}(t_{n,U})},{\bf Z}/2{\bf Z})$, which is non-zero by Theorem \ref{thm-Jardine}. Hence $[C_q]$ maps to $[C_{q_U}]\neq 0$, hence $[C_{q}]\neq 0$.

\end{proof}

{\bf Step 2: The maps $\theta$ and $\widetilde\theta$.}\\

For a brief summary of the  Clifford algebras and Clifford groups associated to a symmetric bundle see \cite{EKV}, Section 1.9.
 Let  $(V, q)$ be a  symmetric bundle on $Y$.  We denote by $C(q)$ its
Clifford algebra  and by  $C^{*}(q)$  its Clifford group. We  then  consider
 the  sheaf of  algebras and  the sheaf of groups,  for the flat topology,  defined by the  functors $C(q): T\rightarrow C(q_{T})$ and
  $C^{*}(q): T\rightarrow C^{*}(q_{T})$.
The norm map $N: C(q)\rightarrow C(q)$ restricts to a morphism of groups
$$N: C^{*}(q)\rightarrow {\bf G}_{m}$$
where ${\bf G}_{m}$ denotes the multiplicative group. We let  $\widetilde {\bf O}(q)$ denote  the kernel of this homomorphism. This is a sheaf of groups for the flat topology
which is representable by a
  smooth group scheme over $Y$.
The group scheme $\widetilde {\bf O}(q)$
splits as  $ \widetilde {{\bf O}}_{+}(q)\coprod  \widetilde {{\bf
O}}_{-}(q)\ .$ Let $x$ be in  $ \widetilde {{\bf
O}}_{\varepsilon}(q)$ with $\varepsilon =\pm 1$, then we define $r_q(x)$
as the element of ${\bf O}(q)$
$$r_q(x): V \rightarrow V$$
$$v\mapsto \varepsilon xvx^{-1} \ .$$
This defines a group homomorphism $r_q:\widetilde {{\bf O}}(q)
\rightarrow {\bf O}(q)$. One can show  that for each $x \in
\widetilde {{\bf O}}_{\varepsilon}(q)$ the element  $r_q(x)$ belongs
to $ {\bf O}_+(q)={\bf SO}(q)$ or ${\bf  O}_-(q)={\bf O}(q)
\setminus  {\bf SO}(q)$ depending on whether  $\varepsilon=1$ or
$-1$.

We start by considering the affine case  $Y=\mbox{Spec}(R)$ where $R$ is a commutative ring in which $2$ is invertible.
In this situation  ${\bf O}(q)$ and ${\bf \widetilde O}(q)$ can be respectively considered as the orthogonal group and the Pinor group of the form $(V,q)$,  where
$V$ is a finitely generated projective $R$-module and $q$  is a non-degenerate form on $V$.

 \begin{lem}\label{lem-Ph}
 Let $(V, q)$ and $(W, f)$ be symmetric bundles over $R$ and let $t\in {\bf Isom}(q, f)(R)$.

\begin{itemize}

\item[i)]  $t$ extends in a unique way to a graded isomorphism of Clifford algebras
 $$\widetilde \psi_{t}: C(q)\rightarrow C(f)\ . $$
\item[ii)] $\widetilde \psi_{t}$ induces,  by restriction,  an  isomorphism of groups, again  denoted $\widetilde \psi_{t }$,
such that the following diagram is commutative

\[\xymatrix{\{1\}\ar[r]^{}&
({\bf Z}/2{\bf Z})(R)\ar[r]^{}\ar[d]_{\textrm{\emph{Id}}}&\widetilde {\bf O}(q)(R)\ar[r]^{r_{q}}\ar[d]_{\widetilde \psi_{t}}& {\bf O}(q)(R)\ar[d]_{\psi_{t}} \\
\{1\}\ar[r]^{}&({\bf Z}/2{\bf Z})(R)\ar[r]^{}&\widetilde {\bf O}(f)(R)\ar[r]^{r_{f}}& {\bf O}(f)(R) \\
}\]
 where $\psi_{t}$ is the group isomorphism $u\rightarrow tut^{-1}$.

\item [iii)]  For any $a\in {\bf O}(f)(R)$ and $t\in {\bf Isom}(q, f)(R)$ we have the equalities
$$\widetilde \psi_{at}= \widetilde \psi_{a}\circ \widetilde \psi_{t}\mbox{ and }\psi_{at}=a\psi_{t}a^{-1} . $$ .

\item [iv)]  Suppose that $(V, q)=(W, f)$.  We assume  that $t $ has  a lift $s(t)$ in  $\widetilde {\bf O}(q)(R)$. Then
$\widetilde \psi_{t} =i_{s(t)}$, (resp. $\varepsilon
i_{s(t)}$ on $\widetilde {{\bf O}}_{\varepsilon}(q)(R)$), if
$t \in {\bf O}_+(q)(R)$, (resp. ${\bf O}_-(q)(R)$ ).

\end{itemize}

\end{lem}

\begin{proof}  It follows from the very definition of the Clifford algebra that  $t$ extends to  a graded  isomorphism
$\widetilde \psi_{t}: C(q)\rightarrow C(f)$  which itself induces
by restriction an isomorphism $\widetilde \psi_{t}: \widetilde {\bf O}(q)\rightarrow \widetilde {\bf O}(f)$. For
$a\in \widetilde {\bf O}_{\varepsilon}(q)$ one has by definition:
$$r_{f}\circ \widetilde \psi_{t}(a)(x)=\varepsilon \widetilde \psi_{t}(a)\ x\ \widetilde \psi_{t}(a)^{-1}, \ \forall x \in W .$$
Since $\widetilde \psi_{t}$ is induced by a morphism of $R$-algebras,  the right-hand side of this equality can be written as
$$\varepsilon \widetilde \psi_{t}(a\ \widetilde \psi_{t}^{-1}(x)\ a^{-1})\ .$$  Since $\widetilde \psi_{t}$ coincides with $t$ on $V$ and since
$a\ {t}^{-1}(x)\ a^{-1} \in V$ (recall that $a\ \in \widetilde {\bf O}(q)$ we conclude that
$$r_{f}\circ \widetilde\psi_{t}(a)(x)=\psi_{t}\circ r_{q}(a)(x)\'e\ \forall  x\in W.$$ Therefore $i)$ and $ii)$ are proved.
The second equality of iii) is immediate.  In order to prove the first equality  it suffices to   observe  that both sides of this equality coincide
when restricted to $V$.

\noindent If we now assume that $t$ has a lift in $\widetilde{\bf O}(q)(R)$, then  we obtain  two automorphisms of
$C(q)$, namely $\widetilde \psi_{t} $ and $\imath _{s(t)}$.
Moreover,  since $s(t) \in \widetilde {{\bf
O}}_{\varepsilon}(q)$, it follows from   the definition of $r_q$  that
$t (x)=\varepsilon s(t) x s(t)^{-1}$ for any $x$ in $V$.
If $t\ \in {\bf O}_+(q)$, then $\varepsilon=1$ and $\widetilde \psi_{t} $
and $i_{s(t)}$   coincide on $V$ and therefore  coincide on
$ C(q)$. If   $t \in {\bf O}_-(q)$, then $ \varepsilon=-1$ and
therefore  $\widetilde \psi_{t}$ and $i_{s(t)}$ will coincide
on $ C_+(q)$ and will differ by a minus sign on  $
C_-(q)$ and the result follows.
\end{proof}

We now return to  the forms $q$ and $t_{n}$ on $Y$  and the sheaf  ${\bf Isom}(t_{n}, q)$
of $Y_{fl}$. Let us define morphisms in $Y_{fl}$:
$$\underline{\theta}:{\bf Isom}(t_{n}, q)\times {\bf O}(q)\rightarrow {\bf Isom}(t_{n}, q)\times {\bf O}(n),$$
$$\underline{\widetilde\theta}:{\bf Isom}(t_{n}, q)\times \widetilde{\bf O}(q)\rightarrow {\bf Isom}(t_{n}, q)\times \widetilde{\bf O}(n)$$
as follows. The morphisms $\underline{\theta}$ and $\underline{\widetilde\theta}$ can be defined on sections. Furthermore, the class of affine schemes yields a generating subcategory of $Y_{fl}$, hence one can define the morphisms $\underline{\theta}$ and $\underline{\widetilde\theta}$ on sections over affine schemes of the form $\textrm{Spec}(R)\rightarrow Y$.
For any $\textrm{Spec}(R)\rightarrow Y$ and $t\in {\bf Isom}(t_{n}, q)(R)$ we set  $\underline{\theta}_{t}=\psi_{t^{-1}}$ and
$\underline{\widetilde\theta}_{t}=\widetilde\psi_{t^{-1}}$.  
Then the maps
$$
\fonc{\underline{\theta}(R)}{{\bf Isom}(t_{n}, q)(R)\times {\bf O}(q)(R)}{{\bf Isom}(t_{n}, q)(R)\times {\bf O}(n)(R)}{(t, x)}{(t, \underline{\theta}_{t}(x))}
$$
and
$$
\fonc{\underline{\widetilde\theta}(R)}{{\bf Isom}(t_{n}, q)(R)\times \widetilde{\bf O}(q)(R)}{{\bf Isom}(t_{n}, q)(R)\times \widetilde{\bf O}(n)(R)}{(t, x)}{(t, \underline{\widetilde{\theta}}_{t}(x))}
$$
are both functorial in $\textrm{Spec}(R)\rightarrow Y$, and yield the morphisms $\underline{\theta}$ and $\underline{\widetilde\theta}$. We denote by $$\widetilde{\pi}:B_{\widetilde{{\bf O}}(q)}\stackrel{B_{r_q}}{\longrightarrow} B_{{\bf O}(q)} \stackrel{\pi}{\longrightarrow} Y_{fl}$$ the canonical map. Recall that
$$\Theta_q:=\pi^*{\bf Isom}(t_{n}, q).$$
Similarly, we denote by $\widetilde{\Theta}_q$ the object of  $B_{\widetilde{{\bf O}}(q)}$ given by 
$$\widetilde{\Theta}_q:=B_{r_q}^*\Theta_q=\widetilde{\pi}^*{\bf Isom}(t_{n}, q) $$
which is  the sheaf ${\bf Isom}(t_{n}, q)$ with trivial $\widetilde{{\bf O}}(q)$--action.
Pulling back $\underline{\theta}$ and $\underline{\widetilde\theta}$ along the morphism
$\widetilde{\pi}:B_{\widetilde{{\bf O}}(q)}\rightarrow Y_{fl}$ we obtain morphisms in $B_{\widetilde{{\bf O}}(q)}$:
$$\widetilde{\pi}^*\underline{\theta}: \widetilde{\Theta}_q\times \widetilde{\pi}^*{\bf O}(q)\rightarrow \widetilde{\Theta}_q\times \widetilde{\pi}^*{\bf O}(n)\mbox{ and }
\widetilde{\pi}^*\underline{\widetilde\theta}: \widetilde{\Theta}_q\times \widetilde{\pi}^*\widetilde{\bf O}(q)\rightarrow \widetilde{\Theta}_q\times \widetilde{\pi}^*\widetilde{\bf O}(n), $$
where  the above maps are just $\underline{\theta}$ and $\underline{\widetilde\theta}$ seen as equivariant map between objects of $Y_{fl}$ with trivial $\widetilde{{\bf O}}(q)$--action. Moreover,
$\widetilde{\pi}^*\underline{\theta}$ and $\widetilde{\pi}^*\widetilde{\underline{\theta}}$ are defined over $\widetilde{\Theta}_q$, i.e. $\widetilde{\pi}^*\underline{\theta}$ and $\widetilde{\pi}^*\underline{\widetilde{\theta}}$ commute with the projection to $\widetilde{\Theta}_q$. In other words, $\widetilde{\pi}^*\underline{\theta}$ and $\widetilde{\pi}^*\underline{\widetilde{\theta}}$ are maps in $B_{\widetilde{{\bf O}}(q)}/\widetilde{\Theta}_q$, which we simply denote by $\theta:=\widetilde{\pi}^*\underline{\theta}$ and $\widetilde\theta:=\widetilde{\pi}^*\underline{\widetilde{\theta}}$. We summarize what we have constructed so far.
\begin{lem}\label{lemtheta}
 The following diagram is a commutative diagram in $B_{\widetilde{{\bf O}}(q)}/\widetilde{\Theta}_q$.
\[\xymatrix{\widetilde{\Theta}_q\ar[r]^{}&
\widetilde{\Theta}_q\times {\bf Z}/2{\bf Z}\ar[r]^{}\ar[d]_{\textrm{Id}}&\widetilde{\Theta}_q\times \widetilde{\pi}^*\widetilde {\bf O}(q)\ar[r]^{r_{q}}
\ar[d]_{\widetilde \theta}& \widetilde{\Theta}_q\times \widetilde{\pi}^*{\bf O}(q)\ar[d]_{\theta}\ar[r]^{}&\widetilde{\Theta}_q \\
\widetilde{\Theta}_q\ar[r]^{}&\widetilde{\Theta}_q\times {\bf Z}/2{\bf Z}\ar[r]^{}&\widetilde{\Theta}_q\times \widetilde{\pi}^*\widetilde{\bf O}(n)\ar[r]^{r_{n}}&
\widetilde{\Theta}_q\times \widetilde{\pi}^*{\bf O}(n)\ar[r]^{}&\widetilde{\Theta}_q\\
}\]
Moreover, the horizontal rows are exact sequences of sheaves of groups on $B_{\widetilde{{\bf O}}(q)}/\widetilde{\Theta}_q$ and the vertical arrows are isomorphisms. Notice that $\widetilde{\Theta}_q$ is the trivial group in $B_{\widetilde{{\bf O}}(q)}/\widetilde{\Theta}_q$. 
\end{lem}
We now use the morphisms $\theta$ and $\widetilde \theta$ to associate to any object $p: W\rightarrow\widetilde{\Theta}_{q}$ of $B_{\widetilde{{\bf O}}(q)}/\widetilde{\Theta}_q$
isomorphisms of groups
$$\theta_{p}: \widetilde{\pi}^*{\bf O}(q)(W)\stackrel{\sim}{\longrightarrow}\widetilde{\pi}^*{\bf O}(n)(W)\ \mbox{and}\ \widetilde \theta_{p}:
\widetilde{\pi}^*\widetilde{\bf O}(q)(W)\stackrel{\sim}{\rightarrow}\widetilde{\pi}^*\widetilde{\bf O}(n)(W),$$
where $W$ is the object of $B_{\widetilde{{\bf O}}(q)}$ underlying to $p$. For example, we have  $\widetilde{\pi}^*{\bf O}(n)(W):=\textrm{Hom}_{B_{\widetilde{{\bf O}}(q)}}(W,\widetilde{\pi}^*{\bf O}(n))$. These isomorphisms can be described as follows:
\[\xymatrix{\theta_p:(f: W\rightarrow \widetilde{\pi}^*{\bf O}(q))\ar[r]^{}& (\theta_{p}(f): W
\ar[r]^{p\times f}&\widetilde{\Theta}_{q}\times \widetilde{\pi}^*{\bf O}(q)\ar[r]^{\theta}
& \widetilde{\Theta}_{q}\times \widetilde{\pi}^*{\bf O}(n)\ar[r]^{\textrm{pr}}&\widetilde{\pi}^*{\bf O}(n)) \\
}\]
\[\xymatrix{\widetilde\theta_p:(f: W\rightarrow \widetilde{\pi}^*{\bf O}(q))\ar[r]^{}& (\widetilde \theta_{p}(f): W
\ar[r]^{p\times f}&\widetilde{\Theta}_{q}\times \widetilde{\pi}^*\widetilde{\bf O}(q)\ar[r]^{\widetilde \theta}
& \widetilde{\Theta}_{q}\times \widetilde{\pi}^*\widetilde{\bf O}(n)\ar[r]^{\textrm{pr}}&\widetilde{\pi}^*\widetilde{\bf O}(n)) \\
}\] where $\textrm{pr}$ denotes  the projection on the second component.  We then have a commutative diagram of groups
\[\xymatrix{1\ar[r]^{}&
{\bf Z}/2{\bf Z}(W)\ar[r]^{}\ar[d]_{\textrm{Id}}& \widetilde{\pi}^*\widetilde{\bf O}(q)(W)\ar[r]^{r_{q, W}}
\ar[d]_{\widetilde \theta_{p}}&  \widetilde{\pi}^*{\bf O}(q)(W)\ar[d]_{\theta_{p}}\\
1\ar[r]^{}& {\bf Z}/2{\bf Z}(W)\ar[r]^{}& \widetilde{\pi}^*\widetilde{\bf O}(n)(W)\ar[r]^{r_{n, W}}&
 \widetilde{\pi}^*{\bf O}(n)(W).\\
}\]

We (also) denote by $\eta: Y_{fl}\rightarrow B_{\widetilde{{\bf O}}(q)}$ the morphism of topoi induced by the morphism of groups $1\to \widetilde{{\bf O}}(q)$. The functor $\eta^*$ forgets the $\widetilde{{\bf O}}(q)$--action. It has a left adjoint $\eta_!$, which is defined as follows: $\eta_!(Z)$ is the object of $B_{\widetilde{{\bf O}}(q)}$ given by $E_{\widetilde{{\bf O}}(q)}\times Z$ on which $\widetilde{{\bf O}}(q)$ acts via the first factor. Furthermore, we may consider the maps $\eta^*\theta$ and $\eta^*\widetilde{\theta}$ as maps in the topos $Y_{fl}/{\bf Isom}(t_{n}, q)$. For a map $t:Z\rightarrow {\bf Isom}(t_{n}, q)$ in $Y_{fl}$ we define
$$(\eta^*\theta)_t:{\bf O}(q)(Z)\rightarrow {\bf O}(n)(Z) \mbox{ and } (\eta^*\widetilde{\theta})_t:\widetilde{{\bf O}}(q)(Z)\rightarrow \widetilde{{\bf O}}(n)(Z)$$
in a similar manner. In particular, for $t:Z=\textrm{Spec}(R)\rightarrow {\bf Isom}(t_{n}, q)$, one has
\begin{equation}\label{weirdo0}
(\eta^*\theta)_t=\psi_{{t}^{-1}}\mbox{ and }(\eta^*\widetilde{\theta})_t=\widetilde\psi_{{t}^{-1}}
\end{equation}
as defined in Lemma \ref{lem-Ph}.

\begin{lem}\label{lem-weirdo}
Let $Z=\textrm{\emph{Spec}}(R)$ be an object of $B_{\widetilde{{\bf O}}(q)}$ with trivial $\widetilde{{\bf O}}(q)$--action, and let $Z\rightarrow\widetilde{\Theta}_q$ be a map in $B_{\widetilde{{\bf O}}(q)}$. By adjunction one has canonical identifications
\begin{equation}\label{weirdo1}
(-)_{\mid R}:\widetilde{\pi}^*\widetilde{{\bf O}}(n)(E_{\widetilde{{\bf O}}(q)}\times Z)\stackrel{\sim}{\longrightarrow}
\widetilde{{\bf O}}(n)(\eta^*Z)=\widetilde{{\bf O}}(n)(R)
\end{equation}
\begin{equation}\label{weirdo2}
(-)_{\mid R}:\widetilde{\pi}^*\widetilde{{\bf O}}(q)(E_{\widetilde{{\bf O}}(q)}\times Z)\stackrel{\sim}{\longrightarrow}\widetilde{{\bf O}}(q)(\eta^*Z)= \widetilde{{\bf O}}(q)(R)
\end{equation}
\begin{equation}\label{weirdo3}
(-)_{\mid R}:\widetilde{\Theta}_q(E_{\widetilde{{\bf O}}(q)}\times Z)\stackrel{\sim}{\longrightarrow}{\bf Isom}(t_{n}, q)(\eta^*Z)={\bf Isom}(t_{n}, q)(R)
\end{equation}
such that
\begin{equation}\label{weirdo5}
\theta_p(\sigma)_{\mid R}=(\eta^*\theta)_{p_{\mid R}}(\sigma_{\mid R}):= \psi_{p_{\mid R}^{-1}}(\sigma_{\mid R})\mbox{ and }
\widetilde{\theta}_p(\sigma)_{\mid R}=(\eta^*\widetilde{\theta})_{p_{\mid R}}(\sigma_{\mid R}):= \widetilde{\psi}_{p_{\mid R}^{-1}}(\sigma_{\mid R})
\end{equation}
for any $$(p,\sigma)\in\widetilde{\Theta}_q(E_{\widetilde{{\bf O}}(q)}\times Z)\times\widetilde{\pi}^*\widetilde{{\bf O}}(q)(E_{\widetilde{{\bf O}}(q)}\times Z).$$

\end{lem}

\begin{proof}
The map (\ref{weirdo1}) is defined as follows. Given $$\sigma:E_{\widetilde{{\bf O}}(q)}\times Z\longrightarrow \widetilde{\pi}^*\widetilde{{\bf O}}(q)\in \widetilde{\pi}^*\widetilde{{\bf O}}(q)(E_{\widetilde{{\bf O}}(q)}\times Z)$$ we define
$$\sigma_{\mid R}:\eta^*Z\stackrel{e}{\longrightarrow}\eta^*E_{\widetilde{{\bf O}}(q)}\times \eta^*Z\stackrel{\eta^*(\sigma)}{\longrightarrow} \eta^*\widetilde{\pi}^*\widetilde{{\bf O}}(q)=\widetilde{{\bf O}}(q)$$
where $e:\eta^*Z\longrightarrow\eta^*E_{\widetilde{{\bf O}}(q)}\times \eta^*Z$ is the map given by the unit section of $\widetilde{{\bf O}}(q)=\eta^*E_{\widetilde{{\bf O}}(q)}$. This morphism $\sigma\mapsto\sigma_{\mid R}$ is an inverse of the adjunction map:
$$\widetilde {\bf O}(n)(\eta ^*Z)=\mbox{Hom}_{Y_{fl}}(\eta ^*Z, \widetilde{\bf O}(n))=\mbox{Hom}_{Y_{fl}}(\eta ^*Z, \eta ^*\widetilde \pi^* \widetilde{\bf O}(n))=
\mbox{Hom}_{B_{\widetilde{{\bf O}}(q)}}(\eta_!\eta ^*Z, \widetilde \pi^* \widetilde{\bf O}(n))=$$
$$\mbox{Hom}_{B_{\widetilde{{\bf O}}(q)}}(E_{\widetilde {\bf O}(q)}\times Z, \widetilde \pi ^*\widetilde{\bf O}(n))=\widetilde \pi ^*\widetilde{\bf O}(n)(E_{\widetilde {\bf O}(q)}\times Z). $$
Replacing successively $\widetilde{{\bf O}}(n)$ with $\widetilde{{\bf O}}(q)$ and ${\bf Isom}(t_{n}, q)$ we obtain respectively (\ref{weirdo2}) and (\ref{weirdo3}). Then (\ref{weirdo5}) follows immediately from the definitions: for some $(p,\sigma)\in\widetilde{\Theta}_q(E_{\widetilde{{\bf O}}(q)}\times Z)\times\widetilde{\pi}^*\widetilde{{\bf O}}(q)(E_{\widetilde{{\bf O}}(q)}\times Z)$ we have
$$\widetilde{\theta}_p(\sigma): E_{\widetilde{{\bf O}}(q)}\times Z\rightarrow \widetilde{\Theta}_q\times \widetilde{\pi}^*\widetilde{{\bf O}}(q)\stackrel{\widetilde{\theta}}{\rightarrow}
\widetilde{\Theta}_q\times \widetilde{\pi}^*\widetilde{{\bf O}}(n)\rightarrow \widetilde{\pi}^*\widetilde{{\bf O}}(n)$$
and
$$\widetilde{\theta}_p(\sigma)_{\mid R}:\eta^*Z\rightarrow\eta^* E_{\widetilde{{\bf O}}(q)}\times \eta^*Z\rightarrow \eta^*\widetilde{\Theta}_q\times \eta^*\widetilde{\pi}^*\widetilde{{\bf O}}(q)\stackrel{\eta^*\widetilde{\theta}}{\rightarrow}
\eta^*\widetilde{\Theta}_q\times \eta^*\widetilde{\pi}^*\widetilde{{\bf O}}(n)\rightarrow \eta^*\widetilde{\pi}^*\widetilde{{\bf O}}(n)$$
which is just
$$(\eta^*\widetilde{\theta})_{p_{\mid R}}(\sigma_{\mid R}):\textrm{Spec}(R)\stackrel{(p_{\mid R},\sigma_{\mid R})}{\longrightarrow} {\bf Isom}(t_{n}, q)\times \widetilde{{\bf O}}(q)\stackrel{\eta^*\widetilde{\theta}}{\longrightarrow}
{\bf Isom}(t_{n}, q)\times \widetilde{{\bf O}}(n)\longrightarrow \widetilde{{\bf O}}(n).$$

\end{proof}
We shall also need the following result.
\begin{lem}\label{lem-generatingfamily}
The class of objects of the form $E_{\widetilde{{\bf O}}(q)}\times\mathrm{Spec}(R)$, where $\mathrm{Spec}(R)$ is an affine scheme endowed with its trivial $\widetilde{{\bf O}}(q)$-action, is a generating family of the topos $B_{\widetilde{{\bf O}}(q)}$.
\end{lem}
To be more precise, here we denote by $\mathrm{Spec}(R)$ an object of $B_{\widetilde{{\bf O}}(q)}$ given by a sheaf on $Y_{fl}$ represented by a $Y$-scheme of the form $\mathrm{Spec}(R)\rightarrow Y$ (the map itself is not necessarily affine) endowed with its trivial $\widetilde{{\bf O}}(q)$-action.
\begin{proof}
We need to show that, for any object $\mathcal{F}$ in $B_{\widetilde{{\bf O}}(q)}$, there exists an epimorphic family
$$\{E_{\widetilde{{\bf O}}(q)}\times\mathrm{Spec}(R_{i})\rightarrow\mathcal{F},i\in I\}$$
of morphisms in $B_{\widetilde{{\bf O}}(q)}$. Recall that we denote by $\eta: Y_{fl}\rightarrow B_{\widetilde{{\bf O}}(q)}$ the morphism of topoi induced by the morphism of groups $1\to \widetilde{{\bf O}}(q)$. The functor $\eta^*$ forgets the $\widetilde{{\bf O}}(q)$--action. It has a left adjoint $\eta_!$, which is defined as follows: $\eta_!(Z)$ is the object of $B_{\widetilde{{\bf O}}(q)}$ given by $E_{\widetilde{{\bf O}}(q)}\times Z$ on which $\widetilde{{\bf O}}(q)$ acts via the first factor.

Using the Yoneda Lemma, one sees that the class of $Y$-schemes $Y'\rightarrow Y$ forms a generating family of the topos $Y_{fl}$. Since any $Y$-scheme $Y'$ can be covered by open affine subschemes, the class of objects of the form $\mathrm{Spec}(R)\rightarrow Y$ is a generating family of $Y_{fl}$. Now let $\mathcal{F}$ be an object of $B_{\widetilde{{\bf O}}(q)}$, and let
$$\{g_{i}:\mathrm{Spec}(R_{i})\rightarrow\eta^*\mathcal{F},i\in I\}$$
be an epimorphic family in $Y_{fl}$. By adjunction we obtain a family
$$\{f_{i}:\eta_!(\mathrm{Spec}(R_{i}))\rightarrow\mathcal{F},i\in I\}$$
of morphisms in $B_{\widetilde{{\bf O}}(q)}$ such that
$$g_i=\eta^*(f_i)\circ \tau_i:\mathrm{Spec}(R_{i})\rightarrow \eta^*\eta_! (\mathrm{Spec}(R_{i}))\rightarrow\eta^*\mathcal{F}$$ 
where $\tau_{i}:\mathrm{Spec}(R_{i})\rightarrow \eta^*\eta_! (\mathrm{Spec}(R_{i}))$ is the adjunction map.

Let $u,v:\mathcal{F}\rightrightarrows\mathcal{G}$ be a pair of maps in $B_{\widetilde{{\bf O}}(q)}$ such that $u\circ f_{i}=v\circ f_{i}$ for each $i\in I$. In particular we have
$$\eta^*(u)\circ \eta^*(f_{i})\circ\tau_i=\eta^*(v)\circ \eta^*(f_{i})\circ\tau_i$$ for each $i\in I$, hence 
$\eta^*(u)\circ g_{i}=\eta^*(v)\circ g_{i}$ for each $i\in I$. It follows that $\eta^*(u)=\eta^*(v)$ since the family $\{g_i,i\in I\}$ is epimorphic. We obtain $u=v$ since $\eta^*$ is faithful. Hence the family $\{f_i,i\in I\}$ is epimorphic as well, and the result follows.

\end{proof}

\bigskip
{\bf Step 3: The cocycles $\alpha$, $\beta$ and $\gamma$.}
\bigskip

Recall that we denote by $$\widetilde{\pi}:B_{\widetilde{{\bf O}}(q)}\stackrel{B_{r_q}}{\longrightarrow} B_{{\bf O}(q)} \stackrel{\pi}{\longrightarrow} Y_{fl}$$ the canonical map. We set
$$\widetilde{\Theta}_q:=B_{r_q}^*{\Theta}_q\mbox{ , } \widetilde{T}_q:=B_{r_q}^*{T}_q \mbox{ , }  \widetilde{E}_{{\bf O}(q)}:=B_{r_q}^*E_{{\bf O}(q)}=\widetilde{{\bf O}}(q)/({\bf Z}/2{\bf Z}).$$

We now look for a covering of $B_{\widetilde{{\bf O}}(q)}$ which splits both the $\widetilde{\pi}^*{\bf O}(n)$--torsors $\widetilde{T}_{q}$ and $\widetilde{\Theta}_{q}$
and also the
$\widetilde{\pi}^*{\bf O}(q)$--torsor $\widetilde{E}_{{\bf O}(q)}$. The map
$$
\appl{\widetilde{E}_{{\bf O}(q)}\times\widetilde{\Theta}_{q}}{\widetilde{E}_{{\bf O}(q)}\times \widetilde{T}_{q}}{(x,t)}{(x,xt)}
$$
is an isomorphism of $B_{\widetilde{{\bf O}}(q)}$. It follows that
$\{U=\widetilde{E}_{{\bf O}(q)}\times\widetilde{\Theta}_{q}\rightarrow*\}$ is a covering of the final object in $B_{\widetilde{{\bf O}}(q)}$ trivializing
$\widetilde{T}_{q}$, $\widetilde{\Theta}_{q}$ and $\widetilde{E}_{{\bf O}(q)}$. We now can use the construction recalled in Step 1  to obtain $1$-cocycles of $U$ which represent each of these torsors. 
We then consider the map  $f: U\rightarrow \widetilde{T}_{q}$ defined by:
$$f: U=\widetilde{E}_{{\bf O}(q)}\times \widetilde{\Theta}_{q}\rightarrow \widetilde{E}_{{\bf O}(q)}\times \widetilde{T}_{q}\rightarrow \widetilde{T}_{q},\ (x, t)\rightarrow (x, xt)\rightarrow xt$$
in order to obtain the 1-cocycle $\gamma\in\mathcal{Z}^1(\{U\rightarrow*\},\widetilde{\pi}^*{\bf O}(n))$ which represents $\widetilde{T}_q$:
$$\fonc{\gamma}{U\times U}{\widetilde{\pi}^*{\bf O}(n)}{(x, t, y, u)}{(xt)^{-1}(yu)}
$$
We apply again this construction where the role of $f$ is now played successively by the projections  $U=\widetilde{E}_{{\bf O}(q)}\times\widetilde{\Theta}_{q}\rightarrow\widetilde{\Theta}_{q}$ and
$U=\widetilde{E}_{{\bf O}(q)}\times\widetilde{\Theta}_{q}\rightarrow \widetilde{E}_{{\bf O}(q)}$. We obtain  firstly
$$
\fonc{\beta}{U\times U}{\widetilde{\pi}^*{\bf O}(n)}{(x, t, y, u)}{t^{-1}u}
$$
for representative of  $\widetilde{\Theta}_{q}$ and secondly
$$
\fonc{\alpha}{U\times U}{\widetilde{\pi}^*{\bf O}(q)}{(x, t, y, u)}{x^{-1}y}
$$
for representative of $\widetilde{E}_{{\bf O}(q)}$. Of course we have $$\alpha\in\mathcal{Z}^1(\{U\rightarrow*\},\widetilde{\pi}^*{\bf O}(q))\mbox{ and }\beta,\gamma\in\mathcal{Z}^1(\{U\rightarrow*\},\widetilde{\pi}^*{\bf O}(n)).$$
Finally, applying the construction described in Step 2, we consider
the group isomorphism $$\theta_p:\widetilde{\pi}^*{\bf O}(q)(U\times U)\stackrel{\sim}{\longrightarrow} \widetilde{\pi}^*{\bf O}(n)(U\times U)$$ associated to
$$\fonc{p}{U\times U}{\widetilde{\Theta}_{q}. } {(x, t, y, u)}{t}$$
Considering $\alpha$ as an element of the group $\widetilde{\pi}^*{\bf O}(q)(U\times U)$ and $\beta,\gamma$ as elements of the group $\widetilde{\pi}^*{\bf O}(n)(U\times U)$, we have 
\begin{equation}\label{startingpoint}
\gamma=\theta_{p}(\alpha)\cdot \beta\in \widetilde{\pi}^*{\bf O}(n)(U\times U),
\end{equation}
since we  may write
$$\gamma(x, t, y, u)=(t^{-1}(x^{-1}y)t)(t^{-1}u)\in \widetilde{\pi}^*{\bf O}(n)(U\times U).$$
Notice that (\ref{startingpoint}) only makes sense in the group $\widetilde{\pi}^*{\bf O}(n)(U\times U)$, since $\mathcal{Z}^1(\{U\rightarrow*\},\widetilde{\pi}^*{\bf O}(n))$ only 
carries the structure of  a pointed set.\\

We continue to view  $\alpha$ and $\beta$ as elements of the groups $\widetilde{\pi}^*{\bf O}(q)(U\times U)$ and $\widetilde{\pi}^*{\bf O}(n)(U\times U)$ respectively and we consider the maps
$$r_q:\widetilde{\pi}^*\widetilde{\bf O}(q)\twoheadrightarrow\widetilde{\pi}^*{\bf O}(q)\mbox{ and }r_n:\widetilde{\pi}^*\widetilde{\bf O}(n)\twoheadrightarrow \widetilde{\pi}^*{\bf O}(n).$$

\begin{lem}
There exist an epimorphism $U'\rightarrow U$ together with  elements
$$\widetilde{\alpha_{|U'\times U'}}\in\widetilde{\pi}^*\widetilde{\bf O}(q)(U'\times U')\mbox{ and }\widetilde{\beta_{|U'\times U'}}\in\widetilde{\pi}^*\widetilde{\bf O}(n)(U'\times U')$$
such that
$$\alpha_{|U'\times U'}=  r_{q}(\widetilde{\alpha_{|U'\times U'}})\mbox{ and }\beta_{|U'\times U'}= r_{n}(\widetilde{\alpha_{|U'\times U'}}).$$
\end{lem}
\begin{proof}
First we show that $\beta$ has a lift. The map $\beta$ can be factored in the following manner:
$$\beta:{U\times U}=\widetilde{E}_{{\bf O}(q)}\times\widetilde{\Theta}_{q}\times \widetilde{E}_{{\bf O}(q)}\times\widetilde{\Theta}_{q}\longrightarrow \widetilde{\Theta}_{q}\times\widetilde{\Theta}_{q}\stackrel{b}{\longrightarrow} \widetilde{\pi}^*{\bf O}(n).$$
where
$$
\fonc{b}{\widetilde{\Theta}_{q}\times \widetilde{\Theta}_{q}}{\widetilde{\pi}^*{\bf O}(n)}{(t, u)}{t^{-1}u}
$$
By base change, it is enough to show that there exists an epimorphism  $V\rightarrow \widetilde{\Theta}_{q}$ and a commutative diagram in $B_{\widetilde{\bf O}(q)}$:
\[\xymatrix{
V\times V\ar[r]\ar[d]^{}& \widetilde{\pi}^*\widetilde{{\bf O}}(n)\ar[d]\\
\widetilde{\Theta}_{q}\times \widetilde{\Theta}_{q}\ar[r]^{}&\widetilde{\pi}^*{\bf O}(n)
}
\]
The objects $\widetilde{\Theta}_{q}\times \widetilde{\Theta}_{q}$, $\widetilde{\pi}^*{\bf O}(n)$ and $\widetilde{\pi}^*\widetilde{{\bf O}}(n)$ of  $B_{\widetilde{\bf O}(q)}$ occurring in this square are all given with the trivial action of $\widetilde{\bf O}(q)$, hence it is enough to show that there exist an epimorphism $V\rightarrow {\bf Isom}(t_{n}, q)$ in $Y_{fl}$ and a commutative diagram in $Y_{fl}$:
\[\xymatrix{
V\times V\ar[r]\ar[d]^{}&\widetilde{{\bf O}}(n)\ar[d]\\
{\bf Isom}(t_{n}, q)\times {\bf Isom}(t_{n}, q)\ar[r]^{}&{\bf O}(n)
}
\]
Take an \'etale covering $Y'\rightarrow Y$ trivializing $q$, i.e. such that there is an isometry
$f:q_{Y'}\stackrel{\sim}{\rightarrow} t_{n,Y'}$, i.e. such that there is a section
$$f:Y'\rightarrow Y'\times {\bf Isom}(q,t_{n})\mbox{ in }Y_{fl}/Y'.$$
Composition with $f$
$$Y'\times {\bf Isom}(t_{n}, q)\stackrel{(f,1)}{\longrightarrow} Y'\times {\bf Isom}(q,t_{n})\times {\bf Isom}(t_{n}, q)\stackrel{(1,-\circ-)}{\longrightarrow} Y'\times {\bf O}(n) $$
yields an isomorphism of ${\bf O}(n)$--torsors over $Y'$:
$$Y'\times {\bf Isom}(t_{n}, q)\simeq Y'\times {\bf O}(n).$$
Indeed, this map is clearly ${\bf O}(n)$--equivariant; it is an isomorphism whose inverse is induced by composition with $f^{-1}: t_{n,Y'}\stackrel{\sim}{\rightarrow}q_{Y'}$ in a similar way.
 We consider the maps
$$V=Y'\times\widetilde{{\bf O}}(n)\longrightarrow Y'\times{\bf O}(n)\simeq Y'\times{\bf Isom}(t_{n}, q)\longrightarrow {\bf Isom}(t_{n}, q) $$
and
$$
\appl{V\times V=Y'\times\widetilde{{\bf O}}(n)\times Y'\times \widetilde{{\bf O}}(n)}{\widetilde{{\bf O}}(n).}{(y',\sigma, z', \tau)}{\sigma^{-1}\tau}
$$
It is then straightforward to check that the above square  is commutative.

It remains to show that $\alpha$ has a lift. We consider the epimorphism in $B_{\widetilde{{\bf O}}(q)}$
$$U'= E_{\widetilde{{\bf O}}(q)}\times\widetilde{\Theta}_{q} \stackrel{(r,Id)}{\longrightarrow}\widetilde{E}_{{\bf O}(q)}\times\widetilde{\Theta}_{q}=U.$$
Here $r:E_{\widetilde{{\bf O}}(q)}\rightarrow \widetilde{E}_{{\bf O}(q)}$ is the map $\widetilde{{\bf O}}(q)\rightarrow{\bf O}(q)$ seen as an  $\widetilde{{\bf O}}(q)$--equivariant map, where $\widetilde{{\bf O}}(q)$ acts by left multiplication on both $\widetilde{{\bf O}}(q)$ and ${\bf O}(q)$.
Then
\[\xymatrix{
U'\times U'\ar[r]\ar[d]^{}& \widetilde{\pi}^*\widetilde{{\bf O}}(q)\ar[d]\\
U\times U\ar[r]^{}&\widetilde{\pi}^*{\bf O}(q)
}
\]
is a commutative diagram in $B_{\widetilde{\bf O}(q)}$ where the top horizontal map is defined as follows:
$$
\appl{U'\times U'=E_{\widetilde{{\bf O}}(q)}\times\widetilde{\Theta}_{q}\times E_{\widetilde{{\bf O}}(q)}\times\widetilde{\Theta}_{q}}{\widetilde{\pi}^*\widetilde{{\bf O}}(q)}{(\sigma,t,\tau, u)}{\sigma^{-1}\tau}
$$

We have shown that there exists epimorphisms  $U_{\alpha}\rightarrow U$ and $U_{\beta}\rightarrow U$
such that $\alpha_{|U_{\alpha}\times U_{\alpha}}$ and $\beta_{|U_{\beta}\times U_{\beta}}$ have lifts $\widetilde{\alpha_{|U_{\alpha}\times U_{\alpha}}}$ and $\widetilde{\beta_{|U_{\beta}\times U_{\beta}}}$. The conclusion of the Lemma with $U'=U_{\alpha}\times_UU_{\beta}\twoheadrightarrow U$ follows.

\end{proof}
It follows from (\ref{startingpoint}) that
\begin{equation}\label{anequ+}
\gamma_{|U'\times U'}=(\theta_{p}( \alpha)\cdot \widetilde \beta)_{|U'\times U'}=
\theta_{p_{|U'\times U'}}(\alpha_{|U'\times U'})\cdot  \beta_{|U'\times U'}.
\end{equation}
Using Lemma 5.11, we obtain
\begin{equation}\label{anequ}
\widetilde{\gamma_{|U'\times U'}}=\widetilde{\theta}_{p_{|U'\times U'}}(\widetilde{\alpha_{|U'\times U'}})\cdot \widetilde{\beta_{|U'\times U'}}\in\widetilde{\pi}^*\widetilde{\bf O}(q)(U'\times U')
\end{equation}
is a lift of $\gamma_{|U'\times U'}\in\widetilde{\pi}^*\widetilde{\bf O}(q)(U'\times U')$. From now on, we write $U$ for $U'$, $p$ for $p_{|U'\times U'}:U'\times U'\rightarrow W\rightarrow\widetilde{\Theta}_q$, $\alpha$ for $\alpha_{|U'\times U'}$,  $\beta$ for $\beta_{|U'\times U'}$ and  $\gamma$ for $\gamma_{|U'\times U'}$. We have lifts $\widetilde\alpha\in\widetilde{\pi}^*\widetilde{\bf O}(q)(U\times U)$, $\widetilde\beta\in\widetilde{\pi}^*\widetilde{\bf O}(n)(U\times U)$ and $\widetilde\gamma\in\widetilde{\pi}^*\widetilde{\bf O}(n)(U\times U)$ of $\alpha\in\widetilde{\pi}^*{\bf O}(q)(U\times U)$, $\beta\in\widetilde{\pi}^*{\bf O}(n)(U\times U)$ and $\gamma\in\widetilde{\pi}^*{\bf O}(n)(U\times U)$ respectively. Moreover, we have
\begin{equation}\label{anequ}
\widetilde{\gamma}=\widetilde{\theta}_{p}(\widetilde{\alpha})\cdot \widetilde{\beta}\in\widetilde{\pi}^*\widetilde{\bf O}(q)(U\times U).
\end{equation}

\bigskip
{\bf Step 4: Reduction to an identity of cocycles}
\bigskip

The extension of group objects in $Y_{fl}$
$$1\rightarrow{\bf Z}/2{\bf Z}\rightarrow  \widetilde{\bf O}(n)\rightarrow {\bf O}(n)\rightarrow 1$$
gives a morphism
$$\delta^2_n:H^1(B_{\widetilde{{\bf O}}(q)},\widetilde{\pi}^*{\bf O}(n))\rightarrow H^2(B_{\widetilde{{\bf O}}(q)},{\bf Z}/2{\bf Z}).$$
Notice that one has
\begin{equation}\label{e1}
\delta^2_n(\widetilde{T}_q)=\widetilde{T}_q^*[C_n]=B_{r_q}^*T_q^*[C_n]=B_{r_q}^*HW_2(q)
\end{equation}
and
\begin{equation}\label{e2}
\delta^2_n(\widetilde{\Theta}_q)=\widetilde{\Theta}_q^*[C_n]=B_{r_q}^*\Theta_q^*[C_n]=B_{r_q}^*w_2(q).
\end{equation}
Similarly the group extension
$$1\rightarrow{\bf Z}/2{\bf Z}\rightarrow  \widetilde{\bf O}(q)\rightarrow {\bf O}(q)\rightarrow 1$$
gives a morphism
$$\delta^2_q:H^1(B_{\widetilde{{\bf O}}(q)},\widetilde{\pi}^*{\bf O}(q))\rightarrow H^2(B_{\widetilde{{\bf O}}(q)},{\bf Z}/2{\bf Z})$$
such that
\begin{equation}\label{e3}
\delta^2_q(\widetilde{E}_{{\bf O}(q)})=\widetilde{E}_{{\bf O}(q)}^*[C_q]=B_{r_q}^*[C_q]=0.
\end{equation}

\begin{prop}
One is reduced to show
\begin{equation}\label{whatwe}
\delta^2_n(\widetilde{T}_q)=\delta^2_n(\widetilde{\Theta}_q)+ B_{r_q}^*w_{1}(q)\cup B_{r_q}^*\mbox{\emph{det}}[q] +\delta^2_q(\widetilde{E}_{{\bf O}(q)})
\end{equation}
in $H^2(B_{\widetilde{{\bf O}}(q)},{\bf Z}/2{\bf Z})$, which in turn will follow from an identity of cocycles
\begin{equation}\label{what}
\delta^2_n(\gamma)=\delta^2_n(\beta)+ (\mbox{\emph{det}}_{{\bf O}(n)}\circ \beta)\cup (\mbox{\emph{det}}_{{\bf O}(q)}\circ \alpha) +\delta^2_q(\alpha)
\end{equation}
in $\check{H}^2(\{U\rightarrow *\},{\bf Z}/2{\bf Z})$.
\end{prop}
\begin{proof}
By Proposition 5.5  it is enough to show that (\ref{what}) implies (\ref{whatwe}) and that (\ref{whatwe}) implies (\ref{whatwehave}). The fact that  (\ref{whatwe}) implies (\ref{whatwehave}) follows immediately from (\ref{e1}), (\ref{e2}), (\ref{e3}) and the fact that $B_{r_q}^*$ respects sums and cup-products.

Let us show that (\ref{what}) implies (\ref{whatwe}). The 2--cocycles $\delta^2_n(\gamma)$, $\delta^2_n(\beta)$ and $\delta^2_q(\alpha)$, all elements of $\mathcal{Z}^2(\{U\rightarrow *\},{\bf Z}/2{\bf Z})$, represent the cohomology classes $\delta^2_n(\widetilde{T}_q)$, $\delta^2_n(\widetilde{\Theta}_q)$ and $\delta^2_q(\widetilde{E}_{{\bf O}(q)})$ respectively. Then we observe that
$\mbox{det}_{{\bf O}(n)}\circ \beta$  and $\mbox{det}_{{\bf O}(q)}\circ \alpha$ are 1-cocycles representing the maps $$B_{\widetilde{{\bf O}}(q)}\stackrel{B_{r_q}}{\longrightarrow} B_{{\bf O}(q)}\stackrel{\Theta_q}{\longrightarrow} B_{{\bf O}(n)} \stackrel{\mbox{det}_{{\bf O}(n)}}{\longrightarrow}B_{{\bf Z}/2{\bf Z}}$$
and
$$B_{\widetilde{{\bf O}}(q)}\stackrel{B_{r_q}}{\longrightarrow} B_{{\bf O}(q)}\stackrel{E_{{\bf O}(q)}=Id}{\longrightarrow} B_{{\bf O}(q)} \stackrel{\mbox{det}_{{\bf O}(q)}}{\longrightarrow}B_{{\bf Z}/2{\bf Z}}$$
respectively. By definition, these two maps correspond to the cohomology classes $B_{r_q}^*w_1(q)$ and $B^*_{r_q}\mbox{det}[q]$ respectively. The result follows since the map $\check{H}^2(\{U\rightarrow *\},{\bf Z}/2{\bf Z})\rightarrow H^2(B_{\widetilde{{\bf O}}(q)},{\bf Z}/2{\bf Z})$ is compatible with cup-products.\\
\end{proof}

{\bf Step 5: Proof of (\ref{what})}\\

We still denote by $$p_{ij}:U\times U \times U\rightarrow U \times U$$
the projection on the $(i,j)$-components.
Then we have 
$$\delta_{n}^{2}(\gamma)=(\widetilde\gamma p_{23})(\widetilde\gamma p_{13})^{-1}(\widetilde\gamma p_{12})\in\mathcal{Z}^{2}(\{U\rightarrow *\},{ \bf Z}/2{ \bf Z})\subset \widetilde{\pi}^*\widetilde{\bf O}(n)(U\times U\times U),$$
a 2-cocycle representative of $\delta_{n}^{2}(\widetilde{T}_{q})\in H^2(B_{\widetilde{{\bf O}}(q)},{\bf Z}/2{\bf Z})$. Of course $\delta_{n}^{2}(\gamma)$ is only well defined in $\check{H}^{2}(\{U\rightarrow*\},{\bf Z}/2{\bf Z})$, i.e. a different choice for the lift $\widetilde\gamma$ gives a cohomologous 2-cocycle.
By (\ref{anequ}) we have
$$\delta_{n}^{2}(\gamma)=((\widetilde \theta_{p}(\widetilde \alpha)p_{23})(\widetilde\beta p_{23}))
((\widetilde \theta_{p}(\widetilde \alpha)p_{13})(\widetilde\beta p_{13}))^{-1}
((\widetilde \theta_{p}(\widetilde \alpha)p_{12})(\widetilde\beta p_{12})).$$
Our first goal is to understand the terms $\widetilde \theta_{p}(\widetilde \alpha)p_{ij}$. To this end we introduce the natural projections
$\mathfrak{p}_{i}$ for $1\leq i\leq 3$:
$$\mathfrak{p}_{i}:U\times U\times U\stackrel{\textrm{pr}_i}{\longrightarrow} U \longrightarrow \widetilde E_{{\bf O}(q)}\times \widetilde \Theta_q\stackrel{\textrm{pr}_2}{\longrightarrow}\widetilde\Theta_{q} .$$
Recall from Step 2 that
$$\widetilde \theta_{p}(\widetilde \alpha)p_{23}=\textrm{pr}\circ \widetilde\theta\circ(p\times \widetilde \alpha)\circ p_{23}.$$
where
$$p:U\times U\stackrel{\textrm{pr}_1}{\longrightarrow} U \longrightarrow \widetilde E_{{\bf O}(q)}\times\widetilde \Theta_q\stackrel{\textrm{pr}_2}{\longrightarrow}\widetilde\Theta_{q}$$
We now observe that
$$(p\times \widetilde \alpha)\circ p_{23}=(p\circ p_{23})\times (\widetilde \alpha \circ p_{23})=
\mathfrak{p}_{2}\times \widetilde \alpha \circ p_{23}. $$
Hence we deduce that
$$\widetilde \theta_{p}(\widetilde \alpha)p_{23}=\widetilde \theta_{\mathfrak{p}_{2}}(\widetilde \alpha p_{23})\in  \widetilde{\pi}^*\widetilde{\bf O}(n)(U\times U\times U).$$
Similarly, we have
$$\widetilde \theta_{p}(\widetilde \alpha)p_{13}=\widetilde \theta_{\mathfrak{p}_{1}}(\widetilde \alpha p_{13})\ \mbox{and}\
\widetilde \theta_{p}(\widetilde \alpha)p_{12}=\widetilde \theta_{\mathfrak{p}_1}(\widetilde \alpha p_{12})$$
in the group $\widetilde{\pi}^*\widetilde{\bf O}(n)(U\times U\times U)$. This yields
\begin{equation}\label{equ1}
\delta_{n}^{2}(\gamma)=\widetilde \theta_{\mathfrak{p}_{2}}(\widetilde \alpha p_{23})(\widetilde\beta p_{23})
(\widetilde\beta p_{13})^{-1}\widetilde \theta_{\mathfrak{p}_{1}}(\widetilde \alpha p_{13}^{-1})
\widetilde \theta_{\mathfrak{p}_{1}}(\widetilde \alpha p_{12})(\widetilde\beta p_{12}).
\end{equation}
Moreover, we have
$$\widetilde \theta_{\mathfrak{p}_{1}}(\widetilde \alpha p_{13}^{-1})
\widetilde \theta_{\mathfrak{p}_{1}}(\widetilde \alpha p_{12})=\widetilde \theta_{\mathfrak{p}_{1}}(\widetilde \alpha p_{13}^{-1}\widetilde \alpha p_{12})
=\widetilde \theta_{\mathfrak{p}_{1}}(\widetilde \alpha p_{23}^{-1})\widetilde \theta_{\mathfrak{p}_{1}}(\widetilde \alpha p_{23}\widetilde \alpha p_{13}^{-1}\widetilde \alpha p_{12}).$$
Since $\widetilde \alpha p_{23}\widetilde \alpha p_{13}^{-1}\widetilde \alpha p_{12}$ is in the kernel of $r_{q, Z}$ and since $\widetilde \theta_{\mathfrak{p}_{1}}$ coincides with the identity on this kernel we can write
\begin{equation}\label{equ2}
\widetilde \theta_{\mathfrak{p}_{1}}(\widetilde \alpha p_{23}^{-1})\widetilde \theta_{\mathfrak{p}_{1}}(\widetilde \alpha p_{23}\widetilde \alpha p_{13}^{-1}
\widetilde \alpha p_{12})=\widetilde \theta_{\mathfrak{p}_{1}}(\widetilde \alpha p_{23}^{-1})(\widetilde \alpha p_{23}\widetilde \alpha p_{13}^{-1}\widetilde \alpha p_{12}).
\end{equation}
Since $(\widetilde \beta p_{23})(\widetilde \beta  p_{13})^{-1}(\widetilde \beta  p_{12})$ is in the kernel of $r_{n, Z}$,
it belongs to the center of $\widetilde{\pi}^*\widetilde{\bf O}(n)(U\times U\times U)$, and it follows from (\ref{equ1}) and (\ref{equ2}) that we have
$$\delta^{2}_{n}(\gamma)=\delta^{2}_{n}(\beta) \cdot \xi \cdot \delta^{2}_{q}(\alpha)\in\widetilde{\pi}^*\widetilde{\bf O}(n)(U\times U\times U)$$
where $\xi$ is defined as follows:
$$\xi= \widetilde \theta_{\mathfrak{p}_{2}}(\widetilde \alpha p_{23})(\widetilde \beta p_{12})^{-1}\widetilde \theta_{\mathfrak{p}_{1}}
(\widetilde \alpha p_{23}^{-1})(\widetilde \beta p_{12}) \in\mathcal{Z}^{2}(\{U\rightarrow *\},{ \bf Z}/2{ \bf Z})$$
Clearly the result (\ref{what}) would follow from an identity
$$\xi=\mbox{det}_{{\bf O}(n)}(\beta)\cup \mbox{det}_{{\bf O}(q)}(\alpha)\in \mathcal{Z}^2(\{U\rightarrow *\},{\bf Z}/2{\bf Z})$$
in the group of 2--cocycles $\mathcal{Z}^2(\{U\rightarrow *\},{\bf Z}/2{\bf Z})$. Since $\mathcal{Z}^2(\{U\rightarrow *\},{\bf Z}/2{\bf Z})\subset {\bf Z}/2{\bf Z}(U\times U\times U)$, it is of course equivalent to show that
\begin{equation}\label{thelast}
\xi=\mbox{det}_{{\bf O}(n)}(\beta)\cup \mbox{det}_{{\bf O}(q)}(\alpha)\in {\bf Z}/2{\bf Z}(U\times U\times U).
\end{equation}
Let us first make the cup product $\mbox{det}_{{\bf O}(n)}(\beta)\cup \mbox{det}_{{\bf O}(q)}(\alpha)$ more explicit: It is given by
$$m\circ(\mbox{det}_{{\bf O}(n)}(\beta) p_{12},\mbox{det}_{{\bf O}(q)}(\alpha) p_{23}):U\times U\times U \longrightarrow
{\bf Z}/2{\bf Z}\times {\bf Z}/2{\bf Z}\longrightarrow{\bf Z}/2{\bf Z}$$
where
$m:{\bf Z}/2{\bf Z}\times {\bf Z}/2{\bf Z}\rightarrow{\bf Z}/2{\bf Z}$ is the standard multiplication,
$$
\fonc{\mbox{det}_{{\bf O}(n)}(\beta):=\mbox{det}_{{\bf O}(n)}\circ\beta}{U\times U}{\mu_2={\bf Z}/2{\bf Z}}{(x_{1}, t_{1}, x_{2}, t_{2})}{\mbox{det}_{{\bf O}(n)}(t_{1}^{-1}t_{2})}$$
and
$$
\fonc{\mbox{det}_{{\bf O}(q)}(\alpha):=\mbox{det}_{{\bf O}(q)}\circ\alpha}{U\times U}{\mu_2={\bf Z}/2{\bf Z}}{(x_{1}, t_{1}, x_{2}, t_{2})}{\mbox{det}_{{\bf O}(q)}(x^{-1}_{1}x_{2})}.$$
By Lemma \ref{lem-generatingfamily}, the class of objects of the form $E_{\widetilde{{\bf O}}(q)}\times\mbox{Spec}(R)\rightarrow Y$, where $\mbox{Spec}(R)$ is an affine $Y$-scheme endowed with its trivial $\widetilde{{\bf O}}(q)$--action, is a generating family of the topos $B_{\widetilde{{\bf O}}(q)}$. Therefore, in order to prove (\ref{thelast})
it is enough to show
\begin{equation}\label{xicircu}
\xi\circ u=(\mbox{det}_{{\bf O}(n)}(\beta) \cup \mbox{det}_{{\bf O}(q)}(\alpha))\circ u\in {\bf Z}/2{\bf Z}(E_{\widetilde{{\bf O}}(q)}\times\mbox{Spec}(R))
\end{equation}
for any map $u$ in $B_{\widetilde{{\bf O}}(q)}$ of the form $$u: E_{\widetilde{{\bf O}}(q)}\times\mbox{Spec}(R)\rightarrow U\times U\times U,$$ where $\mbox{Spec}(R)$ is an affine scheme. Moreover, by adjunction we have an isomorphism (see the proof of Lemma \ref{lem-generatingfamily})
$$
\appl{\textrm{Hom}_{B_{\widetilde{{\bf O}}(q)}}(E_{\widetilde{{\bf O}}(q)}\times\mbox{Spec}(R), {\bf Z}/2{\bf Z})}{\textrm{Hom}_{Y_{fl}}(\mbox{Spec}(R), {\bf Z}/2{\bf Z})}{f}{f_{\mid R}}
$$
sending $f:E_{\widetilde{{\bf O}}(q)}\times\mbox{Spec}(R)\rightarrow {\bf Z}/2{\bf Z}$ to
$$f_{\mid R}:\mbox{Spec}(R)\rightarrow \eta^*(E_{\widetilde{{\bf O}}(q)}\times\mbox{Spec}(R))\stackrel{\eta^*f}{\rightarrow}{\bf Z}/2{\bf Z}.$$
Using the bijection $f\mapsto f_{\mid R}$ above, we are   reduced to showing the identity
\begin{equation}\label{almost}
(\xi\circ u)_{\mid R}= m\circ(\mbox{det}_{{\bf O}(n)}(\beta p_{12} u)_{\mid R},\mbox{det}_{{\bf O}(q)}(\alpha p_{23} u)_{\mid R})
\end{equation}
in ${\bf Z}/2{\bf Z}(\textrm{Spec}(R))={\bf Z}/2{\bf Z}^{\pi_0(\textrm{Spec}(R))}$. Note by the way that one may suppose $\textrm{Spec}(R)$ connected and reduced. Indeed, (\ref{almost}) can be shown after restriction to $\textrm{Spec}(R_j)^{\textrm{red}}\rightarrow \textrm{Spec}(R)$ for any connected component $\textrm{Spec}(R_j)^{\textrm{red}}$ of $\textrm{Spec}(R)$, given with its unique structure of reduced closed affine subscheme (note that a connected component is always closed but not necessarily open). By Lemma \ref{lem-weirdo}, we have
\begin{eqnarray}
(\xi\circ u)_{\mid R}&=&(\widetilde \theta_{\mathfrak{p}_{2}u}(\widetilde \alpha p_{23}u)\cdot(\widetilde \beta p_{12}u)^{-1}\cdot\widetilde \theta_{\mathfrak{p}_{1}u}
(\widetilde\alpha p_{23}u)^{-1}\cdot(\widetilde \beta p_{12}u))_{\mid R}\\
&=&(\widetilde \theta_{\mathfrak{p}_{2}u}(\widetilde \alpha p_{23}u))_{\mid R}\cdot (\widetilde \beta p_{12}u)_{\mid R}^{-1}\cdot(\widetilde \theta_{\mathfrak{p}_{1}u}
(\widetilde\alpha p_{23}u))_{\mid R}^{-1}\cdot(\widetilde \beta p_{12}u)_{\mid R}\\
\label{fau}&=& \eta^*\widetilde \theta_{\mathfrak{p}_{2}u_{\mid R}}(\widetilde \alpha p_{23}u_{\mid R})\cdot \widetilde \beta p_{12}u_{\mid R}^{-1}\cdot \eta^*\widetilde \theta_{\mathfrak{p}_{1}u_{\mid R}}
(\widetilde\alpha p_{23}u_{\mid R})^{-1}\cdot \widetilde \beta p_{12}u_{\mid R}
\end{eqnarray}
where $\widetilde\alpha p_{ij}u_{\mid R}\in\widetilde{\bf O}(q)(R)$, $\widetilde\beta p_{ij}u_{\mid R}\in\widetilde{\bf O}(n)(R)$ and $\eta^*\widetilde \theta_{\mathfrak{p}_{i}u_{\mid R}}:\widetilde{\bf O}(q)(R)\rightarrow \widetilde{\bf O}(n)(R)$ is the map  induced  by $\mathfrak{p}_{i}u_{\mid R}\in  {\bf Isom}(t_{n}, q)(R)$, see Step 2. Moreover, one has
$$\mathfrak{p}_{2}=\mathfrak{p}_{1} \star \beta p_{12}$$
where $\star:\widetilde{\Theta}_q\times \widetilde{\pi}^*{\bf O}(n)\rightarrow \widetilde{\Theta}_q$ is the $\widetilde{\pi}^*{\bf O}(n)$--torsor structure map of $\widetilde{\Theta}_q$. Applying successively $(-)\circ u$ and $(-)_{\mid R}$ we obtain
$$\mathfrak{p}_{2}u_{\mid R}=(\mathfrak{p}_{1}u_{\mid R})\circ(\beta p_{12}u_{\mid R})$$
where the right-hand side $(\mathfrak{p}_{1}u_{\mid R})\circ(\beta p_{12}u_{\mid R})\in {\bf Isom}(t_{n}, q)(R)$ is the composition of $\mathfrak{p}_{1}u_{\mid R}\in {\bf Isom}(t_{n}, q)(R)$ and  $\beta p_{12}u_{\mid R}\in{\bf O}(n)(R)$. It then follows from (\ref{weirdo0}) and Lemma \ref{lem-Ph} that
\begin{equation}\label{derder}
\eta^*\widetilde \theta_{\mathfrak{p}_{2}u_{\mid R}}:= \widetilde \psi_{\mathfrak{p}_{2}u_{\mid R}^{-1}}
=\widetilde \psi_{(\beta p_{12}u_{\mid R})^{-1}(\mathfrak{p}_{1}u_{\mid R})^{-1}}=\widetilde \psi_{(\beta p_{12}u_{\mid R})^{-1}} \circ\widetilde \psi_{(\mathfrak{p}_{1}u_{\mid R})^{-1}}=:
\widetilde \psi_{\beta p_{12}u_{\mid R}^{-1}}\circ \eta^*\widetilde\theta_{\mathfrak{p}_{1}u_{\mid R}}.
\end{equation}
By Lemma \ref{lem-Ph}  iii) and iv), if
$\beta p_{12}u_{\mid R} \in {\bf O}_{+}(n)(R)$, then $\widetilde \psi_{(\beta p_{12}u_{\mid R})^{-1}}=i_{(\widetilde \beta p_{12}u_{\mid R})^{-1}}$, since $\widetilde \beta p_{12}u_{\mid R}$ is a lift of $\beta p_{12}u_{\mid R}$.
For $\beta p_{12}u_{\mid R}\ \in {\bf O}_{+}(n)(R)$ we obtain
$$\eta^*\widetilde \theta_{\mathfrak{p}_{2}u_{\mid R}}(-)=(\widetilde \beta p_{12}u_{\mid R})^{-1}\cdot\eta^*\widetilde\theta_{\mathfrak{p}_{1}u_{\mid R}}(-)\cdot(\widetilde \beta p_{12}u_{\mid R})$$ hence, by (\ref{fau}),
$(\xi\circ u) _{\mid R}= 0$. We now suppose that $\beta p_{12}u_{\mid R} \in {\bf O}_{-}(n)(R)$ and  $\alpha p_{23}u_{\mid R}
\in {\bf O}_{+}(q)(R)$. It follows that $\widetilde \alpha p_{23}u_{\mid R}
\in \widetilde {\bf O}_{+}(q)(R)$ and that
$\eta^*\widetilde \theta_{\mathfrak{p}_{1}u_{\mid R}}(\widetilde \alpha p_{23}u_{\mid R})\ \in \widetilde{{\bf O}}_{+}(n)(R)$.
Since $\widetilde \psi_{(\beta p_{12}u_{\mid R})^{-1}}$ coincides with $i_{(\widetilde \beta p_{12}u_{\mid R})^{-1}}$ on $\widetilde {\bf O}_{+}(n)(R)$ we deduce from (\ref{fau}) and (\ref{derder}) that $(\xi \circ u)_{\mid R}=0$. We now assume that  $\alpha p_{23}u_{\mid R}\in {\bf O}_{-}(q)(R)$ and $\widetilde \alpha p_{23}u_{\mid R}\in \widetilde {\bf O}_{-}(q)(R)$. Using
$\widetilde \psi_{(\beta p_{12}u_{\mid R})^{-1}}=-i_{(\widetilde \beta p_{12}u_{\mid R})^{-1}}$ on $\widetilde {\bf O}_{-}(n)(R)$, we conclude that
$(\xi\circ u)_{\mid R}=1$ in this last case. A  comparison in each case of the values of  $(\xi \circ u)_{\mid R}$ and
$$((\mbox{det}_{{\bf O}(n)}(\beta)\cup\mbox{det}_{{\bf O}(q)}(\alpha))\circ u)_{\mid R}=\mbox{det}_{{\bf O}(n)}(\beta p_{12}u_{\mid R})\cdot \mbox{det}_{{\bf O}(q)}(\alpha p_{23}u_{\mid R})$$
yields
$$(\xi \circ u)_{\mid R}=((\mbox{det}_{{\bf O}(n)}(\beta)\cup\mbox{det}_{{\bf O}(q)}(\alpha))\circ u)_{\mid R}\in {\bf Z}/2{\bf Z}(\mbox{Spec}(R))={\bf Z}/2{\bf Z}$$
for any $\textrm{Spec}(R)$ connected. The result follows.

\end{proof}

\begin{rem}

Let us define
$$H(B_{{\bf O}(q)}, {\bf Z}/2{\bf Z})^{*}=\{1+a_{1}+a_{2}\'e\in  \bigoplus_{0\leq i\leq 2}H^{i}(B_{{\bf O}(q)}, {\bf Z}/2{\bf Z});
a_{i}\in H^{i}(B_{{\bf O}(q)}, {\bf Z}/2{\bf Z})\}\ . $$
The operation
$$(1+a_{1}+a_{2})(1+b_{1}+b_{2})=1+(a_{1}+b_{1})+a_{2}+b_{2}+a_{1}\cup b_{1} .$$
turn $H(B_{{\bf O}(q)}, {\bf Z}/2{\bf Z})^{*}$ into an abelian group. Moreover, $T_q$ and $\Theta_q$ induce morphisms of abelian group from $H(B_{{\bf O}(n)}, {\bf Z}/2{\bf Z})^{*}$ to $H(B_{{\bf O}(q)}, {\bf Z}/2{\bf Z})^{*}$. We associate to $(V,q)$ the element
$$s_{q}=1+\mathrm{det}[q]+[C_{q}]\ \in H(B_{{\bf O}(q)}, {\bf Z}/2{\bf Z})^{*}$$
and we simply write $s_{n}$ for $s_{t_{n}}$. Then Theorem \ref{mainthm} yields the identity
$$s_q=T_{q}^{*}(s_{n}){\Theta_{q}}^{*}(s_{n})^{-1}.$$
\end{rem}

\section{Consequences of the main theorem}

\subsection{Serre's formula}
Our aim is to deduce comparison formulas from Theorem 5.1  which extend the work of Serre (see [15], Chapitre III, Annexe, (2.2.1), (2.2.2))  to symmetric bundles over an arbitrary base scheme. This formula is also refered to as the "real Fr\"ohlich-Kahn-Snaith formula" in [17]. A direct proof of this result is given in \cite{CNET1}, Theorem 0.2.

We consider an ${\bf O}(q)$--torsor $\alpha$ of $Y_{fl}$. We also denote by $\alpha:Y_{fl}\rightarrow B_{{\bf O}(q)}$ the classifying map for this torsor, and by $[\alpha]$ its class in  $H^{1}(Y_{fl}, {\bf O}(q))$. We denote by
$$\delta_{q}^{1}:H^{1}(Y_{fl}, {\bf O}(q))\rightarrow H^{1}(Y_{fl}, {\bf Z}/2{\bf Z})$$
the map induced by the determinant map $\mathrm{det}_q:{\bf O}(q)\rightarrow  {\bf Z}/2{\bf Z}$,  and by
$$\delta_{q}^{2}: H^{1}(Y_{fl}, {\bf O}(q))\rightarrow H^{2}(Y_{fl}, {\bf Z}/2{\bf Z})$$
the boundary map associated to the group--extension $C_q$ in $Y_{fl}$
\begin{equation}\label{ext-Cq}
1\rightarrow {\bf Z}/2{\bf Z}\rightarrow \widetilde
{\bf O}(q)\rightarrow {\bf O}(q)\rightarrow 1.
\end{equation}
In other words, we have
\begin{equation}\label{hehehe}
\delta_q^1[\alpha]:=\alpha^*(\mbox{det}[q]) \mbox{ and }  \delta_q^2[\alpha]:=\alpha^*[C_q].
\end{equation}
As in Section \ref{sect-twisted-forms} we associate to $\alpha$  a symmetric bundle $(V_{\alpha},q_{\alpha})$ on $Y$.
\begin{cor}\label{cor-one}
For any ${\bf O}(q)$--torsor $\alpha$ of $Y_{fl}$, we have
\begin{itemize}
\item[i)] $w_{1}(q_{\alpha})=w_{1}(q)+\delta_q^1[\alpha]$ \mbox{ in } $H^1(Y,{\bf Z}/2{\bf Z})$;
\item[ii)] $ w_{2}(q_{\alpha})=w_{2}(q)+w_{1}(q)\cdot \delta_q^1[\alpha] +\delta_q^2[\alpha] $ \mbox{ in } $H^2(Y,{\bf Z}/2{\bf Z})$.
\end{itemize}
\end{cor}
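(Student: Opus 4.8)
The plan is to obtain both identities by applying the inverse-image functor $\alpha^{*}$ to the universal formulas of Theorem \ref{mainthm}, once the effect of $\alpha^{*}$ on the three types of classes occurring there ($HW_{i}(q)$, the coefficients in $A$, and $\mathrm{det}[q]$, $[C_{q}]$) has been pinned down. Since $\alpha^{*}:H^{*}(B_{{\bf O}(q)},{\bf Z}/2{\bf Z})\to H^{*}(Y_{fl},{\bf Z}/2{\bf Z})$ is a homomorphism of graded rings compatible with cup products, the whole computation reduces to three evaluations followed by elementary manipulations in characteristic $2$.

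First I would identify the composite $T_{q}\circ\alpha:Y_{fl}\to B_{{\bf O}(q)}\to B_{{\bf O}(n)}$ with the classifying map $\{q_{\alpha}\}$ of the twisted form. By Proposition \ref{propone} the ${\bf O}(q)$--torsor $\alpha^{*}E_{{\bf O}(q)}$ classified by $\alpha$ is ${\bf Isom}(q,q_{\alpha})$, and by Theorem \ref{Thm-classifying--torsors} the ${\bf O}(n)$--torsor classified by $T_{q}\circ\alpha$ is $\alpha^{*}T_{q}={\bf Isom}(q,q_{\alpha})\wedge^{{\bf O}(q)}{\bf Isom}(t_{n},q)$. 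Composition of isometries $(g,f)\mapsto g\circ f$ then gives an ${\bf O}(n)$--equivariant isomorphism ${\bf Isom}(q,q_{\alpha})\wedge^{{\bf O}(q)}{\bf Isom}(t_{n},q)\simeq{\bf Isom}(t_{n},q_{\alpha})$, exactly as in the proof of Proposition \ref{prop-fundamental-equivalence}. As $\{q_{\alpha}\}$ is by definition the map classifying ${\bf Isom}(t_{n},q_{\alpha})$, this yields $T_{q}\circ\alpha\simeq\{q_{\alpha}\}$, and hence $\alpha^{*}(HW_{i}(q))=\alpha^{*}T_{q}^{*}(HW_{i})=\{q_{\alpha}\}^{*}(HW_{i})=w_{i}(q_{\alpha})$. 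This torsor identification is the one genuinely geometric input, and I expect it to be the main obstacle; the rest is formal.

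Next I would record the behaviour of $\alpha^{*}$ on the coefficient ring $A$. Because $\alpha$ is a $Y_{fl}$--morphism we have $\pi\circ\alpha\simeq\mathrm{Id}_{Y_{fl}}$, and since $\Theta_{q}\simeq\{q\}\circ\pi$ this gives $\Theta_{q}\circ\alpha\simeq\{q\}$. Under the identification of $A\subset H^{*}(B_{{\bf O}(q)},{\bf Z}/2{\bf Z})$ with $\pi^{*}H^{*}(Y_{fl},{\bf Z}/2{\bf Z})$, where $w_{i}(q)=\Theta_{q}^{*}(HW_{i})$, it follows that $\alpha^{*}(w_{i}(q))=(\Theta_{q}\circ\alpha)^{*}(HW_{i})=\{q\}^{*}(HW_{i})=w_{i}(q)$ in $H^{i}(Y,{\bf Z}/2{\bf Z})$; that is, $\alpha^{*}$ acts as the identity on $A$.

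Finally I would apply $\alpha^{*}$ to the two identities of Theorem \ref{mainthm}, using the three facts above together with the definitions $\alpha^{*}(\mathrm{det}[q])=\delta_{q}^{1}[\alpha]$ and $\alpha^{*}([C_{q}])=\delta_{q}^{2}[\alpha]$ of (\ref{hehehe}). The degree--one identity $\mathrm{det}[q]=w_{1}(q)+HW_{1}(q)$ gives $\delta_{q}^{1}[\alpha]=w_{1}(q)+w_{1}(q_{\alpha})$, which is i). The degree--two identity $[C_{q}]=(w_{1}(q)^{2}+w_{2}(q))+w_{1}(q)\cdot HW_{1}(q)+HW_{2}(q)$ gives $\delta_{q}^{2}[\alpha]=(w_{1}(q)^{2}+w_{2}(q))+w_{1}(q)\cdot w_{1}(q_{\alpha})+w_{2}(q_{\alpha})$; substituting i) turns $w_{1}(q)\cdot w_{1}(q_{\alpha})$ into $w_{1}(q)^{2}+w_{1}(q)\cdot\delta_{q}^{1}[\alpha]$, and cancelling $w_{1}(q)^{2}$ against itself in characteristic $2$ leaves $w_{2}(q_{\alpha})=w_{2}(q)+w_{1}(q)\cdot\delta_{q}^{1}[\alpha]+\delta_{q}^{2}[\alpha]$, which is ii). No step other than the torsor identification requires real work.
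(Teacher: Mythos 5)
Your proposal is correct and follows essentially the same route as the paper: the key lemma identifying $T_q\circ\alpha$ with $\{q_\alpha\}$ via the contracted-product isomorphism ${\bf Isom}(q,q_{\alpha})\wedge^{{\bf O}(q)}{\bf Isom}(t_{n},q)\simeq{\bf Isom}(t_{n},q_{\alpha})$, the observation that $\alpha^{*}$ fixes the coefficients in $A$ because $\pi\circ\alpha\simeq\mathrm{Id}$, and the definitions of $\delta_q^1[\alpha]$ and $\delta_q^2[\alpha]$ as $\alpha^{*}(\mathrm{det}[q])$ and $\alpha^{*}([C_q])$ are exactly the ingredients the paper assembles before pulling back Theorem \ref{mainthm}. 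The only difference is cosmetic: you spell out the characteristic-$2$ substitution in degree $2$, which the paper leaves implicit.
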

\begin{proof}  We define   $\{q_{\alpha}\}: Y_{fl}\rightarrow B_{{\bf O}(n)}$  to be  the morphism of topoi associated  to the ${\bf O}(n)$--torsor ${\bf Isom }(t_{n}, q_{\alpha})$ of $Y_{fl}$ and we let
$T_q: B_{{\bf O}(q)}\rightarrow B_{{\bf O}(n)}$ be the morphism  of topoi defined in \ref{defTq}.
\begin{lem}The following triangle is commutative 
\[\xymatrix{
Y_{fl}\ar[r]^{\alpha}\ar[rd]_{\{q_{\alpha}\}}&B_{{\bf O}(q)}\ar[d]^{T_q}\\
&B_{{\bf O}(n)}
}\]
\end{lem}
\begin{proof}
It will suffice  to describe  an isomorphism $$\{q_{\alpha}\}^*E_{{\bf O}(n)}\simeq \alpha^*T_q^*E_{{\bf O}(n)}$$ of ${\bf O}(n)$--torsors of $Y_{fl}$. It follows from the definitions
that  $\{q_{\alpha}\}^*E_{{\bf O}(n)}={\bf Isom}(t_{n}, q_{\alpha})$ and that
$$\alpha^*T_q^*E_{{\bf O}(n)}=\alpha^*{\bf Isom}(t_{n}, q)={\bf Isom}(q, q_{\alpha})\wedge^{{\bf O}(q)}{\bf Isom}(t_{n}, q).$$
The lemma then follows from the fact that the map
$${\bf Isom}(q, q_{\alpha})\times {\bf Isom}(t_{n}, q)\rightarrow {\bf Isom}(t_{n}, q_{\alpha}),$$
given by composition, induces an ${\bf O}(n)$--equivariant isomorphism
$${\bf Isom}(q, q_{\alpha})\wedge^{{\bf O}(q)} {\bf Isom}(t_{n}, q)\simeq {\bf Isom}(t_{n}, q_{\alpha}).$$
\end{proof}
As a consequence of the lemma we obtain that
\begin{equation}\label{here}
\alpha^{*}T_{q}^{*}(HW_{i})=\{q_{\alpha}\}^{*}(HW_{i})=w_{i}(q_{\alpha})\mbox{ in } H^i(Y,{\bf Z}/2{\bf Z}), \mbox{ for } i=1,2.
\end{equation} We now
observe that,  since $\pi\circ \alpha\simeq id$,  we have
\begin{equation}\label{hehe}
\alpha^{*}(w_{i}(q))=\alpha^{*}\pi^{*}(w_{i}(q))=w_{i}(q).
\end{equation}
Using (\ref{hehehe}), (\ref{here}) and (\ref{hehe}), the corollary is just the pull-back of Theorem \ref{mainthm} via $\alpha^*$.

\end{proof}

\subsection{Comparison formulas for Hasse-Witt invariants of orthogonal representations.}
Let $(V,q,\rho)$ be an orthogonal representation of $G$. To be more precise, $G$ is a group-scheme over $Y$, $(V, q)$ is  a symmetric bundle over $Y$,  and $\rho:G\rightarrow {\bf O}(q)$ is a morphism of $Y$-group-schemes.

We denote by $B_{\rho}: B_{G}\rightarrow B_{{\bf O}(q)}$ the morphism of classifying topoi induced by the group homomorphism $\rho$.
The Hasse-Witt invariants $w_i(q, \rho)$ of $(V,q,\rho)$ lie in $H^i(B_G,{\bf Z}/2{\bf Z})$. Indeed, there is a morphism
$$B_G\stackrel{B_{\rho}}{\longrightarrow}B_{{\bf O}(q)}\stackrel{T_q}{\longrightarrow}B_{{\bf O}(n)}$$
canonically associated to $(V,q,\rho)$ and $w_i(q, \rho)$ is simply the pull-back of $HW_i$ along this map:
\begin{equation}\label{almostnew}
w_i(q, \rho):=(T_q\circ B_{\rho})^*(HW_i)=B_{\rho}^*(HW_i(q)).
\end{equation}
On the other hand the morphism of groups
$$\mbox{det}_{q}\circ \rho:G\longrightarrow {\bf O}(q)\longrightarrow {\bf Z}/2{\bf Z}$$
defines (see Proposition \ref{prop-Giraud-sequence}) a cohomology class $w_1(\rho)\in H^1(B_G,{\bf Z}/2{\bf Z})$. Note that one has
\begin{equation}\label{ehe}
w_{1}(\rho)=B_{\rho}^{*}(\mbox{det}[q]).
\end{equation}
Pulling back the group extension $C_q$ along the map $\rho:G\rightarrow {\bf O}(q)$, we obtain a group--extension
$$1\rightarrow {\bf Z}/2{\bf Z}\rightarrow \widetilde{G}\rightarrow G\rightarrow 1$$
where $\widetilde{G}:=\widetilde{\bf O}(q)\times_{{\bf O}(q)}G$. We denote by $C_{G}\in\mathrm{Ext}_Y(G, {\bf Z}/2{\bf Z})$ the class of this extension and by $[C_G]$ its cohomology class in $ H^2(B_G,{\bf Z}/2{\bf Z})$  (see Proposition \ref{prop-Giraud-sequence}), so that
\begin{equation}\label{ehe2}
B_{\rho}^{*}([C_{q}])=[C_{G}].
\end{equation}

\begin{cor}\label{cor-two} Let $G$ be a group scheme on $Y$ and let $(V, q, \rho)$ be an orthogonal representation of $G$. Then in $H^{*}(B_{G}, {\bf Z}/2{\bf Z})$
we have
\begin{itemize}
\item[i)] $w_{1}(q, \rho)=w_{1}(q)+w_{1}(\rho)$
\item[ii)] $ w_{2}(q, \rho)=w_{2}(q)+w_{1}(q)\cdot w_{1}(\rho) +[C_{G}]$.
\end{itemize}
\end{cor}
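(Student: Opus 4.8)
The plan is to deduce both identities by applying the inverse-image functor $B_{\rho}^{*}$ to the universal formulas of Theorems \ref{mainthm1} and \ref{mainthm2}, and then to recognize each resulting term through the dictionary already recorded in (\ref{almostnew}), (\ref{ehe}) and (\ref{ehe2}). Since $B_{\rho}:B_{G}\rightarrow B_{{\bf O}(q)}$ is a morphism of topoi, $B_{\rho}^{*}$ is a homomorphism of graded ${\bf Z}/2{\bf Z}$-algebras on cohomology; in particular it is additive and compatible with the cup-product, so it is enough to compute the image of each class occurring on the right-hand sides.

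Before doing so, I would first settle the behaviour of the ``constant'' classes $w_{1}(q)$ and $w_{2}(q)$ under $B_{\rho}^{*}$. Because $\rho:G\rightarrow {\bf O}(q)$ is a homomorphism of $Y$-group-schemes, it is compatible with the homomorphisms to the trivial group, so the canonical projections to $Y_{fl}$ satisfy $\pi\circ B_{\rho}\simeq \pi_{G}$, where $\pi:B_{{\bf O}(q)}\rightarrow Y_{fl}$ and $\pi_{G}:B_{G}\rightarrow Y_{fl}$ are the structural maps. Hence $B_{\rho}^{*}\circ\pi^{*}\simeq\pi_{G}^{*}$. Recalling that $w_{i}(q)=\Theta_{q}^{*}(HW_{i})$ and that $\Theta_{q}\simeq\{q\}\circ\pi$, the class $w_{i}(q)\in H^{i}(B_{{\bf O}(q)},{\bf Z}/2{\bf Z})$ equals $\pi^{*}(\{q\}^{*}HW_{i})$ and therefore lies in the image of $\pi^{*}$; consequently $B_{\rho}^{*}(w_{i}(q))=\pi_{G}^{*}(\{q\}^{*}HW_{i})$ is exactly the class denoted $w_{i}(q)$ in $H^{i}(B_{G},{\bf Z}/2{\bf Z})$ appearing on the right-hand sides of i) and ii). This identification is the only point that is not purely formal, and it is precisely what uses the hypothesis that $\rho$ is defined over $Y$.

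With this in hand, part i) follows by applying $B_{\rho}^{*}$ to the identity $HW_{1}(q)=w_{1}(q)+\mbox{det}[q]$ of Theorem \ref{mainthm1}: the left-hand side is $w_{1}(q,\rho)$ by (\ref{almostnew}), the first term on the right is $w_{1}(q)$ by the previous paragraph, and the second term is $B_{\rho}^{*}(\mbox{det}[q])=w_{1}(\rho)$ by (\ref{ehe}). Likewise, part ii) follows by applying $B_{\rho}^{*}$ to the identity $HW_{2}(q)=w_{2}(q)+w_{1}(q)\cup\mbox{det}[q]+[C_{q}]$ of Theorem \ref{mainthm2}: the left-hand side becomes $w_{2}(q,\rho)$ by (\ref{almostnew}); additivity gives $B_{\rho}^{*}(w_{2}(q))=w_{2}(q)$; compatibility of $B_{\rho}^{*}$ with cup-products gives $B_{\rho}^{*}(w_{1}(q)\cup\mbox{det}[q])=w_{1}(q)\cup w_{1}(\rho)$; and (\ref{ehe2}) gives $B_{\rho}^{*}([C_{q}])=[C_{G}]$. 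Summing these yields ii). I expect no genuine obstacle here: all the depth of the statement is already contained in Theorems \ref{mainthm1} and \ref{mainthm2}, and the corollary is purely an exercise in the functoriality of $B_{\rho}^{*}$, the one subtlety being the identification $B_{\rho}^{*}(w_{i}(q))=w_{i}(q)$ established above.
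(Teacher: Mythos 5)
Your proposal is correct and follows essentially the same route as the paper: the paper's proof likewise observes that $\mu\simeq\pi\circ B_{\rho}$ (so that $B_{\rho}^{*}\pi^{*}(w_{i}(q))=\mu^{*}(w_{i}(q))$ is the class called $w_{i}(q)$ in $H^{i}(B_{G},{\bf Z}/2{\bf Z})$) and then pulls back Theorem \ref{mainthm} along $B_{\rho}^{*}$ using (\ref{almostnew}), (\ref{ehe}) and (\ref{ehe2}). Your extra remark spelling out why $B_{\rho}^{*}(w_{i}(q))=w_{i}(q)$ via $w_{i}(q)=\pi^{*}\{q\}^{*}(HW_{i})$ is exactly the identification the paper makes in subsection 4.5.
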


\begin{proof} Let us  denote by  $\mu$ and $\pi$  the morphisms of classifying topoi associated to the group--morphisms $G_Y\rightarrow1$ and ${\bf O}(q)\rightarrow 1$ respectively. We have $\mu\simeq \pi\circ  B_{\rho}$ and therefore
\begin{equation}\label{ehe3}
\mu^{*}(w_{i}(q))=B_{\rho}^{*}\pi^{*}(w_{i}(q)),\  i =1,2.
\end{equation}
As in subsection 4.5,  we identify $H^{i}(Y_{fl}, {\bf Z}/2{\bf Z})$ as a direct summand of $H^{i}(B_{G}, {\bf Z}/2{\bf Z})$ (resp. $H^{i}(B_{{\bf O}(q)}, {\bf Z}/2{\bf Z})$) via $\mu^*$
 (resp. $\pi^*$). In view of (\ref{almostnew}), (\ref{ehe}), (\ref{ehe2}) and (\ref{ehe3}), the corollary is just the pull-back of Theorem \ref{mainthm} via $B_{\rho}^{*}$.
\end{proof}

\subsection{Fr\"ohlich twists}
In this section we extend the work of Fr\"ohlich \cite{Fro} and the results of \cite{CNET1}, Theorem 0.4, to twists of quadratic forms by  $G$-torsors when the group scheme $G$ is  not necessarily constant. Let $G$ be a group-scheme over $Y$ and let $(V, q,\rho)$ be an orthogonal representation of  $G$.
\begin{definition}
For any $G$-torsor $X$ on $Y$, we define the twist $(V_X,q_X)$ to be  the symmetric bundle on $Y$ associated to the morphism
$$\{q_X\}:Y_{fl}\stackrel{X}{\longrightarrow} B_G \stackrel{B_{\rho}}{\longrightarrow} B_{{\bf O}(q)} \stackrel{T_q}{\longrightarrow} B_{{\bf O}(n)}.$$ Equivalently, $(V_X,q_X)$ is the twist of $(V,q)$  given by the morphism
$$Y_{fl}\stackrel{X}{\longrightarrow} B_G \stackrel{B_{\rho}}{\longrightarrow} B_{{\bf O}(q)}.$$
\end{definition}
The twist $(V_X,q_X)$ can be made explicit in a number of situations (see for example Section \ref{sect-explicit-twists} below). In order to compare $w(q)$ with $w(q_{X})$, we denote by $$\delta_{q, \rho}^{1}:H^{1}(Y_{fl}, G)\longrightarrow H^{1}(Y_{fl}, {\bf Z}/2{\bf Z})$$
the map induced by the group homomorphism $\mbox{det}_{q}\circ \rho$ and by
$$\delta_{q, \rho}^{2}: H^{1}(Y_{fl}, G)\rightarrow H^{2}(Y_{fl}, {\bf Z}/2{\bf Z})$$
the boundary map associated to the group-extension $C_{G}$ in $Y_{fl}$
$$1\rightarrow {\bf Z}/2{\bf Z}\rightarrow \widetilde
G\rightarrow G\rightarrow 1.$$

\begin{cor}\label{cor-three} Let $(V, q, \rho)$ be a $G$--equivariant symmetric bundle and let $X$ be a $G$-torsor over $Y$. Then we have
\begin{itemize}
\item[i)] $w_{1}(q_X)=w_{1}(q)+\delta_{q, \rho}^1[X]; $
\item[ii)] $ w_{2}(q_X)=w_{2}(q)+w_{1}(q)\cdot \delta_{q, \rho}^1[X]+\delta_{q, \rho}^2[X]. $
\end{itemize}
\end{cor}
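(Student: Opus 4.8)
Corollary \ref{cor-three} concerns a $G$-torsor $X$ on $Y$, which I view as a morphism of topoi $X: Y_{fl} \to B_G$ classifying the torsor, together with the orthogonal representation $\rho: G \to {\bf O}(q)$ inducing $B_\rho: B_G \to B_{{\bf O}(q)}$. The twisted form $(V_X, q_X)$ is by definition the symmetric bundle classified by the composite $T_q \circ B_\rho \circ X$. My plan is to deduce the two identities by simply pulling back the identities of Corollary \ref{cor-two} along the map $X$, exactly as Corollary \ref{cor-one} is obtained by pulling back Theorem \ref{mainthm} along $\alpha^*$. The entire strategy is: establish the correct dictionary between the invariants appearing in Corollary \ref{cor-two} (which live in $H^*(B_G, {\bf Z}/2{\bf Z})$) and those in the statement of Corollary \ref{cor-three} (which live in $H^*(Y_{fl}, {\bf Z}/2{\bf Z})$), then apply $X^*$.

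\textbf{The key steps.} First I would record that, by functoriality of the construction of Hasse-Witt invariants, the composite $T_q \circ B_\rho \circ X: Y_{fl} \to B_{{\bf O}(n)}$ is precisely the classifying map $\{q_X\}$ of the twisted form, so that
$$
X^*(w_i(q,\rho)) = X^* B_\rho^*(HW_i(q)) = \{q_X\}^*(HW_i) = w_i(q_X), \quad i=1,2.
$$
Second, I would identify the pullbacks of the remaining terms. Since $\delta_{q,\rho}^1$ is induced by the group homomorphism $\mathrm{det}_q \circ \rho: G \to {\bf Z}/2{\bf Z}$, and $w_1(\rho) = B_\rho^*(\mathrm{det}[q])$ is the class it defines in $H^1(B_G, {\bf Z}/2{\bf Z})$ via Proposition \ref{prop-Giraud-sequence}, the compatibility of this construction with the boundary map $H^1(Y_{fl}, G) \to H^1(Y_{fl}, {\bf Z}/2{\bf Z})$ gives $X^*(w_1(\rho)) = \delta_{q,\rho}^1[X]$. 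Likewise, $[C_G] \in H^2(B_G, {\bf Z}/2{\bf Z})$ is the class of the extension $1 \to {\bf Z}/2{\bf Z} \to \widetilde{G} \to G \to 1$, and $\delta_{q,\rho}^2$ is the boundary map for the same extension, so by the description of the boundary map via $\delta([T]) = T^*([C])$ recorded in Step 0 of the proof of Theorem \ref{mainthm2} one gets $X^*([C_G]) = \delta_{q,\rho}^2[X]$. Third, since $\pi \circ B_\rho \circ X \simeq \mathrm{id}$ (the torsor sits over $Y$), the classes $w_i(q) \in H^i(Y_{fl}, {\bf Z}/2{\bf Z})$, viewed inside $H^i(B_G, {\bf Z}/2{\bf Z})$ via $\mu^*$, pull back to themselves under $X^*$, exactly as in equation (\ref{hehe}) of Corollary \ref{cor-one}. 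Combining these three identifications, applying $X^*$ to the two identities of Corollary \ref{cor-two} yields the two asserted formulas.

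\textbf{The main obstacle.} The substantive content is entirely in Corollary \ref{cor-two} and ultimately in Theorem \ref{mainthm}; the present corollary is a formal consequence. Consequently the only real point requiring care is the naturality dictionary, and specifically the identification $X^*([C_G]) = \delta_{q,\rho}^2[X]$ for the degree-two term. Here one must check that the cohomological boundary class $[C_G]$ defined via Giraud's exact sequence (Proposition \ref{prop-Giraud-sequence}) agrees, after pulling back along the classifying map $X$, with the boundary map $\delta_{q,\rho}^2$ defined directly on $H^1(Y_{fl}, G)$; this is the content of the relation $\delta([T]) = T^*([C])$, and it is exactly the compatibility already used in Step 0. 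The rest is bookkeeping, so I expect no genuine difficulty beyond stating these compatibilities cleanly and invoking the functoriality of Giraud's sequence, as was done in Lemma \ref{lem-Cdies}.
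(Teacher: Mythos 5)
Your proposal is correct and follows essentially the same route as the paper: the paper pulls back Theorem \ref{mainthm} along the single composite $B_{\rho}\circ X:Y_{fl}\rightarrow B_{{\bf O}(q)}$, using exactly the identifications $\delta_{q,\rho}^1[X]=(B_{\rho}\circ X)^*(\mathrm{det}[q])$, $\delta_{q,\rho}^2[X]=X^*([C_G])=(B_{\rho}\circ X)^*([C_q])$, $w_i(q_X)=(B_{\rho}\circ X)^*(HW_i(q))$ and $(B_{\rho}\circ X)^*(w_i(q))=w_i(q)$, whereas you factor the same pullback through Corollary \ref{cor-two}; the dictionary you set up is the same and the argument is sound.
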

\begin{proof}
By definition $\delta_{q, \rho}^1[X]$ is the cohomology class associated to the morphism
$$B_{\mathrm{det}_{q}}\circ B_{\rho}\circ X: Y_{fl}\longrightarrow B_G\stackrel{B_{\rho}}{\longrightarrow} B_{{\bf O}(q)}\stackrel{B_{\mathrm{det}_{q}}}{\longrightarrow} B_{{\bf Z}/2{\bf Z}}.$$
It follows that
\begin{equation}\label{a}
\delta_{q, \rho}^1[X]=(B_{\rho}\circ X)^*(\mbox{det}[q]).
\end{equation}
Moreover one has
\begin{equation}\label{b}
\delta_{q, \rho}^2[X]:=X^*([C_G])=X^*B_{\rho}^*([C_q])=(B_{\rho}\circ X)^*([C_q])
\end{equation}
and
\begin{equation}\label{c}
w_i(q_X):=(T_q\circ B_{\rho}\circ X)^*(HW_i)=(B_{\rho}\circ X)^*(HW_i(q)).
\end{equation}
and finally, we  have
\begin{equation}\label{d}
w_i(q)=(B_{\rho}\circ X)^*(w_i(q))
\end{equation}
since $B_{\rho}\circ X$ is defined over $Y_{fl}$. In view of (\ref{a}), (\ref{b}), (\ref{c}) and (\ref{d}) the corollary now follows by pulling back the equality in Theorem \ref{mainthm} along the morphism $B_{\rho}\circ X$.

\end{proof}
\begin{rem}
We have associated to any orthogonal representation $\rho: G\rightarrow {\bf O}(q)$ an exact
sequence of $Y$--group--schemes:
$$1\rightarrow {\bf Z}/2{\bf Z}\rightarrow \widetilde G\rightarrow G\rightarrow 1 .$$
For any $G$--torsor $X$ over $Y$ the class  $\delta^{2}_{q, \rho}[X]$ may be seen as the obstruction of the embedding problem associated to the scheme $X$ and the exact sequence. Corollary 6.4  provides us with a formula for this obstruction in terms of Hasse-Witt invariants of symmetric bundles:
$$\delta^{2}_{q, \rho}[X]= w_{2}(q_{X})+w_{2}(q)+w_{1}(q)^2+w_1(q)w_1(q_X) . $$
In the particular case where $\rho: G_{Y}\rightarrow {\bf SO}(q)$ we have this remarkabley simple formula
$$\delta^{2}_{q, \rho}[X]= w_{2}(q_{X})-w_{2}(q), $$
which expresses this obstruction as a difference of two Hasse-Witt invariants of quadratic forms.
\end{rem}

\subsection{Equivariant symmetric bundles}

\subsubsection{Equivariant symmetric bundles}
Let $G$ be a discrete group acting on a scheme $X$. An equivariant symmetric bundle on $(G,X)$ is a symmetric bundle $(E,r)$ on $X$ endowed with a family of isomorphisms of symmetric bundles $\{\phi_g:(E,r)\rightarrow g^*(E,r),\,g\in G\}$ such that $\phi_{hg}=g^*(\phi_h)\circ\phi_g$ and $\phi_1=\textrm{Id}_{(V,q)}$.

One way to generalize this notion for arbitrary group--schemes is the following:
let $G$ be a $Y$--group--scheme acting (on the left) on the $Y$--scheme $X$. An equivariant symmetric bundle $(E,r,\sigma)$ on $(G,X)$ is a symmetric bundle $(E,r)$ on $X$ such that ${\bf Isom}_X(t_n,r)$, seen as object of $X_{fl}\stackrel{\sim}{\rightarrow}Y_{fl}/X$, is endowed with a left action $\sigma$ of $G$ such that ${\bf Isom}_X(t_n,r)\rightarrow X$ is $G$--equivariant. A $G$--equivariant symmetric bundle $(E,r,\sigma)$ on $(G,X)$ may be seen as an ${\bf O}(n)$--torsor of $B_G/X$, or equivalently, as a map $B_G/X\rightarrow B_{{\bf O}(n)_Y}$. Recall that there is a canonical map $l:X_{fl}\rightarrow B_G/X$, whose inverse image forgets the $G$-action.

\begin{definition}
The category of $G$--equivariant symmetric bundles on $(G,X)$ of rank $n$  is the opposite category to ${\bf Homtop}_{Y_{fl}}(B_G/X,B_{{\bf O}(n)_Y})$. In particular, a $G$--equivariant symmetric bundle $(E,r,\sigma)$ on $(G,X)$ is a map $$\{r,\sigma\}:B_G/X\longrightarrow B_{{\bf O}(n)_Y}.$$
The underlying symmetric bundle $(E,r)$ is the symmetric bundle on $X$ determined (see Proposition \ref{prop-define-Quad-as-Homtop}) by the map
$$\{r\}:X_{fl}\stackrel{l}{\longrightarrow} B_G/X\stackrel{\{r,\sigma\}}{\longrightarrow} B_{{\bf O}(n)_Y}.$$
We define its Hasse-Witt invariants $w_i(r,\sigma)$ as follows:
$$\fonc{\{r,\sigma\}^*}{H^i(B_{{\bf O}(n)_Y},{\bf Z}/2{\bf Z})}{H^i(B_G/X,{\bf Z}/2{\bf Z})}{HW_i}{w_i(r,\sigma)}.$$
\end{definition}

 We consider   the following example:  assume that the quotient scheme $Y=X/G$ exists, and let $(V,q)$ be a symmetric bundle on $Y$ endowed with an orthogonal representation $\rho:G\rightarrow {\bf O}(q)$. Then the pull-back $(V_X,q_X)$ of $(V,q)$ along $X\rightarrow Y$ has a natural $G$--equivariant structure. Indeed, ${\bf Isom}_X(t_n,q)$ is, as an object of  $Y_{fl}/X$, given by ${\bf Isom}_Y(t_n,q)\times_Y X\rightarrow X$. We let $G$ act diagonally on ${\bf Isom}_Y(t_n,q)\times_Y X$ so that the second projection $${\bf Isom}_Y(t_n,q)\times_Y X\longrightarrow X$$ is $G$--equivariant. In terms of morphisms, this $G$--equivariant symmetric bundle is the map
\begin{equation}\label{equi-s-b}
B_G/X\stackrel{l_X}{\longrightarrow} B_G \stackrel{B_{\rho}}{\longrightarrow} B_{{\bf O}(q)_Y} \stackrel{T_q}{\longrightarrow} B_{{\bf O}(n)_Y}
\end{equation}
where $l_X$ is the localization map.

\begin{definition}
Let $G$ be a group-scheme acting on $X$ with quotient scheme $Y$ and let $(V,q,\rho)$ be an orthogonal representation on $Y$. We denote by $(V_X,q_X)$ the $G$--equivariant symmetric bundle on $X$ defined by (\ref{equi-s-b}), i.e. as the pull-back of $(V,q)$ along $X\rightarrow Y$ endowed with its natural $G$--equivariant structure.
\end{definition}
In case that $X$ is a $G$-torsor, the notation $(V_X,q_X)$ for the $G$--equivariant symmetric bundle on $X$ is compatible with the notation for the Fr\"ohlich twisted form $(V_X,q_X)$ on $Y$, since they can be identified via the equivalence $B_G/X\simeq Y$. Indeed, we have a commutative diagram:
\[
\xymatrix{
Y_{fl}\ar[d]^{\simeq}\ar[r]^{X}& B_G\ar[d]^=\ar[r]^{B_{\rho}}&B_{{\bf O}(q)}\ar[d]^=\ar[r]^{T_q}&B_{{\bf O}(n)}\ar[d]^=\\
B_G/X\ar[r]^{l_X}& B_G\ar[r]^{B_{\rho}}&B_{{\bf O}(q)}\ar[r]^{T_q}&B_{{\bf O}(n)}
}\]
where the top row  $(T_q\circ B_{\rho}\circ X)$ defines Fr\"ohlich twisted form $(V_X,q_X)$ on $Y$ and the lower row  $(l_X\circ B_{\rho}\circ X)$ defines by the $G$--equivariant symmetric bundle $(V_X,q_X)$ on $X$. In other words, Fr\"ohlich twisted form on $Y$ may be viewed as the $G$--equivariant symmetric bundle $(V_X,q_X)$ on $X$.

\subsubsection{Comparison formulas}
Let $G$ be a group--scheme acting on the scheme $X$ with property  that the quotient scheme $Y$ exists and such that $X\rightarrow Y$ has weak odd ramification in the sense of Definition \ref{def-weak-odd} below. Let $(V,q)$ be a symmetric bundle on $Y$ together with an orthogonal representation $G\rightarrow {\bf O}(q)$, and let $(V_X,q_X)$ be the induced $G$--equivariant symmetric bundle on $(G,X)$. By definition, we have $H^*(B_G/X,{\bf Z}/2{\bf Z})\simeq H^*(Y_{fl},{\bf Z}/2{\bf Z})$ hence $(V_X,q_X)$ has Hasse-Witt invariants
$$w_i(V_X,q_X)\in H^i(B_G/X,{\bf Z}/2{\bf Z})\simeq H^i(Y_{fl},{\bf Z}/2{\bf Z}).$$
We consider the map
$$f^*:H^*(B_{{\bf O}(q)},{\bf Z}/2{\bf Z})\rightarrow H^*(B_G/X,{\bf Z}/2{\bf Z})\simeq H^*(Y_{fl},{\bf Z}/2{\bf Z}), $$
where $f$ is the morphism $B_\rho\circ l_X$. 

\begin{cor}\label{cor-four}
We have
\begin{itemize}
\item[i)] $w_{1}(q_X)=w_{1}(q)+f^*(\mathrm{det}[q]); $
\item[ii)] $ w_{2}(q_X)=w_{2}(q)+w_{1}(q)\cdot f^*(\mathrm{det}[q])+ f^*([C_q])$.
\end{itemize}
in the ring $H^*(Y_{fl},{\bf Z}/2{\bf Z})$.
\end{cor}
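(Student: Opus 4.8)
The plan is to proceed exactly as in the proofs of Corollaries~\ref{cor-one}, \ref{cor-two} and \ref{cor-three}: write every class occurring in the statement as the pullback along the single morphism $f=B_{\rho}\circ l_X:B_G/X\rightarrow B_{{\bf O}(q)}$ of a universal class living on $B_{{\bf O}(q)}$, and then apply $f^*$ to the two identities of Theorem~\ref{mainthm}. Since $f^*$ is a homomorphism of graded rings respecting sums and cup--products, and since all coefficients lie in ${\bf Z}/2{\bf Z}$, the corollary will drop out after one elementary substitution.

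First I would record the relevant pullback identifications. By the very definition of the $G$--equivariant symmetric bundle $(V_X,q_X)$, its classifying map factors as $\{q_X\}=T_q\circ B_{\rho}\circ l_X=T_q\circ f$, so that
\[
w_i(q_X)=\{q_X\}^*(HW_i)=f^*(T_q^*(HW_i))=f^*(HW_i(q)),\qquad i=1,2.
\]
Moreover $f^*(\mathrm{det}[q])$ and $f^*([C_q])$ are precisely the classes appearing on the right--hand sides of i) and ii), so no further work is needed for them. The one genuinely non--formal point is to show that, under the identification $H^*(B_G/X,{\bf Z}/2{\bf Z})\simeq H^*(Y_{fl},{\bf Z}/2{\bf Z})$ afforded by the weak odd ramification hypothesis, one has $f^*(w_i(q))=w_i(q)$, where $w_i(q)\in H^*(Y_{fl})$ is viewed inside $H^*(B_{{\bf O}(q)})$ via $\pi^*$. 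For this I would check that $f$ is a morphism over $Y_{fl}$: since $\pi\circ B_{\rho}$ is the structure map $\mu:B_G\rightarrow Y_{fl}$, a direct computation on inverse images shows that $\mu\circ l_X$ agrees with the map $g:B_G/X\rightarrow Y_{fl}$ induced by $X\rightarrow Y$ (both send an object $\mathcal{F}\rightarrow Y$ of $Y_{fl}$ to $\mathcal{F}\times_Y X\rightarrow X$ with $G$ acting through $X$). As $g$ realizes the identification $H^*(B_G/X)\simeq H^*(Y_{fl})$, it follows that $f^*(\pi^*w_i(q))=g^*(w_i(q))=w_i(q)$.

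With these in hand, I would finish by pulling back Theorem~\ref{mainthm}. Rewriting its degree--$1$ identity as $HW_1(q)=w_1(q)+\mathrm{det}[q]$ and applying $f^*$ gives i), namely $w_1(q_X)=w_1(q)+f^*(\mathrm{det}[q])$. Rewriting its degree--$2$ identity as $HW_2(q)=[C_q]+(w_1(q)\cdot w_1(q)+w_2(q))+w_1(q)\cdot HW_1(q)$ and applying $f^*$ gives
\[
w_2(q_X)=f^*([C_q])+w_1(q)^2+w_2(q)+w_1(q)\cdot w_1(q_X).
\]
Substituting $w_1(q_X)=w_1(q)+f^*(\mathrm{det}[q])$ and using $w_1(q)^2+w_1(q)^2=0$ in characteristic $2$ then yields ii).

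The main obstacle is the second step: verifying that $f$ is compatible with the structure morphisms to $Y_{fl}$ and hence with the weak--odd--ramification identification $H^*(B_G/X)\simeq H^*(Y_{fl})$. Everything else is the formal machinery of pulling back the universal formulas, already exercised in the previous corollaries.
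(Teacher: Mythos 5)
Your proposal is correct and follows exactly the paper's route: the paper's entire proof is "Apply $f^*$ to Theorem \ref{mainthm}," and your write-up simply makes explicit the pullback identifications ($w_i(q_X)=f^*(HW_i(q))$, $f^*(\pi^*w_i(q))=w_i(q)$ via the identification $g^*$) and the characteristic-$2$ substitution that the paper leaves implicit.
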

\begin{proof}
Apply $f^*$ to Theorem \ref{mainthm}.
\end{proof}

\subsection{An explicit description of the twisted form}\label{sect-explicit-twists}
We shall use our previous work to obtain an explicit description of the twists which, in this geometric context,  generalizes Fr\"ohlich's construction \cite{Fro}. Recall that $(V_X,q_X)$ is the symmetric bundle on $Y$ associated to the morphism
$$Y_{fl}\stackrel{X}{\longrightarrow} B_G \stackrel{B_{\rho}}{\longrightarrow} B_{{\bf O}(q)}. $$
In other words, $(V_X,q_X)$ is determined by the fact that we have an isomorphism of ${\bf O}(q)$--torsors
$${\bf Isom}(q, q_{X})\simeq X\wedge ^{G}{\bf O}(q). $$
Our goal is to provide  a  concrete description  of $(V_{X}, q_{X})$  at  least when $G$ satisfies  some  additional hypotheses.
\noindent We start by recalling the results of \cite{CCMT1} in the affine case.

\begin{definition}
Let $R$ be a commutative noetherian  integral domain with fraction field $K$.  A finite and flat $R$-algebra $A$ is said to satisfy  ${\bf H_{2}}$ when $A_{K}$ is a commutative separable $K$-algebra and the image under the counit
of the set of integral of $A$ is the square of a principal ideal of $R$.
\end{definition}

Let $S=\mbox {Spec}(R)$.  We assume that $G\rightarrow S$ is a group scheme associated to a Hopf algebra $A$ which
 satisfies ${\bf H_{2}}$ (we will say that {\it $G$ satisfies ${\bf H_{2}}$}). We consider a $G$--equivariant symmetric bundle
 $(V, q, \rho)$ given by  a projective  $R$-module $V$ endowed with a non-degenerate quadratic form $q$ and a group homomorphism
 $\rho: G\rightarrow {\bf O}(q)$.
 We have proved in \cite{CCMT1} Theorem 3.1  that  for any $G$--torsor $X=\mbox{Spec}(B) \rightarrow S$
 where  $B$ is a commutative and finite $R$-algebra,   the twist of $(V, q, \rho)$ by $X$ is defined by
 $$(V_{X}, q_{X})=(\mathcal{D}^{-1/2}(B)\otimes_{R}V, Tr\otimes q)^{A}. $$
 This twist  can be roughly described as the symmetric bundle obtained by taking  the fixed point by $A$ of the tensor product of
 $(V, q)$ by the square root of the different of $B$,  endowed with the trace form,  (see \cite{CCMT1}, Sections 1 and 2 for the precise definitions).

We now come back to the general situation. We assume that $G\rightarrow S$ satisfies ${\bf H_{2}}$. Moreover, 
 for the sake of simplicity,  we suppose that $Y$ is  integral and flat over $S$. Let $(V, q, \rho)$ be a $G_{Y}$--equivariant symmetric bundle and let $X$ be
  a $G_{Y}$--torsor. For any affine open subscheme  $U\rightarrow Y$ we set $X_{U}=X\times _{Y}U$ and
  $G_{U}=G_{Y}\times_{Y}U$. We know that by base change $X_{U}\rightarrow U$ is a $G_{U}$--torsor. Moreover  by restriction $(V, q)$
  defines  an equivariant $G_{U}$-symmetric bundle $(V\mid {U}, q\mid U,  \rho \mid U)$ over $U$.  Using the functoriality
  properties of the different maps involved it is easy to check that
  $$(V_{X}, q_{X})\mid U\simeq ((V\mid {U})_{X_{U}}, (q\mid U)_{X_{U}}) . $$
  Since the properties of $G$ are preserved by flat base change we conclude that $G_{U}\rightarrow U$ satisfies ${\bf H_{2}}$ and
  therefore that  by the above $((V\mid {U})_{X_{U}}, (q\mid U)_{X_{U}})$ has now been  explicitly described.

\section{Appendix}

\subsection{Odd ramification}

Let $G$ be a finite constant group acting admissibly on the $S$-scheme $X$, so that $X/G=Y$ exists as a scheme. Here a geometric point of the scheme $X$  is always algebraic and separable (i.e. given by a separable closure of the residue field $k(x)$ for some $x\in X$). Let $\alpha:\overline{\alpha}\rightarrow Y$ be a geometric point and consider the set $\Lambda_{\alpha}$ of geometric points $\overline{\alpha}\rightarrow X$ lying over $\alpha$.
Then $G$ acts transitively on the left (by composition) on the finite set $\Lambda_{\alpha}$. For an element of $\Lambda_{\alpha}$, $\beta:\overline{\alpha}\rightarrow X$, the inertia subgroup $I_{\beta}$ of  $\beta$ is defined as the stabilizer of the geometric point $\beta$.

\begin{definition}\label{def-oddramif-finitecase}
We say that the cover $X\rightarrow Y=X/G$ has \emph{odd ramification} if the order of $I_{\beta}$ is odd for any geometric point $\beta$ of $X$. 
\end{definition}

It follows from the following lemma that the cover $X\rightarrow Y=X/G$ has odd ramification if and only if $H^n(I_{\beta},{\bf Z}/2{\bf Z})=0$ for any $n\geq1$ and any  geometric point $\beta$ of $X$.

\begin{lem}\label{lem-coh-finite-grps}
Let $I$ be a finite group. Then $I$ has odd order if and only if $H^*(I,{\bf Z}/2{\bf Z})={\bf Z}/2{\bf Z}$.
\end{lem}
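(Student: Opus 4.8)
The plan is to read $H^*(I,{\bf Z}/2{\bf Z})$ as the group cohomology of the finite group $I$ with coefficients in the trivial module ${\bf Z}/2{\bf Z}$, and to prove both implications. Observe first that $H^0(I,{\bf Z}/2{\bf Z})=({\bf Z}/2{\bf Z})^I={\bf Z}/2{\bf Z}$ holds unconditionally, so the assertion $H^*(I,{\bf Z}/2{\bf Z})={\bf Z}/2{\bf Z}$ means exactly that $H^n(I,{\bf Z}/2{\bf Z})=0$ for every $n\geq1$. The lemma thus reduces to the equivalence between this higher vanishing and the oddness of $|I|$.

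For the direction "$|I|$ odd $\Rightarrow$ vanishing" I would invoke the restriction--transfer machinery for the trivial subgroup $1\leq I$. The composite $H^n(I,{\bf Z}/2{\bf Z})\xrightarrow{\mathrm{res}}H^n(1,{\bf Z}/2{\bf Z})\xrightarrow{\mathrm{cor}}H^n(I,{\bf Z}/2{\bf Z})$ is multiplication by the index $|I|$. For $n\geq1$ the middle term vanishes, so multiplication by $|I|$ is the zero endomorphism of $H^n(I,{\bf Z}/2{\bf Z})$; but this group is an ${\bf F}_2$-vector space and $|I|$ is odd, hence invertible modulo $2$, so multiplication by $|I|$ is also an isomorphism. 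An endomorphism that is at once zero and invertible forces $H^n(I,{\bf Z}/2{\bf Z})=0$, as required.

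For the converse I would prove the contrapositive: if $|I|$ is even then $H^n(I,{\bf Z}/2{\bf Z})\neq0$ for some (indeed infinitely many) $n\geq1$. By Cauchy's theorem $I$ contains a subgroup $C\cong{\bf Z}/2{\bf Z}$; passing to a Sylow $2$-subgroup $P\supseteq C$, which is nontrivial, the same restriction--transfer identity gives $\mathrm{cor}\circ\mathrm{res}=[I:P]\cdot\mathrm{id}$ with $[I:P]$ odd, so $\mathrm{res}\colon H^*(I,{\bf Z}/2{\bf Z})\to H^*(P,{\bf Z}/2{\bf Z})$ is a split injection identifying $H^*(I,{\bf Z}/2{\bf Z})$ with the subring of stable elements. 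It then suffices to produce one nonzero stable class in positive degree. In the transparent case of cyclic Sylow $P=C$, Swan's theorem identifies $H^*(I,{\bf Z}/2{\bf Z})\cong H^*(C,{\bf Z}/2{\bf Z})^{N_I(C)/C}=H^*({\bf Z}/2{\bf Z},{\bf Z}/2{\bf Z})={\bf F}_2[u]$ with $\deg u=1$ (the Weyl action is trivial since $\mathrm{Aut}({\bf Z}/2{\bf Z})=1$), which is nonzero in all degrees; for general $P$ I would instead construct such a class directly in $H^*(I,{\bf Z}/2{\bf Z})$ by applying the Evens norm to a nonzero degree-$1$ class arising from a surjection $P\twoheadrightarrow{\bf Z}/2{\bf Z}$, whose powers then provide nonzero classes in infinitely many degrees.

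The main obstacle is precisely this last step of the even case: exhibiting genuinely nonzero higher cohomology with \emph{trivial} coefficients. Examples such as $I=A_5$ or $\mathrm{PSL}_2({\bf F}_8)$ (perfect groups, the latter with trivial Schur multiplier) have $H^1(I,{\bf Z}/2{\bf Z})=H^2(I,{\bf Z}/2{\bf Z})=0$, so no fixed low degree works and the general non-vanishing phenomenon cannot be sidestepped. The cleanest formulation of what is needed is the standard fact that $H^*(-,{\bf F}_2)$ of a finite group is unbounded exactly when $2$ divides its order, which may be cited from the standard references on the cohomology of finite groups; the underlying monotonicity $\mathrm{cd}_2(C)\leq\mathrm{cd}_2(I)$ together with $\mathrm{cd}_2({\bf Z}/2{\bf Z})=\infty$ then closes the argument.
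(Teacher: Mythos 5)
Your proposal is correct in substance but takes a genuinely different route: the paper disposes of the whole lemma in one line by citing Corollary 7.8 of Quillen's paper on the spectrum of an equivariant cohomology ring, which ties the size (Krull dimension) of $H^*(I,{\bf Z}/2{\bf Z})$ to the elementary abelian $2$-subgroups of $I$, so that odd order forces dimension $0$ and even order forces an infinite-dimensional ring. Your restriction--corestriction argument for the odd direction is complete, elementary, and self-contained, which is arguably a gain over the citation. For the even direction you have correctly located the real difficulty --- nonvanishing in positive degrees with \emph{trivial} coefficients --- and your fallback citation of the standard unboundedness of $H^*(-,{\bf Z}/2{\bf Z})$ when $2$ divides the order is legitimate and plays exactly the role that Quillen's corollary plays in the paper. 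Two caveats on the sketches you offer in its place. First, the closing appeal to $\mathrm{cd}_2(C)\leq\mathrm{cd}_2(I)$ and $\mathrm{cd}_2({\bf Z}/2{\bf Z})=\infty$ does not close the argument on its own: infinite $2$-cohomological dimension only produces nonvanishing of $H^n(I,M)$ for \emph{some} coefficient module $M$, not for the trivial one; the correct underlying fact is rather Evens--Venkov finite generation (so that $H^*(P,{\bf Z}/2{\bf Z})=H^*(I,\mathrm{Coind}_P^I\,{\bf Z}/2{\bf Z})$ is a finite module over $H^*(I,{\bf Z}/2{\bf Z})$, which is impossible if the latter were just ${\bf Z}/2{\bf Z}$, since a nontrivial $2$-group has infinite-dimensional mod $2$ cohomology), or Quillen's stratification itself. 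Second, the Evens norm needs more care than ``apply it to a nonzero degree-one class of $P$'': the Mackey formula expresses the restriction of a norm as a product over double cosets, and factors can vanish because the class dies on the intersections $P\cap{}^xP$. The standard device is to take $C\simeq{\bf Z}/2{\bf Z}\leq I$, the generator $u\in H^1(C,{\bf Z}/2{\bf Z})$, and the inhomogeneous norm $\mathcal{N}_C^I(1+u)$, whose restriction to $C$ is $(1+u)^{|N_I(C):C|}$; its top component is then a positive-degree class of $H^*(I,{\bf Z}/2{\bf Z})$ restricting to a nonzero power of $u$, and its own powers give nonvanishing in infinitely many degrees. With either repair the even direction is complete, and the odd direction already is.
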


\begin{proof}
This follows from \cite{Quillen71} Corollary 7.8. 
\end{proof}
Let $G$ again denote a finite (constant) group acting admissibly on the $S$-scheme $X$ with quotient scheme $Y$. The quotient map
$X\rightarrow Y=X/G$ is finite, and hence proper. We have a commutative diagram:
\[\xymatrix{
B_G/X\ar[r]^g\ar[d]_j&Y_{fl}\ar[d]_{i} \\
B^{et}_G/X\ar[r]^{g^{Et}}\ar[d]_p&Y_{Et}\ar[d]_q \\
\mathcal{S}_{et}(G,X)\ar[r]^{g^{et}}&Y_{et}
}\]

We denote by $$Rg_*:\mathcal{D}(Ab(B_G/X))\longrightarrow \mathcal{D}(Ab(Y_{fl}))$$ the total derived functor of the left exact functor $g_*:Ab(B_G/X)\rightarrow Ab(Y_{fl})$, where $Ab(B_G/X)$ (respectively $Ab(Y_{fl})$) is the abelian category of abelian objects in $B_G/X$ (respectively in $Y_{fl}$), and $\mathcal{D}(Ab(B_G/X))$ (respectively $\mathcal{D}(Ab(Y_{fl}))$) is its  derived category. Similarly, we denote by $Rg_*^{Et}$ and $Rg_*^{et}$ the total derived functors of $g_*^{Et}$ and $g_*^{et}$ respectively.

\begin{lem}\label{lemhe}
The following statements are equivalent.
\begin{enumerate}
\item The cover $X\rightarrow X/G$ has odd ramification in the sense of Definition \ref{def-oddramif-finitecase}.
\item The natural map is a quasi-isomorphism ${\bf Z}/2{\bf Z}\simeq Rg^{et}_*{\bf Z}/2{\bf Z}$.
\item The natural map is a quasi-isomorphism ${\bf Z}/2{\bf Z}\simeq Rg^{Et}_*{\bf Z}/2{\bf Z}$.
\item The natural map is a quasi-isomorphism ${\bf Z}/2{\bf Z}\simeq Rg_*{\bf Z}/2{\bf Z}$.
\end{enumerate}
\end{lem}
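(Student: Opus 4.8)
The plan is to treat the equivalence $(1)\Leftrightarrow(2)$ as the analytic core of the lemma, and then to deduce $(2)\Leftrightarrow(3)\Leftrightarrow(4)$ formally from the comparison theorems already established in Section~3. For $(1)\Leftrightarrow(2)$ I would compute the cohomology sheaves $R^ng^{et}_*\,{\bf Z}/2{\bf Z}$ stalkwise at geometric points of $Y$ and reduce their vanishing to Lemma~\ref{lem-coh-finite-grps}. First I would observe that, since the natural map ${\bf Z}/2{\bf Z}\to Rg^{et}_*\,{\bf Z}/2{\bf Z}$ is a quasi-isomorphism if and only if it is one on every stalk, statement $(2)$ amounts to the vanishing of $(R^ng^{et}_*\,{\bf Z}/2{\bf Z})_\alpha$ for all $n\geq1$ and all geometric points $\alpha:\overline{\alpha}\to Y$, together with $(g^{et}_*\,{\bf Z}/2{\bf Z})_\alpha={\bf Z}/2{\bf Z}$.

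The key step is the identification $(R^ng^{et}_*\,{\bf Z}/2{\bf Z})_\alpha\simeq H^n(I_\beta,{\bf Z}/2{\bf Z})$ for any $\beta\in\Lambda_\alpha$. To obtain it I would use the simplicial description $\mathcal{S}_{et}(G,X)\simeq\textsc{Desc}({\bf B}(G,X)_{et})$ and the associated relative Cartan--Leray spectral sequence for $g^{et}$ over $Y_{et}$ (the relative analogue of (\ref{spectral})), whose $E_1$-page is assembled from the pushforwards to $Y_{et}$ along the finite maps $G^i\times X\to Y$. Since $X\to Y$ is finite, hence proper, each such pushforward commutes with the base change $\alpha\to Y$ by the proper base change theorem; as étale cohomology ignores the non-reduced structure, the stalk at $\alpha$ of the $E_1$-page reduces to $\mathrm{Map}(G^i\times\Lambda_\alpha,{\bf Z}/2{\bf Z})$, the higher étale cohomology of the finite discrete fibre $G^i\times\Lambda_\alpha$ being zero. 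The resulting complex computes the group cohomology $H^\bullet(G,\mathrm{Map}(\Lambda_\alpha,{\bf Z}/2{\bf Z}))$, and since $\Lambda_\alpha\cong G/I_\beta$ as a $G$-set, Shapiro's lemma yields $H^n(G,\mathrm{Map}(\Lambda_\alpha,{\bf Z}/2{\bf Z}))\simeq H^n(I_\beta,{\bf Z}/2{\bf Z})$. By Lemma~\ref{lem-coh-finite-grps} this stalk is concentrated in degree $0$ (and equal to ${\bf Z}/2{\bf Z}$, the $G$-invariants being ${\bf Z}/2{\bf Z}$ by transitivity) precisely when $I_\beta$ has odd order; ranging over all $\beta$, this is exactly odd ramification, giving $(1)\Leftrightarrow(2)$.

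For $(2)\Leftrightarrow(3)\Leftrightarrow(4)$ I would transfer the computation across the comparison morphisms. Writing $X_U:=X\times_YU$, the sheaves $R^ng^{et}_*\,{\bf Z}/2{\bf Z}$, $R^ng^{Et}_*\,{\bf Z}/2{\bf Z}$ and $R^ng_*\,{\bf Z}/2{\bf Z}$ are the sheaves associated to the presheaves $U\mapsto H^n(\mathcal{S}_{et}(G,X_U),{\bf Z}/2{\bf Z})$, $U\mapsto H^n(B^{et}_G/X_U,{\bf Z}/2{\bf Z})$ and $U\mapsto H^n(B_G/X_U,{\bf Z}/2{\bf Z})$ respectively. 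The proposition identifying $H^*(\mathcal{S}_{et}(G,X_U),{\bf Z}/2{\bf Z})$ with $H^*(B_G/X_U,{\bf Z}/2{\bf Z})$, and Lemma~\ref{compra-etale-flat} applied to $X_U$ with the smooth coefficient ${\bf Z}/2{\bf Z}$, show that these three presheaves agree. Hence the three cohomology sheaves correspond to one another under the morphisms $q:Y_{Et}\to Y_{et}$ and $i:Y_{fl}\to Y_{Et}$, with the degree-zero term the constant sheaf ${\bf Z}/2{\bf Z}$ in all cases. One then checks, again via the comparison theorems, that the positive-degree sheaves vanish simultaneously; alternatively one reruns the stalk computation of the previous paragraph verbatim in the big-étale and flat settings (the finite fibres $G^i\times\Lambda_\alpha$ having the same vanishing cohomology in all three topologies by Lemma~\ref{compra-etale-flat}), so that each of $(2)$, $(3)$, $(4)$ is individually equivalent to odd ramification.

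The main obstacle I expect lies in the stalk computation of the second paragraph: making the relative spectral sequence over $Y_{et}$ precise and justifying that proper base change may be applied simplicial-degreewise inside the equivariant topos, i.e. that the fibre of $g^{et}$ over $\alpha$ is the classifying topos $B_{I_\beta}({\bf Sets})$. Once this equivariant finite base change is in place, the reduction to Lemma~\ref{lem-coh-finite-grps} through Shapiro's lemma is routine, as is the formal passage between the flat, big-étale and small-étale settings.
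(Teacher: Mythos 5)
Your treatment of $(1)\Leftrightarrow(2)$ coincides with the paper's: its proof consists precisely of the stalk identification $(R^ng^{et}_*{\bf Z}/2{\bf Z})_{\alpha}\simeq H^n(I_{\beta},{\bf Z}/2{\bf Z})$, asserted to follow from proper base change and the finiteness of $G$, combined with Lemma \ref{lem-coh-finite-grps}; your Cartan--Leray/Shapiro elaboration is a legitimate way of making that assertion precise. The genuine gap lies in the passage between the three topologies. The claim that ``the three presheaves agree, hence the three cohomology sheaves correspond to one another under $q$ and $i$'' does not prove the converse implications: the three sheafifications are taken in different topoi over different index categories (\'etale $U\rightarrow Y$ only, versus all $Y$-schemes with the \'etale, resp.\ fppf, topology), and vanishing of the sheafification for a finer topology does not imply vanishing for a coarser one --- this is exactly the phenomenon exploited elsewhere in the paper, where the presheaf $T\mapsto H^i(T_{fl},\mathcal{A}_T)$ has trivial fppf sheafification for $i\geq1$ without its sections vanishing. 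Concretely, $(2)\Rightarrow(3)$ and $(4)\Rightarrow(3)$ are the directions that still need an argument.

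For $(2)\Rightarrow(3)$ your fallback (rerunning the stalk computation in the big-\'etale setting) does work, but only once you add two ingredients: odd ramification is preserved under arbitrary base change $Y'\rightarrow Y$ (inertia groups only shrink), and the family of restriction functors $\mathrm{Res}_{Y'}:Y_{Et}\rightarrow Y'_{et}$, for $Y'$ ranging over all $Y$-schemes, is conservative; this is the paper's route. For $(4)\Rightarrow(3)$, however, ``verbatim in the flat setting'' is not available: proper base change is an \'etale statement and the fppf topos is not probed by geometric points in the same way. The paper argues instead at the level of derived categories: ${\bf Z}/2{\bf Z}\simeq Ri_*{\bf Z}/2{\bf Z}\simeq Ri_*Rg_*{\bf Z}/2{\bf Z}\simeq Rg^{Et}_*Rj_*{\bf Z}/2{\bf Z}\simeq Rg^{Et}_*{\bf Z}/2{\bf Z}$, the last step being Lemma \ref{compra-etale-flat}; the easy direction $(3)\Rightarrow(4)$ follows from the fact that fppf sheafification factors through \'etale sheafification. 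You should supply these two arguments, or invoke Proposition--Definition \ref{def-weak-odd}, where the paper carries out $(3)\Leftrightarrow(4)$ in general.
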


\begin{proof}

$(1)\Leftrightarrow(2)$: Let $\alpha:\overline{\alpha}\rightarrow Y$ be an algebraic separable geometric point of $Y$ and let $\beta:\overline{\alpha}\rightarrow X$ be a geometric point over $\alpha$. Then for any $n\geq0$ one has
$$(R^ng^{et}_*{\bf Z}/2{\bf Z})_{\alpha}\simeq H^n(I_{\beta},{\bf Z}/2{\bf Z});$$
this follows from of proper base change and from the fact that $G$ is finite. It then follows from Lemma \ref{lem-coh-finite-grps} that $R^ng^{et}_*{\bf Z}/2{\bf Z}=0$ for any $n\geq1$ if and only if $X\rightarrow X/G$ has odd ramification.

$(2)\Leftrightarrow(3)$: Recall that $q_*$ and $p_*$ are both exact and that $p^*$ and $q^*$ are both fully faithful. We have
$$Rg_*^{et}Rp_*{\bf Z}/2{\bf Z}\simeq Rg_*^{et}p_*{\bf Z}/2{\bf Z}\simeq Rg_*^{et}{\bf Z}/2{\bf Z}$$
$$\simeq Rq_*Rg_*^{Et}{\bf Z}/2{\bf Z}\simeq q_*Rg_*^{Et}{\bf Z}/2{\bf Z}.$$
In particular, if $Rg_*^{Et}{\bf Z}/2{\bf Z}\simeq {\bf Z}/2{\bf Z}$,  then $Rg_*^{et}{\bf Z}/2{\bf Z}\simeq q_*{\bf Z}/2{\bf Z}\simeq{\bf Z}/2{\bf Z}$. Hence we have $(3)\Rightarrow (2)$. Conversely, assume that  $Rg_*^{et}{\bf Z}/2{\bf Z}\simeq{\bf Z}/2{\bf Z}$. 
Then we have $$q_*Rg_{Y,*}^{Et}{\bf Z}/2{\bf Z}=\textrm{Res}_Y(Rg_{Y,*}^{Et}{\bf Z}/2{\bf Z})\simeq{\bf Z}/2{\bf Z},$$ where the functor $\textrm{Res}_Y$ maps a big \'etale sheaf on $Y$ to its restriction to the small \'etale site of $Y$ and $g_Y^{Et}$ is just $g^{Et}$. Moreover,  the cover $X\rightarrow X/G=Y$ has weak odd ramification. For any $Y$-scheme $Y'$ let $G$ act on $X':=X\times_YY'$ via its action on $X$. Then $G$ acts admissibly on  $X'$ and the cover $X'\rightarrow X'/G$ has weak odd ramification. If we denote  the canonical morphism (as defined above)
$g^{et}_{Y'}:  S_{et}(G, X')\rightarrow Y'_{et}$, then $$\textrm{Res}_{Y'}(Rg_{Y,*}^{Et}{\bf Z}/2{\bf Z})\simeq\textrm{Res}_{Y'}(Rg_{Y',*}^{Et}{\bf Z}/2{\bf Z})\simeq Rg_{Y',*}^{et}{\bf Z}/2{\bf Z}\simeq{\bf Z}/2{\bf Z}.$$
But the system of functors $\textrm{Res}_{Y'}:Y_{Et}\rightarrow Y'_{et}$, for $Y'$ running over the category of $Y$-schemes, is conservative. Hence
the canonical map
$${\bf Z}/2{\bf Z}\rightarrow Rg_{Y,*}^{Et}{\bf Z}/2{\bf Z}$$
is an isomorphism.

$(3)\Leftrightarrow(4)$ is shown below in a more general setting.

\end{proof}

The following definition generalizes the notion of odd ramification to the case where the group $G$ is an arbitrary group-scheme (rather than a finite constant group).

\begin{prop-definition}\label{def-weak-odd}
Let $G$ be an $S$-group scheme acting on an $S$-scheme $X$ such that the quotient $X/G=Y$ exists as a scheme. 
The cover $X\rightarrow Y$ is said to have \emph{odd ramification} if the following equivalent conditions are satisfied:
\begin{enumerate}
\item The natural map is a quasi-isomorphism ${\bf Z}/2{\bf Z}\simeq Rg^{Et}_*{\bf Z}/2{\bf Z}$.
\item The natural map is a quasi-isomorphism ${\bf Z}/2{\bf Z}\simeq Rg_*{\bf Z}/2{\bf Z}$.
\end{enumerate}
\end{prop-definition}
\begin{proof}
For any $n\geq0$ the sheaf $R^n(g_*^{Et}){\bf Z}/2{\bf Z}$ (resp. the sheaf $R^n(g_*){\bf Z}/2{\bf Z}$) is the sheaf associated to the presheaf
$$\appl{{\bf Sch}/Y}{{\bf Ab}}{Y'}{H^n(B^{Et}_G/X\times_YY',{\bf Z}/2{\bf Z})\simeq H^n(B_G/X\times_YY',{\bf Z}/2{\bf Z})} $$
for the \'etale topology (resp. for the flat topology), where the isomorphism is given by Lemma \ref{compra-etale-flat}. First we observe that $R^0g_*^{Et}{\bf Z}/2{\bf Z}={\bf Z}/2{\bf Z}$ if and only if $R^0g_*{\bf Z}/2{\bf Z}={\bf Z}/2{\bf Z}$, which follows from the fact that the above presheaf  is already a sheaf for the flat ( and hence the \'etale) topology. Following \cite{SGA4}, we denote by $\widehat{C}$ (respectively $\widetilde{C}$) the category of presheaves (respectively of sheaves) on a site $C$. The inverse image functor of the morphism $\widetilde{C}\rightarrow \widehat{C}$ coincides with the associated functor. Considering the sequence of topoi $Y_{fl}\rightarrow Y_{Et}\rightarrow \widehat{\bf Sch}/Y$ we see that sheafifying a presheaf $P$ on ${\bf Sch}/Y$ for the \'etale  topology and then sheafifying for the flat topology is the same thing as sheafifying $P$ for the flat topology directly. In particular, if the \'etale sheafification of the presheaf above is zero,  then so is its flat sheafification. Therefore, if  $R^ng_*^{Et}{\bf Z}/2{\bf Z}=0$,  then $R^ng_*{\bf Z}/2{\bf Z}=0$; hence $(1)\Rightarrow (2)$.

Conversely, assume that $Rg_*{\bf Z}/2{\bf Z}\simeq {\bf Z}/2{\bf Z}$. Then one has
$$\mathbb{Z}/2\simeq Ri_*\mathbb{Z}/2\simeq Ri_*Rg_*\mathbb{Z}/2\simeq Rg_*^{Et}Rj_*\mathbb{Z}/2\simeq  Rg_*^{Et}\mathbb{Z}/2$$
where $i$ and $j$ are the morphisms of the diagram prior to  Lemma \ref{lemhe}, and the fact that $\mathbb{Z}/2\simeq Rj_*\mathbb{Z}/2$ follows from Lemma \ref{compra-etale-flat}. Hence we have shown $(2)\Rightarrow (1)$.

\end{proof}
\begin{example}\label{ex--torsor}
Let $X$ be an $S$-scheme which supports  a left action by  the $S$-group scheme $G$, with the property  that $X$ is a left $G_Y$--torsor over $Y$. Then $X\rightarrow X/G=Y$ has odd ramification.
\end{example}
\begin{proof}
We have an equivalence $B_G/X\simeq (B_G/Y)/X\simeq (B_{G_Y})/X$
and the map $$g:B_G/X\simeq (B_{G_Y})/X\rightarrow Y_{fl}$$ is an equivalence, since $yX$ is a (left) $yG_Y$--torsor in $Y_{fl}$. We therefore obtain $$Rg_*{\bf Z}/2{\bf Z}=g_*{\bf Z}/2{\bf Z}={\bf Z}/2{\bf Z}.$$
\end{proof}
\begin{example}\label{ex-locodd}
Let $G$ be an $S$-group scheme which is locally a finite constant group scheme in  the fppf--topology. Let $G$ acts on a scheme $X$ with quotient scheme $Y$. Assume that there is a covering family $\{U_i\rightarrow S,\,i\in I\}$ with the property  that $G_{U_i}$ is a finite constant group, $X_{U_i}/G_{U_i}=Y_{U_i}$ and the cover $X_{U_i}\rightarrow Y_{U_i}$ has odd ramification in the sense of Definition \ref{def-oddramif-finitecase} for all $i\in I$. Then  $X\rightarrow X/G$ has odd ramification in the sense of Definition \ref{def-weak-odd}.
\end{example}
\begin{proof}Localizing over $Y$ if  necessary, one can assume that $Y=S$. Consider $g:B_G/X\rightarrow Y_{fl}$ and let $U=U_i$ be a $Y$-scheme.
The following diagram consists of pull--backs:
\[\xymatrix{
B_{G_{U}}/X_{U}\ar[r]\ar[d]&B_{G_{U}}\ar[r]\ar[d] &U_{fl}\ar[d]\\
B_G/X\ar[r]^{}&B_{G}\ar[r]&Y_{fl}
}\]
where $G_U=G\times _YU$ and $X_U=X\times _YU$. Since the total pull--back square above is obtained by localization we have (by Lemma \ref{lemhe}) $$Rg_*{\bf Z}/2{\bf Z}\mid U_i\simeq Rg_{U_i,*}{\bf Z}/2{\bf Z}\simeq{\bf Z}/2{\bf Z}$$
for any $i\in I$, where $$g_{U_i}:B_{G_{U_i}}/X_{U_i}\rightarrow (X_{U_i}/G_{U_i})_{fl}=U_{i,fl}$$
is the map induced by $g:B_G/X\rightarrow Y_{fl}$ by localization. The natural map ${\bf Z}/2{\bf Z}\rightarrow Rg_*{\bf Z}/2{\bf Z}$ is therefore a quasi-isomorphism, since $\{U_i\rightarrow S=Y,\,i\in I\}$ is a covering family.
\end{proof}
\begin{prop}
Let $X$ be an $S$-scheme endowed  with a left action of the $S$-group scheme $G$. Let $1=G_0\subseteq G_1\subseteq ...\subseteq G_n=G$ be a filtration by normal subgroup-schemes such that:
\begin{itemize}
\item  For all $i\geq j$ the quotients $G_{i}/G_j$ and $X_i:=X/G_i$ exist as schemes;
\item For all $i$ we have $X_i/(G_{i+1}/G_i)=X_{i+1}$.
\item  For all $i\geq j$ the map $G_{i}\rightarrow G_{i}/G_j$ is faithfully flat locally of finite presentation.
\end{itemize}
If the cover $X_i\rightarrow X_i/(G_{i+1}/G_i)$ has odd ramification  for each $i$, then the cover $X\rightarrow X/G$ has odd ramification.
 \end{prop}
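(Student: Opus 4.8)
The plan is to use the cohomological criterion of Proposition--Definition \ref{def-weak-odd}: writing $g\colon B_G/X\to Y_{fl}$ for the canonical map with $Y=X/G$, it suffices to prove that the natural map ${\bf Z}/2{\bf Z}\to Rg_*{\bf Z}/2{\bf Z}$ is a quasi-isomorphism. The normality and faithful flatness hypotheses guarantee that for each $i$ the sequence
$$1\to G_{i+1}/G_i\to G/G_i\to G/G_{i+1}\to 1$$
is exact for the fppf-topology and that all the quotients in sight exist as schemes, so that the Hochschild--Serre machinery of Proposition \ref{propHS} is available at every stage.

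First I would factor $g$ through the tower attached to the filtration. Since $G_0=1$ and $G_n=G$, one has $B_{G/G_0}/X_0=B_G/X$ and $B_{G/G_n}/X_n=Y_{fl}$, and the $(G/G_{i+1})$--equivariant maps $X_i\to X_{i+1}$ induce morphisms
$$h_i\colon B_{G/G_i}/X_i\longrightarrow B_{G/G_{i+1}}/X_{i+1},$$
with $g\simeq h_{n-1}\circ\cdots\circ h_0$. Because the inverse image functor of each $h_i$ is exact, $h_{i,*}$ preserves injective abelian objects, so derived pushforward along a composite is the composite of derived pushforwards; hence $Rg_*\simeq R(h_{n-1})_*\circ\cdots\circ R(h_0)_*$. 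Consequently it is enough to prove that ${\bf Z}/2{\bf Z}\to R(h_i)_*{\bf Z}/2{\bf Z}$ is a quasi-isomorphism for every $i$, for then applying the functors $R(h_j)_*$ successively collapses $Rg_*{\bf Z}/2{\bf Z}$ to ${\bf Z}/2{\bf Z}$.

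Fix $i$ and abbreviate $\bar G=G/G_i$, $\bar N=G_{i+1}/G_i$, so that $\bar G/\bar N=G/G_{i+1}$, with $\bar G$ acting on $X_i$ and $X_{i+1}=X_i/\bar N$. Since the map ${\bf Z}/2{\bf Z}\to R(h_i)_*{\bf Z}/2{\bf Z}$ lives over $B_{\bar G/\bar N}/X_{i+1}$ and the object $E_{\bar G/\bar N}\times X_{i+1}$ covers the final object, it is a quasi-isomorphism if and only if it becomes one after restriction along the localization
$$l\colon X_{i+1,fl}\simeq \bigl(B_{\bar G/\bar N}/X_{i+1}\bigr)/(E_{\bar G/\bar N}\times X_{i+1})\longrightarrow B_{\bar G/\bar N}/X_{i+1},$$
because $l^*$ is conservative (an epimorphism onto the final object yields a conservative localization). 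The equivariant refinement of the localization pull--back square of Proposition \ref{propHS} identifies the base change of $h_i$ along $l$ with the canonical map $g_i\colon B_{\bar N}/X_i\to X_{i+1,fl}$, and, exactly as in the proof of Proposition \ref{propHS}, the base-change isomorphism $l^*R(h_i)_*\simeq R(g_i)_*\,l'^*$ holds, where $l'\colon B_{\bar N}/X_i\to B_{\bar G}/X_i$ is the corresponding localization (localization functors being exact and preserving injectives). Thus $l^*$ carries the map ${\bf Z}/2{\bf Z}\to R(h_i)_*{\bf Z}/2{\bf Z}$ to ${\bf Z}/2{\bf Z}\to R(g_i)_*{\bf Z}/2{\bf Z}$, which is a quasi-isomorphism precisely because the cover $X_i\to X_i/\bar N=X_{i+1}$ has odd ramification by hypothesis (Proposition--Definition \ref{def-weak-odd}). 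Since $l^*$ is conservative, ${\bf Z}/2{\bf Z}\to R(h_i)_*{\bf Z}/2{\bf Z}$ is itself a quasi-isomorphism, which completes the argument.

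The step I expect to be the main obstacle is the identification of the base change of $h_i$ along $l$ with $g_i$: this is the equivariant (sliced) version of the Hochschild--Serre pull--back $B_{\bar G}/(\bar G/\bar N)\simeq B_{\bar N}$, and one must verify both that the relevant square is a genuine pull--back of topoi and that the induced map on the top row is literally the canonical map $g_i\colon B_{\bar N}/X_i\to X_{i+1,fl}$ of Proposition--Definition \ref{def-weak-odd}, keeping careful track of the slices over $X_i$ and $X_{i+1}$ throughout.
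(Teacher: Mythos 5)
Your proposal is correct and follows essentially the same route as the paper: factor $g$ through the tower $B_{G/G_i}/X_i$, reduce to showing each $R(h_i)_*{\bf Z}/2{\bf Z}\simeq{\bf Z}/2{\bf Z}$, and conclude by base change along the localization at $E_{G/G_{i+1}}\times X_{i+1}$ together with conservativity of the localization functor. The one point you flag as the main obstacle — identifying the fibre of $h_i$ over that localization with $B_{G_{i+1}/G_i}/X_i$ — is exactly where the paper uses the fppf hypothesis, via $yG_{i}/yG_{j}=y(G_i/G_{j})$ and the computation $f_i^*(y(G/G_{i})\times X_i)\simeq y(G/G_{i-1})/y(G_i/G_{i-1})\times X_{i-1}$.
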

\begin{proof}
We consider the decomposition
$$f:B_G/X\stackrel{f_1}{\longrightarrow} B_{G/G_1}/X_1\stackrel{f_2}{\longrightarrow} B_{G/G_2}/X_2\stackrel{f_3}{\longrightarrow} ...\stackrel{f_{n}}{\longrightarrow} B_{G/G_n}/X_n\simeq X_{n,fl}.$$
Since we have
$$Rf_*{\bf Z}/2{\bf Z}\simeq Rf_{n,*}\cdot\cdot\cdot Rf_{2,*}Rf_{1,*}{\bf Z}/2{\bf Z}$$
it is enough to show that $Rf_{i,*}{\bf Z}/2{\bf Z}\simeq{\bf Z}/2{\bf Z}$ for all $i$. For all $i\geq j$ the map $G_{i}\rightarrow G_{i}/G_j$ is a covering for the fppf-topology by assumption. It follows that the Yoneda embedding preserves the colimit of the effective equivalence relation $G_j\times G_i\rightrightarrows G_i$,
where the double map is defined by multiplication and projection. In other words, one has $yG_{i}/yG_{j}=y(G_i/G_{j})$. We obtain
\begin{eqnarray*}
f_i^*(y(G/G_{i})\times X_i)&=&yG/yG_{i}\times X_{i-1}\\
&=&(yG/yG_{i-1})/(yG_i/yG_{i-1})\times X_{i-1}\\
&=&y(G/G_{i-1})/y(G_i/G_{i-1})\times X_{i-1}.
\end{eqnarray*}
Hence
\begin{equation}\label{onelocalization}
(B_{G/G_{i-1}}/X_{i-1})/f_i^*(y(G/G_{i})\times X_i)\simeq B_{G_i/G_{i-1}}/X_{i-1}, 
\end{equation}
and of course we  have
\begin{equation}\label{anotherlocalization}
(B_{G/G_i}/X_i)/(y(G/G_{i})\times X_i)\simeq X_{i,fl}.
\end{equation}
Localizing over $y(G/G_{i})\times X_i$  we obtain the pull--back
\[\xymatrix{
B_{G_i/G_{i-1}}/X_{i-1}\ar[r]^{f'_i}\ar[d]&X_{i,fl}\ar[d]\\
B_{G/G_{i-1}}/X_{i-1}\ar[r]^{f_i}&B_{G/G_i}/X_i
}\]
where the vertical maps are localization maps (see  (\ref{onelocalization}) and (\ref{anotherlocalization})). Moreover,  by assumption $Rf'_{i,*}{\bf Z}/2{\bf Z}\simeq{\bf Z}/2{\bf Z}$
 and  so we obtain $Rf_{i,*}{\bf Z}/2{\bf Z}\simeq{\bf Z}/2{\bf Z}$ using the standard base change isomorphism for the localization pull--back square above.
\end{proof}

\subsection{A review of basic topos theory}
In this section we recall some elementary notions concerning Grothendieck topoi.
\subsubsection{Giraud's axioms}\label{sect-Gir-Ax}
A Grothendieck topos is a category that is equivalent to the category of sheaves of sets on a Grothendieck site. Many non-equivalent sites give rise to the same topos, however  Giraud's Theorem [SGA 4][IV, Theorem 1.2] provides an intrinsic definition: a topos is a category satisfying Giraud's axioms. To be more precise, a category $\mathcal{S}$ is a topos if and only if it satisfies the following four properties.
\begin{itemize}
\item $\mathcal{S}$ has finite projective limits.
\item $\mathcal{S}$ has direct sums, which are moreover disjoint and universal.
\item Equivalence relations in $\mathcal{S}$ are effective and universal.
\item $\mathcal{S}$ has a small set of generators.
\end{itemize}
Roughly speaking, these axioms tell us that a topos behaves as the category of sets. For instance, arbitrary projective limits (such as a final object $e_{\mathcal{S}}$, products, fiber products, kernel of pairs of maps) and arbitrary inductive limits (such as an initial object $\emptyset_{\mathcal{S}}$, sums, amalgamated sums, quotient by equivalence relations) exist and are universal (i.e. stable by base change) in the topos $\mathcal{S}$. Therefore, the reader should think of a topos as the category of sets and perform standard operations (such as the ones mentioned above) in a topos as if it were the category of sets. The basic example the reader should keep in mind is the topos ${\bf Sets}$ of sets. Note that ${\bf Sets}$ is the category of sheaves of sets on the point space $\{*\}$; it is called the \emph{punctual topos}.

\subsubsection{Sites and the Yoneda functor}
For a category $C$ endowed with a Grothendieck topology, we denote by $\widetilde{C}$ the topos of sheaves of sets on $C$.
We have the \emph{Yoneda functor}:
$$\fonc{y_C}{C}{\widetilde{C}}{X}{y_C(X)}$$
where $y_C(X)$ is the sheaf associated to the presheaf $\textrm{Hom}_C(-,X)$. The Yoneda functor always commutes with finite projective limits, i.e. it is \emph{left exact}, since the associated sheaf functor is left exact. The site $C$ is said to be \emph{subcanonical} if the presheaves $\textrm{Hom}_C(-,X)$ are all sheaves already. The Yoneda functor $y_C$ is fully faithful if and only if the site $C$ is subcanonical.

A site $C$ is said to be \emph{left exact} if $C$ is given by a category having finite projective limits (or equivalently, having a final object and fiber products) endowed with a subcanonical topology. If the site $C$ is left exact, then $C$ can be seen as a left exact full subcategory of $\widetilde{C}$, since the Yoneda functor $y_C$ is fully faithful and left exact.

\subsubsection{Algebraic structure} A group--object in a topos $\mathcal{S}$ (more generally, in any category with finite projective limits) is an object $G$ endowed with maps $e_\mathcal{S}\rightarrow G$ (unit), $G\times G\rightarrow G$ (multiplication) and $i:G\stackrel{\sim}{\rightarrow} G$ (inverse) satisfying the standard axioms. Let $C$ be a site whose underlying category has finite projective limits and let $G$ be a group--object in $C$. The group--object $G$ is defined in terms of finite projective limits (a finite product and the final object are involved) and the Yoneda functor commutes with these limits, hence $y_C(G)$ is a group in $\widetilde{C}$. Equivalently, $y_C(G)(X)$ is a group -- in the usual sense -- for any object $X$ in $C$ and the restriction map $y_C(G)(X)\rightarrow y_C(G)(Y)$ is a morphism of groups for any map $Y\rightarrow X$ in $C$. An abelian object $\mathcal{A}$ is a group object with the property  that $\mathcal{A}(X)$ is abelian for any $X$ in $\mathcal{S}$. It is straightforward to define rings and modules in a topos. The category $Ab(\mathcal{S})$ of abelian objects in a topos $\mathcal{S}$ (more generally, the category of modules over a ring) is an abelian category with enough injective objects.

\subsubsection{Geometric morphisms} Let $\mathcal{S}$ and $\mathcal{S}'$ be topoi. A morphism $u:\mathcal{S}\rightarrow\mathcal{S}'$ is a pair of functors $(u^{*}, u_{*})$ together with an adjunction which makes $u^*:\mathcal{S}'\rightarrow\mathcal{S}$ left adjoint to $u_*:\mathcal{S}\rightarrow\mathcal{S}'$ and such that $u^*$ is left exact (i.e. commutes with finite projective limits). It follows that the functor $u^*$ is exact (since a functor having a right adjoint is right exact); it is called the inverse image of $u$. The direct image $u_*$ is left exact only.

Given a functor $f^*:\mathcal{S}'\rightarrow\mathcal{S}$ which commutes with finite projective limits and arbitrary inductive limits, there exists an (essentially) unique morphism $f:\mathcal{S}\rightarrow\mathcal{S}'$ with inverse image $f^*$ (since such a functor has a uniquely determined right adjoint).

As a first example, let $\mathcal{S}$ be a topos and let $X$ be an object of $\mathcal{S}$. The slice category $\mathcal{S}/X$ is the category whose objects are maps $\mathcal{F}\rightarrow X$ and morphisms are maps over $X$. It follows from Giraud's axioms that $\mathcal{S}/X$ is a topos: $\mathcal{S}/X$ is said to be a \emph{slice} or \emph{induced} topos. The functor
$$\fonc{l_X^*}{\mathcal{S}}{\mathcal{S}/X}{\mathcal{F}}{\mathcal{F}\times X}$$
commutes with arbitrary projective and inductive limits (precisely because these limits are universal in $\mathcal{S}$);  hence $l^*_X$ is the inverse image of a morphism of topoi
$$l_X:\mathcal{S}/X\rightarrow \mathcal{S}. $$
The morphism $l_X$ is said to be a \emph{localization morphism} or a \emph{local homeomorphism}. The inverse image $l^*_X$ commutes with arbitrary projective limits, hence has a left adjoint $l_{X,!}$. One sees immediately that $l_{X,!}(\mathcal{F}\rightarrow X)=\mathcal{F}$. Note that $l_{X,!}$ is not left exact (it does not preserve the final object, unless $X=e_{\mathcal{S}}$). But the induced functor $l_{X,!}:Ab(\mathcal{S}/X)\rightarrow Ab(\mathcal{S})$ is exact, so that $l^*_X:Ab(\mathcal{S})\rightarrow Ab(\mathcal{S}/X)$ preserves injective objects.

For any topos $\mathcal{S}$ there is a unique morphism $e:\mathcal{S}\rightarrow{\bf Sets}$, hence ${\bf Sets}$ is a final object in the 2-category of topoi. Indeed, let $e_\mathcal{S}$ be a final object in $\mathcal{S}$ (which is unique up to a unique isomorphism) and let $\{*\}$ be the set with one element, i.e. the final object in ${\bf Sets}$. Since $e^*$ is left exact,  we have $e^*\{*\}=e_\mathcal{S}$. For any set $I$ we have $I=\coprod_I\{*\}$ hence $e^*I=\coprod_Ie^*\{*\}=\coprod_Ie_{\mathcal{S}}$ since $e^*$ commutes with inductive limits (hence with sums). The functor $e^*$ is therefore uniquely determined. Conversely, the functor defined above commutes with arbitrary inductive limits and finite projective limits; it is the inverse image of a morphism. The direct image $e_*$ is the \emph{global sections functor} $e_*(X)=\textrm{Hom}_{\mathcal{S}}(e_{\mathcal{S}},X)$ while $e^*$ is refered to as the \emph{constant sheaf functor}. If $\mathcal{S}=\textrm{Sh}(T)$ is the category of sheaves of sets on a topological space $T$, then the unique morphism $e:\textrm{Sh}(T)\rightarrow{\bf Sets}$ is induced by the unique continuous map $T\rightarrow \{*\}$.

A morphism of left exact sites $f^*:C'\rightarrow C$ is a left exact functor (and so  $f^*$ commutes with finite projective limits) $f^*:C'\rightarrow C$ sending covering families to covering families. Such a morphism $f^*$ of left exact sites induces a morphism of topoi $(f^*,f_*):\widetilde{C}\rightarrow \widetilde{C'}$ whose inverse image functor extends $f^*:C'\rightarrow C$, where $C$ is seen as a full subcategory of $\widetilde{C}$ via $y_C:C\hookrightarrow\widetilde{C}$. For example, let ${\bf Sch}/S$ be the category of $S$-schemes endowed  with the \'etale or the fppf-topology. The identity functor $({\bf Sch}/S)_{et}\rightarrow ({\bf Sch}/S)_{fppf}$ is a morphism of left exact sites.

\subsubsection{Topoi as spaces}
Let $f:T\rightarrow T'$ be a continuous map between topological spaces. Then the usual inverse and direct image functors yield a morphism $(f^*,f_*):\textrm{Sh}(T)\rightarrow \textrm{Sh}(T')$, where $\textrm{Sh}(T)$ is the category of sheaves of sets on $T$ (recall that $\textrm{Sh}(T)$ is equivalent to the category \'etal\'e spaces on $T$). Moreover, if the spaces $T$ and $T'$ are sober (a rather weak condition meaning that any irreducible closed subspace has a unique generic point) then any morphism of topoi $\textrm{Sh}(T)\rightarrow \textrm{Sh}(T')$ is induced by a unique continuous map. Therefore a topos may be  thought of as a generalized topological space and a morphism as a continuous map. Adopting  this  point of view, one can then define subtopoi, open and closed subtopoi, proper, connected, locally connected morphisms, cohomology, fundamental groups, fundamental groupoids and higher homotopy groups in the context of topos theory, generalizing the usual notions for nice topological spaces. However, the great majority  of this theory is not used in this paper.

\subsubsection{Cohomology} Let $\mathcal{S}$ be a topos. The unique map
$e_{\mathcal{S}}:\mathcal{S}\rightarrow {\bf Sets}$ yields a left exact functor
$\Gamma_{\mathcal{S}}:=e_{\mathcal{S},*}:Ab(\mathcal{S})\rightarrow {\bf Ab}$,
and one defines
$$H^i(\mathcal{S},\mathcal{A}):=R^i(\Gamma_{\mathcal{S}})\mathcal{A}$$
for any abelian object $\mathcal{A}$ in $\mathcal{S}$. This generalizes sheaf cohomology of topological spaces -- take $\mathcal{S}=Sh(T)$ -- on the one hand, and the cohomology of discrete groups -- take $\mathcal{S}$ to be the category of $G$--sets -- on the other.

If $f:\mathcal{E}\rightarrow\mathcal{S}$ is a morphism of topoi, then $e_{\mathcal{S}}\circ f=e_{\mathcal{E}}$ because $e_{\mathcal{E}}$ is the unique map from $\mathcal{E}$ to the final topos, so that $\Gamma_{\mathcal{S}}\circ f_*=\Gamma_{\mathcal{E}}$. Moreover, $f_*$ has an exact left adjoint, hence $f_*$ preserves injective objects. The Grothendieck spectral sequence for the composite functor $\Gamma_{\mathcal{S}}\circ f_*=\Gamma_{\mathcal{E}}$ is called the \emph{Leray spectral sequence for $f$}:
$$H^i(\mathcal{S},R^jf_*\mathcal{A})\Longrightarrow H^{i+j}(\mathcal{E},\mathcal{A}).$$

\subsubsection{The 2--category of topoi}\label{subsect-2catoftopos}
Let $\mathcal{S}$ and $\mathcal{E}$ be topoi. The class ${\bf Homtop}(\mathcal{E},\mathcal{S})$ of morphisms from $\mathcal{E}$ to $\mathcal{S}$ is a category. Indeed, let $f,g:\mathcal{E}\rightrightarrows\mathcal{S}$ be morphisms. A map $\alpha:f\rightarrow g$ in the category ${\bf Homtop}(\mathcal{E},\mathcal{S})$ is a natural transformation $\alpha:f_*\rightarrow g_*$. This structure is compatible with the composition of morphisms of topoi. In other words, the class of topoi forms a 2-category ${\bf Top}$. In this context, the notion of a commutative diagram has to be refined. Roughly speaking, diagrams are not required to commute on the nose, but only up to a coherent family of isomorphisms.  If $I$ is a small category, a pseudo-functor
$$\fonc{\mathcal{T}}{I}{{\bf Top}}{i}{\mathcal{T}(i)}$$
consists in a family of topoi $\mathcal{T}(i)$ indexed over the set of objects of $I$ together with morphisms $\mathcal{T}(f):\mathcal{T}(i)\rightarrow \mathcal{T}(j)$ for any map $f:i\rightarrow j$. For a pair of composable maps $(f,g)$ in $I$, we do not demand that the morphisms $\mathcal{T}(g)\circ \mathcal{T}(f)$ and $\mathcal{T}(g\circ f)$ be equal to one another. Instead, the pseudo-functor $\mathcal{T}$ contains additional information, which is given by a family of isomorphisms $\sigma_{(g,f)}:\mathcal{T}(g)\circ \mathcal{T}(f)\simeq \mathcal{T}(g\circ f)$ satisfying certain associativity conditions.
Such a pseudo-functor $\mathcal{T}$ is called a \emph{commutative diagram of topoi} indexed over $I$. For example, a commutative square of topoi is a diagram
\[\xymatrix{
\mathcal{E}'\ar[d]^{g'}\ar[r]^{f'}&\mathcal{S}'\ar[d]^g \\
\mathcal{E}\ar[r]^f&\mathcal{S}
}\]
given with an isomorphism $\alpha:g_*\circ f'_*\simeq f_*\circ g'_*$.

Such a commutative square is said to be 2-cartesian if, for any topos $\mathcal{T}$, the functor
$$
\appl{{\bf Homtop}(\mathcal{T},\mathcal{E}')}{{\bf Homtop}(\mathcal{T},\mathcal{S'})\times_{{\bf Homtop}(\mathcal{T},\mathcal{S})}{\bf Homtop}(\mathcal{T},\mathcal{E})}{a:\mathcal{T}\rightarrow\mathcal{E}'}{(f'\circ a,g'\circ a,\alpha\star a_*)}
$$
is an equivalence. Here the 2--fiber product category ${\bf Homtop}(\mathcal{T},\mathcal{S'})\times_{{\bf Homtop}(\mathcal{T},\mathcal{S})}{\bf Homtop}(\mathcal{T},\mathcal{E})$ is the category of triples $(b_1,b_2,\beta)$ where $b_1:\mathcal{T}\rightarrow \mathcal{S'}$ and $b_2:\mathcal{T}\rightarrow \mathcal{E}$ are morphisms and $\beta:g\circ b_1\simeq f\circ b_2$ is an isomorphism. A \emph{pull--back of topoi} is a commutative square which is 2--cartesian. If the diagram above is a pull--back, we write
$$\mathcal{E}'\simeq\mathcal{E}\times_{\mathcal{S}}\mathcal{S}'.$$
For example, for any morphism $f:\mathcal{E}\rightarrow \mathcal{S}$ and for any object $X$ of $\mathcal{S}$, the diagram
\[\xymatrix{
\mathcal{E}/f^*X\ar[r]^{f_{/X}}\ar[d]&\mathcal{S}/X\ar[d] \\
\mathcal{E}\ar[r]^f&\mathcal{S}
}\]
is a pull--back \cite{SGA4}[IV.5.10], where the vertical maps are localization morphisms (as defined above) and $f_{/X}$ is given by $f_{/X}^*(\mathcal{F}\rightarrow X):=(f^*\mathcal{F}\rightarrow f^*X)$. In other words, one has
$$\mathcal{E}/f^*X\simeq\mathcal{E}\times_{\mathcal{S}}\mathcal{S}/X.$$

\end{document}